\theoremstyle{plain}
\newcommand{\AdS}{\mathop{{\rm AdS}}\nolimits}
\newcommand{\Sesh}{\mathop{{\rm Sesh}}\nolimits}
\newcommand\bu{{\bf{u}}}
\newcommand\bw{{\bf{w}}}
\newcommand\bv{{\bf{v}}}
\newcommand{\mlabel}[1]{\marginpar{#1}\label{#1}}
\newcommand{\g}{{\mathfrak g}}
\newcommand{\fg}{{\mathfrak g}}
\newcommand{\fh}{{\mathfrak h}}
\newcommand{\fk}{{\mathfrak k}}
\newcommand{\fq}{{\mathfrak q}}
\newcommand{\fp}{{\mathfrak p}}
\renewcommand{\:}{\colon}
\newcommand{\1}{\mathbf{1}}
\newcommand{\cB}{\mathcal{B}}
\newcommand{\cD}{\mathcal{D}}
\newcommand{\cE}{\mathcal{E}}
\newcommand{\cG}{\mathcal{G}}
\newcommand{\cH}{\mathcal{H}}
\newcommand{\cK}{\mathcal{K}}
\newcommand{\cL}{\mathcal{L}}
\newcommand{\cN}{\mathcal{N}}
\newcommand{\cO}{\mathcal{O}}
\newcommand{\cP}{\mathcal{P}}
\newcommand{\cT}{\mathcal{T}}
\newcommand\bx{{\bf{x}}}
\newcommand\by{{\bf{y}}}
\newcommand\bz{{\bf{z}}}
\newcommand{\eset}{\emptyset}
\newcommand{\dd}{{\tt d}}
\newcommand{\subeq}{\subseteq}
\newcommand{\supeq}{\supseteq}
\newcommand{\eps}{\varepsilon}
\newcommand{\shalf}{{\textstyle{\frac{1}{2}}}}
\newcommand{\N}{{\mathbb N}}
\newcommand{\Z}{{\mathbb Z}}
\newcommand{\R}{{\mathbb R}}
\newcommand{\C}{{\mathbb C}}
\newcommand{\bP}{{\mathbb P}}
\renewcommand{\H}{{\mathbb H}}
\newcommand{\T}{{\mathbb T}}
\newcommand{\bB}{{\mathbb B}}
\newcommand{\bH}{{\mathbb H}}
\newcommand{\bS}{{\mathbb S}}
\renewcommand{\hat}{\widehat}
\renewcommand{\tilde}{\widetilde}
\newcommand{\GL}{\mathop{{\rm GL}}\nolimits}
\newcommand{\AU}{\mathop{{\rm AU}}\nolimits}
\newcommand{\SO}{\mathop{{\rm SO}}\nolimits}
\newcommand{\SU}{\mathop{{\rm SU}}\nolimits}
\newcommand{\OO}{\mathop{\rm O{}}\nolimits}
\newcommand{\U}{\mathop{\rm U{}}\nolimits}
\newcommand{\Exp}{\mathop{{\rm Exp}}\nolimits}
\newcommand{\Fix}{\mathop{{\rm Fix}}\nolimits}
\newcommand{\ad}{\mathop{{\rm ad}}\nolimits}
\renewcommand{\Re}{\mathop{{\rm Re}}\nolimits}
\renewcommand{\Im}{\mathop{{\rm Im}}\nolimits}
\newcommand{\Ext}{\mathop{{\rm Ext}}\nolimits}
\newcommand{\Isom}{\mathop{{\rm Isom}}\nolimits}
\newcommand{\bL}{{\mathbb L}}
\newcommand{\Aut}{\mathop{{\rm Aut}}\nolimits}
\newcommand{\diag}{\mathop{{\rm diag}}\nolimits}
\newcommand{\id}{\mathop{{\rm id}}\nolimits}
\renewcommand{\dim}{\mathop{{\rm dim}}\nolimits}
\newcommand{\supp}{\mathop{{\rm supp}}\nolimits}
\newcommand{\ev}{\mathop{{\rm ev}}\nolimits}
\newcommand{\Sesq}{\mathop{{\rm Sesq}}\nolimits}
\newcommand{\dS}{\mathop{{\rm dS}}\nolimits}
\renewcommand{\phi}{\varphi}
\newcommand{\nin}{\noindent} 
\newcommand{\oline}{\overline}
\newcommand{\la}{\langle}
\newcommand{\ra}{\rangle}
\newcommand{\res}{\vert}
\newcommand{\Spec}{{\rm Spec}}
\newcommand{\ssssarr}{\hbox to 15pt{\rightarrowfill}}
\newcommand{\sssarr}{\hbox to 20pt{\rightarrowfill}}
\newcommand{\ssarr}{\hbox to 30pt{\rightarrowfill}}
\newcommand{\sarr}{\hbox to 40pt{\rightarrowfill}}
\newcommand{\arr}{\hbox to 60pt{\rightarrowfill}}
\newcommand{\larr}{\hbox to 60pt{\leftarrowfill}}
\newcommand{\Arr}{\hbox to 80pt{\rightarrowfill}}
\def\theoremname{Theorem}
\def\propositionname{Proposition}
\def\corollaryname{Corollary}
\def\lemmaname{Lemma}
\def\remarkname{Remark}
\def\conjecturename{Conjecture} 
\def\definitionname{Definition}
\def\exercisename{Exercise}
\def\examplename{Example}
\def\examplesname{Examples}
\def\problemname{Problem}
\def\problemsname{Problems}
\def\@thmcounter#1{\noexpand\arabic{#1}}
\def\@thmcountersep{}
\def\@begintheorem#1#2{\it \trivlist \item[\hskip 
\labelsep{\bf #1\ #2.\quad}]}
\def\@opargbegintheorem#1#2#3{\it \trivlist
      \item[\hskip \labelsep{\bf #1\ #2.\quad{\rm #3}}]}
\newtheorem{theor}{\theoremname}[section]
\newtheorem{propo}[theor]{\propositionname}
\newtheorem{coro}[theor]{\corollaryname}
\newtheorem{lemm}[theor]{\lemmaname}
\newenvironment{thm}{\begin{theor}\it}{\end{theor}}
\newenvironment{theorem}{\begin{theor}\it}{\end{theor}}
\newenvironment{Theorem}{\begin{theor}\it}{\end{theor}}
\newenvironment{prop}{\begin{propo}\it}{\end{propo}}
\newenvironment{cor}{\begin{coro}\it}{\end{coro}}
\newenvironment{corollary}{\begin{coro}\it}{\end{coro}}
\newenvironment{lem}{\begin{lemm}\it}{\end{lemm}}
\newenvironment{lemma}{\begin{lemm}\it}{\end{lemm}}
\newtheorem{rema}[theor]{\remarkname}
\newenvironment{remark}{\begin{rema}\rm}{\end{rema}}
\newenvironment{rem}{\begin{rema}\rm}{\end{rema}}
\newtheorem{stepnow}[theor]{}
\newtheorem{defin}[theor]{\definitionname} 
\newenvironment{definition}{\begin{defin}\rm}{\end{defin}}
\newenvironment{defn}{\begin{defin}\rm}{\end{defin}}
\newtheorem{exerc}{\exercisename}[section]
\newtheorem{exa}[theor]{\examplename}
\newenvironment{ex}{\begin{exa}\rm}{\end{exa}}
\newtheorem{exas}[theor]{\examplesname}
\newtheorem{conj}[theor]{\conjecturename}
\newtheorem{pro}[theor]{\problemname}
\newtheorem{prs}[theor]{\problemsname}
\renewcommand{\mlabel}{\label}
\newcommand{\ip}[2]{\la #1,#2 \ra}
\newcommand{\lf}[2]{[ #1,#2 ]_V}
\newcommand{\hE}{\widehat{\mathcal{E}}}
\newcommand{\he}{\hat{\eta}}
\newcommand{\rO}{\mathrm{O}}
\newcommand{\rH}{\mathrm{H}}
\newcommand{\rU}{\mathrm{U}}
\newcommand{\rI}{\mathrm{I}}
\newcommand{\rS}{\mathrm{S}}
\newcommand{\Ctm}{C^\sigma_m}
\newcommand{\ctm}{\Psi_m}
\newcommand{\Oon}{\OO_{1,n}(\R)^\uparrow}
\newcommand{\Oo}{\OO_{n+1}(\R)}
\newcommand{\Tu}{T_{V_+}}
\newcommand{\wH}{\H^n_V}
\newcommand{\wC}{\widetilde{C}_m}
\newcommand{\hgf}{{}_2F_1}
\newcommand{\Lnp}{\mathbb{L}^{n}_+}
\numberwithin{equation}{section}
\renewcommand{\phi}{\varphi}
\newcommand{\wphi}{\widetilde{\varphi}}
\newcommand{\wpsi}{\widetilde{\psi}_m}
\newcommand{\sx}{\sigma_{V}}
\newcommand{\sr}{\sigma_E} 
\newcommand{\cY}{\mathcal{Y}}
\begin{document}


\title{Reflection positivity on spheres}   
\author{Karl-Hermann Neeb}
\address{Department of Mathematics, Friedrich-Alexander-University of 
Erlangen-Nuremberg, Cauerstrasse 11, 91058 Erlangen, Germany}
\email{neeb@math.fau.de}

\author{Gestur \'{O}lafsson}
\address{Department of Mathematics, Louisiana State University, Baton Rouge, LA 70803, U.S.A.}
\email{olafsson@math.lsu.edu}

\thanks{The research of K.-H. Neeb was partially
supported by DFG-grant NE 413/9-1. The research of G. \'Olafsson was partially supported by Simons grant 586106.}
 
\begin{abstract} In this article we specialize a construction of a reflection positive Hilbert space due to
Dimock and Jaffe--Ritter to the sphere $\bS^n$.  We determine the resulting Osterwalder--Schrader 
Hilbert space, a construction that can be viewed as the step from 
euclidean to relativistic quantum field theory. 
We show that this process  gives rise to an irreducible unitary spherical representation of the orthochronous Lorentz group 
$G^c = \OO_{1,n}(\R)^{\uparrow}$ and that the representations thus obtained are 
the irreducible unitary spherical representations  of this group. 
A key tool is a  certain complex domain $\Xi$, known as the crown 
of the hyperboloid, containing a half-sphere $\bS^n_+$ 
and the hyperboloid $\bH^n$ as totally real submanifolds. 
This domain provides a bridge between those two manifolds when we 
study unitary representations of $G^c$ in spaces of 
holomorphic functions on~$\Xi$. 
We connect this analysis with the boundary components which 
are the de Sitter space and a bundle over the space of future 
pointing lightlike vectors. 
\end{abstract}

\maketitle

\nin Keywords: \keywords{Reflection positivity, symmetric spaces, dissecting involutions, positive definite kernels, spherical representations}

\nin MSC \subjclass[2010]{Primary: 22E70, 4385. Secondary: 43A35, 43A80, 58Z99, 81T08}


\section{Introduction}
\mlabel{sec:1} 
\noindent
In this article we continue our work on reflection positivity and its connection to representation theory and
abstract harmonic analysis, concerning  the passage from the compact group 
$\OO_{n+1}(\R)$ to its $c$-dual group $\OO_{1,n}(\R)^\uparrow$.

To make this more precise, recall that a {\it symmetric Lie group} 
is a pair $(G,\tau)$, consisting of a Lie group $G$ 
with an involutive automorphism $\tau$. The 
Lie algebra $\g$ of $G$ decomposes into $\tau$-eigenspaces 
$\g = \fh \oplus \fq$, where $\fh = \ker(\tau - \1)$ and 
$\fq = \ker(\tau + \1)$. A Lie group $G^c$ with 
Lie algebra $\g^c = \fh \oplus i\fq$ is called {\it the Cartan dual}, or for short {\it $c$-dual}, {\it to $G$}. 
Reflection positivity now provides a passage from certain 
unitary representations of $G$ to unitary representations of $G^c$. 
One considers representations $(U,\cE)$ of $G$ on 
reflection positive Hilbert spaces 
$(\cE,\cE_+,\theta)$, i.e., $\cE_+ \subeq \cE$ is a closed subspace 
and $\theta$ is a unitary involution for which 
$\la \xi,\xi \ra_\theta := \la \xi,\theta \xi \ra \geq 0$ for $\xi \in \cE_+$. 
We further assume that $\theta U(g) \theta = U(\tau(g))$ for 
$g \in G$. Then the Hilbert space $\hat\cE$ defined by 
$\la \cdot,\cdot \ra_\theta$ on $\cE_+$ 
is expected to carry a unitary representation $(U^c,\hat\cE)$ 
of the $c$-dual group $G^c$ (at least if it is $1$-connected). 
Then we call $(U,\cE)$ a {\it euclidean realization} of $(U^c,\hat\cE)$. 
We refer to \cite[\S\S 1,3]{NO18} for background and details.  

There is a natural source of reflection positive Hilbert spaces 
in Riemannian geometry. We start with a complete Riemannian manifold $M$ 
and an involutive isometry $\sigma \: M \to M$ which is 
dissecting in the sense that the submanifold $M^\sigma$ 
of fixed points is of codimension 
one and its complement consists of two connected components 
$M_\pm$ satisfying $\sigma(M_+) =~M_-$. Typical examples relevant in our 
context are: 
\begin{itemize}
\item[\rm(a)] $\sigma(x) = (-x_0, x_1, \ldots, x_n)$ on 
$\R^{n+1}$, where $M_+ = \{ x \in \R^{n+1} \: x_0 > 0\}$ is an open half space. 
\item[\rm(b)] $\sigma(x) = (-x_0, x_1, \ldots, x_n)$ on the sphere 
$\bS^n = \{ x \in \R^{n+1} \: \|x\| = 1\}\subeq 
\R^{n+1}$, where $\bS^n_+ = \{ x \in \bS^n \: x_0 > 0\}$ is an open half sphere.  
\item[\rm(c)] $\sigma(x) = (x_0, x_1, \ldots, x_{n-1}, -x_n)$ on $\bH^n  := \{ (x_0,\bx) \in \R^{n+1} \: x_0 > 0, x_0^2 - \bx^2 = 1\}$ 
(hyperbolic space), where $\bH^n_+ = \{ x \in \bH^n \: x_n > 0\}$. 
\end{itemize}

Let $\Delta$ be the Laplacian of $M$, considered as a 
negative selfadjoint operator on $L^2(M,\mu)$, where $\mu$ is the volume measure. 
For any $m>0$, we obtain a bounded positive symmetric operator 
$C_{m}=(m^2-\Delta)^{-1}$ on $L^2(M,\mu)$. 
It defines an inner product on $L^2(M,\mu)$  by
\[\ip{\varphi}{\psi}_{-1}=\ip{\varphi}{C_m \psi}_{L^2} 
=\int_M \overline{\varphi(m)} (C_{m}\psi) (m)\, d\mu (m).\]
The corresponding completion is the Sobolev space $\cH^{-1}(M)$ 
and $\sigma$ induces a unitary involution $\sigma_*(f) := f \circ \sigma$ 
on this space. It is shown in \cite{AFG86,JR08,An13,Di04} 
that the triple $(\cE,\cE_+,\theta) := (\cH^{-1}(M),\cH^{-1}(M_+),\sigma_*)$ 
is a reflection positive Hilbert space. 
In \cite{Di04}, the space $\widehat\cE$ is even identified 
with the subspace $\cH^{-1}_{M^\sigma} \subeq \cH^{-1}(M_+)$ 
consisting of all elements whose support, as distributions on $M$,  
is contained in~$M^\sigma$.

In this paper we study the case 
$M = \bS^n$, its isometry group $G := \OO_n(\R)$, 
and the representations of the $c$-dual group 
$G^c := \OO_{1,n}(\R)^\uparrow$ (the orthochronous Lorentz group) 
on the Hilbert spaces $\hat\cE$ corresponding 
to all values $m  > 0$. 
In particular, we shall see that 
the above construction provides a euclidean realization 
of all irreducible spherical unitary representations of~$G^c$. In addition, it leads to very natural realizations 
in spaces of holomorphic functions on a complex manifold $\Xi$ 
containing $\bS^n_+$ and $\bH^n$ as totally real submanifolds. 
Parts of our results have already been announced in~\cite{NO18}.\\ 

To obtain these results, we proceed as follows.  In Subsection~\ref{se:ReLap} we describe how the reflection 
positivity of $(\cH^{-1}(M),\cH^{-1}(M_+),\sigma_*)$ 
leads to a positive definite analytic kernel function 
$\Psi_m$ on the open subset $M_+  \subeq M$. This is best understood 
by first interpreting $\cH^{-1}(M)$ as a Hilbert space of distributions 
on $M$, defined by a positive definite distribution kernel 
$\Phi_m$ on $M \times M$. 
This kernel is analytic on the complement of the diagonal 
$\Delta_M \subeq M \times M$ because it satisfies on this domain 
the elliptic differential equation 
$\Delta \Phi_m =  m^2\Phi_m$ in both variables. As a consequence, 
the kernel $\Psi_m(x,y) := \Phi_m(x,\sigma(y))$ is analytic 
on $M_+ \times M_+$ and positive definite by reflection positivity 
(Corollary~\ref{cor:2.9}, Lemma~\ref{le:ctm}). 
We also make an effort to translate between the two different 
approaches to the reflection positivity result by 
Jaffe and Ritter \cite{JR08,An13} and Dimock \cite{Di04}. 
In Subsection~\ref{se:SymSp}, all this is specialized to 
the situation where $M \cong G/K$ is a Riemannian symmetric space. 
In this context the kernel $\Phi_m$ is represented by a 
$K$-invariant analytic eigenfunction $\phi_m$ of $\Delta$ on 
the complement of the base point (Theorem~\ref{the:ctmSp}). 
Eventually, we recall in Subsection~\ref{se:spSym} 
the symmetric space structures on the sphere 
$\bS^n \cong \OO_{n+1}(\R)/\OO_n(\R) = G/K$ and the hyperboloid 
$\bH^n \cong \OO_{1,n}(\R)^\uparrow/\OO_n(\R) = G^c/K$. 

To establish the connection between sphere and hyperbolic space 
in Section~\ref{sec:3}, we first note that the complex bilinear form on $\C^{n+1}$ 
restricts on the subspace $V:=\R e_0+i\R^n$ 
to the Lorentzian form $[(u_0,i\bu), (v_0, i\bv)]_V := u_0 v_0 - \bu\bv$. 
Accordingly, we obtain a natural realization of 
$G^c = \OO_{1,n}(\R) ^\uparrow$ in $\GL(V) \cap \OO_{n+1}(\C)$. 
Now the complex 
submanifold $\Xi :=G^c .\bS^n_+$ of the complex sphere $\bS^n_\C$ 
contains the half sphere $\bS^n_+$ and the hyperbolic space 
\[\wH =\{u\in V\: u_0 > 0, \lf{u}{u}=1\}\cong \OO_{1,n}(\R)^\uparrow/\OO_n(\R )\]
as totally real submanifolds, so that we may 
translate between $\bS^n$ and $\bH^n$ by analytic continuation. 
In the literature on representations of semisimple Lie groups, 
the manifold $\Xi$ is called the {\it crown of hyperbolic space} 
and it plays the role of a natural ``complexification'' of 
the Riemannian symmetric space $G^c/K$ (cf.\ \cite{AG90,KS04,KS04,KO08} 
and Theorem~\ref{thm:3.9}).
The boundary of $\Xi$ consists of two $G^c$-orbits. One is 
{\it de Sitter space} 
\[ \dS^n = \Xi \cap i V  = i \{ v \in V \: [v,v]_V =-1\}, \] 
and the other orbit projects onto the homogeneous space 
\[ \bL^n_+ :=\{v = (v_0, i\bv) \in V\: \lf{v}{v} =0, v_0>0\}\] 
of positive light rays in $V$. We also note that, 
for the future cone 
\[ V_+=\{u= (u_0,i\bu) \in V \: u_0 > 0, [u,u] > 0\},\] 
the corresponding tube $T_{V_+} := V_+ + i V$ intersects 
$\bS^n_\C$ precisely in $\Xi$. 

In Section~\ref{sec:4} we obtain the analytic continuation 
of the kernels $\Psi_m$ on $\bS^n_+$ to $G^c$-invariant kernels on~$\Xi$. 
We call a kernel $\Psi$ on $\Xi \times \Xi$ sesquiholomorphic 
if it is holomorphic in the first  and antiholomorphic in the second 
argument.  
Our first main result is Theorem~\ref{thm:KernSphere},  asserting 
that $G^c$-invariant sesquiholomorphic kernels on $\Xi$ are of the form 
\[ \Psi(z,w) = \alpha_\Psi([z,\sigma_V w]_V), \qquad 
\alpha_\Psi \: \C \setminus (-\infty,1] \to \C\ \mbox{holomorphic},\] 
where $\sx$ is the complex conjugation on $V_\C$ fixing $V$ pointwise. 
To obtain an analytic continuation of $\Psi_m$, we thus have 
to determine the corresponding function $\alpha_{\Psi_m}$, 
which occupies the remainder of Section~\ref{sec:4}. 
The main results are Theorems~\ref{th:Psi} and~\ref{thm:gamma}, 
expressing $\Psi_m$ by the hypergeometric function $\hgf$: 
\[\Psi_m(z,w)=\frac{\Gamma \left(\frac{n-1}{2}+\lambda\right)\Gamma \left(\frac{n-1}{2}-\lambda\right)}{\Gamma (n)}
\hgf \Big(\frac{n-1}{2}+\lambda,\frac{n-1}{2}-\lambda;\frac{n}{2};\frac{1}{2}\left(1-\lf{z}{\sx w}\right)\Big),\]
where 
 \[\lambda = \lambda_m := 
\begin{cases}
\sqrt{\left(\frac{n-1}{2}\right)^2-m^2} & \text{ for}\  0\leq m\le (n-1)/2  \\
i \sqrt{m^2- \left(\frac{n-1}{2}\right)^2} 
& \text{ for }  m\ge (n-1)/2.
\end{cases}\]  

In Section~\ref{se:RefPosRep} we eventually turn to the 
representation theoretic consequences of these results. 
Theorem~\ref{thm:irrSpRep} provides the key information by 
showing that the irreducible positive definite spherical functions 
$(\phi_m)_{m > 0}$ on $\bH^n_V$ are positive multiples of the functions 
$\Psi_m(\cdot, e_0)$. In particular, they  extend to 
sesquiholomorphic kernels 
\[ \Phi_m^c(z,w) := \frac{\Psi_m(z,w)}{\Psi_m(e_0,e_0)}, \quad m > 0, \qquad 
\Phi_0^c(z,w) := 1, \] 
on $\Xi \times \Xi$. This entails  that the 
representations $(U^c_m, \hat\cE)$ of $G^c$ that we obtain 
from the representations of $G = \OO_{n+1}(\R)$ 
on the reflection positive Hilbert spaces 
$(\cH^{-1}(M),\cH^{-1}(M_+),\sigma_*)$ are precisely 
the irreducible spherical representations. 
It also provides a natural realization of these representations 
in reproducing kernel Hilbert spaces $\cH_{\Phi^c_m} \subeq \cO(\Xi)$ 
of holomorphic functions on $\Xi$ by 
$\pi_m(g)f = g_* f$ (Corollary~\ref{cor:IntRep}). 
As the kernels $(\Phi^c_m)_{m \geq 0}$ 
are the extreme points in the convex set 
of sesquiholomorphic positive definite $G^c$-invariant
kernels $\Psi$ on $\Xi\times \Xi$  normalized by $\Psi(e_0,e_0)=1$ 
(Corollary \ref{cor:IntRep}), for all such kernels there exists a 
probability measure $\mu$ on $[0,\infty)$ with 
\begin{equation}
  \label{eq:intrepx}
 \Psi = \int_0^\infty \Phi^c_m\, d\mu(m).
\end{equation}
We apply this in Section \ref{se:perspectives} to two natural classes of examples.

As spherical representations of $G^c$ are typically realized in 
functions on the sphere $\bS^{n-1} \cong \bL_n^+/\R^\times_+$, 
we show in Theorem~\ref{thm:Pm} how this leads to an integral representation  
of the kernels $\Phi^c_m$ in terms of ``plane wave kernels'': 
\begin{equation}
  \label{eq:intrep1}
\Phi_m^c(z,w)=\int_{\bS^{n-1}} \lf{\sx (w)}{(1,u)}^{\lambda-\frac{n-1}{2}}\lf{z}{(1,u)}^{-\lambda - \frac{n-1}{2}}d\mu_{\bS^{n-1}}(u), 
\end{equation}
where $\mu_{\bS^{n-1}}$ is the $\OO_n(\R)$-invariant probability measure on 
$\bS^{n-1}$. 
This in turn  leads to a Poisson transform from the realization 
on $\bS^{n-1}$ to holomorphic functions on $\Xi$. 
Section~\ref{se:RefPosRep} is rounded off by 
Subsection~\ref{subsec:5.3} with a brief discussion 
of the relations between our kernels with 
canonical kernels on hyperbolic spaces (cf.~\cite{vDH97}). 
These kernels also extend analytically to a neighborhood of 
$\bH^n_V$ in $\Xi$, but not to all of $\Xi$. 

We conclude this paper with Section~\ref{se:perspectives}, 
where we discuss various aspects of our results 
that have not been pursued in this paper. 
In Section \ref{sec:lieball} we show that $\Xi$ is 
holomorphically equivalent to the Lie ball, the bounded 
symmetric domain $\SO_{2,n}(\R)/\rS(\OO_2(\R)\times \OO_n(\R))$ 
associated to the Lie group $\SO_{2,n}(\R)$. 
In particular, the action of $G^c$ on $\Xi$ extends to 
a transtive action of $\SO_{2,n}(\R)$. 
This observation can already be found 
in \cite[Table III, p.229]{KS05}. 
It was used in \cite{GKO03} to construct $\SO_{1,n}(\R)_0$-invariant 
distributions on de Sitter space 
$\dS^n$, realized as a $G^c$-orbit in $\partial \Xi$. 
From this perspective, the $\SO_{2,n}(\R)$-invariant positive 
definite kernels on $\Xi$ are of particular interest, 
and results on their branching behavior on $G^c$ 
are briefly described in Thorem~\ref{thm:ExMuNu} in terms of the 
integral decomposition in the sense of 
\eqref{eq:intrepx}. 

In  Subsection \ref{se:boundaryVal} we show that the 
$G^c$-representations in the Hilbert subspaces 
$\cH_{\Phi^c_m} \subeq \cO(\Xi)$ have natural boundary value maps 
into $G^c$-invariant Hilbert spaces of distributions on $\dS^n 
\subeq \partial \Xi$. 
Subsection~\ref{subsec:6.3} briefly discusses analogs 
of our results concerning spheres $\bS^n$ for $\R^{n}$ and $\bH^n$, 
endowed with their natural dissecting involutions.  
We end in Subsection~\ref{subsec:6.4} by showing that the 
spherical representations $(\pi_m, \cH_m)_{m \geq 0}$ 
of $G^c$ all extend naturally to antiunitary representations 
of the full Lorentz group~$\OO_{1,n}(\R)$ in the sense of \cite{NO17}. \\

\nin {\bf Background:} 
In \cite{NO19}, we classify all irreducible symmetric spaces with
dissecting involution. It turns out that they are quadrics 
\[ Q := \{ x \in \R^{p+q} \: \beta_{p,q}(x,x) = 1\},
\quad \mbox{ where } \quad 
\beta_{p,q}(x,y) = \sum_{j = 1}^p x_j y_j - \sum_{j = p+1}^{p+q}  x_j y_j\] 
and the dissecting involution is given either by 
\[ \sigma (x_1,\ldots ,x_n)=(-x_1,x_2,\ldots ,x_n) \quad \mbox{ or by } \quad 
\sigma (x_1,\ldots ,x_n)=(x_1,\ldots , x_{n-1},-x_n).\] 
In particular, the only irreducible $n$-dimensional semisimple Riemannian 
symmetric spaces with dissecting involutions 
are the sphere $\bS^n$ ($q = 0$) and the hyperboloid $\bH^n$ 
($p = 1$).\\

\nin {\bf Connections with physics:} 
The origins of reflection positivity lie 
in the construction of euclidean quantum field theories 
by Osterwalder and Schrader \cite{OS73,OS75} (see \cite{Ja08} for a historical discussion). 
The classical example is 
$M = \R^{n+1}$ on which $\sigma(x_0, \bx) = (-x_0,\bx)$ 
is interpreted as a time reflection 
and the passage from $\cE $ to $\widehat \cE$ corresponds to the passage 
from euclidean quantum field theories to relativistic ones. 
In this process the euclidean inner product on $\R^{n+1}$ changes to
the Lorentzian form $[x,y]=x_0y_0 -\bx \by$.

Since then various constructions of reflection positive spaces 
and their representaton theoretic context have been 
studied. We refer to \cite{Ja08}, \cite{JO98, JO00} for surveys 
and to the recent monograph \cite{NO18} for more details.

Our work is closely related to the series of articles by J.~Bros 
and his coauthors \cite{BM96,BV97,BEM02b,BM04}, although our 
perspective is different. In these papers the focus is mostly on de Sitter space 
$\dS^n$ and on analytic extensions from there to $\Xi$, whereas 
we focus on the passage from the sphere 
$\bS^n$ to $\bH^n_V \subeq \Xi$. 

We now comment briefly on the intersection points. 
In \cite{BM96} $X_n = \dS^n$ is $n$-dimensional de Sitter space 
and $\cT_\pm = G^c.\bS^n_\pm$ are called Lorentz tuboids; clearly 
$\cT_+ = \Xi$. Instead of sesquiholomorphic 
kernels on $\Xi$, the authors study holomorphic kernels on $\cT_+ \times \cT_-$. 
{\it Perikernels} are distributional solutions of the Klein--Gordon equation 
$(m^2 - \square)\Psi = 0$ on de Sitter space which extend holomorphically 
to $\cT_\pm$, hence correspond to our kernels~$\Psi_m$ 
\cite[\S 4.2]{BV96}. The K\"allen--Lehmann representation 
for generalized free fields \cite[Prop.~3.3]{BV96} is a variant 
of our integral representation 
\eqref{eq:intrepx} which applies to all $G^c$-invariant positive definite 
sesquiholomorphic kernels on~$\Xi$. It is formulated in terms 
of distributional boundary values of the $\Psi_m$ 
and requires certain growth conditions on the kernels. 
The integral representation in terms of {\it de Sitter plane waves} 
in \cite[\S\S 4.1]{BV96} is closely related to our 
formula \eqref{eq:intrep1}. In particular 
\cite[Thms.~4.3, 4.4]{BV96} relate to 
irreducible spherical representation of the de Sitter group $\SO_{1,n}(\R)$ 
and identify the corresponding Casimir eigenvalues, 
where  $\rho$ is called the {\it geometric mass}. 
These results are used in \cite{BEM02b} in the context of 
quantum field theory on de Sitter space $\dS^n$. 
Propositions~1,2,3 in \cite[\S 2.1]{BM04} appear also in our \S 3.1. 
In particular, Proposition 3 determines the set $\C_\Xi$.
Techniques based on \cite{Di04} and reflection positivity 
concerning the passage from $\bS^2$ to $\dS^2$ have recently been used 
to construct interacting quantum fields on $\dS^2$ in \cite{BJM16}. 

\tableofcontents

\subsection*{Notation}\label{se:Notat}
In this section we collect notation that will be used throughout the article.
\medskip

\nin{\bf Euclidean and Minkowski space:} 
The standard basis for the euclidean space $E := \R^{n+1}$ and $\C^{n+1}$ is denoted by $e_0,e_1,\ldots ,e_n$. Accordingly, we use the notation
\[ z = (z_0,z_1,\ldots ,z_n)= (z_0,\bz) \quad \mbox{ with } \quad 
z_j\in\C, \bz=(z_1,\ldots ,z_n)\in \C^n.\]  
Denote by $z w= \sum_{j=0}^{n} z_jw_j$ the standard $\C$-bilinear form
on $\C^{n+1}$ and write $z^2=zz$.  
For $j =0,\ldots, n$, we 
write $r_j(z) = z - 2 z_j e_j$ for the orthogonal reflections in $e_j^\bot$. 
Half-sphere and complex sphere are denoted by 
\[  \bS^n_\pm :=\{x\in \bS^n\: \pm x_0>0\}\subset \bS^n:=\{x\in \R^{n+1}\: x^2=1\}\subset \bS^n_\C:=\{z\in \C^{n+1}\: 
z^2 =1\}.\]
The Lorentzian bilinear form on $\R^{n+1}$ is denoted 
$[z,w]=z_0w_0 -\bz\bw$. We write 
$\R^{1,n} = (\R^{n+1},[\cdot,\cdot])$ 
for the $(n+1)$-dimensional Minkowski space 
and 
\[ \H^n:=\{x\in \R^{1,n}\: [x, x]=1, x_0 > 0\} \] 
for the hyperboloid model of $n$-dimensional hyperbolic space.

As we shall see below, it is convenient to realize Minkowski space 
as the subspace 
\[ V:=\iota \R^{n+1}=\R e_0\oplus i\R^n \subeq \C^n \quad \mbox{ for } \quad 
\iota(x_0,\bx) := (x_0,i\bx).\]   
Then 
\[ [x,y] = x_0y_0 -\bx  \by = (\iota x)(  \iota y) =: 
[\iota(x),\iota(y)]_V  \quad \mbox{ for }\quad x,y \in \R^{n+1}. \] 
On $\C^{n+1}$ we consider the conjugations 
\[ \sr(z_0, \ldots, z_n) := (\oline{z_0}, \ldots, \oline{z_n}) 
\quad \mbox{ and } \quad 
 \sx(z_0, \ldots, z_n) := (\oline{z_0}, -\oline{z_1}, \ldots, -\oline{z_n}) \] 
with respect to the real subspaces $E = \R^{n+1}$ and $V$, respectively. 
These  conjugations commute and the holomorphic involution $\sr\sx$ is $-r_0$. \\

\nin{\bf Groups:} For the matrix 
Lie groups that will be used in this article we use the notation
\begin{itemize}
\item $\OO_{n+1}(\C)=\{g\in \GL_{n+1}(\C )\: (\forall z,w\in\C^{n+1})\, (gz)(gw)=z w\}$.
\item
$\rO_{n+1} (\R)=\GL_{n+1}(\R)\cap \rO_{n+1}(\C)$.
\item $\OO_{1,n}(\R ) =\{g\in \GL_{n+1}(\R )\:  (\forall x,y\in\R^{n+1})\, [gx,gy]=[x,y]\}$.  
\item $\Oon :=\{g\in\OO_{1,n}(\R)\: g_{00} > 0\}.$ 
\end{itemize}
The natural action of $G:=\Oo$ on $\R^{n+1}$ 
defines a transitive action of $G$ on $\bS^n$. The stabilizer of 
$e_0$ is $K := G_{e_0} \cong \OO_n(\R)$, 
so that we obtain a $G$-equivariant diffeomorphism 
$G/K \to \bS^n, gK \mapsto g.e_0$, by the orbit map.  
Similarly, the group $\Oon$ acts transitively on $\H^n = \Oon.e_0\simeq  \Oon / K$.

The group 
\[  G^c:=\iota \OO_{1,n}(\R)^\uparrow \iota \subeq \GL(V)\]
preserves the Lorentzian product $[\cdot,\cdot]_V$. 
It also contains $K = G_{e_0} = G^c_{e_0}$.  
The involution 
$\sx$ commutes with $G^c$,  and 
$\sr g \sr = r_0 g r_0 = \tau(g)$ 
is the involution on $G^c$ whose fixed point group is 
$K = G^c_{e_0}$. With 
\[  G_\C  :=\OO_{n+1}(\C) \quad \mbox{ and }\quad K_\C = G_{\C,e_0} \cong 
\OO_n(\C), \] 
we then have $\bS^n_\C = G_\C.e_0 \cong G_\C/K_\C.$ 
 
We shall also consider the following sets: 
\begin{itemize}
\item $V_+:=\{v\in V\: \lf{v}{v}>0, v_0>0\}$ 
(open upper light cone),
\item  $\wH:= \iota \H^n=\bS^n_\C \cap V_+ = G^c.e_0 \cong G^c/K$ 
(hyperbolic space), 
\item $\bS^n_{+,\C} := \{ z \in \bS^n_\C \: \Re z_0 > 0\}$,  
\item $\Tu:= iV+V_+$ (future tube), and 
\item $\Xi:=G^c.\bS^n_+\subset \bS^n_{+,\C}$, called 
the \textit{crown} of~$\wH$. 
\end{itemize}
Note that $V$, $V_+$, $T_{V_+}$ and $\bS^n_{\C}$ are invariant under 
the action of $G^c$.  \\
 
\nin {\bf Distributions:} 
If $M$ is a manifold, then $C^\infty (M)$ denotes 
the space of smooth complex valued functions on $M$ and
$C^\infty_c(M)$ the subspace of compactly supported smooth functions. 
This space carries the locally convex topology for which it 
is the direct limit of the closed subspaces 
$C^\infty_C(M)$ of smooth functions supported in the compact subset $C\subeq M$, 
on which the topology is that of 
uniform convergence of all derivatives on compact subsets
 of chart neighborhoods. This turns $C^\infty_c(M)$ into a complete 
locally convex space. Its conjugate linear dual, i.e., the space of 
continuous antilinear functionals,  is denoted by $C^{-\infty} (M)$.
This is the space of \textit{distributions} on $M$. 
If $M$ is a Riemannian manifold, then the volume measure $\mu = \mu_M$ defines a locally finite measure 
on $M$. This  leads to 
a linear injection  $L^1_{\text{loc}}(M)\hookrightarrow C^{-\infty}(M)$ given by
\[ \Psi_f(\eta )=\int_M \overline{\eta (x)} f(x)\, d\mu (x), \quad 
f\in L^1_{\text{loc}}(M), \eta\in C_c^\infty (M).\]

If $\theta : M\to M$ is a diffeomorphism then $\theta_* :C^\infty_c (M)\to C^\infty_c(M)$ is defined by
$\theta_* \eta =\eta \circ \theta^{-1}$. One defines $\theta_* $ in a 
similar way on other function spaces and
spaces of distributions. If $\theta$ is an isometry, 
then $\mu$ is $\theta_*$ invariant, i.e., $\int_M \theta_* \eta\, d\mu
=\int_M \eta\, d\mu$ for all $\eta\in C_c^\infty (M)$.

\section{Reflection positivity related to the resolvent of the Laplacian} 
\mlabel{sec:2} 
\noindent
In this section we recall the definition of a reflection 
positive Hilbert space. 
We then introduce a construction of Hilbert 
spaces based on resolvents 
of the Laplace operator   \cite{AFG86,An13,Di04,JR08,JR07a,JR07b}.  
We then specialize the discussion to symmetric spaces and the sphere. We follow mostly the presentation
of \cite{Di04} which provides a Markov realization of $\hat\cE$ 
 and refer to \cite{NO18} for more details 
and background on reflection positivity. 

\subsection{Reflection positivity and the resolvent of the Laplacian}\label{se:ReLap}

 Most 
of the material in this subsection can be found in our
previous articles \cite{NO14,JO98,JO00} and the  monograph~\cite{NO18}. 

\begin{defn}(Reflection positive Hilbert space) 
\mlabel{def:x.1} A reflection positive Hilbert space is a
triple $(\cE,\cE_+,\theta)$, where $\cE$ is a Hilbert space, $\cE_+$ is
a closed subspace of $\cE$ and $\theta \in \rU (\cE)$ is an involution for which the 
hermitian form $\ip{\xi }{\eta}_\theta := \ip{\theta \xi}{\eta}$ is positive semi-definite 
on $\cE_+$. 
\end{defn}

{}Let $(\cE,\cE_+,\theta)$ be a reflection positive Hilbert space. 
We write 
\[\cN 
:= \{ \eta  \in \cE_+ \: \|\eta \|_\theta  = 0\} 
= \{ \eta  \in \cE_+ \: (\forall \zeta \in \cE_+)\ \ip{\zeta }{\eta }_\theta = 0\}
= \cE_+ \cap \theta(\cE_+)^\bot \] 
and $q$ or $\widehat{\,\,}$ for the quotient map $ \cE_+ \to \cE_+/\cN, \eta \mapsto \hat\eta=q(\eta)$. 
Denote by $\hE$  the Hilbert space completion of $\cE_+/\cN$ with respect to 
the norm $\|\he\|_{\theta}:= \sqrt{\la \theta \eta , \eta \ra}$. 

The passage from $\cE$ to $\hat\cE$ can be used to 
construct from certain unitary representations 
of a Lie group $G$ on $\cE$ 
a unitary representation of another Lie group $G^c$ 
on $\hE$. We refer to 
\cite{MNO14,NO14,NO15a,NO18,JO98,JO00} for details.

The geometric setup for the main theme of this article is the following.  
Let $M$ be a complete Riemannian manifold and $\Delta$ denote the negative 
Laplace--Beltrami operator on $L^2(M, \mu_M)$, considered as a selfadjoint 
operator. Here the completeness of $M$ is used for the 
essential selfadjointness (\cite[Thm.~2.4]{Str83}) 
and $\mu = \mu_M$ is the Riemannian volume 
measure. Then $\Spec(\Delta)$ is contained in $(-\infty ,0]$. If $M$ is compact then  $\mu (M)<\infty$ and the 
constants are in contained in $L^2(M)$, so that $0\in \Spec(\Delta)$. 

For each $m > 0$, we have a bounded positive operator 
$C_m=C:=(m^2-\Delta)^{-1}$ on $L^2(M)$.

\begin{defn}\label{def:PsoDef} Let $M_+\subset M$ be an open 
submanifold, let $\sigma : M\to M$ be an isometric 
involution, and $D\in C^{-\infty}(M \times M)$. Then 
\begin{itemize}
\item[(i)] $D$ is called {\it positive definite} if
$D(\varphi \otimes \oline\varphi) \geq 0$ for $\varphi\in C^\infty_c(M)$. 
We then write $\cH_D \subeq C^{-\infty}(M)$ for the Hilbert subspace obtained 
from the positive semidefinite form on $C^\infty_c(M)$ defined  by 
\[ \la \varphi, \eta\ra_D := D(\varphi \otimes \oline\eta) 
\quad \mbox{ for } \quad \varphi, \eta \in C^\infty_c(M).\] 
This space embeds naturally into $C^{-\infty}(M)$ in such a way 
that $\xi \in \cH_D$ is mapped to the  the distribution 
$\xi_D := \la \cdot, \xi \ra_D$. 

\item[(ii)] A $\sigma$-invariant distribution $D$ on $M \times M$ 
is said to be  {\it reflection positive with respect to $(M,M_+,\sigma)$} 
if $D$ is positive definite and the distribution 
$D^\sigma :=D\circ (\id ,\sigma )$ on $M_+ \times M_+$ is positive definite. 
We write $\cH_{D,+} \subeq \cH_D$ for the subspace 
generated by $\xi_D$, $\xi \in C^\infty_c(M_+)$. We denote the extension of $\theta_*$ 
 to $\cH_D$ by the same symbol. Then $D$ is reflection positive if and only if 
$(\cH_D,\cH_{D+},\theta_*)$ is reflection positive.
\item[(iii)] If $G$ is a group, $G_+\subset G$ and $\tau :G\to G$ 
is an involution, then
$\varphi : G\to \C$ is {\it positive definite}, resp., 
{\it reflection positive with respect to $G_+$ and $\tau$} if 
the kernel $D(x,y)=\varphi (y^{-1}x)$ 
is positive definite, resp., reflection positive with
respect to $(G,G_+,\tau)$.
\end{itemize}
\end{defn}

\begin{rem} If the distribution $D$ is represented by a continuous 
function $d(x,y)$ with respect to the measure $\mu$ on $M$, i.e., 
\[ D(\varphi \otimes \oline\psi)  
= \int_{M \times M} \oline{\varphi(x)} d(x,y) \psi(y)\, d\mu(x)\, 
d\mu(y) \quad \mbox{ for } \quad 
\varphi,\psi \in C^\infty_c(M),\] 
then the positive definiteness of $D$ is equivalent to the 
positive definiteness of the kernel function $d$, i.e., to 
\[\sum_{i,j=1}^n c_i\overline{c_j} d(x_i,x_j)\ge 0
\quad \mbox{ for } \quad x_1,\ldots x_n, c_1,\ldots ,c_n\in \C.\]  
\end{rem}

We write 
$\mathrm{diag}(M):=\{(x,x)\in M\times M\: x\in M\}$ for the diagonal 
in $M \times M$.

\begin{lem}\label{le:Phim} The bounded operator 
$C_m$ on $L^2(M)$ defines a distribution $\tilde C_m$ on 
$M \times M$ by 
\[\wC (\varphi\otimes \oline\eta )
:=\ip{{\varphi}}{C_m{\eta}}=\int_{M} \oline{\varphi} (x)
(C_m\eta) (x)\, d\mu_M(x) 
\quad \mbox{ for } \quad \varphi, \eta 
\in C^\infty_c(M).\]
Furthermore, the following assertions hold: 
\begin{itemize}
\item[\rm(i)] $\tilde C_m$ is positive definite, in particular 
$\wC (\varphi\otimes \oline\eta) = \oline{\wC (\eta \otimes \oline\varphi)}$ 
for $\varphi, \eta \in C^\infty_c(M)$. 
\item[\rm(ii)] We have 
$(m^2-\Delta )_y \tilde C_m(x,y)=  \delta_M(x,y)$ in sense of distributions on 
$M \times M$, where 
$\delta_M(\phi) = \int_M \phi(x,x)\, d\mu(x)$. 
\item[\rm(iii)] On $M\times M\setminus \diag(M)$, 
the distributional derivatives of $\tilde C_m$ satisfy 
\[0 = (m^2-\Delta)_y \tilde C_m(x,y)= (m^2-\Delta)_x\tilde C_m(x,y)
=(m^2-\Delta)_x(m^2-\Delta)_y\tilde C_m(x,y).\]
\item[\rm(iv)] On the open subset $M\times M\setminus \diag (M)$, the distribution 
$\tilde C_m$ is represented by an analytic function $\Phi_m$. 
\item[\rm(v)] The distribution $\tilde C_m$ and the function 
$\Phi_m$ are invariant under the isometry group $\Isom(M)$. 
\end{itemize}
\end{lem}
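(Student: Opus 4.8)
The plan is to extract (i) and (ii) from the functional calculus of the selfadjoint operator $\Delta$, to obtain (iii) by restricting (ii) to the complement of the diagonal and invoking the symmetry in (i), and then to deduce the analytic regularity (iv) from analytic hypoellipticity of an elliptic operator on $M \times M$; the invariance (v) is then immediate from the naturality of $\Delta$. First I would record that, since $\Delta$ is selfadjoint with $\Spec(\Delta)\subeq(-\infty,0]$, the resolvent $C_m=(m^2-\Delta)^{-1}$ is a bounded, selfadjoint, positive operator. Positivity gives $\wC(\varphi\otimes\oline\varphi)=\la\varphi,C_m\varphi\ra\ge 0$ for $\varphi\in C^\infty_c(M)\subeq L^2(M)$, which is positive definiteness in the sense of Definition~\ref{def:PsoDef}(i), and selfadjointness of $C_m$ yields the hermitian symmetry asserted in (i). For (ii) I would use that every $\eta\in C^\infty_c(M)$ lies in the domain of $\Delta$ and satisfies $C_m(m^2-\Delta)\eta=\eta$; pairing $(m^2-\Delta)_y\wC$ with $\varphi\otimes\oline\eta$ and moving the formally selfadjoint operator $(m^2-\Delta)_y$ onto the test function gives $\la\varphi,C_m(m^2-\Delta)\eta\ra=\la\varphi,\eta\ra$, which is exactly the pairing of the diagonal distribution $\delta_M$ with $\varphi\otimes\oline\eta$ once the sesquilinear conventions are unwound.

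For (iii), the distribution $\delta_M$ is supported on $\diag(M)$, so (ii) already shows $(m^2-\Delta)_y\wC=0$ on $M\times M\setminus\diag(M)$. The hermitian symmetry $\wC(x,y)=\oline{\wC(y,x)}$ from (i) interchanges the roles of the two variables, and because $m^2-\Delta$ is a real operator this transfers the $y$-equation into $(m^2-\Delta)_x\wC=0$ on the same domain. Since $(m^2-\Delta)_x$ and $(m^2-\Delta)_y$ act on independent variables they commute, and applying one to the vanishing of the other yields the remaining identity.

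The substantive step is (iv), the passage from a distributional solution to an \emph{analytic} representative. Here I would view $\wC$ as a distribution on the product Riemannian manifold $M\times M$, whose Laplacian is $\Delta_x+\Delta_y$, and rewrite the identities of (iii) as the single elliptic equation $(2m^2-\Delta_{M\times M})\wC=0$ on $M\times M\setminus\diag(M)$. On a real-analytic Riemannian manifold the operator $\Delta_{M\times M}$ is elliptic with real-analytic coefficients, so $2m^2-\Delta_{M\times M}$ is analytically hypoelliptic (Morrey--Nirenberg), and its distributional solutions are represented by real-analytic functions, producing the analytic $\Phi_m$. I expect this to be the main obstacle, since it forces one to use that $M$ is real-analytic --- which holds in all cases of interest (spheres, hyperboloids, Riemannian symmetric spaces) --- whereas ordinary elliptic regularity would only yield a smooth, not analytic, representative. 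Finally, for (v), each $g\in\Isom(M)$ preserves $\mu$, so $g_*$ is unitary on $L^2(M)$ and commutes with $\Delta$, hence with $C_m$; thus $\wC$ is invariant under the diagonal action $g_*\otimes g_*$, and since an analytic function representing a distribution is unique, this invariance descends to $\Phi_m$.
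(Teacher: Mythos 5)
Your argument is correct and, for parts (i), (ii), (iii) and (v), it is essentially the paper's own proof: positivity (hence selfadjointness) of $C_m$ gives (i); moving $(m^2-\Delta)_y$ onto the test function and using $C_m(m^2-\Delta)\eta=\eta$ for $\eta\in C^\infty_c(M)\subeq\cD(\Delta)$ gives (ii); restriction off the diagonal plus the hermitian symmetry gives (iii); and commutation of isometries with $\Delta$, hence with $C_m$, plus uniqueness of the representing function, gives (v). The genuine divergence is in (iv), and there your route is actually sounder than the paper's. The paper asserts that the fourth-order product $(m^2-\Delta)_x(m^2-\Delta)_y$ is elliptic on $M\times M\setminus\diag(M)$ and invokes \cite[Thm.~8.12]{Ru73}; taken literally this is problematic, since the principal symbol of that product is $|\xi|^2|\eta|^2$, which vanishes on nonzero covectors of the form $(\xi,0)$, so the product is not elliptic in the standard sense (one must either view the two equations of (iii) as an overdetermined elliptic system or pass to the sum). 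You do the latter: adding the two equations gives $(2m^2-\Delta_{M\times M})\tilde C_m=0$ with $\Delta_{M\times M}=\Delta_x+\Delta_y$ the product Laplacian, a genuinely elliptic second-order operator, to which Morrey--Nirenberg analytic hypoellipticity applies. You are also right to flag that analyticity (as opposed to mere smoothness, which ordinary elliptic regularity would give) requires the metric of $M$ to be real-analytic; the paper leaves this hypothesis implicit, though it holds in all cases it actually uses (spheres, hyperboloids, Riemannian symmetric spaces).

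One omission: the lemma also asserts that $\tilde C_m$ \emph{is} a distribution on $M\times M$, i.e., that the separately continuous form $(\varphi,\eta)\mapsto\la\varphi,C_m\eta\ra$ extends continuously to $C^\infty_c(M\times M)$, not just to the algebraic tensor product $C^\infty_c(M)\otimes C^\infty_c(M)$. The paper settles this by the kernel theorem (\cite[Thm.~51.6]{Tr67}) together with a partition of unity reducing to open subsets of $\R^n$; your proposal does not address this point, and it should be added, although it is routine.
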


\begin{proof} That $\tilde C_m$ defines a distribution on $M \times M$ 
follows from the continuity of the bounded operator $C_m$ 
(\cite[Thm.~51.6]{Tr67} and a partition of unity argument to reduce to 
open subsets of $\R^n$). 

\nin (i) follows directly from the positivity of the operator $C_m$. 

\nin (ii) follows from 
\begin{align*}
\tilde C_m(\phi \otimes (m^2 - \Delta)\eta) 
&= \la \phi, C_m (m^2 - \Delta)\oline\eta\ra_{L^2(M)} 
= \la \phi, \oline\eta\ra_{L^2(M)} \\
&= \int_M \oline{\phi(x)} \oline{\eta(x)}\, d\mu_M(x) 
= \delta_M(\phi \otimes \eta).
\end{align*}

\nin (iii) and (iv): The first equality follows immediately from (ii), and the 
second one  from~(i). The third is an immediate consequence of the first two. 
As $(m^2-\Delta)_x(m^2-\Delta)_y$ is an elliptic operator on $M\times M\setminus
\diag(M)$ and annihilates $\tilde C_m$ on this open subset, it follows that 
$\tilde C_m$ is represented on the complement of $\diag(M)$ by an 
analytic function (\cite[Thm.~8.12]{Ru73}). 

\nin (v) Since $\Delta$ commutes with the action of $\Isom(M)$ on $L^2(M)$, 
the operator $C_m$ also commutes with $\Isom(M)$. This implies that the 
corresponding distribution  $\tilde C_m$ 
on $M \times M$ is invariant under $\Isom(M)$. 
\end{proof}

\begin{defn} An isometry $\sigma$ of a connected complete Riemannian 
manifold $M$ is called {\it dissecting} if the complement of the 
fixed point set $M^\sigma$ is not connected. Then $\sigma$ is an involution, 
the complement of $M^\sigma$ has two connected components 
$M_\pm$ with $\sigma(M_\pm) = M_\mp$, and each connected component of $M^\sigma$ 
is of codimension~$1$ (see \cite[Lemma~2.7]{AKLM06}). 
\end{defn}

Let $M$ be a connected complete Riemannian manifold and 
let $\sigma \: M \to M$ be a dissecting involution. For $m > 0$ and  
$r\in\R$ let $\cH^r(M)$ be the completion of $C_c^\infty(M)$ in the norm
\[ \ip{\varphi}{(-\Delta +m^2)^r\varphi}^{1/2}.\] It is easy to see that this 
space does not depend on $m$. 
For $\varphi,\psi\in C_c^\infty (M)$  
we have $|\ip{\varphi}{\psi}_{L^2}|\le \|\varphi\|_r\|\psi \|_{-r}$, 
so that the $L^2$-pairing extends to the duality pairing on 
$\cH^{r}(M) \times \cH^{-r}(M)$, 
realizing $\cH^r(M)$ as the dual of $\cH^{-r}(M)$. We
also note that $(-\Delta +m^2): \cH^1 (M)\to \cH^{-1}(M)$ is unitary. 

For a closed subset $A\subset M$, we let
$\cH^r_{A}(M)=\{\varphi\in\cH^r(M)\: \supp \varphi \subseteq A\}$ and for 
an open subset $\Omega\subseteq M$, we let $\cH_{0 }^r(\Omega )$ be the closed 
subspace of $\cH^r (M)$ generated by $C_c^\infty (\Omega)$. 

The following results can be found in \cite[Lem.~1/2, Cor.~1/2]{Di04} 
but it should be pointed out that
the reflection positivity for $(\cH^{-1}(M),\cH^{-1}_{\oline{M_+}}(M),\theta)$ was also established in~\cite{JR08, An13, AFG86}. This follows from 
Lemma~\ref{lem:2.9} below which builds a bridge to Dimock's context.  

\begin{thm}{\rm (\cite{Di04})} \label{thm:E0}  
Consider the following subspaces 
$\cE:=\cH^{-1}(M)\subset C^{-\infty}(M)$: 
\[ \cE_0=\cH^{-1}_{M^\sigma}(M), \qquad 
\cE_+=\cE_0\oplus (-\Delta +m^2)\cH_{0}^1(M_+)\quad \mbox{  and } \quad 
\cE_-=\theta \cE_+=\cE_0\oplus (-\Delta +m^2)\cH_{0}^1(M_-),\] 
where $\oplus$ stands for orthogonal direct sum.
We note that $\cE_0=\cE_+\cap \cE_-=\cE_+^{\theta}$. 
Then the  following assertions hold:
\begin{itemize}
\item[\rm (i)] $\cE_+=\cH^{-1}_{M^\sigma \cup M_+}(M) = \cH^{-1}_{\oline{M_+}}(M)$.
\item[\rm(ii)] $\cE =\big((-\Delta + m^2)\cH^1_0(M_-)\big)\oplus\cE_+$.
\item[\rm (iii)] (The Markov condition) If $\varphi \in \cE_+$, 
then $P_-\varphi = P_0\varphi$, i.e., $P_-P_+=P_0$. 
\item[\rm (iv)] $\cE_0\not=\{0\}$.
\item[\rm(v)] For $\varphi\in\cE_+$ and $\psi\in \cE_-$ we have 
$\ip{\varphi}{\psi}_{-1}=\la \varphi, (m^2 - \Delta)^{-1} \psi \ra_{H^{-1} \times H^1} = 
\ip{P_0\varphi}{P_0 \psi}_{-1}$.
\end{itemize}
\end{thm}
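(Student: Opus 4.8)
The engine behind all five assertions is one orthogonal decomposition, which I abbreviate $(\ast)$:
\[ \cH^{-1}(M) \;=\; A\cH^1_0(M_+)\;\oplus\; A\cH^1_0(M_-)\;\oplus\;\cE_0, \qquad A := m^2-\Delta. \]
The plan is to prove $(\ast)$ first and then obtain (i)--(v) by Hilbert-space bookkeeping. To establish $(\ast)$ I would exploit two facts from the setup: that $A\:\cH^1(M)\to\cH^{-1}(M)$ is unitary, so that $\ip{A\chi}{A\psi}_{-1}=\ip{\chi}{\psi}_1$, and the resulting identity $\ip{\xi}{A\psi}_{-1}=\ip{\xi}{\psi}_{H^{-1}\times H^1}$ for $\xi\in\cH^{-1}(M)$, $\psi\in\cH^1(M)$. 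Pairwise orthogonality of the three summands is then a disjoint-support computation: for $\varphi\in C_c^\infty(M_+)$ and $\psi\in C_c^\infty(M_-)$ one gets $\ip{A\varphi}{A\psi}_{-1}=\ip{A\varphi}{\psi}_{H^{-1}\times H^1}=0$ because $A\varphi$ is supported in $M_+$ while $\psi$ sits in $M_-$; and for $\xi\in\cE_0=\cH^{-1}_{M^\sigma}(M)$ and $\psi\in C_c^\infty(M_\pm)$ the pairing $\ip{\xi}{A\psi}_{-1}=\ip{\xi}{\psi}_{H^{-1}\times H^1}$ vanishes since $\psi$ is supported off $M^\sigma$. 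Completeness is equally short: if $\xi$ is orthogonal to $A\cH^1_0(M_+)\oplus A\cH^1_0(M_-)$, then $\xi$ annihilates $C_c^\infty(M_+\cup M_-)$, whence $\supp\xi\subseteq M^\sigma$ and $\xi\in\cE_0$; being orthogonal to $\cE_0$ as well forces $\xi=0$. Since $A\cH^1_0(M_\pm)$ are closed (unitary images of closed subspaces) and $\cE_0$ is closed in $\cH^{-1}(M)$, three pairwise orthogonal closed subspaces with trivial common orthogonal complement must fill the space, giving $(\ast)$.

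Granting $(\ast)$, the remaining items are bookkeeping. For (i), the inclusion $\cE_+\subseteq\cH^{-1}_{\oline{M_+}}(M)$ holds because $\cE_0$ and $A\cH^1_0(M_+)$ are supported in $M^\sigma$ and in $\oline{M_+}$, and $M^\sigma\cup M_+=\oline{M_+}$; conversely, writing $\xi\in\cH^{-1}_{\oline{M_+}}(M)$ as $\xi_++\xi_-+\xi_0$ along $(\ast)$, its component $\xi_-=\xi-\xi_+-\xi_0$ is supported in $\oline{M_+}\cap\oline{M_-}=M^\sigma$, so $\xi_-\in\cE_0\cap A\cH^1_0(M_-)=\{0\}$ and $\xi\in\cE_+$. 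Assertion (ii) is the regrouping $\cH^{-1}(M)=A\cH^1_0(M_-)\oplus\bigl(A\cH^1_0(M_+)\oplus\cE_0\bigr)=A\cH^1_0(M_-)\oplus\cE_+$ of $(\ast)$. The note $\cE_0=\cE_+\cap\cE_-$ is read off the orthogonality of the summands, while $\cE_0=\cE_+^\theta$ follows from $\cE_+^\theta\subseteq\cE_+\cap\theta\cE_+=\cE_+\cap\cE_-=\cE_0$ together with $\theta|_{\cE_0}=\id$ (justified below). For (v), expanding $\varphi=\varphi_++\varphi_0\in\cE_+$ and $\psi=\psi_-+\psi_0\in\cE_-$ along $(\ast)$ kills every cross term by orthogonality and leaves $\ip{\varphi}{\psi}_{-1}=\ip{\varphi_0}{\psi_0}_{-1}=\ip{P_0\varphi}{P_0\psi}_{-1}$, the middle equality of (v) being the defining relation $\ip{\varphi}{\psi}_{-1}=\ip{\varphi}{C_m\psi}_{H^{-1}\times H^1}$.

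The Markov condition (iii) is the most transparent consequence of $(\ast)$: with respect to the threefold splitting, $P_+$ projects onto $A\cH^1_0(M_+)\oplus\cE_0$, $P_-$ onto $A\cH^1_0(M_-)\oplus\cE_0$, and $P_0$ onto $\cE_0$. Hence $P_+\varphi$ has vanishing $A\cH^1_0(M_-)$-part, so applying $P_-$ discards its $A\cH^1_0(M_+)$-part and keeps only the $\cE_0$-part, i.e.\ $P_-P_+\varphi=P_0\varphi$.

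What is not formal, and what I expect to be the main obstacle, is the nonvanishing (iv), the sole point demanding genuine analytic input rather than orthogonality. The plan is to produce an explicit nonzero element of $\cE_0$: choosing $\rho\in C_c^\infty(M^\sigma)$ nonzero, form the single-layer distribution $u_\rho(\eta)=\int_{M^\sigma}\oline{\rho}\,\eta\,d\mu_{M^\sigma}$. In a chart flattening the hypersurface $M^\sigma$, a Fourier-side estimate shows that the surface measure of a codimension-one submanifold lies in $H^{-s}_{\mathrm{loc}}$ exactly for $s>\tfrac12$, hence in $\cH^{-1}(M)$; as $\supp u_\rho$ is a nonempty subset of $M^\sigma$, this gives $\cE_0\neq\{0\}$. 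The same local analysis shows conversely that every element of $\cE_0$ is a single layer, since even one normal derivative of the surface measure already leaves $H^{-1}$; and single layers are $\sigma_*$-invariant because $\sigma$ fixes $M^\sigma$ pointwise and preserves its induced volume measure. This yields $\theta|_{\cE_0}=\id$, completing the identity $\cE_0=\cE_+^\theta$ used above.
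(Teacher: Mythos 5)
Your proposal is correct, but there is nothing in the paper to compare it against line by line: Theorem~\ref{thm:E0} is imported from Dimock \cite[Lem.~1/2, Cor.~1/2]{Di04} and stated without proof, so the only available benchmarks are Dimock's argument and the way the paper uses the theorem. Judged against those, your route is exactly the expected one. The engine you call $(\ast)$ --- unitarity of $A=m^2-\Delta\: \cH^1(M)\to \cH^{-1}(M)$ converting $\ip{\xi}{A\psi}_{-1}$ into the duality pairing $\ip{\xi}{\psi}_{H^{-1}\times H^1}$, so that orthogonality to $A\cH^1_0(M_\mp)$ becomes the support condition $\supp\xi\subeq \oline{M_\pm}$ --- is precisely the mechanism behind Dimock's lemmas, and it is the computation the paper itself repeats in step (b) of the proof of Lemma~\ref{lem:2.9} (your disjoint-support pairing appears there verbatim). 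Your bookkeeping deduction of (i), (ii), (iii) and (v) from $(\ast)$ is sound, and you correctly isolate the only two places where genuine analysis enters: assertion (iv), and the identity $\theta|_{\cE_0}=\id$, without which $\cE_0=\cE_+^\theta$ would not follow. Both rest on the single-layer description of $\cH^{-1}$-distributions supported on the hypersurface $M^\sigma$, which is also how \cite{Di04} identifies $\hat\cE$ (the paper alludes to this in the introduction). If you write this up in full, three small points deserve explicit justification, though none is a gap in substance: (a) $\oline{M_\pm}=M_\pm\cup M^\sigma$ is not purely topological --- it uses that a dissecting isometric involution acts locally as a reflection at each fixed point, so both components accumulate on all of $M^\sigma$ (cf.\ \cite{AKLM06}); (b) the passage from ``compactly supported and locally $H^{-1}$ in charts'' to membership in $\cH^{-1}(M)$ requires the realization of $\cH^{-1}(M)$ as the antidual of $\cH^1(M)$ together with a local trace inequality $\|\eta|_{M^\sigma}\|_{L^2(K)}\le C_K\|\eta\|_1$; and (c) $\theta\cE_+=\cE_-$, used implicitly in your argument for $\cE_0=\cE_+^\theta$, needs that $\theta$ commutes with $A$ and exchanges $\cH^1_0(M_+)$ with $\cH^1_0(M_-)$, which holds because $\sigma$ is an isometry.
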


{}From this we immediately get 
(cf.~\cite[\S 2.3]{NO18}):
\begin{thm}{\rm (Reflection Positivity)} \label{thm:JRA}
The triple 
$(\cH^{-1}(M),\cH^{-1}_{M_+}(M),\theta)$  is a reflection
positive Hilbert space with $\cN =(-\Delta +m^2 )\cH^1_0(M_+)$
and $\widehat\cE \simeq \cE_0 = \cH_{M^\sigma}^{-1}(M)$.
\end{thm}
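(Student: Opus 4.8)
The plan is to treat Dimock's structure theorem (Theorem~\ref{thm:E0}) as a black box and reduce the assertion to two soft points: that $\theta$ acts as the identity on $\cE_0$, and some bookkeeping of norms. Throughout I write $\theta = \sigma_*$, $\cE = \cH^{-1}(M)$, $\cE_0 = \cH^{-1}_{M^\sigma}(M)$, and I use the orthogonal decompositions $\cE_\pm = \cE_0 \oplus (-\Delta+m^2)\cH^1_0(M_\pm)$ together with $\cE_+ = \cH^{-1}_{\oline{M_+}}(M)$ (Theorem~\ref{thm:E0}(i)), which matches the space $\cH^{-1}_{M_+}(M)$ in the statement.

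First I would record that $\theta$ restricts to the identity on $\cE_0$. Since $\sigma$ fixes $M^\sigma$ pointwise and is an isometry, $\sigma_*$ preserves the support condition, so $\cE_0$ is $\theta$-invariant; indeed $\theta\cE_0 = \theta(\cE_+\cap\cE_-) = \cE_-\cap\cE_+ = \cE_0$. That $\theta$ acts as the identity is a regularity statement: an element of $\cH^{-1}(M)$ supported on the codimension-one submanifold $M^\sigma$ carries no transverse derivative, since a normal derivative of the surface distribution would lie only in $\cH^{-3/2-\eps}$ and hence outside $\cH^{-1}$. Locally it is therefore a weighted surface distribution on $M^\sigma$, and such a distribution is fixed by the isometry $\sigma$ that fixes $M^\sigma$ pointwise. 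I expect this to be the only genuinely technical point; everything else is a formal consequence of Theorem~\ref{thm:E0}.

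Granting $\theta|_{\cE_0} = \id$, the positivity is immediate. For $\xi \in \cE_+$ set $\xi_0 := P_0\xi \in \cE_0$ and $\xi_1 := \xi - \xi_0 \in (-\Delta+m^2)\cH^1_0(M_+)$. Since $\theta$ commutes with $\Delta$ and $\sigma(M_+) = M_-$, we get $\theta\xi = \xi_0 + \theta\xi_1$ with $\theta\xi_1 \in (-\Delta+m^2)\cH^1_0(M_-) \subeq \cE_0^\bot$. Expanding $\ip{\theta\xi}{\xi}$, the two cross terms $\ip{\xi_0}{\xi_1}$ and $\ip{\theta\xi_1}{\xi_0}$ vanish by orthogonality to $\cE_0$, and $\ip{\theta\xi_1}{\xi_1}$ vanishes by the Markov identity Theorem~\ref{thm:E0}(v) applied to $\xi_1 \in \cE_+$ and $\theta\xi_1 \in \cE_-$ (as $P_0\xi_1 = 0$). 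Hence $\ip{\xi}{\xi}_\theta = \ip{\theta\xi}{\xi} = \ip{\xi_0}{\xi_0}_{-1} = \|P_0\xi\|^2_{-1} \geq 0$, which is the asserted reflection positivity.

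Finally I would read off the remaining two claims from this formula. As $\|\xi\|_\theta^2 = \|P_0\xi\|^2_{-1}$, the null space is $\cN = \{\xi\in\cE_+ : P_0\xi = 0\} = \cE_+\cap\cE_0^\bot = (-\Delta+m^2)\cH^1_0(M_+)$ by the orthogonal decomposition, giving the stated $\cN$. Consequently $P_0$ induces a bijection $\cE_+/\cN \to \cE_0$ that is isometric for the $\theta$-norm, since on $\cE_0$ one has $\ip{\xi_0}{\xi_0}_\theta = \ip{\xi_0}{\xi_0}_{-1}$ by $\theta|_{\cE_0} = \id$. Because $\cE_0$ is a closed subspace of the Hilbert space $\cE$ it is already complete, so no completion is needed and $\hE \simeq \cE_0 = \cH^{-1}_{M^\sigma}(M)$ isometrically.
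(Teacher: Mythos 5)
Your proof is correct and takes essentially the same route as the paper: there Theorem~\ref{thm:JRA} is presented as an immediate consequence of Dimock's Theorem~\ref{thm:E0}, which is precisely the reduction you carry out, with the key identity $\la \theta\xi,\xi\ra_{-1} = \|P_0\xi\|_{-1}^2$ (via the orthogonal decompositions and part (v)) made explicit. The only remark worth adding is that your ``one genuinely technical point,'' namely $\theta|_{\cE_0} = \id$, is already contained in the statement of Theorem~\ref{thm:E0} (the assertion $\cE_0 = \cE_+ \cap \cE_- = \cE_+^{\theta}$), so the transverse-regularity sketch for distributions supported on $M^\sigma$ can be dropped in favor of a citation.
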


\begin{cor} \mlabel{cor:2.9}
The positive definite distribution $\tilde C_m$ on $M \times M$ 
is reflection positive with respect to $(M,M_+,\sigma)$, 
i.e., the analytic kernel 
\[ \Psi_m(x,y) := \Phi_m^\sigma(x,y) := \Phi_m(x,\sigma(y))\] 
on  $M_+ \times M_+$ defined in {\rm Lemma~\ref{le:Phim}} is positive definite. 
\end{cor}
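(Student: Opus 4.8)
The plan is to read off the positive definiteness of $\Psi_m$ from the reflection positivity of the Hilbert space $(\cH^{-1}(M),\cH^{-1}_{M_+}(M),\theta)$ established in Theorem~\ref{thm:JRA}, with $\theta=\sigma_*$. By Lemma~\ref{le:Phim}(i),(v) the distribution $\tilde C_m$ is positive definite and $\Isom(M)$-invariant, hence in particular $\sigma$-invariant; so the one thing left to verify in Definition~\ref{def:PsoDef}(ii) is that the reflected distribution $\tilde C_m^\sigma=\tilde C_m\circ(\id,\sigma)$ is positive definite on $M_+\times M_+$. Since $\sigma(M_+)=M_-$ is disjoint from $M_+$, for $x,y\in M_+$ the point $(x,\sigma(y))$ lies off the diagonal of $M\times M$, so Lemma~\ref{le:Phim}(iv) shows that on $M_+\times M_+$ the distribution $\tilde C_m^\sigma$ is represented by the analytic function $\Psi_m(x,y)=\Phi_m(x,\sigma(y))$. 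By the Remark following Definition~\ref{def:PsoDef}, for a continuous kernel positive definiteness of $\Psi_m$ is equivalent to positive definiteness of the distribution $\tilde C_m^\sigma$, so it suffices to prove the latter.

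The heart of the matter is to identify the distributional pairing with the reflected inner product. For $\varphi\in C^\infty_c(M_+)$, I would evaluate $\tilde C_m^\sigma(\varphi\otimes\overline{\varphi})$ through the kernel representation and substitute $y\mapsto\sigma(y)$; this substitution leaves $\mu$ invariant because $\sigma$ is an isometry, and it replaces $\Phi_m(x,\sigma(y))$ by $\Phi_m(x,y)$ and $\varphi(y)$ by $\varphi(\sigma(y))=(\sigma_*\varphi)(y)$. Since $\sigma_*\varphi$ is supported in $\sigma(M_+)=M_-$ while $\varphi$ is supported in $M_+$, the pairing only ever evaluates $\Phi_m$ off the diagonal, so $\Phi_m$ may legitimately be treated as the integral kernel of $C_m$. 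The double integral thereby collapses to $\ip{\varphi}{C_m\sigma_*\varphi}_{L^2}=\ip{\varphi}{\sigma_*\varphi}_{-1}$, the last equality being the very definition of the $\cH^{-1}$-inner product. Hence $\tilde C_m^\sigma(\varphi\otimes\overline{\varphi})=\ip{\varphi}{\theta\varphi}_{-1}$.

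To conclude, I would use that $\theta$ is a unitary involution, hence self-adjoint, so $\ip{\varphi}{\theta\varphi}_{-1}=\ip{\theta\varphi}{\varphi}_{-1}=\ip{\varphi}{\varphi}_\theta$. As every $\varphi\in C^\infty_c(M_+)$ has support in $M_+\subeq\overline{M_+}$, it lies in $\cE_+=\cH^{-1}_{M_+}(M)$, and Theorem~\ref{thm:JRA} says that the form $\ip{\cdot}{\cdot}_\theta$ is positive semidefinite there. Therefore $\tilde C_m^\sigma(\varphi\otimes\overline{\varphi})=\ip{\varphi}{\varphi}_\theta\ge 0$ for all $\varphi\in C^\infty_c(M_+)$, which is precisely the asserted positive definiteness of $\tilde C_m^\sigma$, and with it the reflection positivity of $\tilde C_m$.

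The step I expect to require the most care is the second one: keeping the antilinearity convention for distributions straight, pinning down the precise meaning of $\tilde C_m\circ(\id,\sigma)$ as a pullback, and --- most importantly --- using that the reflection $\sigma$ pushes the support of $\sigma_*\varphi$ into $M_-$, which is exactly what legitimizes replacing the a priori singular distribution $\tilde C_m$ by its analytic off-diagonal kernel $\Phi_m$ in the computation. Everything after the identity $\tilde C_m^\sigma(\varphi\otimes\overline{\varphi})=\ip{\varphi}{\varphi}_\theta$ is then a direct appeal to the reflection positivity already proved in Theorem~\ref{thm:JRA}.
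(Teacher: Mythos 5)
Your proposal is correct and follows exactly the route the paper intends: the paper states Corollary~\ref{cor:2.9} as an immediate consequence of Theorem~\ref{thm:JRA}, and your computation $\tilde C_m^\sigma(\varphi\otimes\overline{\varphi})=\ip{\varphi}{C_m\sigma_*\varphi}_{L^2}=\ip{\varphi}{\theta\varphi}_{-1}=\ip{\varphi}{\varphi}_\theta\ge 0$ for $\varphi\in C^\infty_c(M_+)\subeq\cE_+$, together with the off-diagonal identification of $\tilde C_m$ with the analytic kernel $\Phi_m$ from Lemma~\ref{le:Phim}(iv), is precisely the argument the paper leaves implicit (cf.\ also Lemma~\ref{le:ctm}(ii) and Lemma~\ref{lem:2.9}).
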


The following lemma is a bridge 
between Dimock's approach and the papers by Jaffe and Ritter: 

\begin{lem} \mlabel{lem:2.9} 
The closed subspace $\cH^{-1}_{\oline{M_+}}(M) \subeq \cH^{-1}(M)$ 
is generated by the subspace $C^\infty_c(M_+)$ of test functions on $M_+$, 
considered as elements of $\cH^{-1}(M)$. 
\end{lem}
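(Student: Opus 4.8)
\emph{Reformulation.} Writing $\cH^{-1}_0(M_+)$ for the closed span of $C^\infty_c(M_+)$ in $\cH^{-1}(M)$, the assertion is the equality $\cH^{-1}_0(M_+)=\cH^{-1}_{\oline{M_+}}(M)$. The inclusion $\cH^{-1}_0(M_+)\subeq \cH^{-1}_{\oline{M_+}}(M)$ is immediate: every $\varphi\in C^\infty_c(M_+)$ is supported in $M_+\subeq\oline{M_+}$, and since $\cH^{-1}(M)$ embeds continuously into $C^{-\infty}(M)$, the condition $\supp\xi\subeq\oline{M_+}$ is preserved under $\cH^{-1}$-limits, so $\cH^{-1}_{\oline{M_+}}(M)$ is closed and contains that span. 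As $\cH^{-1}_0(M_+)$ is a closed subspace of it, it suffices to show that any $\xi\in\cH^{-1}_{\oline{M_+}}(M)$ that is orthogonal to $C^\infty_c(M_+)$ must vanish.

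\emph{Translation into an elliptic problem.} The plan is to pass to $u:=C_m\xi=(m^2-\Delta)^{-1}\xi\in\cH^1(M)$, using that $(m^2-\Delta)\colon\cH^1(M)\to\cH^{-1}(M)$ is unitary. A short computation with the $\cH^{-1}$--$\cH^1$ duality gives, for every $\varphi\in C^\infty_c(M)$, the identity $\la\xi,\varphi\ra_{-1}=\la u,\varphi\ra_{L^2}$. Hence the orthogonality $\xi\perp C^\infty_c(M_+)$ translates into $u|_{M_+}=0$. On the other hand, $\supp\xi\subeq\oline{M_+}$ means $\xi(\psi)=0$ for all $\psi\in C^\infty_c(M_-)$; writing $\xi=(m^2-\Delta)u$ and using Green's identity (with no boundary term, since $\psi$ has compact support in $M$) this reads
\[ \int_{M_-}\big(m^2 u\psi+\nabla u\cdot\nabla\psi\big)\,d\mu=0 \quad\text{for all } \psi\in C^\infty_c(M_-), \]
i.e.\ $u$ is a weak solution of $(m^2-\Delta)u=0$ on the open set $M_-$.

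\emph{Trace and energy estimate.} The fixed-point set $M^\sigma$ of the isometric involution $\sigma$ is a smooth hypersurface, so $M_-$ is an open set with smooth boundary $M^\sigma$ and the order-one trace theory applies. Since $u\in\cH^1(M)$ vanishes on $M_+$, its trace on $M^\sigma$ from the $M_+$-side is zero; as the trace of a global $\cH^1$-function does not jump across the hypersurface, the trace from the $M_-$-side vanishes as well, and together with $\supp u\subeq\oline{M_-}$ this places $u$ in $\cH^1_0(M_-)$, the $\cH^1(M)$-closure of $C^\infty_c(M_-)$. The weak equation then extends by density to test against $u$ itself, and using $u|_{M_+}=0$ to rewrite $\int_{M_-}$ as $\int_M$ we obtain
\[ \int_M\big(m^2|u|^2+|\nabla u|^2\big)\,d\mu=0. \]
Since $m>0$ this forces $u=0$, hence $\xi=(m^2-\Delta)u=0$, which proves the density.

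\emph{Where the difficulty lies.} The duality identities and the easy inclusion are routine; the real content is the boundary analysis of the third paragraph, namely the approximation of the boundary-supported part $\cE_0=\cH^{-1}_{M^\sigma}(M)$ by test functions living in the \emph{open} set $M_+$. This is exactly where two inputs enter: the smoothness of $M^\sigma$ (automatic, as the fixed-point set of an isometry) and the clean order-one characterization $\cH^1_0(M_-)=\{v\in\cH^1(M)\colon \supp v\subeq\oline{M_-},\ v|_{M^\sigma}=0\}$. Alternatively, one could first invoke Theorem~\ref{thm:E0}(i) to reduce the reverse inclusion to $\cE_0\subeq\cH^{-1}_0(M_+)$, observing that $(m^2-\Delta)$ preserves supports and hence maps $\cH^1_0(M_+)$ into $\cH^{-1}_0(M_+)$; but the boundary estimate for $\cE_0$ remains the crux in either route.
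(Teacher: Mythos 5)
Your proof is correct, but it follows a genuinely different route from the paper's. The paper argues softly on top of Dimock's Theorem~\ref{thm:E0}: writing $\cK_\pm$ for the closed spans of $C^\infty_c(M_\pm)$, it shows that $\cK_++\cK_-$ is dense in $\cH^{-1}(M)$ (via $L^2$-density), that $\cK_\pm\subeq\cH^{-1}_{\oline{M_\pm}}(M)$, and that $(m^2-\Delta)\cH^1_0(M_\pm)\subeq\cK_\pm$; it then compares the resulting orthogonal decompositions, using that $\theta$ swaps $\cK_+$ and $\cK_-$ while fixing $\cE_0=\cH^{-1}_{M^\sigma}(M)$, to force $\cE_0\subeq\cK_+$ and hence $\cK_+=\cE_0\oplus(m^2-\Delta)\cH^1_0(M_+)=\cH^{-1}_{\oline{M_+}}(M)$ --- no analysis beyond the quoted theorem is required. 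You instead compute the orthogonal complement of $\cK_+$ inside $\cH^{-1}_{\oline{M_+}}(M)$ directly: transporting by the unitary $C_m$ turns it into the elliptic problem ``$u\in\cH^1(M)$, $u=0$ a.e.\ on $M_+$, $(m^2-\Delta)u=0$ weakly on $M_-$'', which you kill by a trace argument and the energy identity. This bypasses Theorem~\ref{thm:E0}, the reflection symmetry $\theta$, and the density step entirely, and your duality identities are all accurate. What your route costs is exactly the input you flag as the crux: the identification of $\cH^1_0(M_-)$ with the $\cH^1(M)$-functions vanishing a.e.\ on $M_+$. That fact is standard but not free --- for noncompact $M^\sigma$ one should first truncate with Lipschitz cutoffs (available by completeness of $M$) and then use the local collar structure of the reflection $\sigma$ around $M^\sigma$ to translate and mollify; note also that $M^\sigma$ being a smooth hypersurface is not automatic for fixed-point sets of isometries but comes from the dissecting hypothesis. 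Since this approximation statement is essentially the analytic core of the lemmas the paper quotes from \cite{Di04}, your argument in effect re-proves that core rather than reusing it: it is self-contained at the price of redoing the boundary analysis, whereas the paper's argument gets the lemma for free from Theorem~\ref{thm:E0} plus the symmetry. Your closing alternative (reducing via Theorem~\ref{thm:E0}(i) to $\cE_0\subeq\cK_+$) is also valid and is precisely the statement the paper's decomposition comparison delivers.
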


\begin{proof} Let $\cK_\pm \subeq \cH^{-1}(M)$ denote the closed subspace generated 
by $C^\infty_c(M_\pm)$. 

\nin (a) $\cK_+ + \cK_-$ is dense in $\cH^{-1}(M)$ because 
the subspace $C^\infty_c(M_\pm)$ is dense in $L^2(M_\pm)$ and 
$L^2(M)\cong L^2(M_+) \oplus L^2(M_-)$ is dense in $\cH^{-1}(M)$. 

\nin (b) $\cK_\pm \subeq \cH^{-1}_{\oline{M_\pm}}$: 
By Dimock's Theorem~\ref{thm:E0}, 
$\cH^{-1}_{\oline{M_\pm}}$ is the orthogonal complement 
of the closed subspace $(m^2 - \Delta) \cH^1_0(M_\mp)$, hence contains $\cK_\pm$ 
because $\varphi_\pm \in C^\infty(M_\pm)$ implies 
\[ 
\la \varphi_+, (m^2 - \Delta) \varphi_- \ra_{-1} 
= \la (m^2 - \Delta)\varphi_+,  \varphi_- \ra_{-1} 
= \la \varphi_+, \varphi_- \ra_{L^2} = 0.\] 

\nin (c) $(m^2 - \Delta) \cH^1_0(M_\pm)\subeq \cK_\pm$ follows from 
$(m^2 - \Delta) C^\infty_c(M_\pm) \subeq \cK_\pm$ and the density 
of $C^\infty_c(M_\pm)$ in $\cH^1_0(M_\pm)$. 

\nin (d) From (b) and (c) and Dimock's Theorem, we obtain the orthogonal decomposition 
\[ \cK_\pm = (\cK_\pm \cap \cH^{-1}_{M^\sigma}(M)) \oplus (m^2 - \Delta) \cH^1_0(M_\pm).\] 
Further $\theta(\cK_\pm) = \cK_\mp$ and 
$\cE_0 \subeq \Fix(\theta)$ imply 
$\cK_+ \cap \cH^{-1}_{M^\sigma}(M)) = \cK_- \cap \cH^{-1}_{M^\sigma}(M))$. 
Therefore 
\[\cK_+ +  \cK_- 
= (\cK_+ \cap \cH^{-1}_{M^\sigma}(M)) 
\oplus (m^2 - \Delta) \cH^1_0(M_-) 
\oplus (m^2 - \Delta) \cH^1_0(M_+), \] 
so that (a) and Dimock's Theorem show that 
$\cK_+ \cap \cH^{-1}_{M^\sigma}(M) = \cH^{-1}_{M^\sigma}(M)$.
This completes the proof. 
\end{proof}

\begin{lem}\label{le:ctm} Define $\Ctm : C_c^\infty (M_+)\to L^2(M_+)$ 
by 
$\Ctm(f)  :=(\sigma_* C_m)(f)\res_{M_+}= (C_m\sigma_*)(f)\res_{M_+}$.
Then, for all $\eta\in C_c^\infty (M_+)$, we have:
\begin{itemize}
\item[\rm(i)] $(m^2-\Delta)\Ctm \eta = 0$ on $M_+$ and $\Ctm\eta $ is analytic on $M_+$. 
\item[\rm(ii)] $(\Ctm   \eta)(x)=\int_{M_+} \Psi_m(x, y)\eta(y)\, d\mu (y)
=\int_{M_-}\Phi_m (x,y) \eta (\sigma (y))\, d\mu (y)$ 
for $x \in M_+$ and $\Phi_m$ is analytic on $M_+\times M_+$.
\end{itemize}
\end{lem}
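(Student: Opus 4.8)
The plan is to establish each assertion by unwinding the definition of $\Ctm$ and invoking the properties of $\Phi_m$ already recorded in Lemma~\ref{le:Phim} and Corollary~\ref{cor:2.9}. First I would verify that the two expressions defining $\Ctm$ agree: since $C_m$ commutes with the isometry $\sigma$ (Lemma~\ref{le:Phim}(v), or directly because $\Delta$ is $\sigma$-invariant), we have $\sigma_* C_m = C_m \sigma_*$ as operators on $L^2(M)$, so the two restrictions to $M_+$ coincide.

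For part~(i), the key point is that $C_m \sigma_* \eta = (m^2 - \Delta)^{-1}(\sigma_*\eta)$, so that $(m^2 - \Delta)\Ctm\eta = (m^2 - \Delta) C_m(\sigma_*\eta)|_{M_+} = (\sigma_*\eta)|_{M_+}$ in the distributional sense on all of $M$. But $\sigma_*\eta = \eta\circ\sigma^{-1}$ is supported in $\sigma(\supp\eta) \subeq M_-$, which is disjoint from the open set $M_+$; hence the right-hand side vanishes on $M_+$, giving $(m^2-\Delta)\Ctm\eta = 0$ there. Analyticity of $\Ctm\eta$ on $M_+$ then follows from elliptic regularity: $m^2 - \Delta$ is an elliptic operator with analytic coefficients, and any distributional solution of the homogeneous equation is analytic by~\cite[Thm.~8.12]{Ru73} (the same citation used in Lemma~\ref{le:Phim}(iv)).

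For part~(ii), I would compute $\Ctm\eta$ against a test function and use Lemma~\ref{le:Phim} to represent $\wC$ by the analytic kernel $\Phi_m$ off the diagonal. Writing $(C_m\sigma_*\eta)(x) = \int_M \Phi_m(x,y)(\sigma_*\eta)(y)\,d\mu(y)$ and changing variables $y \mapsto \sigma(y)$ (using that $\sigma$ is an isometry, so $\mu$ is $\sigma$-invariant) yields $\int_M \Phi_m(x,\sigma(y))\eta(y)\,d\mu(y)$. Since $\eta$ is supported in $M_+$ this integral is over $M_+$, and by the definition $\Psi_m(x,y) = \Phi_m(x,\sigma(y))$ from Corollary~\ref{cor:2.9} we obtain the first formula $\int_{M_+}\Psi_m(x,y)\eta(y)\,d\mu(y)$; the alternative expression $\int_{M_-}\Phi_m(x,y)\eta(\sigma(y))\,d\mu(y)$ is the same integral before the substitution, with $\sigma(\supp\eta)\subeq M_-$. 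The analyticity of $\Phi_m$ on $M_+\times M_+$ follows because for $x,y\in M_+$ the pair $(x,\sigma(y))$ lies in $M_+\times M_-$, hence off the diagonal $\diag(M)$, where $\Phi_m$ is analytic by Lemma~\ref{le:Phim}(iv).

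The main subtlety I anticipate is a point of rigor rather than of depth: interpreting the integral representation pointwise. The kernel $\Phi_m$ represents $\wC$ only as a distribution, and off the diagonal, so one must justify that for fixed $x\in M_+$ the integral against the smooth compactly supported $\eta$ genuinely reproduces the value $(\Ctm\eta)(x)$ of the analytic function, rather than merely an equality of distributions. This is handled by noting that $\supp\eta$ is a compact subset of $M_+$, so $\{x\}\times\sigma(\supp\eta)$ stays in the open region where $\Phi_m$ is a bona fide analytic function, letting one pass from the distributional pairing to an honest convergent integral; the exchange of the pointwise evaluation with the integral is then routine.
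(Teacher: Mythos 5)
Your proposal is correct and takes essentially the same route as the paper: both arguments rest on Lemma~\ref{le:Phim} (the off-diagonal analytic kernel $\Phi_m$ and the resolvent identity), on the fact that $\sigma(\supp\eta)\subeq M_-$ keeps everything away from $\diag(M)$, and on analytic elliptic regularity, together with the $\sigma$-invariance of $\mu$ for the change of variables in (ii). The only organizational differences are that you obtain (i) directly from $(m^2-\Delta)C_m=\id$ plus the support argument, and you get joint analyticity of $\Psi_m$ on $M_+\times M_+$ by composing Lemma~\ref{le:Phim}(iv) with $(x,y)\mapsto (x,\sigma(y))$, whereas the paper derives the eigenvalue equation for $\Psi_m$ and then invokes the hermitian symmetry of $\Phi_m$ to handle the first variable; your version is, if anything, slightly cleaner and no less rigorous than the paper's terse argument, and you correctly flag (and adequately dispose of) the one real subtlety, namely passing from the distributional kernel identity to the pointwise integral formula.
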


\begin{proof}  As $\sigma (x)\in M_-$ for $x\in M_+$, (i) follows from Lemma \ref{le:Phim} and the fact that $-\Delta + m^2$ is elliptic. It now follows that, for $\eta \in C_c^\infty (M_+)$, we have
\[(-\Delta +m^2)C^\sigma_m \eta (x)=\int_{M_+} (-\Delta +m^2)\Psi_m(x,y)\eta (y)d\mu (y)=0.\]
Hence $(-\Delta +m^2)\Psi_m(x,y)=0$ for all $y \in M_+$. 
Hence $y\mapsto \Psi_m(x,y)$ is
analytic for all $x\in M_+$. As $-\Delta +m^2$ is symmetric, it follows that $\Phi_m (x,y)
=\overline{\Phi_m(y,x)}$. Hence $\Psi_m (\cdot ,x)$ is also analytic 
on~$M_+$. 
\end{proof}

\begin{rem} If $\eta\in C_c^\infty (M_+)$, then $\sigma (\supp (\eta))\subset M_-$ is compact. 
Lemma~\ref{le:ctm}(i) thus shows 
that $\Ctm \eta$ is analytic on the open 
subset $M\setminus \sigma (\supp \eta) \supeq M_- \cup M^\sigma$.
\end{rem}

 \subsection{Symmetric spaces}
\mlabel{se:SymSp}

\begin{definition} \mlabel{def:ss} (a) Let $M$ be a smooth manifold and 
$\mu \: M \times M \to M, (x,y) \mapsto x \cdot y =: s_x(y)$ 
be a smooth map with the following properties: 
each $s_x$ is an involution for which $x$ is an  isolated fixed point and 
\[ s_x(y \cdot z) = s_x(y)\cdot s_x(z) \quad \mbox{ for all } \quad x,y \in M, \quad 
\mbox{ i.e.,} \quad  s_x \in \Aut(M,\mu).\] 
Then we call $(M,\mu)$ a {\it symmetric space}. 

(b) A morphism of symmetric spaces $M$ and $N$ is a smooth
map $\varphi : M\to N$ such that $\varphi (x\cdot y)= \varphi (x)\cdot \varphi (y)$.

(c) The real line $\R$ is a symmetric space with respect to 
$s_x(y) = 2x - y$ and a {\it geodesic in $M$} is a smooth morphism 
$\gamma \: \R \to M$ of symmetric spaces.
 
In \cite{Lo69} it is shown that, for every 
$v \in T_p(M)$, there is a unique geodesic $\gamma_p^v \:\R \to M$ 
with $\gamma_p^v(0) = p$ and  $(\gamma_p^v)'(0) = v$. The corresponding map 
\[ \Exp \: TM \to M, \quad \Exp(v) := \Exp_p(v) := \gamma_p^v(1) \] 
is called the {\it exponential function of $M$}. It satisfies 
$\gamma_p^v(t) = \Exp_p(tv)$ for all $t \in \R$. 
\end{definition}

As shown in \cite{Lo69}, connected symmetric spaces are homogeneous spaces 
of Lie groups and they arise from the following construction that
goes back to \'E. Cartan, see \cite{Hel78} for detailed discussion.
Let $G$ be a Lie group and $\theta : G\to G$ be an involution. Let $K\subset G$ be a
subgroup such that $(G^\theta)_0\subseteq K\subseteq  G^\theta$. 
Then the homogeneous space $M := G/K$ is a symmetric space with respect to 
$s_{gK}(xK) := g\theta(g^{-1}x)K$.  
We write $m_0=eK \in M$ for 
the canonical base point and 
$\theta_M$ for the reflection $s_{m_0}(gK) = \theta(g)K$ 
in the base point~$m_0$. 

Denote by $d\theta : \fg \to \fg$ the derived involution and let 
\[ \fk := \{x\in \fg\: d\theta(x )= x\} \quad \mbox{ and } \quad 
\fp:= \{x\in\fg\: d\theta (x )=-x\}.\] Then $\fg = \fk\oplus \fp$, $\fk$ is the Lie algebra of $K$ and $\fp$ can be identified with 
the tangent space $T_{m_0}(M)$. The isomorphism is given
by the tangent map $T_e(q)\res_{\fp} \: \fp \to T_{m_0}(M)$, where 
$q \: G \to M, g \mapsto gK$ is the quotient map. We note that
if $x\in \fq$ and $\varphi \in C^\infty (M)$ then 
$T_e(q)(x)f=\frac{d}{dt}\big|_{t=0}f(\exp (tx).m_0)$.  

Assume that $K$ is compact. Then there is a $G$-invariant 
Riemannian structure on  $M$ given in the following way:  
Fix a $K$-invariant inner product $\la \cdot,\cdot \ra$ on $\fp\cong T_{m_0}(M)$, 
which is possible because $K$ is compact.  
Write $\ell_g: M\to M$ for the diffeomorphism
$\ell_g(x);=g.x$. We then define a $G$-invariant metric on $T(M)$ by
\[g_{g.m_0}((d\ell_g)_{m_0}(u),(d\ell_g)_{m_0}(v)):=\ip{u}{v}\, .\] 
The so-obtained Riemannian manifold is called a {\it Riemannian 
symmetric space}. 

\begin{thm}\mlabel{the:ctmSp}
Let $M=G/K$ be a Riemannian symmetric space, 
$\sigma : M\to M$ be a dissecting 
isometric involution, and 
$m_0 \in M_+$ such that there exists an 
involutive automorphism $\tau$ of $G$ with $\tau(K) = K$ 
such that 
\[ \sigma (g.x )= \tau (g).\sigma (x)\quad \mbox{ for } \quad g \in G, x \in M\]
holds for an involutive automorphism $\tau$ of $G$. 
 Let 
\[ \varphi_m(x):=\Phi_m (x,m_0), \qquad x \not=m_0 \quad\mbox{ and } \quad 
\psi_m (x):=\ctm (x,m_0) = \varphi_m(\sigma(x)), \qquad x \not= \sigma(m_0). \] 
Then the following assertions hold: 
\begin{itemize}
\item[\rm(1)] $\varphi_m$ is a $K$-invariant analytic function on $M\setminus \{m_0\}$ and satisfies
the differential equation
$\Delta \varphi_m  = m^2\varphi_m$. 
\item[\rm(2)]   $\psi_m$ is
a $K$-invariant analytic function on $M\setminus \{\sigma (m_0)\}$ satisfying the differential equation $\Delta \psi_m =
m^2\psi_m$.
\item[\rm(3)] $(\Ctm \eta)(x) 
=\int_{G} \psi_m(\tau (h)^{-1}.x)\eta (h.m_0) \, dh$ for 
$\eta \in C^\infty_c(M_+)$ and  $x\in M_+$. 
\end{itemize}
\end{thm}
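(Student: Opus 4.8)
The plan is to read off parts (1) and (2) directly from Lemma~\ref{le:Phim}, and to prove (3) by transporting the integral formula of Lemma~\ref{le:ctm}(ii) to the group $G$ along the orbit map $h \mapsto h.m_0$, the crux being a pointwise identity that rewrites $\Psi_m(x,h.m_0)$ in the asserted form. For (1), I would observe that $\varphi_m(x) = \Phi_m(x,m_0)$ is the restriction of the function $\Phi_m$, analytic off $\diag(M)$ by Lemma~\ref{le:Phim}(iv), to the slice with fixed second argument $m_0$; hence $\varphi_m$ is analytic on $M \setminus \{m_0\}$, and setting $y = m_0$ in the relation $(m^2-\Delta)_x \Phi_m(x,y) = 0$ of Lemma~\ref{le:Phim}(iii) gives $\Delta\varphi_m = m^2\varphi_m$. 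Since $M = G/K$ is Riemannian symmetric, $G$ acts by isometries, so $\Phi_m$ is $G$-invariant by Lemma~\ref{le:Phim}(v); as $K = G_{m_0}$ fixes $m_0$, this yields $\varphi_m(k.x) = \Phi_m(k.x,k.m_0) = \Phi_m(x,m_0) = \varphi_m(x)$ for $k \in K$, i.e.\ $K$-invariance.

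For (2), writing $\psi_m(x) = \Psi_m(x,m_0) = \Phi_m(x,\sigma(m_0))$, the same two items of Lemma~\ref{le:Phim}, now with the second argument fixed at $\sigma(m_0)$, show that $\psi_m$ is analytic on $M \setminus \{\sigma(m_0)\}$ and satisfies $\Delta\psi_m = m^2\psi_m$. For $K$-invariance I would use $\psi_m = \varphi_m \circ \sigma$ together with the intertwining relation $\sigma(k.x) = \tau(k).\sigma(x)$: since $\tau(K) = K$ and $\varphi_m$ is $K$-invariant by (1), $\psi_m(k.x) = \varphi_m(\tau(k).\sigma(x)) = \varphi_m(\sigma(x)) = \psi_m(x)$.

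For (3), I start from Lemma~\ref{le:ctm}(ii), which gives $(\Ctm\eta)(x) = \int_{M_+}\Psi_m(x,y)\eta(y)\,d\mu(y)$. As $\eta$ is compactly supported in $M_+$, this integral equals the integral over all of $M$, and I push it forward along the orbit map, using that the invariant Riemannian measure $\mu$ on $M = G/K$ is the image of a suitably normalized Haar measure (with $K$ of unit mass); this turns the expression into $\int_G \Psi_m(x,h.m_0)\eta(h.m_0)\,dh$. It then remains to establish the pointwise identity $\Psi_m(x,h.m_0) = \psi_m(\tau(h)^{-1}.x)$, which I would obtain by the chain $\Psi_m(x,h.m_0) = \Phi_m(x,\sigma(h.m_0)) = \Phi_m(x,\tau(h).\sigma(m_0))$, applying the intertwining relation in the second step, then invoking $G$-invariance of $\Phi_m$ with $g = \tau(h)^{-1}$ to move the group element into the first slot, $\Phi_m(x,\tau(h).\sigma(m_0)) = \Phi_m(\tau(h)^{-1}.x,\sigma(m_0))$, and finally recognizing the latter as $\psi_m(\tau(h)^{-1}.x)$ via $\psi_m(z) = \Phi_m(z,\sigma(m_0))$.

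The step I expect to require the most care is the passage to $G$ in (3): one must check that the integrand $h \mapsto \psi_m(\tau(h)^{-1}.x)\,\eta(h.m_0)$ is right $K$-invariant, so that it genuinely descends to $M = G/K$ and the group integral agrees with the one over $M$. The factor $\eta(h.m_0)$ is right $K$-invariant because $K$ fixes $m_0$; for the other factor, replacing $h$ by $hk$ gives $\psi_m(\tau(k)^{-1}\tau(h)^{-1}.x) = \psi_m(\tau(h)^{-1}.x)$, using $\tau(K) = K$ and the $K$-invariance of $\psi_m$ from (2). Once this consistency and the normalization of the measure are pinned down, the remaining manipulations are the purely algebraic use of $\sigma \circ \ell_g = \ell_{\tau(g)} \circ \sigma$ and the $G$-invariance of $\Phi_m$.
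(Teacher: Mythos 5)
Your proposal is correct and follows essentially the same route as the paper: parts (1) and (2) are read off from Lemma~\ref{le:Phim} (the paper also cites Lemma~\ref{le:ctm} for (2)), and part (3) is the descent of the integral formula of Lemma~\ref{le:ctm}(ii) along the orbit map combined with the identity $\Psi_m(x,h.m_0)=\psi_m(\tau(h)^{-1}.x)$, which the paper leaves implicit. The one point the paper's proof of (3) makes explicit and you treat only implicitly is that the singularity $\sigma(m_0)$ of $\psi_m$ is never met on the support of the integrand (if $\tau(h)^{-1}.x=\sigma(m_0)$ then $x=\sigma(h.m_0)\in M_-$), but this is already covered in your setup, since you start from the integral over $M_+$ where $\Psi_m$ is analytic.
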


\begin{proof} (1) follows from Lemma~\ref{le:Phim}(iv),(v),  
and (2) from Lemma~\ref{le:ctm}.
For (3) we observe that the function $\tilde\eta(h) := \eta (h.m_0)$ 
on $G$ is supported in $\{h\in G\: h.m_0\in M_+\}$. 
Furthermore, the singularity of
$\psi_m$ is in $\sigma(m_0)$, and if $\tau(h)^{-1}.x=\sigma (m_0)$, 
then $x=\sigma (h.m_0)\in M_-$, so that the singularity is not contained in 
the support of $\tilde\eta$. 
\end{proof}

\begin{rem} Theorem~\ref{the:ctmSp} applies in particular 
to the dissecting involution on $\bS^n$ defined by 
the reflection $\sigma = r_0$, the base point $m_0 = e_0$, 
and $\tau(g) = r_0 g r_0$ on $G = \OO_{n+1}(\R)$. 
\end{rem}

\subsection{The sphere and the hyperboloid as symmetric spaces}\label{se:spSym}

Both the sphere $\bS^n\subeq \R^{n+1}$ and the hyperboloid $\bH^n\subeq \R^{1,n}$ 
are Riemannian symmetric spaces. 

The tangent bundle of the sphere is given by $T(\bS^n)=\{(u,v)\in \bS^n\times \R^{n+1}\:
v\perp u\}$ with the $\OO_{n+1}(\R)$ action $g.(u,v)=(gu,gv)$. 
Geodesics and exponential map are given by
\begin{equation}\label{eq:Exp}
\Exp_p(v)= \cos (\|v\|)p + \sin (\|v\|)\frac{v}{\|v\|} 
\quad \mbox{ for } \quad 0 \not= v \in T_p(\bS^n) \cong p^\bot \cong \R^n
\end{equation}
and in particular 
\begin{equation}\label{eq:Exp'}
\Exp_p(tv)= \cos (t)p + \sin (t)v 
\quad \mbox{ for } \quad \|v\| = 1. 
\end{equation}

The exponential function can be dealt with more easily if we use the 
analytic functions $C, S \: \C \to \C$ defined by 
\begin{equation}
  \label{eq:CandS}
 C(z) := \sum_{k = 0}^\infty \frac{(-1)^k}{(2k)!} z^{k} \quad \mbox{ and } \quad 
 S(z) := \sum_{k = 0}^\infty \frac{(-1)^k}{(2k+1)!} z^{k}
\end{equation}
which satisfy 
\[ \cos z = C(z^2) \quad \mbox{ and } \quad \frac{\sin z}{z} = S(z^2) 
\quad \mbox{ for } \quad z \in \C^\times.\]
We thus obtain 
\begin{equation}
 \label{eq:exp-rel}
\Exp_{p}(v) = C(v^2 ) p + S(v^2) v.
\end{equation}

On the tangen bundle $T (\H^n)\simeq \{(u,v)\in \H^n\times \R^{n+1}\: [u,v]=0\}$ 
of $\bH^n$, the Riemannian structure is given by
$g_u((u,v),(u,v))=-[v,v]$. This shows that the Lorentz 
group $\OO_{1,n}(\R)^\uparrow$ acts by isometries on $\bH^n$. 
The stabilizer of $e_0$ is again the group $K$ and 
$\H^n\cong\OO_{1,n}(\R)^\uparrow/K$. 
 
\section{The complex manifold $\Xi$} 
\mlabel{sec:3} 

\noindent
In this section we take a closer look at the subset 
$\Xi = G^c.\bS^n_+$ of the complex sphere $\bS^n_\C$, 
defined in the introduction. 
This domain turns out to be open and thus inherits a complex 
manifold structure. It contains the half sphere 
$\bS^n_+$ and the hyperbolic space $\bH^n_V$ as totally real submanifolds 
$\bS^n_+=\Xi^{\sr}$ and $\wH=\Xi^{\sx}$.  
Thus if $F: \Xi \to \C$ is holomorphic and $F|_{\bS^n_+}=0$ or $F|_{\wH}=0$ then 
$F= 0$. We therefore consider $\Xi$ as a bridge between analytic functions 
on $\bS^n_+$ and $\bH^n_V$, and we shall study this connection in particular 
for functions invariant under $K = \OO_n(\R)$. 
In Subsection~\ref{se:SphCr} we explore some elementary properties 
of $\Xi$ and calculate the set $\C_\Xi$ of all values of the 
complex bilinear form $[\cdot, \cdot]_V$ on $\Xi$. This permits us later 
to obtain analytic continuations of $G^c$-invariant kernels on $\bH^n_V$ 
and from certain kernels on $\bS^n_+$. In Subsection~\ref{se:Boundary} 
we show that the boundary of $\Xi$ in $\bS^n_\C$ consists of 
two $G^c$-orbits: de Sitter space $\dS^n$ and the space $\bL^n_+$, 
the non-zero boundary elements of the positive light cone $V_+$. 
These two boundary orbits are of particular importance for realizations of 
$G^c$-representations on these spaces.

\subsection{The sphere and the crown of the hyperboloid} 
\label{se:SphCr}

The complex sphere $\bS^n_\C$ is a complex symmetric space and the 
reflections are given by the same formula as for the
 sphere. Its exponential function is given by (\ref{eq:exp-rel}). 
For $p = e_0$ and $\|v\|=1$, we obtain in particular 
\[\Exp_{e_0}(it v)=\cosh (t)e_0 + i\sinh (t)v\in \iota \H^n\subset \bS^n_\C,  
\quad \mbox{ where } \quad \iota (x_0,\bx)=(x_0,i\bx).\]
\begin{lem}\label{le:HinSn}
Let $u,v \in V$. Then $z=u+iv \in\bS^n_\C$ if and only if
\[\lf{u}{u} - \lf{v}{v}=1\quad \text{and} \quad \lf{u}{v}= 0. \]
\end{lem}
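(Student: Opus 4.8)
The plan is to unwind the definitions directly: the condition $z=u+iv\in\bS^n_\C$ means $z^2=zz=1$ for the standard $\C$-bilinear form on $\C^{n+1}$, and I must translate this single complex equation into the two stated real conditions on the Lorentzian form $[\cdot,\cdot]_V$. First I would recall that $u,v$ lie in the real subspace $V=\R e_0\oplus i\R^n$, on which the restriction of the $\C$-bilinear form is the Lorentzian form, i.e.\ $uw=[u,w]_V$ for $u,w\in V$. Crucially, $[\cdot,\cdot]_V$ is $\R$-bilinear on $V$, and the $\C$-bilinear form on $\C^{n+1}$ extends it: for $u,v\in V$ we have $(u+iv)(u+iv)=u u + 2i\,uv - v v$, where each product is computed with the $\C$-bilinear form but, since the arguments lie in $V$, equals the corresponding value of $[\cdot,\cdot]_V$.

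The key computation is therefore
\begin{equation*}
z^2 = (u+iv)(u+iv) = \lf{u}{u} - \lf{v}{v} + 2i\,\lf{u}{v}.
\end{equation*}
Here the real part is $\lf{u}{u}-\lf{v}{v}$ and the imaginary part is $2\lf{u}{v}$, and both are genuine real numbers because $u,v\in V$ makes all three Lorentzian pairings real. The equation $z^2=1$ then splits, by comparing real and imaginary parts of a complex number, into exactly
\begin{equation*}
\lf{u}{u}-\lf{v}{v}=1 \quad\text{and}\quad \lf{u}{v}=0,
\end{equation*}
which is the claimed equivalence.

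The only point requiring a moment of care—and the one I would flag as the main (mild) obstacle—is justifying that the three quantities $\lf{u}{u}$, $\lf{v}{v}$, $\lf{u}{v}$ are real, so that the splitting into real and imaginary parts is legitimate. This follows because the Lorentzian form takes real values on the real vector space $V$ (by construction $[x,y]_V=[\iota^{-1}x,\iota^{-1}y]$ is the real Minkowski form), together with the identity $uw=\lf{u}{w}$ for $u,w\in V$ relating the $\C$-bilinear form to $[\cdot,\cdot]_V$. I would state this realness explicitly before separating real and imaginary parts. Beyond that the argument is a one-line algebraic expansion with no further subtleties, and the converse direction is immediate since the two real conditions reassemble into $z^2=1$ by the same computation read backwards.
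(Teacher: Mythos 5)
Your proof is correct and follows essentially the same route as the paper: the paper's proof consists of exactly the computation $z^2=\lf{u}{u}-\lf{v}{v}+2i\lf{u}{v}$ followed by comparing real and imaginary parts. Your additional remark that the three Lorentzian pairings are real (since $[\cdot,\cdot]_V$ restricts the $\C$-bilinear form to the real subspace $V$) is the implicit justification the paper leaves to the reader, so your write-up is simply a more detailed version of the same argument.
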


\begin{proof} A simple calculation shows that
$z^2 =\lf{u}{u}-\lf{v}{v}+2i\lf{u}{v}$ 
and the claim follows.
\end{proof}

\begin{prop}\label{prop:XiTube} The following assertions hold: 
\begin{itemize}
\item[\rm(i)] $T_{V_+} \cap \R^{n+1} = \R^{n+1}_+ := \{
(x_0,\bx)\: x_0 > 0\}$ and 
$\bS_+^n=\Tu \cap \bS^n$. 
\item[\rm(ii)] $\Xi =\Tu \cap \bS^n_{+,\C}=\Tu\cap \bS^n_\C$.
\item[\rm(iii)] We have $\sx \Xi = \sr\Xi =\Xi$ and 
$\Xi^{\sx}= \Xi \cap V = \wH$ and $\Xi^{\sr} = \Xi \cap \R^{n+1}= \bS^n_+.$
\end{itemize}
\end{prop}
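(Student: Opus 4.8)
The plan is to carry out everything through the real-linear decomposition $\C^{n+1} = V \oplus iV$, writing each $z \in \C^{n+1}$ uniquely as $z = u + iv$ with $u,v \in V$ (so $u = \frac12(z + \sx z)$ is the ``$V$-real part''). In this language $z \in \Tu$ means exactly $u \in V_+$, with $v$ free in $V$. Part (i) is then immediate: a real vector $z = (x_0,\bx)$ has $V$-real part $u = x_0 e_0$, so $\lf{u}{u} = x_0^2$ and $u_0 = x_0$, whence $z \in \Tu \Leftrightarrow u \in V_+ \Leftrightarrow x_0 > 0$; this gives $\Tu \cap \R^{n+1} = \R^{n+1}_+$, and intersecting with $\bS^n$ yields $\bS^n_+ = \Tu \cap \bS^n$.

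For (ii) I would first dispose of the easy equality: since $z = u+iv \in \Tu$ forces $\Re z_0 = u_0 > 0$, we have $\Tu \subseteq \{\Re z_0 > 0\}$, so $\Tu \cap \bS^n_\C = \Tu \cap \bS^n_{+,\C}$. Because $\Tu$ and $\bS^n_\C$ are $G^c$-invariant, so is their intersection, and $\bS^n_+ \subseteq \Tu \cap \bS^n_\C$ by (i) gives $\Xi = G^c.\bS^n_+ \subseteq \Tu \cap \bS^n_\C$. The real work is the reverse inclusion. Given $z = u + iv \in \Tu \cap \bS^n_\C$, set $c = \sqrt{\lf{u}{u}} > 0$; since $G^c$ acts transitively on the future hyperboloid $\{w \in V_+ \: \lf{w}{w} = c^2\} = c\,\wH$ (the $c$-scaling of $\wH = G^c.e_0$), I can pick $g \in G^c$ with $g^{-1}u = c e_0$. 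Replacing $z$ by $g^{-1}z$, which again lies in the $G^c$-invariant set $\Tu \cap \bS^n_\C$, I may assume $u = c e_0$. Lemma~\ref{le:HinSn} then gives $0 = \lf{u}{v} = c\,v_0$, so $v_0 = 0$ and $v = (0,i\bv)$; hence $iv = (0,-\bv)$ is real and $z = (c,-\bv) \in \R^{n+1}$ with $z^2 = c^2 + \bv^2 = 1$ and $z_0 = c > 0$, i.e. $z \in \bS^n_+$. Thus $z \in G^c.\bS^n_+ = \Xi$, completing (ii).

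For (iii) I would use the identification $\Xi = \Tu \cap \bS^n_\C$ from (ii). Writing $z = u+iv$, one computes $(\sx z)^2 = (\sr z)^2 = \oline{z^2}$, so both conjugations preserve $\bS^n_\C$; moreover $\sx(u+iv) = u - iv$ keeps $u \in V_+$, so $\sx\Tu = \Tu$. For $\sr$ I would invoke the relation $\sr\sx = -r_0$ from the Notation section: since $\sx$ is an involution this gives $\sr = -r_0\sx$, and as $-r_0 = \diag(1,-1,\ldots,-1)$ lies in $G^c$ it preserves $\Tu$, so $\sr\Tu = -r_0(\sx\Tu) = \Tu$. This yields $\sx\Xi = \sr\Xi = \Xi$. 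Finally, as $\sx$ and $\sr$ fix pointwise exactly $V$ and $\R^{n+1}$, we have $\Xi^{\sx} = \Xi \cap V$ and $\Xi^{\sr} = \Xi \cap \R^{n+1}$; substituting $\Xi = \Tu \cap \bS^n_\C$, a point of $V$ lies in $\Tu$ iff it lies in $V_+$, so $\Xi \cap V = V_+ \cap \bS^n_\C = \wH$, while $\Xi \cap \R^{n+1} = (\Tu \cap \R^{n+1}) \cap \bS^n = \R^{n+1}_+ \cap \bS^n = \bS^n_+$ by (i).

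The main obstacle is the reverse inclusion $\Tu \cap \bS^n_\C \subseteq \Xi$ in (ii): all the other assertions reduce to bookkeeping with the splitting $V \oplus iV$, but this step genuinely requires the transitivity of $G^c$ on the future mass-hyperboloids, combined with the orthogonality relations of Lemma~\ref{le:HinSn}, to normalize an arbitrary tube point into the half-sphere.
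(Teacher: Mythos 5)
Your proof is correct and follows essentially the same route as the paper: the easy inclusion $\Xi \subseteq T_{V_+}\cap \bS^n_\C$ via $G^c$-invariance, then the reverse inclusion by using transitivity of $G^c$ on the level sets $\{u \in V_+ \: [u,u]_V = c^2\}$ to normalize $u = c e_0$ and deduce $v_0 = 0$ from the sphere equation (your appeal to Lemma~\ref{le:HinSn} is just the paper's direct expansion of $z^2=1$ in packaged form). Your part (iii), including the observation $\sr = -r_0\sx$ with $-r_0 \in G^c$, is a correct fleshing-out of what the paper dismisses as ``follows from (i) and (ii).''
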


\begin{proof} (i) It is clear that $z=u+iv\in \Tu\cap \R^{n+1}$ if and only if $u=re_0$ with
$r>0$ and $iv = (0,\bv)$ with $\bv\in  \R^n$. This shows that 
$T_{V_+} \cap \R^{n+1} = \R^{n+1}_+$. Intersecting with the sphere now yields 
the second assertion. 

(ii) By (i) we have $\Xi =G^c.\bS^n_+= G^c.(\Tu \cap \bS^n)
\subseteq \Tu \cap \bS^n_{\C}$.  
Conversely, let 
$z=u+iv \in \Tu\cap \bS^n_{\C}$. Then $u_0>0$ and, as $G^c$ acts transitively on all level sets $\lf{u}{u}=r>0$ in $V_+$, we
may
assume that $u=re_0$ with $r>0$. Thus $z=(r+iv_0,\bv)$ 
with $v_0 \in \R$ and $\bv \in \R^n$. As $z\in \bS^n_\C$, we have 
\[1=z^2= r^2-v_0^2+2irv_0 +\|\bv\|^2\, .\]
Hence $v_0=0$ and this implies that  $z\in \bS_+^n\subset \Xi$. Finally, we note that, if 
$z\in \Tu$, then $\Re z_0>0$ hence $\Tu\cap \bS^n_\C = \Tu \cap \bS^n_{+,\C}$.

(iii) follows from (i) and (ii). 
\end{proof}

\begin{cor} $\Xi$ is an open subset of $\bS^n_{\C}$,  
hence a complex manifold, and the 
group $G^c$ acts on $\Xi$ by holomorphic maps. 
\end{cor}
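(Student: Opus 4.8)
The plan is to reduce everything to the description $\Xi = \Tu \cap \bS^n_\C$ furnished by Proposition~\ref{prop:XiTube}(ii), so that the only substantive point is to show that the tube $\Tu = iV + V_+$ is open in $\C^{n+1}$. Once this is in hand, $\Xi$ is open in $\bS^n_\C$ as the intersection of $\bS^n_\C$ with an open subset of the ambient space, and the complex manifold structure as well as the holomorphy of the group action follow quickly.

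First I would record that $V$ is a real form of $\C^{n+1}$, that is, $\C^{n+1} = V \oplus iV$ as real vector spaces. Indeed $V = \R e_0 \oplus i\,\mathrm{span}_\R(e_1,\ldots,e_n)$ has real dimension $n+1$, and a direct inspection of coordinates shows $V \cap iV = \{0\}$, whence the real dimensions force the direct sum decomposition. Let $P \: \C^{n+1} \to V$ be the real-linear projection along $iV$; it is continuous. Since $V_+ = \{v \in V \: \lf{v}{v} > 0,\ v_0 > 0\}$ is defined inside $V$ by strict inequalities between continuous functions, it is open in $V$, and by construction $\Tu = \{w + u \: w \in V_+,\ u \in iV\} = P^{-1}(V_+)$. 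Hence $\Tu$ is open in $\C^{n+1}$.

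Next I would invoke that $\bS^n_\C = \{z \in \C^{n+1} \: z^2 = 1\}$ is a smooth complex hypersurface: the holomorphic map $z \mapsto z^2$ has differential $2z$, which is nonzero at every point of $\bS^n_\C$ (as $0 \notin \bS^n_\C$), so $\bS^n_\C$ is a regular level set and thus a complex submanifold of $\C^{n+1}$. By Proposition~\ref{prop:XiTube}(ii) we have $\Xi = \Tu \cap \bS^n_\C$, which exhibits $\Xi$ as the intersection of the complex manifold $\bS^n_\C$ with the open set $\Tu$; therefore $\Xi$ is open in $\bS^n_\C$ and inherits its complex manifold structure.

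Finally, for the group action I would use that $G^c = \iota\,\OO_{1,n}(\R)^\uparrow\,\iota \subeq \GL(V) \cap \OO_{n+1}(\C)$. As a subgroup of $\OO_{n+1}(\C)$, every $g \in G^c$ acts $\C$-linearly on $\C^{n+1}$ preserving the bilinear form $zw$, hence preserving the quadric $\bS^n_\C$; being $\C$-linear, $g$ is holomorphic and restricts to a biholomorphism of $\bS^n_\C$. Since $\Xi = G^c.\bS^n_+$ is by its very definition $G^c$-invariant, these biholomorphisms restrict to holomorphic self-maps of $\Xi$, so $G^c$ acts on $\Xi$ by holomorphic maps. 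I expect the only genuine obstacle to be the openness of $\Tu$, i.e.\ correctly identifying $\Tu$ as the preimage of the open cone $V_+$ under the projection associated with the decomposition $\C^{n+1} = V \oplus iV$; the remaining assertions are then formal.
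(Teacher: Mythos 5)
Your proposal is correct and follows essentially the same route as the paper: both rest on Proposition~\ref{prop:XiTube}(ii) to write $\Xi = \Tu \cap \bS^n_\C$, deduce openness of $\Xi$ in $\bS^n_\C$ from openness of the tube $\Tu$ in $\C^{n+1}$, and obtain holomorphy of the $G^c$-action by restricting the linear action of $\OO_{n+1}(\C)$. You merely spell out details the paper leaves implicit (the decomposition $\C^{n+1} = V \oplus iV$ giving openness of $\Tu$, and the regular-level-set argument for $\bS^n_\C$), all of which are accurate.
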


\begin{proof} The tube domain $\Tu$ is open in $\C^{n+1}$ and hence $\Xi$ is an open subset of the complex manifold
$\bS^n_{\C}$ and hence a complex submanifold.  The last statement is clear as this is the restriction of the
linear action of $\OO_{n+1}(\C)$ to $G^c$.
\end{proof}

\begin{ex} \mlabel{ex:n1} 
Let us take a closer look at the case $n=1$.
We parametrize the complex $1$-sphere with the biholomorphic map 
\[ \zeta \: \C^\times \to \bS^1_\C\subeq \C^2, \quad 
\zeta(z) = 
\Big(\frac{1}{2}\Big(z +\frac{1}{z}\Big), \frac{1}{2i}\Big(z -\frac{1}{z}\Big)\Big), 
\qquad \zeta(1) = (1,0) = e_0, \] 
whose inverse is given by $\zeta^{-1}(z_0,z_1) = z_0 + i z_1$. 
That this map is biholomorphic is most easily seen by writing 
$z = e^{iw}$ with $\cos(w) = \frac{1}{2}(z + z^{-1})$ and 
$\sin(w) = \frac{1}{2i}(z - z^{-1})$, which leads to 
$\zeta(z) = (\cos w, \sin w)$ and in particular to 
$\zeta(1) = e_0$ and  $\zeta(i) = e_1$. 
The map 
$\zeta$ intertwines the multiplication action of $\T$ on $\C^\times$ with the 
action of $G_0 = \SO_2(\R)$ on $\bS^1_\C$. 
Further, the coordinate reflections act on $\C$ by 
$r_0(z) = -z^{-1}$ and $r_1(z) = z^{-1}$. 
Accordingly, $G^c_0 \cong \R^\times_+$.  

The tube domain is given by 
\[ T_{V_+} = \{ z \in \C^2 \: |\Im z_1| < \Re z_0\}.\] 
To determine $\zeta^{-1}(\Xi)$, we observe that 
$\Re(z + z^{-1}) > 0$ if and only if $\Re z > 0$, so that 
\[ \bS^1_{+, \C} = \zeta(\C_+), \quad \mbox{ where  }\quad 
\C_+ := \{ z \in \C \: \Re z > 0\}\] 
is the open right half-plane. 
If $z = x + i y$ with $x > 0$, then 
\[ |\Im \zeta(z)_1| 
= \frac{1}{2}|\Re(z-z^{-1})| 
= \frac{1}{2} x\Big|1 - \frac{1}{x^2 + y^2}\Big| 
<  \frac{1}{2} x\Big(1 + \frac{1}{x^2 + y^2}\Big)
= \Re \zeta(z)_0 \] 
holds automatically. By Proposition~\ref{prop:XiTube}, this shows that 
\begin{equation}
  \label{eq:xi-for-n1}
\Xi = \bS^1_{+, \C} = \zeta(\C_+).
\end{equation}

Since we shall need it later, we calculate the set $\C_\Xi$ of values 
of the kernel $[\cdot,\cdot]_V$ on $\Xi$. 
We have 
\[ [\zeta(z), \zeta(w)]_V 
= \frac{1}{4}\Big(z +\frac{1}{z}\Big)\Big(w +\frac{1}{w}\Big)
- \frac{1}{4}\Big(z -\frac{1}{z}\Big)\Big(w -\frac{1}{w}\Big)
= \frac{1}{2}\Big(\frac{z}{w} + \frac{w}{z}\Big).\] 
For $z,w \in \C_+$, the values of $zw^{-1}$ are the products of elements 
in $\C_+$, which leads to the set $\C \setminus (-\infty,0]$. 
For $z \in \C^\times$, the complex number 
$\frac{1}{2}(z + z^{-1})$ is contained in $(-\infty, -1]$ 
if and only if $z \in (-\infty,0)$. This implies that 
$\C_{\Xi} = \C \setminus (-\infty,-1]$. 
\end{ex}

With the precise information on the case $n =1$, we can determine 
the set $\C_\Xi$ in general: 

\begin{lem} \mlabel{lem:xi-values}
$\C_\Xi := \{ [z,w]_V \: z,w \in \Xi\} 
= \C\setminus (-\infty , -1]$.
\end{lem}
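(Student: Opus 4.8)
The plan is to prove the two inclusions separately, using the explicit computation of the case $n=1$ in Example~\ref{ex:n1} for one direction and a polar (Cartan) decomposition of $G^c$ for the other.

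For the inclusion $\C\setminus(-\infty,-1]\subseteq\C_\Xi$ I would use the standard embedding $\C^2\into\C^{n+1}$, $(\zeta_0,\zeta_1)\mapsto(\zeta_0,\zeta_1,0,\ldots,0)$. It identifies $\bS^1_\C$ with the complex submanifold $\bS^n_\C\cap(\C e_0+\C e_1)$ and, by Proposition~\ref{prop:XiTube}, the crown $\Xi_1$ of the $1$-dimensional case with $\Xi\cap(\C e_0+\C e_1)$; moreover the bilinear form restricts correctly. Hence every value $\lf{z}{w}$ with $z,w\in\Xi_1$ is also attained on $\Xi$, and Example~\ref{ex:n1} shows that these values already fill $\C\setminus(-\infty,-1]$.

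The substantive inclusion is $\C_\Xi\subseteq\C\setminus(-\infty,-1]$, and here the naive idea of moving two given points of $\Xi$ into a common $\bS^1_\C$ fails, since two generic points span a complex plane that no element of $G^c$ can bring into $\C e_0+\C e_1$. Instead I would exploit $\Xi=G^c.\bS^n_+$ together with the $G^c$-invariance of $\lf{\cdot}{\cdot}$: writing $z=g.b$ with $b\in\bS^n_+$ gives $\lf{z}{w}=\lf{b}{g^{-1}w}$, so it suffices to treat values $\lf{b}{w'}$ with $b\in\bS^n_+$ and $w'\in\Xi$. Now apply the Cartan decomposition $G^c=KAK$ with $K=\OO_n(\R)$ and $A=\{a_t\}_{t\in\R}$ a maximal one-parameter group of boosts fixing $e_2,\ldots,e_n$; since $K.\bS^n_+=\bS^n_+$, any $w'\in\Xi$ is of the form $k.a_t.x$ with $x\in\bS^n_+$, and absorbing $k$ into $b$ reduces us to studying $\lf{b}{a_tx}$ for $b,x\in\bS^n_+$ and $t\in\R$. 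An explicit computation of the boost gives $(a_tx)_0=x_0\cosh t-ix_1\sinh t$, $(a_tx)_1=x_1\cosh t+ix_0\sinh t$ and $(a_tx)_j=x_j$ for $j\ge2$, whence $\lf{b}{a_tx}=\cosh t\,\mu-\rho-i\sinh t\,\nu$, where $\mu=b_0x_0-b_1x_1$, $\nu=b_0x_1+b_1x_0$ and $\rho=\sum_{j\ge2}b_jx_j$. Since we only need to exclude real values $\le-1$, I would observe that $\lf{b}{a_tx}$ is real exactly when $t=0$ or $\nu=0$. When $t=0$ the value is $\lf{b}{x}=b_0x_0-\mathbf b\cdot\mathbf x$, and the elementary estimate $b_0x_0-\|\mathbf b\|\,\|\mathbf x\|>-1$ (equivalent to $(b_0+x_0)^2>0$, using $\|\mathbf b\|^2=1-b_0^2$ and $\|\mathbf x\|^2=1-x_0^2$) gives $\lf{b}{x}>-1$; when $\nu=0$ one checks that $x_0,b_0>0$ force $\mu=b_0(x_0^2+x_1^2)/x_0>0$, so that $\cosh t\,\mu-\rho\ge\mu-\rho=\lf{b}{x}>-1$. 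In all cases the real part exceeds $-1$, so no value lies in $(-\infty,-1]$.

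The main obstacle is conceptual rather than computational: recognizing that one should \emph{not} try to reduce to $n=1$ by fitting both arguments into a single $\bS^1_\C$, but should instead fix the first argument in $\bS^n_+$ and use the polar decomposition to write the second as a boost of a real point. This is precisely what makes the imaginary part of $\lf{b}{a_tx}$ transparent and isolates the two regimes ($t=0$ and $\nu=0$) in which the value is real; the small sign fact that $\nu=0$ forces $\mu>0$ is what prevents large boosts from pushing the value below $-1$.
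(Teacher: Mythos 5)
Your strategy is sound and genuinely different from the paper's, but your central displayed formula is wrong in the paper's conventions and needs repair. On all of $\C^{n+1}$ the form $\lf{\cdot}{\cdot}$ is the \emph{Euclidean} complex bilinear form $\sum_j z_jw_j$; the Lorentzian coordinate expression $z_0w_0-\mathbf{z}\cdot\mathbf{w}$ is valid only when both arguments are written in $V$-coordinates $(z_0,i\mathbf{z})$, which the \emph{real} points $b,x\in\bS^n_+$ are not. So at $t=0$ the value is $\lf{b}{x}=b_0x_0+\mathbf{b}\cdot\mathbf{x}$, not $b_0x_0-\mathbf{b}\cdot\mathbf{x}$, and for your boost $a_t$ (which lies in $\OO_{n+1}(\C)$ and hence preserves $\sum_j z_jw_j$, not the Lorentzian expression you paired with) the correct identity is
\[
\lf{b}{a_t x}=\cosh t\,(b_0x_0+b_1x_1)+\sum_{j\ge 2}b_jx_j+ i\sinh t\,(b_1x_0-b_0x_1).
\]
The slip is systematic rather than fatal: setting $\mu':=b_0x_0+b_1x_1$, $\nu':=b_1x_0-b_0x_1$, $\rho':=\sum_{j\ge2}b_jx_j$, your case analysis runs verbatim. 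The value is real iff $t=0$ or $\nu'=0$; at $t=0$ it equals $b\cdot x>-1$ by your Cauchy--Schwarz estimate; and $\nu'=0$ forces $x_1=b_1x_0/b_0$, hence $\mu'=x_0(b_0^2+b_1^2)/b_0>0$ and $\cosh t\,\mu'+\rho'\ge\mu'+\rho'=b\cdot x>-1$. (In fact your expression parametrizes the same set of values as the correct one: replace $x$ by $(x_0,-\mathbf{x})\in\bS^n_+$ and $t$ by $-t$; this is why your conclusions nevertheless come out right.)

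With that correction your proof is valid and takes a different route from the paper. The paper does no reduction and no computation: for $z=x+iy\in T_{V_+}$ with $\lf{z}{z}$ real, the identity $\lf{z}{z}=\lf{x}{x}-\lf{y}{y}+2i\lf{x}{y}$ forces $\lf{x}{y}=0$, and since $x\in V_+$ is timelike this gives $\lf{y}{y}\le 0$, so $\lf{z}{z}\ge\lf{x}{x}>0$; then for $u,v\in\Xi$, convexity of the cone $V_+$ gives $u+v\in T_{V_+}$, and $\lf{u+v}{u+v}=2(1+\lf{u}{v})$, so reality of $\lf{u}{v}$ immediately yields $\lf{u}{v}>-1$. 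That argument is shorter and needs neither the Cartan decomposition $G^c=KAK$ nor any case distinctions; yours is more explicit and identifies exactly which pairs produce real values. Both proofs use the $n=1$ computation of Example~\ref{ex:n1}, via the embedding $\C e_0+\C e_1\subeq\C^{n+1}$, for the inclusion $\C\setminus(-\infty,-1]\subseteq\C_\Xi$, exactly as you describe.
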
 

\begin{proof}  The case $n = 1$ has been treated in 
Example~\ref{ex:n1}. Therefore it suffices to show that 
the intersection $\C_\Xi \cap (-\infty,-1]$ is empty. 
For $z = x + i y \in T_{V_+}$ with $x,y \in V$, we have
\[ [z,z]_V = [x,x]_V - [y,y]_V + 2i [x,y]_V.\]
If $[z,z]_V \in \R$, then $[x,y]_V = 0$. 
As $[x,x]_V > 0$, this implies that $[y,y]_V \leq 0$ because the form 
is Lorentzian. 
Therefore 
\begin{equation}
  \label{eq:dag}
[z,z]_V \geq [x,x]_V > 0.
\end{equation}
For $u,v \in \Xi = T_{V_+} \cap \bS^n_\C$, we now have 
$u + v \in T_{V_+}$ and 
\[ [u+v, u+v]_V = [u,u]_V + [v,v]_V + 2 [u,v]_V 
= 2(1 + [u,v]_V).\] 
If $[u,v]_V$ is real, then so is $[u+v,u+v]_V$, and we obtain from the 
preceding paragraph that 
\[ 0 <  [u+v, u+v]_V = 2(1 + [u,v]_V),\quad \mbox{ i.e.,} \quad 
[u,v]_V > -1. \qedhere \]
\end{proof}

\begin{rem} Let $\Omega =\{v\in \R^n\: \|v\|<\pi/2\}=K. (-\pi/2,\pi/2)e_n$. Then\eqref{eq:Exp} implies that
\[\bS_+^n=\Exp_{e_0} \Omega = K.\Exp_{e_0}((-\pi/2,\pi/2)e_n)\, .\]
Thus 
\[ \Xi =G^c.\Exp_{e_0}(\Omega )=G^c.\Exp_{e_0}((-\pi/2,\pi/2)e_n).\] 
This  shows that $\Xi$ is indeed the crown domain 
\[ G_c \exp\Big(\Big\{ i x \: x\in \fp, \|\ad x \| < \frac{\pi}{2}\Big\}\Big) K_\C /K_\C 
\subeq G_\C/K_\C \] 
of  the Riemannian symmetric space $G^c.e_0 \cong G^c/K$ 
(cf.\ \cite[p.~32]{GKO03}). For
more information about the crown see
\cite{AG90,KS04,KS04,KO08}. The crown is the  maximal $G^c$-invariant 
domain for the holomorphic extension of all spherical functions on the
Riemannian symmetric space $G^c/K$. It is shown in \cite{KS05} 
that in our case the crown is the Lie ball, which can be identified with the
Riemannian symmetric space $\SO_{2,n}(\R)_0/(\SO_2(\R) \times \SO_n(\R))$; 
see also the Section \ref{sec:lieball} for a direct argument.
\end{rem}

\subsection{The boundary of $\Xi$} 
\label{se:Boundary}

In this subsection we show that the boundary of the crown consists of two types of
orbits. We then consider the projection of the orbits onto $V$ and $iV$, respectively: 
 \begin{center}
$\displaystyle\large
{\hskip -1.5cm} 
\begin{diagram}    \node[2]{ \partial \Xi} \arrow{sw,t}{p_V} \arrow{se,t}{p_{iV}}   \\ 
\node{V }  \node[2]{ iV}
\end{diagram}$
\end{center}
Both $p_V$ and $p_{iV}$ are $G^c$-equivariant maps. 
Hence the projection of a $G^c$-invariant subset is again $G^c$-invariant
in $V$, resp., $iV$. 

\begin{lem}\label{lem:3.1} The boundary of $\Xi$ in $\bS^n_\C$ is given by 
  \begin{equation}
    \label{eq:boundxi}
\partial \Xi =\{u+iv\: u,v \in V, 
\lf{u}{u}=0, u_0\ge 0, \lf{v}{v}=-1, \lf{u}{v} =0\}.
  \end{equation}
\end{lem}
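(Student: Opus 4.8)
The plan is to establish the two inclusions of the asserted set equality, writing $B$ for the right-hand side of \eqref{eq:boundxi}. Throughout I would use the decomposition $z = u + iv$ with $u,v \in V$, which is available because $\C^{n+1} = V \oplus iV$ as real vector spaces. Combining Lemma~\ref{le:HinSn} with Proposition~\ref{prop:XiTube}(ii), a point $z = u+iv$ lies in $\Xi = T_{V_+} \cap \bS^n_\C$ precisely when $u \in V_+$, $\lf{u}{u} - \lf{v}{v} = 1$ and $\lf{u}{v} = 0$. Since every element of $B$ has $\lf{u}{u} = 0$, and hence $u \notin V_+$, we get $B \cap \Xi = \eset$; as $\Xi$ is open in $\bS^n_\C$, it therefore suffices to prove $\partial\Xi = \oline\Xi \setminus \Xi \subseteq B$ and $B \subseteq \oline\Xi$.

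For the inclusion $\partial\Xi \subseteq B$ I would first note that $\bS^n_\C$ is closed in $\C^{n+1}$ and that the tube $T_{V_+} = V_+ + iV$ is the preimage of $V_+$ under the real-linear projection $p_V \: V\oplus iV \to V$. Hence $\oline{T_{V_+}} = p_V^{-1}(\oline{V_+}) = \oline{V_+} + iV$, where $\oline{V_+} = \{u \in V \: \lf{u}{u} \ge 0,\ u_0 \ge 0\}$, so that $\oline\Xi \subseteq \oline{T_{V_+}} \cap \bS^n_\C$. A boundary point $z = u + iv \in \oline\Xi \setminus \Xi$ then has $u \in \oline{V_+}\setminus V_+ = \{u \: \lf{u}{u} = 0,\ u_0 \ge 0\}$, and the sphere relations of Lemma~\ref{le:HinSn} force $\lf{v}{v} = \lf{u}{u} - 1 = -1$ and $\lf{u}{v} = 0$; that is, $z \in B$.

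For the reverse inclusion $B \subseteq \oline\Xi$ I would, given $z_0 = u + iv \in B$, construct an explicit approximating sequence in $\Xi$. Since $\lf{v}{v} = -1$, the vector $v$ is spacelike, so $\lf{\cdot}{\cdot}$ restricts to a Lorentzian form of signature $(1,n-1)$ on $v^{\bot}$, which therefore contains a future timelike vector $w_0$ (normalized so that $(w_0)_0 > 0$). Put $u_k := u + \tfrac1k w_0$. Then $(u_k)_0 = u_0 + \tfrac1k (w_0)_0 > 0$, and because $u$ is future causal and $w_0$ future timelike we have $\lf{u}{w_0} \ge 0$, so that
\[ \lf{u_k}{u_k} = \tfrac{2}{k}\lf{u}{w_0} + \tfrac1{k^2}\lf{w_0}{w_0} \ge \tfrac1{k^2}\lf{w_0}{w_0} > 0, \]
whence $u_k \in V_+$. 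Since $w_0 \in v^{\bot}$ and $\lf{u}{v} = 0$, we also have $\lf{u_k}{v} = 0$. Writing $\delta_k := \lf{u_k}{u_k} \to 0$ and setting $v_k := \sqrt{1 - \delta_k}\,v$ for $k$ large, one checks $\lf{v_k}{v_k} = \delta_k - 1 = \lf{u_k}{u_k} - 1$ and $\lf{u_k}{v_k} = 0$, so that $z_k := u_k + i v_k \in \Xi$ by the criterion above, while $z_k \to u + iv = z_0$. Hence $z_0 \in \oline\Xi$, and together with $B \cap \Xi = \eset$ this yields $B \subseteq \partial\Xi$.

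I expect the two places where the Lorentzian geometry genuinely enters to be the main obstacles: the identity $\oline{T_{V_+}} = \oline{V_+} + iV$, which reduces the closure of $\Xi$ to that of the cone $V_+$ and must be argued from the product structure $\C^{n+1} = V \oplus iV$; and, in the converse direction, the existence of a future timelike $w_0 \in v^{\bot}$ together with the causal inequality $\lf{u}{w_0} \ge 0$ for future causal $u$, which is exactly what guarantees that the perturbed vectors $u_k$ stay inside the open forward cone $V_+$. Note that this construction treats the two expected boundary types (the de~Sitter case $u = 0$ and the lightlike case $u \in \bL^n_+$) uniformly, so that no appeal to the $G^c$-orbit structure is needed at this stage.
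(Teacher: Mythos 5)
Your proof is correct, and its overall skeleton matches the paper's: both arguments rest on the characterization $\Xi = \Tu\cap\bS^n_\C$ from Proposition~\ref{prop:XiTube} together with Lemma~\ref{le:HinSn}, prove $\partial\Xi\subseteq B$ by passing the defining (in)equalities to the closure, and prove $B\subseteq\oline\Xi$ by exhibiting explicit approximating sequences inside $\Xi$. The difference lies in how the approximation step is executed. The paper splits into two cases: for $u=0$ (the de Sitter points) it chooses $\tilde u\in V_+$ with $\lf{\tilde u}{v}=0$ and rescales $\eps\tilde u+iv$ onto the sphere; for $u\neq 0$ lightlike it first uses the transitivity of the $G^c$-action to normalize $v_0=0$ and then perturbs $z$ by $\eps e_0$. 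Your construction $u_k=u+\tfrac1k w_0$, with $w_0$ future timelike in $v^\perp$, combined with the rescaling $v_k=\sqrt{1-\delta_k}\,v$, handles both cases uniformly; the causal inequality $\lf{u}{w_0}\geq 0$ that you flag is indeed the crux and is a standard fact (from $\lf{u}{u}=0$ one gets $\|\bu\|=u_0$, from $\lf{w_0}{w_0}>0$ one gets $\|\bw_0\|<(w_0)_0$, whence $\lf{u}{w_0}\geq u_0\big((w_0)_0-\|\bw_0\|\big)\geq 0$), so $u_k\in V_+$ as claimed. What your route buys is uniformity and independence from the $G^c$-orbit structure (no normalization step, no transitivity argument); what the paper's route buys is that the case distinction makes the two boundary orbit types --- $\dS^n$ and the lightlike orbit --- visible from the outset, which is precisely the structure exploited immediately afterwards in Lemma~\ref{lem:3.7}. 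Your treatment of the closure inclusion via $\oline{\Tu}=\oline{V_+}+iV$ is also slightly more roundabout than the paper's direct observation that the non-strict conditions are closed, but both are sound.
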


\begin{proof}  Recall from Proposition \ref{prop:XiTube} that
$\Xi = T_{V_+}\cap \bS^n_\C$. In view of Lemma \ref{le:HinSn}, this means that, 
for $u,v \in V$, the element $z=u+iv$ is contained in $\Xi$ 
if and only if $\lf{u}{u}-\lf{v}{v}=1$, $\lf{u}{v}=0$, $\lf{u}{u}>0$ and $u_0>0$. This shows that 
\[\overline{\Xi}\subeq   
\{u+iv\in V_\C\: \lf{u}{u}-\lf{v}{v}=1,\, \lf{u}{v}=0,\, \lf{u}{u}\ge 0,\, \text{ and } u_0\ge 0\}.\]
To see that we actually have equality, suppose that 
$z = u + i v$ is contained in the right hand side but not in $\Xi$. 

If $u_0=0$, then $\lf{u}{u}=u_0^2-\bu^2=-\bu^2\ge 0$ implies that $u=0$. 
This shows that $z = i v$ with $[v,v]_V = -1$. Then there exists an 
element $\tilde u \in V_+$ with $[\tilde u,v]_V = 0$, so that 
\break $z_\eps := (1 + \eps^2)^{-1}(\eps \tilde u + i v) \in \Xi$ with 
$z_\eps \to iv$ for $\eps \to 0$. 

If $u_0>0$, then we must have $\lf{u}{u}=0$ and $[v,v]_V = -1$. 
Acting with a suitable element of $G^c$, we may assume that 
$v_0 = 0$. For $\eps > 0$ small enough we then have 
$z_\eps := (1 +2\eps u_0 +  \eps^2)^{-1}(z + \eps e_0) \in \Xi$ with 
$z_\eps \to z$ for $\eps \to 0$. 
\end{proof} 

If $u=0$, then \eqref{eq:boundxi} leads to a realization of \textit{de Sitter space} 
\[\dS^n:=i \{v\in V\: \lf{v}{v} =-1\} 
=\bS^n_\C \cap i V= \partial \Xi \cap i V
=p_{iV}(\partial \Xi)\, .\]
The stabilizer $G^c_{e_n}$ of the point $e_n \in \dS^n$ 
in the group $G^c$ is $H= \iota \OO_{1, n-1}(\R)^\uparrow\iota$, 
which is a non-compact symmetric subgroup of $G^c$ with respect to the
involution given by conjugation by $r_n$. Hence $\dS^n\cong G^c/H$ 
is a Lorentzian symmetric space.

Before discussing the other boundary orbits, 
some more notation is needed. 
Let 
\begin{align} M &:=\left\{m_A:=\begin{pmatrix} 1 & 0 & 0\\
0 & A & 0\\
0 & 0 & 1\end{pmatrix}\: A\in \OO_{n-1}(\R )\right\}\simeq \OO_{n-1}(\R)\\
  N&:=\left\{n_v:=\begin{pmatrix} 1+\frac{1}{2}\|v\|^2 &-iv^T & \frac{i}{2}\|v\|^2\\
iv & \rI_{n-1} & -v\\
\frac{i}{2}\|v\|^2& v^T& 1-\frac{1}{2}\|v\|^2\end{pmatrix}\: v\in\R^{n-1}\right\}
\simeq (\R^{n-1},+) \label{nv} \\
A&:=\left\{ a_t=\begin{pmatrix} \cosh (t) & 0 & -i\sinh (t)\\ 0 & \rI_{n-1} & 0\\ i\sinh (t) & 0 & \cosh (t )\end{pmatrix}\: t\in\R\right\}\simeq (\R,+)\, .
\label{at}\end{align}
The group $MN$ is the stabilizer of the element 
$\xi^0 := e_0 +i e_n$, and 
\begin{equation}\label{eq:atxb}
a_t.\xi^0=e^t\xi^0.
\end{equation} The orbit of $\xi^0$ is 
\begin{equation}
  \label{eq:conic}
\Lnp :=\{v = (v_0, i\bv) \in V\: \lf{v}{v} =0, v_0>0\} = G^c.\xi^0 \simeq G^c/MN 
\end{equation}
of non-zero forward lightlike vectors. 
To see that $\Lnp = G^c.\xi^0$, let $v=(v_0, i\bv)\in \Lnp$ and note that 
$v_0\not= 0$ and $\|\bv\|=v_0>0 $. As
$K \cong \OO_n(\R)$ acts transitively on each sphere in $\R^{n}$, we may 
assume that $\bv=v_0 e_n$, or $v= v_0 \xi_{0}$. Then 
$v = v_0(e_0+ie_n) = a_{\log v_0}.\xi^0$ by (\ref{eq:atxb}).
 
\begin{lem} \mlabel{lem:3.7} Suppose that $n\geq 2$. Let $\cO 
:= G^c. (\xi^0 +e_{n-1})$. Then the 
boundary of the crown is the union of two $G^c$-orbits 
\begin{equation}
  \label{eq:dag2}
\partial \Xi = \dS^n\dot \cup  \cO.
\end{equation}
The projection of $\cO$   onto 
$V$ is $\Lnp = G^c.\xi^0$ and the projection onto 
$iV$ is $\dS^n 
= G^c.e_{n-1}$. 
\end{lem}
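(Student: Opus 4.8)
The plan is to verify Lemma~\ref{lem:3.7} by first establishing the $G^c$-orbit decomposition of $\partial\Xi$ given by Lemma~\ref{lem:3.1}, and then computing the two projections explicitly. From~\eqref{eq:boundxi}, a boundary point $z = u + iv$ satisfies $[u,u]_V = 0$, $u_0 \geq 0$, $[v,v]_V = -1$, and $[u,v]_V = 0$. I would split into the two cases $u = 0$ and $u \neq 0$ (equivalently $u_0 = 0$ versus $u_0 > 0$, since $[u,u]_V = 0$ with $u_0 = 0$ forces $u = 0$ as in the proof of Lemma~\ref{lem:3.1}). The case $u = 0$ gives exactly $z = iv$ with $[v,v]_V = -1$, which is the de Sitter orbit $\dS^n$. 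The case $u \neq 0$ gives points projecting to a nonzero forward lightlike vector $u \in \Lnp$, and I would show these form the single orbit $\cO = G^c.(\xi^0 + e_{n-1})$.

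First I would treat $\dS^n$: since $G^c$ acts transitively on $\{v \in V : [v,v]_V = -1\}$ (this is the hyperboloid of ``spacelike'' unit vectors, a $G^c$-homogeneous space because $G^c$ is the orthochronous Lorentz group acting on $V$), the set $\{iv : [v,v]_V = -1\}$ is a single orbit, and it sits inside $\partial\Xi$ by~\eqref{eq:boundxi}. For the second orbit, I would start from the representative $\xi^0 + e_{n-1} = (e_0 + ie_n) + e_{n-1}$ and check it lies in $\partial\Xi$: writing it as $u + iv$ with $u = e_0 + e_{n-1}$ and $v = e_n$, one verifies $[u,u]_V = 1 - 1 = 0$, $u_0 = 1 > 0$, $[v,v]_V = -1$, and $[u,v]_V = 0$, so the boundary conditions hold. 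Then I would argue transitivity: given any $z = u + iv$ in the second case, use that $G^c$ acts transitively on $\Lnp$ (established just before the lemma via the $MN$ and $A$ computations in~\eqref{eq:conic}) to move $u$ to $\xi^0$; the remaining freedom is the stabilizer $MN$ of $\xi^0$, and I would show $MN$ acts transitively on the set of admissible $v$ (those with $[v,v]_V = -1$, $[\xi^0, v]_V = 0$) modulo the constraint, landing on $v = e_n$ up to the $M \cong \OO_{n-1}(\R)$ and $N$ action, thereby reaching the representative $\xi^0 + e_{n-1}$.

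The projection statements then follow from what has been assembled: $p_V$ sends $\cO$ to the set of $u$-components, which by the case analysis is exactly $\Lnp = G^c.\xi^0$ by~\eqref{eq:conic}, and $p_{iV}$ sends $\cO$ to the $iv$-components, which range over $\{iv : [v,v]_V = -1\} = \dS^n$; the identification $\dS^n = G^c.e_{n-1}$ holds because $e_{n-1}$ is a unit spacelike vector and $G^c$ acts transitively on these. Equivariance of $p_V$ and $p_{iV}$ is immediate since $G^c$ preserves both $V$ and $iV$ and acts $\R$-linearly, so it commutes with taking real and imaginary parts relative to the decomposition $V_\C = V \oplus iV$. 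The disjointness $\dS^n \cap \cO = \eset$ is clear because points of $\dS^n$ have $u = 0$ whereas points of $\cO$ have $u \in \Lnp$ with $u_0 > 0$.

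The main obstacle I anticipate is the transitivity argument for $\cO$: showing that the stabilizer $MN$ of $\xi^0$ acts transitively on the admissible set of imaginary parts $v$ (subject to $[v,v]_V = -1$ and $[\xi^0,v]_V = 0$). This requires understanding precisely how $N$ (the ``translations'' in~\eqref{nv}) and $M \cong \OO_{n-1}(\R)$ act on such $v$, and checking that together they sweep out the full orbit while respecting the constraint $[\xi^0, v]_V = 0$. The hypothesis $n \geq 2$ is presumably needed here so that the vector $e_{n-1}$ is available and the relevant lightlike-plus-spacelike configuration is nondegenerate; I would keep careful track of which components of $v$ are fixed by the constraints and which are genuinely moved by $MN$.
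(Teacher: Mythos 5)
Your plan follows the same route as the paper's proof: split $\partial\Xi$ via Lemma~\ref{lem:3.1} into the points with $u=0$ (giving $\dS^n$) and those with $u\neq 0$, move $u$ to $\xi^0$ by transitivity of $G^c$ on $\Lnp$, and then reduce the imaginary part by the stabilizer $MN$ of $\xi^0$. However, there are two genuine problems. The first is a coordinate error that would propagate: in Lemma~\ref{lem:3.1} the decomposition $z=u+iv$ requires $u,v\in V=\R e_0\oplus i\R^n$, and the unique such decomposition of $\xi^0+e_{n-1}$ is $u=\xi^0=e_0+ie_n$, $v=-ie_{n-1}$. Your choice $u=e_0+e_{n-1}$, $v=e_n$ is the decomposition with respect to the other real form $\R^{n+1}$, whose components do not lie in $V$, and your evaluation $[u,u]_V=1-1=0$ uses the Minkowski form of $\R^{1,n}$ rather than the complex bilinear form (in fact $\lf{e_0+e_{n-1}}{e_0+e_{n-1}}=2$). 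For this particular point the numbers happen to agree with the correct ones, but the method is wrong, and the same confusion recurs when you assert that the normal form of the admissible imaginary part is ``$v=e_n$''; it is $v=-ie_{n-1}$. If you parametrize the admissible set of $v$'s in the wrong real form, the transitivity argument you outline will not go through as computed.

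The second, more serious issue is that the step you explicitly defer (``the main obstacle I anticipate'') is precisely the entire content of the paper's argument, so the proposal has a hole exactly where the work lies. The paper's execution is short: after normalizing $u=\xi^0$, act with $M\cong\OO_{n-1}(\R)$ (this is where $n\geq 2$ enters), which fixes $\xi^0$ and rotates the $(e_1,\dots,e_{n-1})$-component of $v$ into the direction $-ie_{n-1}$; the constraints $\lf{\xi^0}{v}=0$ and $\lf{v}{v}=-1$ then force $v=v_0\xi^0-ie_{n-1}$, i.e.\ $z=(1+iv_0)\xi^0+e_{n-1}$; finally, since $n_w\in N$ fixes $\xi^0$ and satisfies $n_w.e_{n-1}=e_{n-1}-iw_{n-1}\xi^0$ by \eqref{nv}, the choice $w_{n-1}=v_0$ gives $n_w.z=\xi^0+e_{n-1}$. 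This exhibits $\partial\Xi\setminus\dS^n$ as the single orbit $\cO$, and then your equivariance argument for $p_V$ and $p_{iV}$ (which is fine, once the representative is decomposed correctly as $p_V(\xi^0+e_{n-1})=\xi^0$ and $p_{iV}(\xi^0+e_{n-1})=e_{n-1}$) yields the projection statements and the disjointness.
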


\begin{proof} Assume that $z = u+iv \in \partial \Xi \setminus \dS^n$, 
so that $u\not=0$ (cf.~Lemma \ref{lem:3.1}). 
As $[u,u]_V = 0$, the $G^c$-orbit of $u$ contains $\xi^0$, 
so that we may w.l.o.g.\ assume that $u = \xi^0$. 
Acting with the subgroup $K = G^c_{e_0} \cong\OO_n(\R)$, 
we may further assume that $v \in \R e_0 - \R_+ i e_{n-1} + \R i e_n$. 
Then $[u,v]_V = 0$ implies that 
$v = v_0 \xi^0 - a i e_{n-1}$, and $[v,v]_V = -1$ yields $a = 1$. 
We thus obtain $z = (1 + i v_0) \xi^0 + e_{n-1}$. 
In the notation of \eqref{nv}, we have 
\[ n_v.(\xi^0 + e_{n-1})= 
\xi^0 - i v_{n-1} \xi^0 + e_{n-1} 
= (1-iv_{n-1}) \xi^0 + e_{n-1},\] 
so that we may further assume that $v_0 = 0$, which eventually 
shows that $z$ is $G^c$-conjugate to $\xi^0 + e_{n-1}$.
Now the claim follows from 
\[ p_{iV}(\cO) = G^c.p_{iV}(\xi^0 + e_{n-1}) = G^c.e_{n-1} = \dS^n.\qedhere\]
 \end{proof}
 
The tangent space of $\dS^n$ at $e_n$ is the $n$-dimensional Minkowski space 
\[ T_{e_n}(\dS^n) = i V\cap e_n^\perp \simeq  \R^{1,n-1}.\]
By \eqref{eq:exp-rel}, we have 
\begin{equation}
  \label{eq:exp1}
\Exp_{e_n}(z )=S(z^2) z  + C(z^2 )e_n 
\quad \mbox{ for } \quad z\in T_{e_n}(\dS^n)_\C=\C\oplus \C^{n-1}. 
\end{equation}

We now describe how one can obtain the crown by moving inward from the de Sitter 
space $\dS^n$ (see \cite{KS05} for a discussion of the general case):

\begin{thm} \mlabel{thm:3.9}
{\rm($\Xi$ from the perspective of $\dS^n$)} 
Let 
\[ \Omega_{e_n} := \{ v \in i T_{e_n}(\dS^n) = \R \oplus i \R^{n-1} \subeq V\: 
v_0 > 0, \lf{v}{v} > 0\} \] 
be the $n$-dimensional forward light cone and 
\[ \Omega_{e_n}^\pi \:= \{ v \in \Omega_{e_n} \: \lf{v}{v} < \pi^2\}.\] 
For $g \in G^c$ and $p := g.e_n \in \dS^n$, we put 
$\Omega_p := g.\Omega_{e_n}$ and 
$\Omega_p^\pi := g.\Omega_{e_n}^\pi$. 
Then we have
  \begin{align*}
\Xi 
&= G^c.\Exp_{e_n}(\Omega_{e_n}^\pi) = \bigcup_{p \in \dS^n} \Exp_p(\Omega_p^\pi). 
  \end{align*}
\end{thm}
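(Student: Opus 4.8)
The plan is to prove the two displayed descriptions of $\Xi$ in turn, deducing the union formula from the $G^c$-equivariant first equality together with the observation that the light cones $\Omega_p$ are defined $G^c$-equivariantly. First I would establish that $\Exp_{e_n}(\Omega_{e_n}^\pi) \subeq \Xi$ by a direct computation using the explicit exponential formula \eqref{eq:exp1}. Writing $v \in \Omega_{e_n}^\pi$ as $v = (v_0, i\bv) \in \R \oplus i\R^{n-1}$ with $[v,v]_V = v_0^2 - \|\bv\|^2 =: s^2 > 0$ and $0 < s < \pi$, I would compute $z := \Exp_{e_n}(v) = S(s^2)v + C(s^2)e_n = \frac{\sin s}{s}\, v + \cos s\, e_n$, and then verify via Proposition~\ref{prop:XiTube}(ii) that $z \in \Tu \cap \bS^n_\C = \Xi$. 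The membership $z \in \bS^n_\C$ is automatic since $\Exp$ maps into the complex sphere; the real work is checking $z \in \Tu = V_+ + iV$, i.e.\ decomposing $z = \Re_V(z) + i\,\Im_V(z)$ with respect to the real structure $V$ and confirming $\Re_V(z) \in V_+$. Here the constraint $s < \pi$ should be exactly what keeps the $e_n$-coordinate in the correct half, guaranteeing positivity of the light-cone component.

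Next I would prove the reverse inclusion $\Xi \subeq G^c.\Exp_{e_n}(\Omega_{e_n}^\pi)$. The strategy is to use the already-established description from the preceding Remark, namely $\Xi = G^c.\Exp_{e_0}(\Omega)$ with $\Omega = \{v \in \R^n : \|v\| < \pi/2\}$ centered at the base point $e_0 \in \bS^n_+$. The idea is that moving inward from $\dS^n$ and moving outward from $\bS^n_+$ parametrize the same crown, so it suffices to match the two sweeping-out descriptions. Concretely, I would take an arbitrary $z \in \Xi$, use transitivity of $G^c$ on $\dS^n$ (shown in Lemma~\ref{lem:3.1} and the surrounding discussion, where $\dS^n = G^c/H$) to reduce to understanding the $A$-orbit structure relative to the base point $e_n$, and then exhibit a geodesic from a point of $\dS^n$ reaching $z$. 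Since $G^c$ acts transitively on $\dS^n$, it is enough to show that every $K_{e_n}$-orbit representative in $\Xi$ lies on $\Exp_{e_n}(\Omega_{e_n}^\pi)$, which reduces to a $\SO(1,1)$ or $\SO(2)$ computation in a two-dimensional slice — essentially the $n=1$ picture of Example~\ref{ex:n1}.

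The main obstacle will be the reverse inclusion, specifically verifying that the radial parameter of every point of $\Xi$, measured geodesically from the appropriate base point of $\dS^n$, stays strictly below $\pi$. For the forward inclusion the cutoff $s < \pi$ is imposed by hand and one only checks it suffices; for the reverse one must show it is \emph{necessary and sufficient}, i.e.\ that no point of $\Xi$ requires $s \ge \pi$ and that the whole cone up to $s = \pi$ is covered. I would handle this by exploiting that $\Xi = \Tu \cap \bS^n_\C$ is cut out by the sharp inequality $[\Re_V z, \Re_V z]_V > 0$ together with $(\Re_V z)_0 > 0$ (Proposition~\ref{prop:XiTube}), translating these two inequalities through the formula $\Exp_{e_n}(v) = \frac{\sin s}{s} v + \cos s\, e_n$ into the single condition $0 < s < \pi$ on the geodesic parameter. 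The equivariance of $p \mapsto \Omega_p^\pi$ then upgrades the slice statement to the full union $\bigcup_{p \in \dS^n}\Exp_p(\Omega_p^\pi)$, completing the proof.
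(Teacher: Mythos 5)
Your proposal is correct and takes essentially the same route as the paper: the forward inclusion is the identical computation combining \eqref{eq:exp1} with Proposition~\ref{prop:XiTube}, and your reverse inclusion --- reducing an arbitrary $z \in \Xi$ through the sweep description $\Xi = G^c.\Exp_{e_0}((-\pi/2,\pi/2)e_n)$ to a two-dimensional slice and matching geodesics --- is the paper's own reduction, which uses $\Xi = G^c.\bS^n_+$ and $K$-conjugation to write $z$ as $g.(\sin t, 0,\ldots,0,\cos t) = g.\Exp_{e_n}(t e_0)$ with $t \in (0,\pi)$. One small caveat: the inequalities of Proposition~\ref{prop:XiTube} do \emph{not} translate into ``the single condition $0 < s < \pi$'' as a biconditional, since $\Exp_{e_n}(v) \in \Xi$ only requires $\sin(s)/s > 0$, which also holds for $s \in (2\pi,3\pi)$, etc.; this does not affect the theorem, because the set equality only needs every point of $\Xi$ to be reached by \emph{some} $s \in (0,\pi)$, which your slice reduction already provides.
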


\begin{proof} In view of the $G^c$-invariance of $\Xi$ and the equivariance 
of the exponential map of $\bS^n_\C$, it suffices to verify the first equality. 
From \eqref{eq:exp1} we obtain for $v\in \R_+\oplus i\R^{n-1}\subset iT_{e_n}(\dS^n)$ and 
$\R_+ = (0,\infty)$:
\begin{equation}\label{eq:Expen}
\Exp_{e_n}(v )=S([v,v]_V) v  + C([v,v]_V)e_n 
\end{equation}
and this is contained in $T_{V_+} \cap \bS^n_{\C}=\Xi$ if $[v,v]_V \in (0,\pi^2)$. 
Therefore $\Exp_{e_n}(\Omega_{e_n}) \subeq \Xi$. 
If, conversely, $z \in \Xi = G^c.\bS^n_+$, then there exists a 
$t\in (0,\pi)$ such that $z$ is $G^c$-conjugate to   $x = (\sin t, 0, \ldots, 0, \cos t)$. 
But then $te_0 \in \Omega_{e_n}^\pi$ and 
\eqref{eq:Expen} yields 
$x = \Exp_{e_n}(t e_0)$. This proves the claim. 
\end{proof}

\begin{rem} (a) The proof of Lemma~\ref{lem:3.7} shows that the second orbit 
in \eqref{eq:dag2} is a homogeneous space of the form 
\[ \cO \cong G^c/\OO_{n-2}(\R)\{n_v\in N\: v_{n-1}=0\},\]  
but we will not use that fact in this article. This orbit is not
a symmetric space but we have a double fibration
\begin{center}
$\large\displaystyle
{\hskip -1.5cm} 
\begin{diagram} \node[2]{\cO} \arrow{sw,t}{p_V} \arrow{se,t}{p_{iV}}   \\ 
\node{\Lnp }  \node[2]{ \dS^n}
\end{diagram}$
\end{center}
which might be interesting for harmonic analysis on $\dS^n$.

(b)  We will return to the space $\Lnp$  
in Section~\ref{se:IntRep}. 
We recall that an orbit of a subgroup $gNg^{-1}$, $g \in G^c$, 
in $\wH$ is called a
 \textit{horocycle}. The group $G^c$ 
acts transitively on the set of horocycles which is isomorphic to $G^c/MN\cong 
\Lnp$ because $M$ leaves every horocycle invariant.
As a subset of $\wH$, the basic horocycle is 
\[h_0 :=N.e_0=\{(1+\textstyle{\frac{1}{2}}\|v\|^2, iv, 
\textstyle{\frac{i}{2}}\|v\|^2)\: v\in\R^{n-1}\}\simeq \R^{n-1} .\]
As $N \cap G^c_{e_0} = \{e\}$, the horocycles are paraboloids 
diffeomorphic to $N$ and hence to~$\R^{n-1}$. 

(c) For de Sitter space, the stabilizer group $H = G^c_{e_n}$ 
is a symmetric subgroup of $G^c$ as
pointed out above. But   $H$ is not compact and
$\dS^n$ is not a Riemannian symmetric space, but a pseudo-Riemannian
symmetric space. The signature of the  metric is $(1,n-1)$. The symmetric
space $G^c/H$ is an example of a \textit{non-compactly causal symmetric space}, see \cite{HO96}. The
involution $r_n$ is dissecting and commutes with the action of $H$.
\end{rem}

\begin{ex} In the context of Example~\ref{ex:n1}, where we discuss 
the case $n = 1$ by an isomorphism $\zeta \: \C^\times \to \bS^1_\C$, 
we have $\bL^1_+ = \eset$, 
\[ \zeta(i\R^\times) = \partial \Xi = \dS^1 \subeq i \R \times \R 
\quad \mbox{ and } \quad  \zeta(\R^\times_+) = \bH^1_V \subeq \R \times i \R.\] 
\end{ex}

\section{Reflection positivity on the sphere $\bS^n$}
\mlabel{sec:4} 
\noindent
In this section we specialize the results from Section \ref{se:ReLap} to the sphere $\bS^n$ 
and the dissecting involution $r_0$. We start by discussing $G^c$-invariant 
kernels on~$\Xi$ arising by analytic extension from $G$-invariant 
kernels on $\bS^n$ by twisting with $\sigma$. 
To connect with our 
previous work on reflection positive functions on the circle $\bS^1$ \cite{NO15b}, 
we  then discuss the special case $n=1$. 
After that we turn to the general case and obtain an explicit 
expression for the kernel~$\Psi_m$ on $\Xi$ 
(Theorem~\ref{the:ctmSp}). 

\subsection{Invariant positive definite kernels} 
We say that
$\Psi \: \Xi \times \Xi \to \C$ is {\it sesquiholomorphic} if $\Psi$ is 
holomorphic in the first and antiholomorphic in the second 
argument. Note that this is equivalent to $\Psi$ being holomorphic on 
$\Xi \times \Xi^{\rm op}$, where $\Xi^{\rm op}$ carries the opposite complex structure. 
We write $\Sesh(\Xi)$ for the complex linear space 
of sesquiholomorphic $G^c$-invariant kernels on $\Xi$,  
and $\Gamma := \Gamma (\Xi)\subeq \Sesh(\Xi)$ for the 
convex cone of positive definite kernels. 
The Fr\'echet space of holomorphic functions on $\Xi$ is denoted 
by $\cO(\Xi)$. 
We note that every $\Psi \in\Gamma$ is {\it hermitian} in the
sense that $\Psi (z,w)=\overline{\Psi (w,z)}$.

\begin{lem}\label{le:PsiE0} For $\Psi \in \Sesh(\Xi)$, 
the following are equivalent: 
\begin{itemize}
\item[\rm(i)] $\Psi = 0$. 
\item[\rm(ii)] $\Psi_{e_0} = 0$. 
\item[\rm(iii)] $\Psi_{e_0} \res_{\bH_V^n} =0$.
\item[\rm(iv)] $\Psi_{e_0} \res_{\bS^n_+} =0$.
\end{itemize}
For $\Psi \in \Gamma$, these conditions are further equivalent to 
\begin{itemize}
\item[\rm(v)] $\Psi(e_0, e_0) = 0$. 
\end{itemize}
\end{lem}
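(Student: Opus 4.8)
The plan is to treat the two groups of equivalences separately, exploiting in both cases that a sesquiholomorphic kernel is pinned down by very little data once $G^c$-invariance and the identity theorem for the totally real submanifolds of $\Xi$ are brought to bear. Throughout I write $\Psi_{e_0}(z) = \Psi(z,e_0)$, which is holomorphic on $\Xi$ because $\Psi$ is holomorphic in its first argument. The implications (i)$\Rightarrow$(ii)$\Rightarrow$(iii) and (ii)$\Rightarrow$(iv) are immediate, since restricting the zero function gives zero. For the reverse implications (iii)$\Rightarrow$(ii) and (iv)$\Rightarrow$(ii) I would appeal to Proposition~\ref{prop:XiTube}(iii), according to which $\wH = \Xi^{\sx}$ and $\bS^n_+ = \Xi^{\sr}$ are totally real submanifolds of $\Xi$ of maximal (real) dimension $n$; hence, by the identity theorem recorded at the beginning of Section~\ref{sec:3}, a holomorphic function on $\Xi$ vanishing on either of them vanishes identically. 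Applying this to $\Psi_{e_0}$ yields (ii).

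The one substantial step is (ii)$\Rightarrow$(i), where $G^c$-invariance enters. Assuming $\Psi_{e_0} = 0$, i.e.\ $\Psi(z,e_0)=0$ for all $z \in \Xi$, I would first propagate the vanishing across the first factor by invariance: for $w = g.e_0 \in G^c.e_0 = \wH$ one has $\Psi(z,w) = \Psi(z, g.e_0) = \Psi(g^{-1}.z, e_0) = 0$, since $g^{-1}.z \in \Xi$. Thus $\Psi$ vanishes on all of $\Xi \times \wH$. Now fix $z \in \Xi$: the function $w \mapsto \overline{\Psi(z,w)}$ is holomorphic on $\Xi$ (because $\Psi$ is antiholomorphic in its second argument) and vanishes on the maximal totally real submanifold $\wH = \Xi^{\sx}$, so by the identity theorem it vanishes identically. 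Hence $\Psi(z,w) = 0$ for all $z,w \in \Xi$, i.e.\ $\Psi = 0$. I expect this to be the only place where something beyond bookkeeping happens; the key point is that the single orbit $G^c.e_0 = \wH$ is already totally real of full dimension, so after invariance has spread the zeros onto that orbit the identity theorem can be applied a second time, in the second variable.

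For $\Psi \in \Gamma$ the equivalence with (v) is a routine positivity argument. The implication (ii)$\Rightarrow$(v) is trivial, as $\Psi(e_0,e_0) = \Psi_{e_0}(e_0)$. For (v)$\Rightarrow$(ii) I would use that a positive definite hermitian kernel obeys the Cauchy--Schwarz inequality: for each $z \in \Xi$ the $2\times 2$ Gram matrix attached to $\{z,e_0\}$ is positive semidefinite, which together with $\Psi(e_0,z) = \overline{\Psi(z,e_0)}$ gives
\[ |\Psi(z,e_0)|^2 \le \Psi(z,z)\,\Psi(e_0,e_0). \]
If $\Psi(e_0,e_0) = 0$, the right-hand side vanishes for every $z$, forcing $\Psi_{e_0} = 0$, which is (ii). This closes the full cycle of equivalences.
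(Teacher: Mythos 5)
Your proposal is correct and follows essentially the same route as the paper: restriction gives the easy implications, the identity theorem for the totally real submanifolds $\bS^n_+$ and $\bH^n_V$ (as recorded at the start of Section~\ref{sec:3}) handles (iii),(iv)$\Rightarrow$(ii), $G^c$-invariance spreads the vanishing onto $\Xi\times\bH^n_V$ followed by a second application of the identity theorem in the second variable for (ii)$\Rightarrow$(i), and Cauchy--Schwarz for the positive definite case. No gaps.
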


\begin{proof} Obviously (i) implies (ii). 
The equivalence of (ii), (iii) and (iv) follows from the fact that 
$\Xi$ is connected and $\bS^n_+$ and $\bH_V^n$ are totally real submanifolds 
of $\Psi$. 

It is also clear that (iv) implies (v). If, conversely, 
$\Psi$ is positive definite and $\Psi(e_0,e_0) =0$, 
then 
\[ |\Psi(z, e_0)|^2 \leq \Psi(z,z)\Psi(e_0,e_0) = 0
\quad \mbox{ for } \quad z \in \Xi, \] 
i.e., $\Psi_{e_0} = 0$. 

If $\Psi_{e_0} = 0$, 
then the $G^c$-invariance of $\Psi$ leads to 
$\Psi_{g.e_0}(z) = \Psi_{e_0}(g^{-1}.z)=0$ for $g \in G^c$, $z \in \Xi$. 
Hence $\Psi_z = 0$ for every $z \in \wH = G^c.e_0$. 
Thus $\Psi (w,z)=0$ for $z\in\wH, w \in \Xi$ and since 
$\wH$ is totally real in $\Xi$, we obtain $\Psi(w,z) = 0$ 
for all $z,w \in \Xi$. 
\end{proof}
 
 \begin{cor}\label{co:UniDet} Any $\Psi\in\Sesh(\Xi)$ is uniquely 
determined by any of the $K$-invariant functions
 $\Psi_{e_0}|_{\wH}$ and $\Psi_{e_0}|_{\bS_+^n}$.
 \end{cor}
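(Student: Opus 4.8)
The plan is to read this off Lemma~\ref{le:PsiE0} once we use that $\Sesh(\Xi)$ is a complex vector space and that restriction is a linear operation. Concretely, the two assignments
\[
\Psi \longmapsto \Psi_{e_0}|_{\wH}
\quad\text{and}\quad
\Psi \longmapsto \Psi_{e_0}|_{\bS^n_+}
\]
are linear maps on $\Sesh(\Xi)$, and the corollary is exactly the assertion that each of them is injective.

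First I would record that $\Psi_{e_0}$ really is a $K$-invariant function, so that the statement is meaningful: since $\Psi$ is $G^c$-invariant and $K = G^c_{e_0}$, for $k \in K$ and $z \in \Xi$ we obtain $\Psi_{e_0}(k.z) = \Psi(k.z, k.e_0) = \Psi(z, e_0) = \Psi_{e_0}(z)$. Restricting to the totally real submanifolds $\wH = \Xi^{\sx}$ and $\bS^n_+ = \Xi^{\sr}$ then yields $K$-invariant functions on those manifolds.

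For the injectivity, suppose $\Psi, \Psi' \in \Sesh(\Xi)$ agree after restriction, say $\Psi_{e_0}|_{\wH} = \Psi'_{e_0}|_{\wH}$. Because $\Sesh(\Xi)$ is closed under differences, $\Theta := \Psi - \Psi'$ again lies in $\Sesh(\Xi)$ and satisfies $\Theta_{e_0}|_{\wH} = 0$, which is condition~(iii) of Lemma~\ref{le:PsiE0}. The equivalence (i)$\,\Leftrightarrow\,$(iii) of that lemma then forces $\Theta = 0$, i.e.\ $\Psi = \Psi'$. The argument for $\bS^n_+$ is verbatim the same, invoking condition~(iv) in place of~(iii).

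I do not expect any genuine obstacle here: all of the substantive work is already carried by Lemma~\ref{le:PsiE0}, where the connectedness of $\Xi$ together with the fact that $\wH$ and $\bS^n_+$ are totally real is used to propagate the vanishing of a sesquiholomorphic kernel from a single point (or a single $G^c$-orbit) to all of $\Xi \times \Xi$. What remains for the corollary is only the formal observation that forming a difference and restricting are linear, so that uniqueness is equivalent to the triviality of the kernel of these restriction maps.
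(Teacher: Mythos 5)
Your proof is correct and is precisely the argument the paper intends: the corollary follows immediately from Lemma~\ref{le:PsiE0} applied to the difference $\Psi - \Psi'$, using that $\Sesh(\Xi)$ is a vector space, with the equivalences (i)$\Leftrightarrow$(iii) and (i)$\Leftrightarrow$(iv) supplying injectivity of the two restriction maps. Your preliminary check that $\Psi_{e_0}$ is $K$-invariant (from $G^c$-invariance and $K = G^c_{e_0}$) is a sensible addition, though the paper takes it for granted.
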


Lemma~\ref{le:PsiE0}(v) shows in particular that the convex subset 
\[ \Gamma_1:=\{\Psi\in \Gamma\: \Psi (e_0,e_0)=1\} \] 
is a base of the cone $\Gamma$, i.e., the linear functional 
$\Psi \mapsto \Psi(e_0, e_0)$ is strictly positive on 
$\Gamma \setminus \{0\}$. We write 
\[ \Gamma_e := \Ext(\Gamma_1) \] 
for the set of extreme points of $\Gamma_1$ which represent 
the extremal rays of the cone $\Gamma$. 
For $y\in \Xi$ and $\Psi\in \Gamma$, we obtain a holomorphic function 
$\Psi_y\in \cO(\Xi)$ by $\Psi_y(x):=\Psi (x,y)$ such
that
\[\Psi_y(x)=\Psi (x,y)=\overline{\Psi (y,x)}=\overline{\Psi_x(y)}\quad\text{and}\quad
\Psi_y(g.x)=\Psi_{g^{-1}.y}(x) 
\quad \mbox{ for } \quad x,y \in \Xi, g \in G^c. \]
 
Our next goal is to identify the elements in $\Gamma_e$.  
We start with the following easy geometric lemmas: 
 
\begin{lem}\label{le:3.1}
Two pairs $(x,y), (z,w) \in \bS^n \times \bS^n$ are conjugate under 
$G = \OO_{n+1}(\R)$ if and only if $x  y=z  w$. 
\end{lem}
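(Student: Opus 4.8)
The plan is to prove both directions of the equivalence, where the forward direction is essentially immediate from the invariance of the bilinear form and the reverse direction is the substantive content. First I would observe that since every $g \in \OO_{n+1}(\R)$ preserves the standard $\C$-bilinear form restricted to $\R^{n+1}$, i.e.\ $(gx)(gy) = xy$ for all $x,y$, the quantity $xy$ is a $G$-invariant of the pair $(x,y)$. Hence if $(x,y)$ and $(z,w)$ are $G$-conjugate, say $z = gx$ and $w = gy$, then $zw = (gx)(gy) = xy$, which gives the forward implication.

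For the reverse direction, suppose $xy = zw$ with $x,y,z,w \in \bS^n$, so all four vectors are unit vectors and the assumption says the angle between $x$ and $y$ equals the angle between $z$ and $w$. The plan is to construct an orthogonal transformation $g$ carrying $x \mapsto z$ and $y \mapsto w$ by working in the two-dimensional subspaces spanned by each pair. I would split into cases according to whether the pairs are linearly independent. In the generic case where $\{x,y\}$ is linearly independent (equivalently $xy \neq \pm 1$), the condition $xy = zw$ forces $\{z,w\}$ to also be independent; then one builds an orthonormal basis of $\Spann\{x,y\}$ via Gram--Schmidt and likewise of $\Spann\{z,w\}$, notes that $x,y$ have the same coordinates in the first basis as $z,w$ do in the second (because these coordinates depend only on $\|x\|=\|z\|=1$, $\|y\|=\|w\|=1$, and the common inner product $xy=zw$), and defines $g$ to send the first orthonormal basis to the second. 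Extending $g$ by any orthogonal map between the orthogonal complements $\Spann\{x,y\}^\perp$ and $\Spann\{z,w\}^\perp$ (which have equal dimension) produces the desired element of $\OO_{n+1}(\R)$.

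The degenerate cases are easy to dispose of: if $xy = 1$ then $x = y$ and likewise $z = w$, so I just need a single orthogonal map taking the unit vector $x$ to the unit vector $z$, which exists since $\OO_{n+1}(\R)$ acts transitively on $\bS^n$; the case $xy = -1$ giving $y = -x$, $w = -z$ is handled by the same map since $g(-x) = -gx$.

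The main obstacle, though it is more bookkeeping than genuine difficulty, is verifying carefully that the coordinates of the pair in the respective orthonormal frames genuinely coincide, so that the map defined on the two-dimensional spans really is an isometry that can be extended orthogonally to all of $\R^{n+1}$. Concretely, with $e_1 := x$ and $e_2 := (y - (xy)x)/\|y - (xy)x\|$, one has $y = (xy)e_1 + \sqrt{1-(xy)^2}\,e_2$, and the analogous expansion for $w$ in terms of $f_1 := z$, $f_2 := (w-(zw)z)/\|w-(zw)z\|$ uses the identical scalar $zw = xy$; setting $g e_i = f_i$ and extending isometrically is then forced to work. I expect this to be the only step requiring care, and it is purely linear-algebraic.
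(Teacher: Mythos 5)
Your proof is correct and complete: the forward direction is indeed immediate from invariance of the bilinear form, and your Gram--Schmidt construction with orthogonal extension on the complements (plus the degenerate cases $xy=\pm 1$, which you handle correctly via Cauchy--Schwarz) settles the converse. The paper itself states this as an ``easy geometric lemma'' and gives no proof at all, so there is nothing to diverge from; your argument is the standard one (in effect a special case of Witt's extension theorem for the Euclidean form), and the only point worth noting is the one the paper makes in the remark following the lemma: the use of the full group $\OO_{n+1}(\R)$ rather than $\SO_{n+1}(\R)$ is essential for $n=1$, which your freedom in choosing the extension on the $(n-1)$-dimensional complements silently exploits.
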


\begin{rem} For $n > 1$, Lemma~\ref{le:3.1} remains true 
with $\SO_{n+1}(\R)$ instead of $\OO_{n+1}(\R)$. 
However, for $n = 1$, one needs an additional invariant to separate 
the $\SO_2(\R)$ orbits of pairs, namely 
the determinant $\det(x,y) := x_0 y_1 - x_1 y_0$, to determine 
the oriented angle between $x$ and $y$. 
\end{rem}

\begin{lem} \mlabel{lem:alphafun}
Let $\Omega \subeq \bS^n\times \bS^n$ be a subset invariant under 
the diagonal action of $G = \OO_{n+1}(\R)$. 
For a function $\Phi \: \Omega \to \C$, 
the following are equivalent:
\begin{itemize}
\item[\rm(i)] $\Phi$ is $G$-invariant.
\item[\rm(ii)] There  exists a $K$-biinvariant function 
$\varphi_\Phi$ on $G_\Omega := \{ g \in G \: (g.e_0, e_0)\in \Omega\}$ 
such that
$\Phi(g_1.e_0,g_2.e_0)=\varphi_\Phi(g_2^{-1}g_1)$.
\item[\rm(iii)] There exists a function $\alpha_\Phi : 
\{ xy \: (x,y) \in \Omega \} \to \C$ such that
$\Phi (x,y)=\alpha_\Phi (x  y)$ for $(x,y)~\in~\Omega$.
\end{itemize}
\end{lem}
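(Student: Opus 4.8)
The plan is to prove the two equivalences $(i)\Leftrightarrow(iii)$ and $(i)\Leftrightarrow(ii)$ separately, with the geometric input of Lemma~\ref{le:3.1} doing all the real work in the first one, while the second is a purely formal manipulation with the coset space $\bS^n \cong G/K$, $K = G_{e_0}$. The implication $(iii)\Rightarrow(i)$ is immediate: since $G = \OO_{n+1}(\R)$ preserves the bilinear form, one has $(g.x)(g.y) = x y$, hence $\Phi(g.x,g.y) = \alpha_\Phi((g.x)(g.y)) = \alpha_\Phi(x y) = \Phi(x,y)$ for $(x,y)\in\Omega$ (the point $(g.x,g.y)$ again lying in $\Omega$ by its $G$-invariance).

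For $(i)\Rightarrow(iii)$ I would define $\alpha_\Phi$ on the value set $\{xy : (x,y)\in\Omega\}$ by choosing, for each value $t$, a representative $(x,y)\in\Omega$ with $x y = t$ and putting $\alpha_\Phi(t) := \Phi(x,y)$. The only substantive point is well-definedness: if $(x,y),(x',y')\in\Omega$ satisfy $x y = x' y'$, then Lemma~\ref{le:3.1} furnishes $g\in G$ with $(x',y') = (g.x,g.y)$, and the $G$-invariance of $\Phi$ forces $\Phi(x',y') = \Phi(x,y)$. This is precisely where Lemma~\ref{le:3.1} is used, identifying the $G$-orbits on $\bS^n\times\bS^n$ with the fibres of $(x,y)\mapsto xy$; the resulting $\alpha_\Phi$ then satisfies $\Phi(x,y) = \alpha_\Phi(xy)$ by construction.

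For $(i)\Leftrightarrow(ii)$ the bridge is the remark that $(g_1.e_0,g_2.e_0)\in\Omega$ if and only if $g_2^{-1}g_1\in G_\Omega$, which follows by translating the pair with $g_2^{-1}$ and using the $G$-invariance of $\Omega$. Assuming $(i)$, I would set $\varphi_\Phi(g) := \Phi(g.e_0,e_0)$ for $g\in G_\Omega$; right multiplication by $k\in K$ fixes $e_0$, and left multiplication by $k$ is undone by applying $G$-invariance of $\Phi$ with $k^{-1}$ and using $k^{-1}.e_0 = e_0$, so $\varphi_\Phi$ is $K$-biinvariant, while the identity $\Phi(g_1.e_0,g_2.e_0) = \Phi(g_2^{-1}g_1.e_0,e_0) = \varphi_\Phi(g_2^{-1}g_1)$ comes from translating by $g_2^{-1}$. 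Conversely, given $(ii)$, I would first check that $\Phi(g_1.e_0,g_2.e_0) := \varphi_\Phi(g_2^{-1}g_1)$ is consistent, since replacing $g_i$ by $g_i k_i$ with $k_i\in K$ changes $g_2^{-1}g_1$ to $k_2^{-1}(g_2^{-1}g_1)k_1$, on which $\varphi_\Phi$ is constant; the $G$-invariance of the resulting $\Phi$ is then clear because $(hg_2)^{-1}(hg_1) = g_2^{-1}g_1$.

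The only genuine obstacle is the well-definedness in $(i)\Rightarrow(iii)$, and it is dispatched entirely by Lemma~\ref{le:3.1}; once the $G$-orbits on $\bS^n\times\bS^n$ are known to be exactly the level sets of the bilinear form, the remaining implications—especially those involving $(ii)$—are routine bookkeeping with cosets and $K$-biinvariance.
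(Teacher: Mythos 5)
Your proposal is correct: the paper states Lemma~\ref{lem:alphafun} without proof, treating it as a routine consequence of the orbit description in Lemma~\ref{le:3.1}, and your argument---(iii)$\Rightarrow$(i) from $G$-invariance of the bilinear form, (i)$\Rightarrow$(iii) by well-definedness of $\alpha_\Phi$ via Lemma~\ref{le:3.1}, and (i)$\Leftrightarrow$(ii) by transitivity of $G$ on $\bS^n$ together with coset bookkeeping---is exactly the intended one. There is nothing to correct.
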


\begin{defn}
An analytic kernel $\Psi $ on $\bS^n_+\times \bS^n_+$ 
is said to be {\it $(\fg^c, K)$-invariant} if the following 
conditions hold for all $(x,y)\in\bS^n_+$: 
\begin{itemize}
\item[\rm (i)]   For $X\in \fg^c$ we write 
\[ (\cL^1_X F)(x,y) 
= \frac{d}{dt}\Big|_{t=0} F(\exp(tX).x,y) \quad \mbox{ and } \quad 
(\cL^2_X F)(x,y) 
= \frac{d}{dt}\Big|_{t=0} F(x,\exp(tX).y)\] 
and define $\cL_X^j$ for $X \in \g_\C$ by complex linear extension. 
Then 
\begin{equation}\label{eq:Inv}
\cL^1_X\Psi (x,y)=-\cL^2_X\Psi (x,y)\quad \text{for all } X\in \fg^c, 
\end{equation} 
\item[(ii)] For $k\in K$ we have $\Psi (k.x,k.y)=\Psi (x,y)$.
\end{itemize}
\end{defn}

If $\Psi \in \Sesq(\Xi)$ is real-valued 
and symmetric on $\bS^n_+\times \bS^n_+$, then 
the kernel $\Psi^*(z,w) := \oline{\Psi(w,z)}$ coincides with 
$\Psi$ on the totally real submanifold $\bS^n_+ \times \bS^n_+$ of 
$\Xi \times \Xi^{\rm op}$, hence on $\Xi \times \Xi^{\rm op}$. 
We conclude that $\Psi$ is hermitian. 

\begin{lem}\label{le:PsiLocInv} Let $\Psi$ be a $(\g^c,K)$-invariant 
analytic kernel on $\bS^n_+\times \bS^n_+$. Then there exist an analytic 
complex-valued function $\alpha_\Psi$ on an open neighborhood 
of $(-1,1]$, such that 
\[ \Psi (x,y)=\alpha_\Psi (\lf{x}{\sx y})\quad \mbox{ for } \quad 
x,y \in \bS^n_+.\]
The kernel $\Psi$ is hermitian if and only if $\alpha_\Psi$ is real valued.
\end{lem}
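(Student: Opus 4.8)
The plan is to strip the conjugation $\sx$ out of the second slot, reduce the statement to the (complexified) rotation-invariance of Lemma~\ref{lem:alphafun}/\ref{le:3.1}, and then reinstate $\sx$. Throughout I use that $\Psi$ is sesquiholomorphic, i.e. holomorphic in the first and antiholomorphic in the second argument — this is implicit already in the definition of $\cL^2_X$ for the boost directions $X\in\g^c$, since $\exp(tX).y$ leaves $\bS^n_+$ and lands in $\Xi$ — together with the facts that $\sx$ commutes with $G^c\supeq K$ and that $\sx\Xi=\Xi$ (Proposition~\ref{prop:XiTube}).

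First I would set $\Theta(z,w):=\Psi(z,\sx w)$ on $\Xi\times\Xi$. Because $\sx$ is antiholomorphic and $\Psi$ is antiholomorphic in its second argument, $\Theta$ is holomorphic in \emph{both} arguments. The $K$-invariance of $\Psi$ and $\sx k=k\sx$ for $k\in K$ give $\Theta(k.z,k.w)=\Theta(z,w)$; and from $\sx g=g\sx$ for $g\in G^c$ the infinitesimal invariance \eqref{eq:Inv}, $\cL^1_X\Psi+\cL^2_X\Psi=0$, transfers to the identity $\cL^1_X\Theta+\cL^2_X\Theta=0$ for all $X\in\g^c$. Since $\Theta$ is holomorphic in both arguments, the last identity extends $\C$-linearly to all $X\in\g^c_\C=\so_{n+1}(\C)$; thus $\Theta$ is annihilated by the diagonal action of the full complex Lie algebra $\so_{n+1}(\C)$, and, adding the reflections contained in $K$, it is invariant under the diagonal action of $\OO_{n+1}(\C)$.

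Next I would invoke the holomorphic analogue of Lemma~\ref{le:3.1}: the only invariant of a pair of points of $\bS^n_\C$ under the diagonal $\OO_{n+1}(\C)$-action is the pairing $\lf{z}{w}$. Concretely, where $d\lf{z}{w}\neq 0$ a dimension count gives that the diagonal $\so_{n+1}(\C)$-orbits have dimension $2n-1$, hence are exactly the codimension-one level sets of $\lf{\cdot}{\cdot}$, so the infinitesimal invariance forces $d\Theta$ to be a multiple of $d\lf{z}{w}$ and $\Theta$ to be locally a holomorphic function of $\lf{z}{w}$. To globalize this to a single-valued $\alpha$ I would use that $\lf{\cdot}{\cdot}$ maps $\Xi\times\Xi$ onto $\C_\Xi=\C\setminus(-\infty,-1]$ (Lemma~\ref{lem:xi-values}), which is simply connected, so there is no monodromy; the branch is pinned down on $\wH\times\wH$, where $\sx$ acts trivially and $G^c=\OO_{1,n}(\R)^\uparrow$ acts by genuine isometries, so that $\Theta|_{\wH\times\wH}$ is an honest (single-valued) $G^c$-invariant function of $\lf{z}{w}\in[1,\infty)$. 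This produces a holomorphic $\alpha$ on $\C_\Xi$ with $\Theta=\alpha\circ\lf{\cdot}{\cdot}$.

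Finally, undoing the substitution gives $\Psi(z,w)=\Theta(z,\sx w)=\alpha(\lf{z}{\sx w})$ on $\Xi\times\Xi$; restricting to $x,y\in\bS^n_+=\Xi^{\sr}$ and noting $\lf{x}{\sx y}=x_0y_0-\bx\by\in(-1,1]$ yields the displayed formula with $\alpha_\Psi$ the restriction of $\alpha$ to a neighborhood of $(-1,1]$. For the last assertion, $\lf{x}{\sx y}=\lf{y}{\sx x}$ is real on $\bS^n_+\times\bS^n_+$, so the hermiticity relation $\Psi(x,y)=\overline{\Psi(y,x)}$ becomes $\alpha_\Psi(s)=\overline{\alpha_\Psi(s)}$ for $s\in(-1,1]$, i.e. $\alpha_\Psi$ is real-valued. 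The hard part will be the invariant-theoretic step of the third paragraph: passing from the local/infinitesimal statement to a globally well-defined holomorphic $\alpha$, since the $\OO_{n+1}(\C)$-orbits need not remain inside the open set $\Xi\times\Xi$. The simple connectedness of $\C_\Xi$ together with the genuine real $G^c$-action on $\wH$ is what I would lean on to control this, and the reflections in $K$ are precisely what is needed to eliminate the residual determinant invariant when $n=1$ (cf.\ the remark after Lemma~\ref{le:3.1}).
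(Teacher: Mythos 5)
The central gap is your opening assumption that $\Psi$ is sesquiholomorphic on $\Xi\times\Xi$. The lemma assumes only that $\Psi$ is a (real-)analytic kernel on the real manifold $\bS^n_+\times\bS^n_+$; the operators $\cL^j_X$ are defined for $X\in\g_\C$ by complex-\emph{linear extension} from $\g$, so the invariance condition \eqref{eq:Inv} involves only derivatives at real points along real directions. It is not true that sesquiholomorphy on $\Xi\times\Xi$ is ``implicit'': real-analyticity yields a sesquiholomorphic extension to \emph{some} neighborhood of $\bS^n_+\times\bS^n_+$, but your argument needs the global domain — you restrict to $\wH\times\wH$ (note $\bS^n_+\cap\wH=\{e_0\}$, so no thin neighborhood of the half-sphere contains it) and you invoke Lemma~\ref{lem:xi-values} about $\C_\Xi$. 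The assumption is also genuinely false in the lemma's generality: the kernel $\Psi(x,y):=(\lf{x}{\sx y}-2)^{-1}$ is analytic on $\bS^n_+\times\bS^n_+$ (there $\lf{x}{\sx y}\in(-1,1]$) and is $(\g^c,K)$-invariant — as in the proof of Theorem~\ref{th:Psi}, any kernel $\beta(\lf{x}{\sx y})$ is — yet it has no sesquiholomorphic extension to $\Xi\times\Xi$, since any such extension would have to equal $(\lf{z}{\sx w}-2)^{-1}$, and $2\in\C_\Xi$. Finally, assuming the extension makes the argument circular relative to how the lemma is used: in Theorem~\ref{th:Psi} the lemma is applied to $\Psi_m$ precisely in order to \emph{prove} that $\Psi_m$ extends to $\Xi\times\Xi$; the statement you are actually attacking, for kernels already sesquiholomorphic on $\Xi\times\Xi$, is Theorem~\ref{thm:KernSphere}, not Lemma~\ref{le:PsiLocInv}.

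Even granting the stronger hypothesis, your globalization step is not justified as written: simple connectivity of $\C_\Xi$ is not the relevant property. A holomorphic function that is locally constant on the level sets of $(z,w)\mapsto\lf{z}{w}$ factors through a single-valued $\alpha$ only if those level sets intersected with $\Xi\times\Xi$ are \emph{connected} (the map is not a locally trivial fibration, so there is no monodromy argument to borrow), and your orbit-dimension count breaks down at the value $1$, attained on the twisted diagonal $w=\sx z$ where $d\lf{\cdot}{\cdot}$ vanishes. These are exactly the points on which the paper spends its effort in Theorem~\ref{thm:KernSphere} (reduction to $n=1$ and the explicit computation with $R(z)=\frac{1}{2}(z+z^{-1})$). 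The paper's proof of the lemma itself never leaves the real points: it sets $\Phi(x,y):=\Psi(x,\sx y)$ — legitimate since $\sx|_{\R^{n+1}}=-r_0\in K$ preserves $\bS^n_+$ — observes that the twisted $(\g^c,K)$-invariance becomes honest $(\g,K)$-invariance, i.e.\ local $\OO_{n+1}(\R)$-invariance, moves pairs along paths in the compact group inside $\bS^n_+$ into $\bS^1$ to obtain $\Phi(x,y)=\alpha_\Phi(xy)$ (cf.\ Lemmas~\ref{le:3.1} and \ref{lem:alphafun}), and gets analyticity of $\alpha_\Phi$ at the endpoint $1$ from the symmetry of $t\mapsto\alpha_\Phi(C(t^2))$ together with the inverse function theorem. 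That purely real route delivers exactly the local conclusion the lemma asserts; your route, if completed, would prove a different and later theorem under a strictly stronger hypothesis.
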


\begin{proof} {\bf Step 1:} If $y \in \bS_+^n$, then $\sx y\in \bS_+^n$. 
Hence $\Phi (x,y):=\Psi (x,\sx(y))$ is defined on
$\bS^n_+\times \bS^n_+$. As $\Psi$ is analytic and $(\g^c,K)$-invariant, 
the kernel $\Phi$ is $(\fg,K)$-invariant and hence locally $G$-invariant. 
Transforming pairs of points on $\bS^n_+$ by differentiable paths in $G$ 
inside of $\bS^n_+$ into pairs lying in 
$\bS^1 \cong \bS^n \cap (\R e_0 + \R e_1)$, it follows that 
there exists a function $\alpha_\Phi : (-1,1]\to \C$ such that 
\[ \Phi (x,y)=\alpha_\Phi (x  y) \quad \mbox{ for }\quad 
x,y \in \bS^n_+.\]  

\nin {\bf Step 2:} Now we argue that $\alpha_\Phi$ extends to an analytic function 
on an open interval containing $(-1,1]$. 
The analyticity of $\alpha_\Phi$ 
on the open interval $(-1,1)$ immediately follows from the analyticity of $\Phi$. 
To see what happens in $1$, we observe that 
\[ \Phi(x,e_0) = \alpha_\Phi(x   e_0) = \alpha_\Phi(x_0). \] 
For $t$ close to $0$, this leads to 
\[ \Phi(\cos(t)e_0 + \sin(t) e_1,e_0) = \alpha_\Phi(\cos(t)) 
= \alpha_\Phi(C(t^2)) \] 
with $C$ as in \eqref{eq:CandS}. Since this function is symmetric in $t$, 
it follows that $\alpha_\Phi \circ C$ extends to a function which is 
analytic in a neighborhood of $0$. As $C(0) = 1$ and $C'(0) \not=0$, 
the Inverse Function Theorem for holomorphic functions shows that 
$\alpha_\Phi$ extends to an analytic function in a neighborhood of~$1$.

\nin {\bf Step 3:} For $x, y \in \bS^n_+$ we now have 
\[ \Psi (x,y)= \alpha_\Phi (x  \sx y)= \alpha_\Phi (\lf{x}{ \sx y}).\]
 Hence the statement holds with
$\alpha_\Psi :=\alpha_\Phi$. 
As $\sx|_{\R^n}=-r_0\in K$, we have 
\[\alpha_\Psi (x  \sx y)=\alpha_\Psi (\sx x   y)=\alpha_\Psi (y  \sx x).\]
Thus $\Psi (x,y)=\Psi (y,x)$. Hence $\Psi$ is hermitian if and only if 
$\Psi(y,x) = \Psi (x,y)=\overline{\Psi (y,x)}$ which holds if
and only if $\alpha_\Psi$ is real valued. 
\end{proof}
 
\begin{thm}\label{thm:KernSphere} A 
sesquiholomorphic kernel $\Psi$ on $\Xi\times \Xi$ 
is $G^c$-invariant if and only if there exists a holomorphic function 
\[ \alpha_\Psi : \C_\Xi = \C \setminus (-\infty,-1] \to \C 
\quad \mbox{ such that } \quad 
\Psi (z,w)=\alpha_\Psi (\lf{z}{ \sx w})\quad \mbox{ for } \quad 
z,w \in \Xi.\] 
Then $\Psi$ is hermitian if and only if $\alpha_\Psi |_{(-1,1]}$ is real valued.
\end{thm}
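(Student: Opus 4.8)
The plan is to prove the two implications separately, dispatching the ``if'' direction by a direct check and isolating an analytic-continuation argument for the converse.

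For the ``if'' direction, suppose $\Psi(z,w)=\alpha_\Psi(\lf{z}{\sx w})$ with $\alpha_\Psi$ holomorphic on $\C_\Xi$. Since $\sx\Xi=\Xi$ (Proposition~\ref{prop:XiTube}(iii)) and $\lf{z}{\sx w}=z\cdot(\sx w)\in\C_\Xi$ for $z,w\in\Xi$ by Lemma~\ref{lem:xi-values}, the right-hand side is defined. It is holomorphic in $z$ (the bilinear form is $\C$-linear in the first slot) and antiholomorphic in $w$ (because $\sx$ is conjugate-linear), so $\Psi$ is sesquiholomorphic. As $G^c\subeq\OO_{n+1}(\C)$ preserves the bilinear form and $\sx$ commutes with $G^c$, we get $\lf{g.z}{\sx(g.w)}=\lf{g.z}{g.\sx w}=\lf{z}{\sx w}$, whence $\Psi(g.z,g.w)=\Psi(z,w)$, i.e.\ $\Psi$ is $G^c$-invariant.

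For the converse the first step is to linearize the sesquiholomorphy by setting $\hat\Psi(z,W):=\Psi(z,\sx W)$. Because $\sx$ is antiholomorphic and commutes with $G^c$, the kernel $\hat\Psi$ is holomorphic in both arguments on $\Xi\times\Xi$, invariant under the diagonal $G^c$-action, and satisfies $\lf{z}{\sx w}=zW$ for $W=\sx w$. The decisive point is that, since $G^c$ is a real form of $G_\C=\OO_{n+1}(\C)$, holomorphic $G^c$-invariance upgrades to infinitesimal $\SO_{n+1}(\C)$-invariance: for fixed $(z,W)$ the map $g\mapsto\hat\Psi(g.z,g.W)$ is holomorphic on the open neighborhood $\{g\in G_\C:g.z,g.W\in\Xi\}$ of $e$ and constant on the totally real submanifold $G^c$ of maximal dimension, hence constant near $e$. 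Differentiating yields $\cL^1_X\hat\Psi+\cL^2_X\hat\Psi=0$ for all $X\in\so_{n+1}(\C)$.

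I would then convert this infinitesimal invariance into a factorization through $\mu(z,W):=zW$. Both $d\hat\Psi$ and $d\mu$ annihilate every diagonal field $\cL^1_X+\cL^2_X$, $X\in\so_{n+1}(\C)$ (for $d\mu$ this is the invariance of the bilinear form under skew operators). By the complex analogue of Lemma~\ref{le:3.1} — $\SO_{n+1}(\C)$ acts on $\bS^n_\C\times\bS^n_\C$ with $zW$ separating generic orbits, the orbit through a pair spanning a nondegenerate plane having complex codimension one by Witt's theorem — these diagonal fields span a codimension-one distribution wherever $d\mu\neq0$, so $d\hat\Psi$ is proportional to $d\mu$ off a thin set and locally $\hat\Psi=\alpha\circ\mu$ with $\alpha$ holomorphic. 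To produce a single global $\alpha$ on $\C_\Xi=\C\setminus(-\infty,-1]$ I would start from the germ supplied by Lemma~\ref{le:PsiLocInv}, which already gives a holomorphic $\alpha_\Psi$ near $(-1,1]$ with $\Psi(x,y)=\alpha_\Psi(\lf{x}{\sx y})$ on $\bS^n_+\times\bS^n_+$ (the restriction of $\Psi$ there is analytic and $(\g^c,K)$-invariant, obtained by differentiating and restricting the $G^c$-invariance). Since $\mu(\Xi\times\Xi)=\C_\Xi$ is surjective and $\C_\Xi$ is simply connected, the local factorizations let me continue $\alpha_\Psi$ along any path without monodromy to a holomorphic function on all of $\C_\Xi$; the identity $\hat\Psi=\alpha\circ\mu$ then propagates from a neighborhood of $\bS^n_+\times\bS^n_+$ to all of $\Xi\times\Xi$ by the identity theorem, giving $\Psi(z,w)=\alpha(\lf{z}{\sx w})$. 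The main obstacle is precisely this globalization: controlling the critical set of $\mu$ and the connectedness of its fibers inside $\Xi\times\Xi$ so that the continuation is genuinely single-valued; simple connectivity of the slit plane $\C_\Xi$ together with the already-established germ on $(-1,1]$ is what makes it go through.

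Finally, for the hermitian statement I would compute $\lf{w}{\sx z}=\overline{\lf{z}{\sx w}}$ directly from the definition of $\sx$. Thus $\Psi$ is hermitian, i.e.\ $\Psi(z,w)=\overline{\Psi(w,z)}$, if and only if $\alpha_\Psi(c)=\overline{\alpha_\Psi(\bar c)}$ for all $c\in\C_\Xi$. As $c\mapsto\overline{\alpha_\Psi(\bar c)}$ is holomorphic on the conjugation-invariant connected set $\C_\Xi$, the identity theorem shows this holds exactly when the two functions agree on the real interval $(-1,1]$, that is, when $\alpha_\Psi|_{(-1,1]}$ is real-valued.
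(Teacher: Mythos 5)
Your ``if'' direction, the local factorization via orbit geometry, and the hermitian criterion are all sound, and your route (complexifying the invariance by a totally-real argument, then factoring through $\mu(z,W)=zW$ using Witt's theorem) is genuinely different from the paper's, which instead reduces to $n=1$ via the embedding $\bS^1_\C\subset\bS^n_\C$ and computes explicitly. But the step you yourself call the main obstacle --- globalization --- is a real gap, not a technicality that simple connectivity absorbs. The Monodromy Theorem gives single-valuedness only \emph{after} one knows the germ of $\alpha_\Psi$ can be analytically continued along every path in $\C_\Xi$, and your local factorizations do not supply that: the fibers $\mu^{-1}(c)\cap(\Xi\times\Xi)$ are in general \emph{disconnected}, so the push-forward germs attached to different components of a fiber can sit over the same point of $\C_\Xi$ and disagree, and then there is no well-defined ``continuation'' to speak of. Already for $n=1$ (the case the paper reduces everything to) a generic fiber has exactly two components: in the coordinates of Example~\ref{ex:n1}, invariance under the identity component $G^c_0\cong\R^\times_+$ only yields $\Psi(\zeta(z),\zeta(w))=F(z\oline w^{-1})$ with $F$ holomorphic on $\C\setminus(-\infty,0]$, the fiber over $c$ splits into the two sheets $z\oline w^{-1}=u_1$ and $z\oline w^{-1}=u_1^{-1}$, and the two local germs they produce are $F(c+\sqrt{c^2-1})$ and $F(c-\sqrt{c^2-1})$. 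These coincide if and only if $F(u)=F(u^{-1})$, and that functional equation is forced only by invariance under the reflection $z\mapsto z^{-1}$, i.e.\ by the non-identity component of $G^c$ --- exactly the information your infinitesimal $\so_{n+1}(\C)$-argument discards. Indeed, the kernel $\Psi(\zeta(z),\zeta(w)):=z\oline w^{-1}$ is sesquiholomorphic, invariant under $G^c_0$, satisfies your condition $d\hat\Psi\wedge d\mu\equiv 0$ everywhere, and $\mu$ maps onto the simply connected $\C_\Xi$, yet it does not factor through $\lf{z}{\sx w}$; so ``local factorization everywhere $+$ surjectivity $+$ simple connectivity'' cannot be the operative mechanism.

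What is missing, therefore, is the fiber-matching step: you must show that the full $G^c$-invariance (equivalently, the $K$-invariance encoded in the initial germ of Lemma~\ref{le:PsiLocInv}) forces the germs coming from different components of $\mu^{-1}(c)\cap(\Xi\times\Xi)$ to agree, or prove connectedness of these fibers for $n\geq 2$ (itself a nontrivial claim your proposal does not address). This is precisely how the paper's proof is organized: it restricts $\Psi$ to $\Xi_1\times\Xi_1\cong\C_+\times\C_+$, uses the reflection in $K$ to derive $F(u)=F(u^{-1})$, and only then observes that $\alpha_\Psi(w):=F\bigl(w\pm\sqrt{w^2-1}\bigr)$ is a single well-defined holomorphic function on $\C\setminus(-\infty,-1]$ --- i.e.\ it verifies by hand that the two germs over each point coincide, which is the very point your argument leaves open.
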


\begin{proof} By Lemma~\ref{lem:xi-values}, 
any holomorphic function $\alpha$ on $\C_\Xi$ defines a 
$G^c$-invariant sesquiholomorphic kernel by $\Psi(z,w) 
:=\alpha_\Psi (\lf{z}{ \sx w})$. 

Suppose, conversely, that $\Psi$ is a $G^c$-invariant sesquiholomorphic 
kernel on $\Xi$. 
We have already seen that $\Psi$ is uniquely determined 
by its restriction to  $\bS^n_+\times \bS^n_+$. This
restriction is $(\g^c,K)$-invariant if and only if $\Psi$ is $G^c$-invariant. 
In view of Lemma \ref{le:PsiLocInv}, it therefore remains to show that 
the analytic function $\alpha_\Psi$ extends to the domain $\C_\Xi$. 
Since $\bS^1_\C$ embeds naturally in $\bS^n_\C$, it suffices to verify this 
for $n =1$. 

We recall from Example~\ref{ex:n1} that 
$\bS^1_\C \cong \C^\times$, $\Xi \cong \C_+$ is the open right half-plane 
and 
\[ G^c \cong \R^\times_+ \times \{\id,\sigma\} \quad \mbox{  with } \quad 
\sigma(z) = z^{-1},\] 
where $r \in \R^\times_+$ acts by multiplication. Further, 
$\sigma_V\zeta(z) = \zeta(\oline z)$ for $z \in \Xi$. 
If $\Psi$ is an $\R^\times_+$-invariant sesquiholomorphic kernel, 
we can use the family $(\Psi_w)_{w \in \Xi}$ of holomorphic functions on 
$\C_+$ to obtain a holomorphic function $F$ defined on the domain 
$\Xi \cdot \Xi = \C \setminus (-\infty,0]$ 
such that $\Psi(z,w) = F(z\oline w^{-1})$ holds for $z,w \in \Xi$. 
The $\sigma$-invariance of $\Psi$ yields for $x,y \in \R^\times_+$: 
\[ F(xy^{-1}) = \Psi(x,y) = \Psi(x^{-1}, y^{-1}) = F(x^{-1}y),\] 
and thus $F(z^{-1}) = F(z)$. 
From Example~\ref{ex:n1}  we further recall that 
\[ [\zeta(z),\sigma_V \zeta(w)] = \frac{1}{2}\Big(\frac{z}{\oline w} + \frac{\oline w}{z}\Big) 
= R(z\oline w^{-1}) \quad \mbox{ for } \quad 
R(z) := \frac{1}{2}(z + z^{-1}).\] 
In a $1$-neighborhood the function 
$\alpha_\Psi$ thus satisfies 
\[ \alpha_\Psi(R(z)) = \Psi(z,1) = F(z).\] 
Next we observe that the 
equation $R(z) = w$ has for $w \not\in (-\infty,-1]$ the solutions 
\[ z_{1/2} = w \pm \sqrt{w^2 - 1} \in \C \setminus (-\infty, 0] 
\quad \mbox{ satisfying} \quad 
z_1 z_2 = 1,\] 
so that $F(z_1) = F(z_2)$. 
On $\C \setminus (-\infty,-1]$ we thus obtain by 
\begin{equation}  \label{eq:ana-ext}
w \mapsto F(w + \sqrt{w^2-1}) = F(w - \sqrt{w^2-1}) 
\end{equation}
a well-defined function. In a neighborhood of $1$ it coincides 
with $\alpha_\Psi$, so that it is holomorphic. 
Outside of $1$, both branches of the square root yield the same 
holomorphic function when composed with $F$, so that 
$\alpha_\Psi(w) := F(w \pm \sqrt{w^2-1})$ 
defines a holomorphic function on $\C \setminus (-\infty,-1]$. 
This completes the proof. 
\end{proof} 

\subsection{Reflection positivity on $\bS^1$}
\mlabel{subsec:4.2}

Let us recall the results from \cite{NO15b} where we considered the
case $G=M=\T_\beta =\R/\beta \Z $, $\beta > 0$, with the involution 
$r_1(z)=z^{-1}$. Assume that $\beta = 2\pi$. Then $\T=\T_\beta$ is 
identified with $\bS^1$ by  the map $t + 2\pi \Z \mapsto (\cos t, \sin t)$, 
mapping $0$ to $e_0$ and $\pi/2$ to $e_1$. 
We let $\T_{\beta,+}=\{t+\beta\Z\: 0<t<\beta/2\}$.

Here $G^c_0 :=  \R^\times_+$ act by multiplication,
$G \cong  G_0\rtimes \{\id,r_1\}$ and $G^c = G^c_0\rtimes  \{\id,r_1\}$. 
We also recall from \eqref{eq:xi-for-n1} 
that $\Xi = \C_+$ is the open right half plane and $\bH^1 = \R^\times_+$. 

Here the basic example of reflection positive functions are given by
\[f_\lambda (t)=e^{-t\lambda} +e^{-(\beta -t)\lambda }= 2e^{-\beta \lambda /2}
\cosh \left(\left(\frac{\beta}{2}-t\right)\lambda\right)\]
(Definition~\ref{def:PsoDef}(iii)). 
With $\beta=2\pi$ this becomes $f_\lambda (t)=2e^{-\pi \lambda}\cosh ((\pi -t)\lambda )$.
In general, we have (see \cite[Thm. 3.3]{KL81} or \cite[Thm. 2.4]{NO15b}):
\begin{thm} \label{thm:KL} A $2\pi $-periodic symmetric 
continuous function $\varphi : \R\to \C$ is reflection
positive with respect to $(\T, \T^+,\tau_1)$, i.e., the 
kernels 
$(\phi(t+s))_{t,s \in (0,\pi)}$ and  
$(\phi(t-s))_{t,s \in \R}$ are positive definite, 
if and only if there exists a positive measure $\mu$ on $[0,\infty)$ such that
\[\varphi (t)
=\int_0^\infty e^{-t\lambda } +e^{(\pi -t)\lambda}d\mu (\lambda)
\quad \mbox{ for } \quad  0\leq  t\leq 2\pi .\]
The measure $\mu$ is uniquely determined by $\varphi$.
\end{thm}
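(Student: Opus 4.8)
The plan is to treat the two implications separately, the reverse (sufficiency) direction being a short explicit computation and the forward direction resting on the reflection positive reconstruction adapted to the compact, ``thermal'' geometry of the circle.

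\textbf{Sufficiency.} First I would record the two positivity properties of the building blocks $f_\lambda(t) = e^{-t\lambda} + e^{-(2\pi - t)\lambda} = 2 e^{-\pi\lambda}\cosh((\pi - t)\lambda)$ on $[0,2\pi]$. A direct integration gives the Fourier coefficients
\[ \frac{1}{2\pi}\int_0^{2\pi} f_\lambda(t) e^{-ikt}\,dt = \frac{(1 - e^{-2\pi\lambda})\lambda}{\pi(\lambda^2 + k^2)} \geq 0, \]
so by Herglotz's theorem the kernel $(f_\lambda(t-s))_{t,s}$ is positive definite. For the reflection positivity condition I would note that on $\T^+ = (0,\pi)$ one has $t + s \in (0,2\pi)$, so that
\[ f_\lambda(t+s) = e^{-\lambda t}e^{-\lambda s} + e^{-2\pi\lambda}\, e^{\lambda t}\, e^{\lambda s} \]
is a sum of two rank-one positive definite kernels, hence positive definite. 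As both properties are preserved under integration against the positive measure $\mu$, the function $\varphi = \int_0^\infty f_\lambda\,d\mu(\lambda)$ is reflection positive; finiteness of $\varphi(0) = \int (1 + e^{-2\pi\lambda})\,d\mu$ forces $\mu$ to be finite, which is all one needs.

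\textbf{Necessity.} Here I would proceed in two stages. From positive definiteness of $(\varphi(t-s))$ and Bochner--Herglotz, the Fourier coefficients $a_k$ of the symmetric function $\varphi$ are nonnegative, so $\varphi(t) = a_0 + 2\sum_{k\geq 1} a_k\cos(kt)$. I then form the reflection positive Hilbert space $(\cE,\cE_+,\theta)$, where $\cE$ is the reproducing kernel space of the kernel $\varphi(s-t)$, the rotations act by a $2\pi$-periodic unitary one-parameter group $U$ with $\theta U(r)\theta = U(-r)$, and $\cE_+$ is generated by the evaluations $K_s$, $s\in(0,\pi)$; reflection positivity is exactly the hypothesis that $(\varphi(t+s))_{t,s\in(0,\pi)}$ is positive definite. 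Passing to $\widehat\cE$, the images $\widehat K_s$ satisfy $\la \widehat K_s, \widehat K_{s'}\ra = \varphi(s+s')$. The subtle point, and the step I expect to be the main obstacle, is that the circle is compact: unlike the case of $\R$ (where reflection positivity yields a genuine contraction semigroup and hence, by Bernstein, a one-sided representation $\int e^{-|t|\lambda}\,d\mu$), on $\T$ the translations $\widehat K_s \mapsto \widehat K_{s+t}$ do not extend to a contraction semigroup, and the two fixed points $0$ and $\pi$ of $\tau_1$ must both contribute. The picture I would aim to establish is that the reconstruction produces a KMS/thermal structure at inverse temperature $\beta = 2\pi$: a positive selfadjoint operator $H$ on $\widehat\cE$ together with a cyclic thermal vector whose two-point function is the two-sided expression $e^{-t\lambda} + e^{-(2\pi - t)\lambda}$ rather than a one-sided exponential. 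I would organize the bookkeeping at the two fixed points using the symmetry $\varphi(t) = \varphi(2\pi - t)$, which is precisely what forces the $\cosh$-shaped kernel $f_\lambda$; invoking the spectral theorem for $H$ then yields a positive measure $\mu$ (the normalized spectral measure) with
\[ \varphi(t) = \int_0^\infty \big(e^{-t\lambda} + e^{-(2\pi - t)\lambda}\big)\,d\mu(\lambda), \qquad 0 \leq t \leq 2\pi. \]

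\textbf{Uniqueness.} Finally, comparing with the sufficiency computation, the Fourier coefficients of $\varphi$ are $a_k = \int_0^\infty \frac{(1 - e^{-2\pi\lambda})\lambda}{\pi(\lambda^2 + k^2)}\,d\mu(\lambda)$ for $k = 0,1,2,\dots$. Pushing $\mu$ forward along $\lambda \mapsto \lambda^2$ and absorbing the positive weight turns this into the values $a_k = \int_0^\infty (x + k^2)^{-1}\,d\nu(x)$ of the Stieltjes transform of a positive measure $\nu$ at the points $k^2$. Since this transform is holomorphic on $\C\setminus(-\infty,0]$ and, by a Carlson-type uniqueness theorem applied to $w \mapsto \int (x+w^2)^{-1}\,d\nu(x)$, is determined by its values along the integers, $\nu$ and hence $\mu$ is uniquely determined by $\varphi$.
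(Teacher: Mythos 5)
First, a remark on the comparison itself: the paper does not prove Theorem~\ref{thm:KL} at all; it quotes it from \cite[Thm.~3.3]{KL81} and \cite[Thm.~2.4]{NO15b}. So your proposal can only be measured against those (standard) proofs. Your \emph{sufficiency} argument is correct and complete: the Fourier coefficient computation for $f_\lambda(t)=e^{-t\lambda}+e^{-(2\pi-t)\lambda}$, the decomposition of $f_\lambda(t+s)$ on $(0,\pi)$ into two product (rank-one) kernels, and the passage through the integral all work; you also silently correct what is evidently a typo in the paper's display, which should read $e^{-t\lambda}+e^{-(2\pi-t)\lambda}$, consistent with the function $f_\lambda$ defined just before the theorem.

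The \emph{necessity} direction has a genuine gap, and it sits exactly at the point you yourself flag as ``the main obstacle.'' After passing to $\widehat\cE$ you only have the partially defined maps $\widehat K_s\mapsto \widehat K_{s+t}$ for $s,\,s+t\in(0,\pi)$; these form a \emph{local} semigroup of symmetric operators, and the claim that they come from a single selfadjoint operator $H$ is a nontrivial theorem --- it is precisely the Klein--Landau/Fr\"ohlich result on local symmetric semigroups, which is the technical heart of \cite{KL81}. Your sketch replaces this step by a description of the expected answer (``a KMS/thermal structure at inverse temperature $2\pi$ with a positive $H$ and cyclic thermal vector''), which is the conclusion, not an argument. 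Moreover, even once such an $H$ exists, it is \emph{not} positive in general: the spectral measure $\nu$ of the (limiting) cyclic vector lives on all of $\R$, constrained only by $\int_\R e^{-t\lambda}\,d\nu(\lambda)<\infty$ for $0<t<2\pi$. It is the symmetry $\varphi(t)=\varphi(2\pi-t)$, combined with uniqueness of bilateral Laplace transforms, that lets one fold the negative part of the spectrum onto $[0,\infty)$ and produce the kernel $e^{-t\lambda}+e^{-(2\pi-t)\lambda}$ with $\mu$ supported in $[0,\infty)$; this folding argument is not carried out in your sketch, and ``bookkeeping at the two fixed points'' does not substitute for it.

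The \emph{uniqueness} step is also unsound as written. Carlson's theorem requires regularity and an exponential-type bound up to the imaginary axis, but your function $G(w)=\int_0^\infty (x+w^2)^{-1}\,d\nu(x)$ is not even bounded near that axis: for $w=\varepsilon+iv$ the point $-w^2$ approaches the support of $\nu$, and $|G(w)|$ can blow up like $1/\varepsilon$. Equivalently, viewed as a Stieltjes transform sampled at the points $s_k=k^2$, the relevant density condition $\sum_k 1/s_k=\infty$ fails, so no off-the-shelf sampling/uniqueness theorem applies. The standard (and much shorter) argument: both representations extend $\varphi$ holomorphically to the strip $0<\Re z<2\pi$ as $\int_\R e^{-z\lambda}\,d\nu_i(\lambda)$, where $\nu_i$ is $\mu_i$ plus the image of $e^{-2\pi\lambda}\mu_i$ under $\lambda\mapsto-\lambda$; agreement on $(0,2\pi)$ plus Fourier inversion along a vertical line gives $\nu_1=\nu_2$, hence $\mu_1=\mu_2$ (the atom at $0$ being recovered from $\nu(\{0\})=2\mu(\{0\})$).
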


This fits well into our current discussion.
For the $\OO_2(\R)$-invariant kernel $\Phi_m$ on 
the complement of the diagonal in $\bS^1 \times \bS^1$ 
(Lemma~\ref{le:Phim}), we write
\[ \wphi_m(t):= \alpha_{\Phi_m}(\cos (t))=\Phi_m((\cos (t),\sin(t)),e_0) \] 
for a function $\alpha_{\Phi_m}$ in $[-1,1)$  
(Lemma~\ref{lem:alphafun}). In this notation the
differential equation in Theorem~\ref{the:ctmSp} becomes 
\[\wphi_m^{\prime\prime}(t)=m^2\wphi_m(t)\quad \mbox{ for } \quad 0 <  t < 2\pi.\]
The solutions are of the form  $\wphi_m (t)=ae^{mt} +be^{-mt}$. 
The invariance under $r_1$ leads to 
\[ \wphi_m(2\pi - t) = \wphi_m(t), \qquad 0 < t < 2\pi,\] 
and hence  to $a=e^{-2\pi m}b$.
Thus
\begin{equation}\label{eq:varpS1}
\wphi_m (t)=b( e^{-mt}+ e^{-2\pi m}e^{mt})=2b e^{-\pi m}
\cosh ((\pi -t)m), \quad 0<  t < 2\pi, 
\end{equation}
which is a multiple of the function $f_m$ from above.   

For the twisted kernel $\Psi(x,y) = \Phi(x,\sigma y)$ corresponding to an 
$\OO_2(\R)$-invariant kernel on the complement of the diagonal of $\bS^1$, 
we have for $|t|, |s| < \pi/2$: 
\begin{align*}
&\Psi((\cos t, \sin t), (\cos s, \sin s)) 
=\Phi((\cos t,\sin t),(-\cos s, \sin s)) \\
&=\Phi((\cos t,\sin t),(\cos(\pi - s), \sin(\pi -s))) 
=\tilde\varphi(t + s + \pi).
\end{align*}  
Therefore the positive definiteness of $\Psi$ is equivalent to the 
positive definiteness of the kernel $(\tilde\varphi(t + s))_{0 < t,s < \pi}$. 

As $\cosh (mt)=\int_\R e^{-\lambda t}d\mu_m (t)$ 
is the Laplace transform of the positive measure 
$\mu_m = \frac{1}{2}(\delta_m+\delta_{-m})$, 
the kernel $\cosh (m(t+s))$ is positive definite on $\R$. 
In particular, the kernel $\Psi_m$ corresponding to 
the function $\psi_m(t) := \varphi_m(\pi + t)$ 
is positive definite on $\bS^1_+ \times \bS^1_+$. 

In complex coordinates $z = e^{it}$ on $\bS^1 \cong \T$, 
we have 
\[ \Psi_m(e^{it}, e^{is}) = \tilde\phi_m(t + s + \pi) 
= 2 b e^{-\pi m} \cosh(m(t+s)) 
= b e^{-\pi m}(e^{m(t+s)} + e^{-m(t+s)}).\]
For $z,w \in \C$ with $\Re z, \Re w > 0$, we thus obtain the 
sesquiholomorphic extension 
\[ \Psi_m(z,w) 
= 2 b e^{-\pi m} \cosh(-m i\log(z/\oline w)) 
= b e^{-\pi m}((z/\oline w)^{im} + (z/\oline w)^{-im}).\] 

\subsection{The kernel  function corresponding to $(m^2-\Delta )^{-1}$}
We now discuss the general case and determine the functions 
$\psi_m$, the functions 
$\phi_m$,  as well as the corresponding  $G^c$-invariant kernel
$\Psi_m$ on $\Xi\times \Xi$. 

For a $K$-invariant function $\varphi \: \bS^n \to \C$, 
we obtain functions 
\[ \alpha \: [-1,1] \to \C \quad \mbox{ by } \quad \phi(x) = \alpha(x_0)\] 
and 
\[ \eta_\phi \: [0,\pi] \to \C \quad \mbox{ by } 
\quad \eta_\phi(t) = \alpha(\cos t)\] 
(cf.~Lemma \ref{lem:alphafun}(iii)). 
Then  \cite[Cor. 9.2.4]{Fa08} shows that 
\begin{lem} For $\varphi \in C^\infty (\bS^n )^K$, we have: 
\begin{align}
\eta_{\Delta \varphi} (t) &= \eta_\varphi ^{\prime\prime}(t)+(n-1)\cot (t )\eta_\varphi ^\prime (t)\label{eq:RadPart}
=\frac{1}{\sin^{n-1}(t)}\dfrac{d}{dt}\left(\sin^{n-1}(t)\dfrac{d}{dt}\right ) \eta_\varphi (t) \quad \mbox{ for } \quad 
0 < t < \pi\, .\nonumber
\end{align}
In particular, 
$\Delta \varphi  =m^2\varphi$ if and only if $\eta_\varphi$ satisfies the second order differential equation
\begin{equation}\label{eq:RadPart2}
\eta_\varphi^{\prime\prime}(t)+(n-1)\cot (t)\eta_\varphi ^\prime (t)
-m^2 \eta_\varphi (t)=0.
\end{equation}
\end{lem}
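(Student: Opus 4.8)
The plan is to compute the Laplace--Beltrami operator of $\bS^n$ in geodesic polar coordinates centered at the base point $e_0$ and then restrict to functions depending only on the polar angle. Recall from \eqref{eq:Exp'} that the unit-speed geodesics through $e_0$ are $t \mapsto \cos(t) e_0 + \sin(t)\omega$ with $\omega$ a unit vector in $e_0^\bot \cong \R^n$, so that
\[
(0,\pi) \times \bS^{n-1} \to \bS^n, \qquad (t,\omega) \mapsto x(t,\omega) := \cos(t) e_0 + \sin(t)\omega,
\]
is a diffeomorphism onto $\bS^n \setminus \{\pm e_0\}$ and the geodesic distance from $e_0$ to $x$ equals $t$, where $\cos(t) = x_0$. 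Since $K = \OO_n(\R)$ fixes $e_0$ and acts as the rotation group on $\bS^{n-1} \subeq e_0^\bot$, a function $\varphi \in C^\infty(\bS^n)^K$ depends only on $t$; this is exactly the content of $\varphi(x) = \alpha(x_0)$ and $\eta_\varphi(t) = \alpha(\cos t)$.

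First I would determine the induced metric. Differentiating $x(t,\omega)$ and using that the round metric is the one induced from the ambient Euclidean inner product on $\R^{n+1}$, one finds $\langle \partial_t x, \partial_t x\rangle = 1$, that the mixed terms vanish because the variation $u$ of $\omega$ satisfies $u \perp e_0$ and $u \perp \omega$, and that the angular block equals $\sin^2(t)$ times the round metric of $\bS^{n-1}$. Thus the metric is the warped product $dt^2 + \sin^2(t)\, d\omega^2$, and the Riemannian volume density in these coordinates is $\sin^{n-1}(t)$ times that of $\bS^{n-1}$. Feeding this into the coordinate expression $\Delta f = \tfrac{1}{\sqrt{g}}\partial_i\bigl(\sqrt{g}\, g^{ij}\partial_j f\bigr)$ gives
\[
\Delta = \frac{1}{\sin^{n-1}(t)}\frac{\partial}{\partial t}\Bigl(\sin^{n-1}(t)\frac{\partial}{\partial t}\Bigr) + \frac{1}{\sin^2(t)}\Delta_{\bS^{n-1}}.
\]
For a $K$-invariant $\varphi$ the angular term annihilates $\varphi$, leaving precisely the divergence form $\eta_{\Delta\varphi}(t) = \frac{1}{\sin^{n-1}(t)}\frac{d}{dt}\bigl(\sin^{n-1}(t)\,\eta_\varphi'(t)\bigr)$.

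Finally I would expand the divergence form, $\frac{1}{\sin^{n-1}(t)}\frac{d}{dt}(\sin^{n-1}(t)\eta_\varphi') = \eta_\varphi'' + (n-1)\tfrac{\cos t}{\sin t}\eta_\varphi'$, which identifies the two displayed expressions for $\eta_{\Delta\varphi}$ on $(0,\pi)$. The ``in particular'' statement is then immediate: $\Delta\varphi = m^2\varphi$ is equivalent to $\eta_{\Delta\varphi} = m^2\eta_\varphi$, that is, to \eqref{eq:RadPart2}. I do not expect a genuine obstacle here; the result is classical --- it is the radial part of $\Delta$ on the rank-one symmetric space $\bS^n = G/K$, with volume density $A(t) = \sin^{n-1}(t)$ and $A'(t)/A(t) = (n-1)\cot t$ --- which is why it is quoted from \cite[Cor.~9.2.4]{Fa08}. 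The only point requiring a little care is that the polar coordinates degenerate at the poles $t \in \{0,\pi\}$, but this is harmless since the differential equation is asserted only on the open interval $(0,\pi)$; smoothness of $\varphi$ across the poles merely imposes regularity conditions on $\eta_\varphi$ at the endpoints, which play no role in the local computation.
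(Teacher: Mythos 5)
Your proposal is correct. The paper gives no proof of this lemma at all --- it simply quotes \cite[Cor.~9.2.4]{Fa08} --- and your computation (geodesic polar coordinates $x(t,\omega)=\cos(t)e_0+\sin(t)\omega$, warped-product metric $dt^2+\sin^2(t)\,d\omega^2$, volume density $\sin^{n-1}(t)$, divergence form of the Laplace--Beltrami operator, and vanishing of the angular term on $K$-invariant functions) is precisely the standard argument underlying that citation, so in substance you have reproved the reference rather than found a different route. One small point worth noting: the paper's $\Delta$ is the geometer's (negative semidefinite) Laplace--Beltrami operator, i.e.\ the divergence-form operator $\frac{1}{\sqrt{g}}\partial_i(\sqrt{g}\,g^{ij}\partial_j)$ you use, so the sign conventions match and the equivalence $\Delta\varphi=m^2\varphi \Leftrightarrow \eqref{eq:RadPart2}$ follows exactly as you say, the poles being harmless since both conditions need only be compared on $\bS^n\setminus\{\pm e_0\}$.
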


For $n=1$ this leads to the differential equation $ \eta_\varphi ^{\prime\prime}=m^2 \eta_\varphi $
which implies that $\eta_\varphi$ is a linear combination of
$\cosh (mt) $ and $\sinh (mt)$. The $K$-invariance leads to $\eta_\varphi$ being
even, so $\eta_\varphi$ is a multiple of $\cosh (mt)$, recovering our previous
result for $\bS^1$ (see Section~\ref{subsec:4.2}). 
We can therefore assume from now on that $n>1$.

The substitution  $s=\sin^2 (t/2)=\frac{1}{2}(1-\cos (t))\in (0,1)$ 
transforms (\ref{eq:RadPart2}) 
into the following differential equation 
for $ y(s)=\eta_\varphi (t)$, $0 < s < 1$ (see \cite[p.484]{Hel84} for
a general statement):
\begin{equation}\label{eq:HypDeEq}
s(1-s)y^{\prime\prime}(s) +\left(\frac{n}{2}-ns\right)y^\prime (s)-m^2 y (s)=0\, .
\end{equation}
This is a special case of the \textit{hypergeometric differential equation}
\[s(1-s)y^{\prime\prime}(s) + (c-(a+b+1)s)y^\prime(s) -ab y(s)=0\, .\] 
This equation has two linearly independent solutions. 
In general, one of them is singular at the origin. The other one,
the Gauss hypergeometric
function, denoted by  $\hgf (a, b; c; x)$, is regular at~$x=0$ and
normalized by  $\hgf (a,b;c;0)=1$

We recall here the definition of $\hgf$ as it will be helpful in the following. For $a \in \C$ and
$k\in \N$ let $(a)_0=1$ and 
$(a)_k :=\prod_{j=0}^{k-1}(a +j)$.
Then, for $a,b,c\in \C$ such that $c\not\in -\N_0$, we have
\begin{equation}\label{def:hgf}
\hgf (a, b ; c ; z)=\sum_{k=0}^\infty \frac{(a)_k(b)_k}{(c)_k} \frac{z^k}{k!} 
\quad \mbox{ for } \quad  |z|<1.
\end{equation}
If $a$ or $b$ is a negative integer then $\hgf (a , b; c ;z)$ is a polynomial.

\begin{rem} \mlabel{rem:x} 
The hypergeometric function $\hgf$ has
an analytic continuation to $\C\setminus [1,\infty)$ 
(see \cite[\S 14.51]{WW96} or
\cite[p. 297]{L73}). Clearly (\ref{def:hgf}) implies that $\hgf (a, b;c ; z )= \hgf (b , a;c ; z )$ and 
\[ \hgf (a , b ; c ; z)>0\quad \mbox{  for } \quad 0\le z<1\quad \mbox{ and } \quad a, b,c>0 \quad \mbox{ or } \quad b=\bar a, c>0.\] 
\end{rem}

To simplify the notation,  let 
\begin{equation}
  \label{eq:lambda}
\rho := \frac{n-1}{2}> 0\quad \mbox{  and } \quad 
\lambda := \lambda_m :=
\begin{cases} 
\sqrt{\rho^2-m^2}  & \text{ for } m^2 \leq \rho^2 \\ 
i\sqrt{m^2-\rho^2}  & \text{ for } m^2 \geq \rho^2. 
\end{cases}
\end{equation}
Note that 
\begin{equation}
  \label{eq:lambda2}
 0 \leq \Re \lambda < \rho \quad \mbox{ and } \quad 
\lambda^2 = \rho^2 - m^2 \quad \mbox{ for } \quad m > 0.
\end{equation}
Then the solution to the differential equation (\ref{eq:HypDeEq}) which
is regular at $s=0$  is, up to a constant,
\begin{equation}
  \label{eq:hypgeo1}
 \hgf (\rho +\lambda , \rho-\lambda; n/2; s) 
= \hgf \big(\rho +\lambda , \rho-\lambda; n/2; \shalf(1- \cos(t))\big).
\end{equation}

 \begin{thm}\label{th:Psi} There exists a constant $\gamma_{n,m}>0$ 
such that the $G^c$-invariant kernel 
 $\Psi_m$ on $\Xi\times \Xi$ from {\rm Theorem~\ref{thm:JRA}} 
is given by
\begin{align*}
\Psi_m(x,y)
&= \gamma_{n,m}\cdot  \hgf (\rho +\lambda , \rho -\lambda ; n/2;\shalf\left(1-\lf{x}{ \sx(y)}\right)) \\
&= \gamma_{n,m}\cdot  \hgf \Big(\frac{n-1}{2} +\lambda , \frac{n-1}{2} -\lambda ; 
\frac{n}{2};\frac{1-\lf{x}{ \sx(y)}}{2}\Big).
\end{align*}
This kernel is positive definite. Furthermore, $\Psi_m$ 
extends to a sesquiholomorphic kernel on the set 
\[ \{(z,w)\in V_\C\times V_\C\: \lf{z}{\sx w}\in \C\setminus (-\infty, -1]\}.\]
\end{thm}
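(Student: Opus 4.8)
The plan is to reduce the whole kernel to the single radial function $\psi_m(x):=\Psi_m(x,e_0)$ and to identify it with the solution of the hypergeometric equation \eqref{eq:HypDeEq} that is regular at the base point. First I would observe that on $\bS^n_+\times\bS^n_+$ the kernel depends only on the invariant $\lf{x}{\sx y}$: since $\Psi_m(x,y)=\Phi_m(x,\sigma(y))$ with $\sigma=r_0$ and $\Phi_m$ is $\OO_{n+1}(\R)$-invariant, Lemma~\ref{lem:alphafun} gives $\Phi_m(x,y)=\alpha_{\Phi_m}(xy)$ for the standard bilinear form $xy=\sum_j x_jy_j$, whence $\Psi_m(x,y)=\alpha_{\Phi_m}(x\,\sigma(y))$ with $x\,\sigma(y)=-\lf{x}{\sx y}$. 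Setting $\alpha_{\Psi_m}(\zeta):=\alpha_{\Phi_m}(-\zeta)$ we obtain $\Psi_m(x,y)=\alpha_{\Psi_m}(\lf{x}{\sx y})$, so that the whole kernel is determined by $\psi_m(x)=\alpha_{\Psi_m}(x_0)$, using $\lf{x}{\sx e_0}=x_0$.

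Next I would determine $\psi_m$. By Theorem~\ref{the:ctmSp}(2), applied with base point $m_0=e_0$ so that $\sigma(m_0)=-e_0$, the function $\psi_m=\Phi_m(\cdot,-e_0)$ is $K$-invariant, analytic on $\bS^n\setminus\{-e_0\}$ — in particular analytic at $e_0$ — and satisfies $\Delta\psi_m=m^2\psi_m$. Writing $\eta(t):=\alpha_{\Psi_m}(\cos t)$ and substituting $s=\tfrac12(1-\cos t)$, the radial equation \eqref{eq:RadPart2} turns into the hypergeometric equation \eqref{eq:HypDeEq} for $y(s)=\eta(t)$. Here $e_0$ corresponds to $s=0$ and the antipode $-e_0$, where the only singularity of $\psi_m$ sits, to $s=1$; regularity of $\psi_m$ at $e_0$ therefore forces $y$ to be a scalar multiple of the solution regular at $s=0$, namely $\hgf(\rho+\lambda,\rho-\lambda;n/2;s)$ from \eqref{eq:hypgeo1}. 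This yields $\psi_m(x)=\gamma_{n,m}\,\hgf(\rho+\lambda,\rho-\lambda;n/2;\tfrac12(1-x_0))$ and hence the displayed formula on $\bS^n_+\times\bS^n_+$. I expect the selection of the regular branch to be the main point to get right: the untwisted function $\varphi_m=\Phi_m(\cdot,e_0)$ is instead singular at $s=0$ and corresponds to the second, non-hypergeometric solution, so it is precisely the twist by $\sigma$ that moves the singularity to $s=1$ and selects $\hgf$.

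Then I would fix the constant. Since $\rho\pm\lambda$ are either both positive (when $m\le\rho$) or a complex-conjugate pair (when $m\ge\rho$) while $n/2>0$, Remark~\ref{rem:x} shows that $\hgf(\rho+\lambda,\rho-\lambda;n/2;\cdot)$ is real on $[0,1)$; as $\Phi_m$, and thus $\psi_m$, is real-valued, $\gamma_{n,m}\in\R$. The kernel $\Psi_m$ is positive definite on $\bS^n_+\times\bS^n_+$ by Corollary~\ref{cor:2.9} and is not identically zero, because $\Phi_m$ represents the nonzero resolvent $C_m$. If $\Psi_m(e_0,e_0)$ were $0$, the Cauchy--Schwarz inequality for positive definite kernels would force $\Psi_m(\cdot,e_0)=\psi_m\equiv0$, a contradiction; hence $\gamma_{n,m}=\Psi_m(e_0,e_0)>0$.

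Finally I would produce the holomorphic extension. By Remark~\ref{rem:x} the function $\hgf(\rho+\lambda,\rho-\lambda;n/2;\cdot)$ continues holomorphically to $\C\setminus[1,\infty)$, so $\alpha_{\Psi_m}(\zeta)=\gamma_{n,m}\,\hgf(\rho+\lambda,\rho-\lambda;n/2;\tfrac12(1-\zeta))$ is holomorphic on $\{\zeta:\tfrac12(1-\zeta)\notin[1,\infty)\}=\C\setminus(-\infty,-1]=\C_\Xi$. By Lemma~\ref{lem:xi-values} and Theorem~\ref{thm:KernSphere}, $\Psi_m(z,w):=\alpha_{\Psi_m}(\lf{z}{\sx w})$ then defines a $G^c$-invariant sesquiholomorphic kernel on $\Xi\times\Xi$ agreeing with $\Psi_m$ on the totally real submanifold $\bS^n_+\times\bS^n_+$, hence equal to the kernel of Theorem~\ref{thm:JRA}; positive definiteness carries over from $\bS^n_+\times\bS^n_+$ to $\Xi\times\Xi$ by analytic continuation, $\bS^n_+$ being a totally real determining submanifold (Lemma~\ref{le:PsiE0}). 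Since $\lf{z}{\sx w}$ is defined by the $\C$-bilinear form for all $z,w\in V_\C$ and is holomorphic in $z$ and antiholomorphic in $w$, the very same formula defines a sesquiholomorphic kernel on all of $\{(z,w)\in V_\C\times V_\C:\lf{z}{\sx w}\in\C\setminus(-\infty,-1]\}$, which is the asserted extension.
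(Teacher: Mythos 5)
Your proposal follows essentially the same route as the paper's proof: reduce to the radial function $\psi_m=\Psi_m(\cdot,e_0)$ via Lemma~\ref{lem:alphafun} and the identity $x\,\sigma(y)=-\lf{x}{\sx y}$, identify $\psi_m$ with the solution of \eqref{eq:HypDeEq} regular at $s=0$ (your explicit remark that the twist by $\sigma$ moves the singularity from $s=0$ to $s=1$ and thereby selects $\hgf$ is exactly the point the paper leaves implicit), get $\gamma_{n,m}>0$ from Corollary~\ref{cor:2.9} together with Lemma~\ref{le:PsiE0}, and extend via Remark~\ref{rem:x}, Lemma~\ref{lem:xi-values} and Theorem~\ref{thm:KernSphere}.

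One step is under-justified: you assert that positive definiteness ``carries over from $\bS^n_+\times\bS^n_+$ to $\Xi\times\Xi$ by analytic continuation,'' citing Lemma~\ref{le:PsiE0}. That lemma only gives uniqueness of the sesquiholomorphic extension; since positive definiteness is a family of \emph{inequalities} at arbitrary finite point configurations, it does not propagate by the identity theorem alone. The paper invokes \cite[Thm.~A.1]{NO14} here, whose proof runs through the reproducing kernel Hilbert space on the totally real submanifold and the holomorphic extension of its elements. Since ``$\Psi_m$ is positive definite'' (on all of $\Xi\times\Xi$) is part of the statement, you should either cite that theorem or supply the RKHS argument; everything else in your write-up matches the paper.
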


Note that 
\begin{equation}
  \label{eq:hypgeo2}
\psi_m(x) = \Psi_m(x,e_0)
= \gamma_{n,m}\cdot  \hgf \Big(\frac{n-1}{2} +\lambda , \frac{n-1}{2} -\lambda ; 
\frac{n}{2};\shalf(1 - x_0)\Big).
\end{equation}

\begin{proof} We recall first that 
\begin{equation}\label{eq:psi}
\Psi_m(x,y)=\Phi_m(x,\sigma (y))=
\alpha_{\Phi_m}(x\sigma (y))
\end{equation}
for a function $\alpha_{\Phi_m}$ on the interval $[-1,1)$ 
as $\Phi_m$ is $\OO_{n+1}(\R)$-invariant 
(Lemma \ref{lem:alphafun}). We also note that $\sigma (x)=
-\sigma_V(x)$ on~$\bS_+^n$. Thus 
$\Psi_m(x,y)=\alpha_{\Phi_m}(-[x,\sigma_V(y)]_V)$, which is clearly 
$(\fg^c,K)$-invariant.
Thus
$\alpha_{\Psi_m} (s)$ from Lemma \ref{le:PsiLocInv} is given by
$\alpha_{\Phi_m}(- s)$. 
We now apply the above discussion to $\psi_m(x)=\alpha_{\Psi_m}(x_0)$ and note that,
for $u\in e_0^\perp\cap \bS^n$ and $x=\cos (t)e_0+\sin (t)u$, we have
$x_0=\cos (t)$ and 
\[\sin^2(t/2)=\frac{1}{2}\left(1-\cos (t)\right)
=\frac{1}{2}(1-x_0)
=\frac{1}{2}\left(1-[x,\sigma_V(e_0)]\right).\]
By the discussion preceding the theorem, 
there exists a constant $\gamma_{n,m}$ such that
\begin{equation}
  \label{eq:hypgeo3}
\psi_m(x)=\gamma_{n,m}\cdot\, \hgf (\rho+ \lambda, \rho-\lambda ; n/2; 
\shalf(1-x_0)) .
\end{equation}

From Lemma \ref{lem:xi-values} we know that, for $z,w \in \Xi$, we have 
$[z,\sigma_V(w)] \in \C \setminus (-\infty,-1]$, so that 
$\shalf \left(1-[x,\sigma_V(e_0)]\right) \in \C \setminus [1,\infty)$. 
Therefore Remark~\ref{rem:x}, combined with 
Lemma \ref{le:PsiE0}, implies that the right hand side 
of \eqref{eq:hypgeo3} extends uniquely to a kernel in $\Sesh (\Xi)$. Hence so
does $\Psi_m$ and the extension is given by
\begin{equation}
  \label{eq:hypgeo4}
 \Psi_m(x,y)= \gamma_{n,m}\cdot\, \hgf (\rho+ \lambda, \rho-\lambda ; n/2; 
\shalf\left(1-[x,\sigma_V(y)]\right) ).
\end{equation}
Corollary~\ref{cor:2.9} implies that the kernel 
$\Psi_m\res_{\bS^n_+ \times \bS^n_+}$ is positive definite. 
Now \cite[Thm.~A.1]{NO14} implies that the kernel $\Psi_m$ 
is positive definite on $\Xi \times \Xi$. 
As $\Psi_m\res_{\bS^n_+ \times \bS^n_+}$ is non-zero, 
Lemma \ref{le:PsiE0} implies that $\Psi_m(e_0,e_0) > 0$. 
We thus obtain 
\[0<\Psi_{m}(e_0,e_0)=\gamma_{n,m}\cdot\, \hgf (\rho+ \lambda, \rho-\lambda ; n/2; 0 )=\gamma_{n,m} .\] 
This finishes the proof.
\end{proof}

\begin{cor} The $G$-invariant reflection positive kernel $\Phi_m(x,y)$ extends to
a sesquiholomorphic kernel on $\{(x,y)\in V_\C\times V_\C\: x \sigma_E(y)
\in \C\setminus [1, \infty ) \}$ 
given by
 \begin{align}
\Phi_m(x,y)
&= \gamma_{n,m} \cdot 
\hgf (\rho+\lambda ,\rho-\lambda; n/2; \shalf(1+x \sr(y))) 
\label{eq:hygeo} \\ 
&= \gamma_{n,m}\cdot  \hgf \Big(\frac{n-1}{2} +\lambda , \frac{n-1}{2} -\lambda ; 
\frac{n}{2};\frac{1 + \lf{x}{ \sx(y)}}{2}\Big).\notag
 \end{align}
\end{cor}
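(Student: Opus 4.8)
The plan is to read off $\Phi_m$ from the kernel $\Psi_m$ of Theorem~\ref{th:Psi} by undoing the twist by the dissecting involution $\sigma=r_0$. By \eqref{eq:psi} we have $\Psi_m=\Phi_m\circ(\id,\sigma)$, and $\sigma=r_0$ is $\C$-linear, hence extends to a holomorphic involution of $\bS^n_\C$. Since $\sigma^2=\id$, this yields $\Phi_m(x,y)=\Psi_m(x,\sigma(y))$, which on the real sphere recovers the original $\OO_{n+1}(\R)$-invariant kernel $\alpha_{\Phi_m}(xy)$. As $\Psi_m(x,\cdot)$ is antiholomorphic and $\sigma$ is holomorphic, the composite $(x,y)\mapsto\Psi_m(x,\sigma(y))$ is again sesquiholomorphic, so I would use it to transport the analytic continuation of $\Psi_m$ already provided by Theorem~\ref{th:Psi} to $\Phi_m$.

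Concretely, first I would substitute $w=\sigma(y)=r_0(y)$ into the explicit extension
\[ \Psi_m(z,w)=\gamma_{n,m}\cdot\hgf\big(\rho+\lambda,\rho-\lambda;\tfrac{n}{2};\tfrac12(1-\lf{z}{\sx w})\big), \]
valid on $\{(z,w)\in V_\C\times V_\C : \lf{z}{\sx w}\in\C\setminus(-\infty,-1]\}$ by \eqref{eq:hypgeo4}. The substantive step is the transformation of the bilinear form. Using $r_0(y)=(-y_0,y_1,\dots,y_n)$ together with the definition of $\sx$, a one-line coordinate check gives $\sx(r_0 y)=-\sr(y)$; equivalently this is the relation $\sr\sx=-r_0$ recorded in the introduction. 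Hence $\lf{x}{\sx(\sigma y)}=-\lf{x}{\sr(y)}=-x\sr(y)$, and the hypergeometric argument becomes $\tfrac12\big(1-\lf{x}{\sx(\sigma y)}\big)=\tfrac12\big(1+x\sr(y)\big)$, which is exactly the argument in the first displayed line of the corollary.

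Next I would transport the domain and check holomorphy. The membership $(x,\sigma y)\in\{\lf{z}{\sx w}\in\C\setminus(-\infty,-1]\}$ reads $-x\sr(y)\in\C\setminus(-\infty,-1]$, i.e.\ $x\sr(y)\in\C\setminus[1,\infty)$, matching the domain in the statement; on this set $\tfrac12(1+x\sr(y))$ avoids $[1,\infty)$, where $\hgf$ is holomorphic (Remark~\ref{rem:x}). Since $(x,y)\mapsto x\sr(y)$ is holomorphic in $x$ and antiholomorphic in $y$, the resulting $\Phi_m$ is sesquiholomorphic there, and because $\bS^n$ is totally real in $\bS^n_\C$ (cf.\ Lemma~\ref{le:PsiE0}) this is the unique such extension of the real-sphere kernel $\alpha_{\Phi_m}(xy)$, with $\sr(y)=y$ on $\R^{n+1}$. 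The second displayed expression I would then obtain by re-expressing $x\sr(y)$ through the conjugation $\sx$ fixing $V$ by means of the commuting relations among $r_0,\sr,\sx$; this is the one place where I expect to have to be most careful with signs.

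The main obstacle is precisely this sign bookkeeping in the interplay of the three commuting involutions $\sigma=r_0$, $\sr$, $\sx$ and the two bilinear forms, and in correctly reflecting the excluded half-line $(-\infty,-1]$ for $\Psi_m$ to $[1,\infty)$ for $\Phi_m$. Once the single identity $\sx\circ r_0=-\sr$ is established, the remainder is a direct substitution into Theorem~\ref{th:Psi} with no further analytic input beyond the uniqueness of sesquiholomorphic extension already supplied by Lemma~\ref{le:PsiE0}.
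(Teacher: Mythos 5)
Your main derivation is correct and is essentially the paper's own proof: both arguments transport the extension of $\Psi_m$ from Theorem~\ref{th:Psi} through the ($\C$-linearly extended) involution $\sigma=r_0$, the only substantive input being the sign identity $\sx\circ r_0=-\sr$ together with uniqueness of sesquiholomorphic extension off a totally real submanifold. The paper packages this slightly differently: it observes that the right-hand side of \eqref{eq:hygeo} defines an element of $\Sesh(M)$ on the open, starlike (hence connected) set $M=\{(x,y)\: x\,\sr(y)\in\C\setminus[1,\infty)\}$ and then checks equality only on the slice $\bS^n_+\times\{e_0\}$, where your identity appears in the specialized form $\sigma(e_0)=-e_0=-\sx(e_0)$; your computation is the same check performed at every pair of points rather than at the base point. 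One small point you pass over quickly (as does the paper): for real $x,y\in\bS^n_+$ the pair $(x,\sigma(y))$ lies in $\bS^n_+\times\bS^n_-$, so identifying the extended $\Psi_m$ at $(x,\sigma(y))$ with $\Phi_m(x,y)$ is not literally the defining relation \eqref{eq:psi} on $\bS^n_+\times\bS^n_+$, but requires a real-analytic continuation in the second variable (or, equivalently, the one-variable identity for $\alpha_{\Phi_m}$ on $[-1,1)$).

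The one step of your plan that cannot be carried out as described is the last one. No sign bookkeeping will produce the second displayed line of the corollary from the first, because as printed the two lines are not equal: for real $x,y\in\bS^n$ the first line's argument is $\shalf(1+xy)$ (euclidean form, since $\sr(y)=y$ there), whereas $\lf{x}{\sx(y)}=x_0y_0-\bx\by$ is the Lorentzian form; at $x=y=e_1$ the two arguments are $1$ and $0$, respectively. The second line is a misprint, apparently carried over from Theorem~\ref{th:Psi}, where the argument is $\shalf\bigl(1-\lf{z}{\sx w}\bigr)$; the correct variant, which is exactly what your identity $\sx\circ r_0=-\sr$ yields, is
\begin{equation*}
\shalf\bigl(1-\lf{x}{\sx(r_0 y)}\bigr)=\shalf\bigl(1+x\,\sr(y)\bigr),
\end{equation*}
consistent also with the stated domain $x\,\sr(y)\in\C\setminus[1,\infty)$. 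So your instinct that this is the one place requiring care with signs is exactly right; the outcome of that care is that the printed second line should be corrected rather than derived.
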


\begin{proof} Let $M:=\{(x,y)\in V_\C\times V_\C\: x \sr(y)\in \C\setminus 
[1, \infty ) \}$ and observe that $M$ is open and starlike with respect to 
$(0,0)$, hence in particular connected. 
The right hand side defines an element in $\Sesh (M)$. 
By a similar argument as in the proof of Theorem \ref{thm:E0}, we only
have to show that both sides of \eqref{eq:hygeo} 
coincide on $\bS^n_+\times \{e_0\}$. But this follows
from Theorem~\ref{th:Psi} and \eqref{eq:psi} as $-e_0=\sigma (e_0)=-\sigma_V(e_0)$.
\end{proof} 

\begin{rem} \label{rem:4.4}(A simplification for $n$ odd) 
For $n=1$ we have $\rho = 0$, so that  $\lambda = i m$ and
the differential equation (\ref{eq:RadPart2}) becomes
$\eta_\varphi^{\prime\prime} =m^2\eta_\varphi$. Together 
with the fact that $\eta_\varphi$ is even, this leads
to (see also \cite[p.~3]{C03})
\begin{align}
\Psi_m ((\cos t,\sin t), e_0)
&= \gamma_{1,m}\cdot  \hgf \big(im , -im ; \shalf;\shalf(1-\cos t)\big) \nonumber\\
&= \gamma_{1,m} \cosh (m t), \quad\text{for}\quad  0 < t  < \pi.\label{eq:Rem4.4}
\end{align} 

Even if the following discussion can be made uniform for all $\lambda$, 
we separate the cases where $m>\rho$ and $0<m<\rho$. 
Induction and the identity 
$(a)_{n+1}=a(a+1)_n$ lead to  
\begin{equation}\label{eq:shift}
\dfrac{d^k}{dz^k}\ \hgf (a, b; c; z) 
=\frac{(a)_k(b)_k}{(c)_k}\hgf (k+a,k+b;k+c;z).
\end{equation} 
We use this relation starting in dimension $1$ moving up to odd dimension $n=2k +1$. Let
$z=\frac{1}{2}(1-\cos t) = \frac{1}{2}(1 - x_0)$, so that
$\frac{d}{dz} = \frac{2}{\sin t}\frac{d}{dt}$ for $0 < t < \pi$. 
We then get for  $m > \rho$ with $\lambda = i \sqrt{m^2 - \rho^2}$: 
\begin{align}
 \psi_m(x) &= \Psi_m(x, e_0) 
= \gamma_{n,m}\cdot  \hgf \Big(k +\lambda , k-\lambda ; 
k+ \shalf;\shalf(1-x_0)\Big)\nonumber \\
&= \gamma_{n,m}\cdot  \frac{(1/2)_k}{(1+\lambda)_k(1-\lambda)_k} 
\dfrac{d^k}{dz^k}\ \hgf \Big(\lambda ,  -\lambda; \shalf;\shalf(1-x_0)\Big)\nonumber\\
&= \gamma_{n,m}\cdot  \frac{(1/2)_k}{(1+\lambda)_k(1-\lambda)_k} 
\dfrac{d^k}{dz^k}\cosh (mt)\\
&= \frac{\gamma_{n,m} (n-2)(n-4) \cdots 3 \cdot 1}
{2^k \prod_{j=0}^{k-1}(j^2+m^2 - \rho^2)}
\left(\frac{1}{\sin(t)}\dfrac{d}{dt}\right)^k\cosh(mt).\label{eq:PsiOdd}
\end{align} 
For $n=3$, we get the following formula for $\bS^3$: 
\[\wpsi (t)=\frac{\gamma_{3,m}}{2} 
\frac{m}{m^2 - \rho^2} \cdot 
\frac{\sin(mt)}{\sin(t)} \, .\] 
For the case $0<m<\rho$ we have $0<\lambda <\rho$. The arguments are the same as before, the only change is that now $\lambda$ is real so that
we get  
\[\psi_m(x) = \gamma_{n,m} \frac{(n-2)\cdots 1}{2^k\prod_{j=0}^{k-1}(j^2-\lambda ^2)}\left(\frac{1}{\sin(t)}\dfrac{d}{dt}\right)^k\cosh (m t).\] 
This formula is originally due to Takahaski, \cite[p. 326]{T63}. For a more general statement see \cite[Thm.5.1 and Ex. 5.3]{OP04}.
\end{rem}

\subsection{The constant $\gamma_{n,m}$}

In this section we evaluate the constant $\gamma_{n,m}$ explicitly. 
We need the following facts:
\begin{lem}\label{le:415} {\rm(\cite[Prop. 9.1.2]{Fa08})}
Let $f\in L^1(\bS^n)$ be $K$-invariant and 
$\alpha : [-1,1]\to \C$ with $f(x)=\alpha (x_0)$. Then
\[\int_{\bS^n} f(x)d\mu (x) = \frac{\Gamma \left(\frac{n+1}{2}\right)}{\sqrt{\pi}\Gamma\left(\frac{n}{2}\right)}
\, \int_{-1}^1 \alpha (t)(1-t^2)^{\frac{n}{2}-1}dt\]
holds for the $\OO_{n+1}(\R)$-invariant probability measure $\mu$ on $\bS^n$. 
\end{lem}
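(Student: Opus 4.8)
The plan is to reduce the integral over $\bS^n$ to a one-dimensional integral in the latitude coordinate $x_0 = t$ by slicing the sphere into the level sets $\{x \in \bS^n : x_0 = t\}$, each of which is a round $(n-1)$-sphere on which a $K$-invariant integrand is constant. First I would parametrize $\bS^n$ minus its two poles $\pm e_0$ (a $\mu$-null set) by the chart
\[ (-1,1) \times \bS^{n-1} \to \bS^n, \qquad (t,\omega) \mapsto \big(t, \sqrt{1-t^2}\,\omega\big), \]
and note that, since $K \cong \OO_n(\R)$ acts on the last $n$ coordinates and fixes $x_0$, a $K$-invariant $f$ pulls back to $\alpha(t)$, independent of $\omega$.

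The key step is to compute the Riemannian volume element in this chart. Pulling back the round metric, the $\partial_t$-direction acquires squared length $(1-t^2)^{-1}$, a tangent vector of $\bS^{n-1}$ is rescaled to squared length $(1-t^2)$ times its original value, and the mixed term vanishes because $\partial_t$ is orthogonal to each latitude sphere. The metric is therefore block-diagonal with density
\[ \sqrt{\det g} = (1-t^2)^{-1/2}\,(1-t^2)^{(n-1)/2} = (1-t^2)^{\frac n2 - 1}, \]
so the Riemannian surface measure factors as $(1-t^2)^{\frac n2-1}\,dt\,d\omega_{\bS^{n-1}}$. Equivalently, the coarea formula applied to the height function $x_0$, using $|\nabla_{\bS^n} x_0|^2 = 1-x_0^2$ together with the fact that the slice at height $t$ is a round sphere of radius $\sqrt{1-t^2}$, yields the same density. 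Integrating out $\omega$ then gives
\[ \int_{\bS^n} \alpha(x_0)\, d\mu_{\bS^n} = \vol(\bS^{n-1}) \int_{-1}^1 \alpha(t)\,(1-t^2)^{\frac n2-1}\, dt \]
for the Riemannian volume measure $\mu_{\bS^n}$.

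It remains to normalize. Passing to the probability measure $\mu = \mu_{\bS^n}/\vol(\bS^n)$ turns the prefactor into a constant $C := \vol(\bS^{n-1})/\vol(\bS^n)$ independent of $\alpha$, so it suffices to fix $C$. Testing the identity on $\alpha \equiv 1$ and using that $\mu$ is a probability measure forces $C\int_{-1}^1 (1-t^2)^{\frac n2-1}\, dt = 1$; the Euler Beta integral
\[ \int_{-1}^1 (1-t^2)^{\frac n2-1}\, dt = B\big(\tfrac12,\tfrac n2\big) = \frac{\sqrt\pi\,\Gamma(\tfrac n2)}{\Gamma(\tfrac{n+1}{2})} \]
then gives exactly $C = \Gamma(\tfrac{n+1}{2})/\big(\sqrt\pi\,\Gamma(\tfrac n2)\big)$, as claimed. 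One can also obtain this constant directly from $\vol(\bS^k) = 2\pi^{(k+1)/2}/\Gamma(\tfrac{k+1}{2})$. The only genuinely computational step is the volume-element calculation of the second paragraph; the Beta-integral normalization sidesteps having to recall the closed form for $\vol(\bS^k)$, and I expect no real obstacle beyond this routine piece of spherical geometry.
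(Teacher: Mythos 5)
Your proof is correct, and it is essentially the standard argument: the paper itself gives no proof of this lemma (it is quoted from \cite[Prop.~9.1.2]{Fa08}), and the latitude decomposition $x=(t,\sqrt{1-t^2}\,\omega)$ with volume density $(1-t^2)^{\frac n2-1}\,dt\,d\omega$ followed by the Beta-integral normalization is exactly how the cited reference derives it. Your metric computation, the coarea cross-check via $|\nabla_{\bS^n}x_0|^2=1-x_0^2$, and the determination of the constant by testing on $\alpha\equiv 1$ are all accurate, so there is nothing to add.
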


\begin{lem} \label{le:416} {\rm(\cite[\S 9.2]{L73})} 
Let $a,b,c\in \C$ be such that 
$0 < \Re(b) < \Re(c)$ and $\Re (c - a -b)>0$. Then 
\[\lim_{t\to 1^-}\hgf (a , b;c; t)= \frac{\Gamma (c)\Gamma (c- a- b)}{\Gamma (c-a)\Gamma (c-b)}.\]
\end{lem}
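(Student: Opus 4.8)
The plan is to derive this classical Gauss summation formula from the Euler integral representation of $\hgf$, which delivers both the existence of the limit and its value in one stroke. (Existence of the limit alone also follows from Abel's theorem, since the hypothesis $\Re(c-a-b)>0$ forces the defining series \eqref{def:hgf} to converge absolutely at the argument $1$ via the asymptotics $\frac{(a)_k(b)_k}{(c)_k\,k!}\sim \frac{\Gamma(c)}{\Gamma(a)\Gamma(b)}k^{a+b-c-1}$; but the integral route evaluates the limit directly.)

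First I would record the Euler representation: for a real argument $0\le t<1$ and under the hypothesis $0<\Re(b)<\Re(c)$,
\[
\hgf(a,b;c;t)=\frac{\Gamma(c)}{\Gamma(b)\Gamma(c-b)}\int_0^1 s^{b-1}(1-s)^{c-b-1}(1-ts)^{-a}\,ds.
\]
This comes straight out of the series \eqref{def:hgf}: insert the Beta integral $\frac{(b)_k}{(c)_k}=\frac{\Gamma(c)}{\Gamma(b)\Gamma(c-b)}\int_0^1 s^{b+k-1}(1-s)^{c-b-1}\,ds$ (legitimate because $\Re(b)>0$ and $\Re(c-b)>0$), interchange sum and integral by absolute convergence, and sum the binomial series $\sum_k \frac{(a)_k}{k!}(ts)^k=(1-ts)^{-a}$, valid for $ts<1$. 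The condition $0<\Re(b)<\Re(c)$ is exactly what makes this step work.

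The heart of the matter is the passage $t\to 1^-$ under the integral sign. Pointwise the integrand converges to $s^{b-1}(1-s)^{c-a-b-1}$. For $s,t\in[0,1)$ the quantity $1-ts$ is a positive real, so $|(1-ts)^{-a}|=(1-ts)^{-\Re a}$, and from $1-s\le 1-ts\le 1$ one gets $(1-ts)^{-\Re a}\le\max\bigl(1,(1-s)^{-\Re a}\bigr)$. Hence the integrand is dominated, uniformly in $t$, by $s^{\Re b-1}(1-s)^{\Re(c-b)-1}$ when $\Re a\le 0$ and by $s^{\Re b-1}(1-s)^{\Re(c-a-b)-1}$ when $\Re a>0$. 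In both cases the dominating function is integrable on $(0,1)$: the exponent of $s$ at $0$ is $\Re b-1>-1$, while the exponent of $1-s$ at $1$ is $\Re(c-b)-1>-1$ in the first case and $\Re(c-a-b)-1>-1$ in the second. This is precisely where the hypothesis $\Re(c-a-b)>0$ is consumed. Dominated convergence then gives
\[
\lim_{t\to 1^-}\hgf(a,b;c;t)=\frac{\Gamma(c)}{\Gamma(b)\Gamma(c-b)}\int_0^1 s^{b-1}(1-s)^{c-a-b-1}\,ds.
\]

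Finally the remaining integral is a Beta integral, convergent exactly under $\Re(b)>0$ and $\Re(c-a-b)>0$:
\[
\int_0^1 s^{b-1}(1-s)^{c-a-b-1}\,ds=\frac{\Gamma(b)\Gamma(c-a-b)}{\Gamma(c-a)}.
\]
Substituting and cancelling $\Gamma(b)$ yields $\dfrac{\Gamma(c)\Gamma(c-a-b)}{\Gamma(c-b)\Gamma(c-a)}$, as asserted. The only genuine obstacle is the dominated-convergence step; everything else is bookkeeping with Beta and Gamma factors, and it is reassuring that the three integrability requirements line up exactly with the three hypotheses, so no parameter range is lost.
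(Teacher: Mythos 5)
Your proof is correct. Note, however, that the paper itself offers no argument for this lemma at all: it is quoted as a classical fact with a citation to Lebedev's book, whose hypotheses $0<\Re(b)<\Re(c)$ together with $\Re(c-a-b)>0$ are precisely the ones tailored to the Euler-integral method you use, so your write-up essentially reconstructs the standard proof behind that citation. Each step checks out: the Euler representation follows from the Beta-integral identity for $(b)_k/(c)_k$ (using $\Re b>0$, $\Re(c-b)>0$) plus a legitimate sum--integral interchange; the domination $|(1-ts)^{-a}|=(1-ts)^{-\Re a}\le\max\bigl(1,(1-s)^{-\Re a}\bigr)$ is valid because $1-ts$ is a positive real, and the resulting dominating functions are integrable exactly under the stated hypotheses, so dominated convergence applies along any sequence $t_n\to 1^-$; and the final Beta evaluation $\int_0^1 s^{b-1}(1-s)^{c-a-b-1}\,ds=\Gamma(b)\Gamma(c-a-b)/\Gamma(c-a)$ gives the asserted constant. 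The only cosmetic caveat is your parenthetical asymptotic $\frac{(a)_k(b)_k}{(c)_k\,k!}\sim\frac{\Gamma(c)}{\Gamma(a)\Gamma(b)}k^{a+b-c-1}$, which as stated presumes $a,b\notin-\N_0$ (in the degenerate cases the series terminates and the claim is trivial); since this remark is not used in the main argument, it does not affect correctness. What your route buys, compared with the bare citation in the paper, is a self-contained verification in which the three integrability requirements visibly consume the three hypotheses, confirming that the lemma is stated in exactly the generality needed.
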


Finally we also have (\cite[Lem. 1]{Lo67}):
 
\begin{lem} \label{le:417} For $c,d \in \C$ with $\Re c> 0$ and $\Re d>0$,   
we have for $0<t<x$:
\[\int_t^x (x-u)^{c-1}(u-t)^{d-1}\hgf \Big(a , b ; c; 1-\frac{u}{x}\Big)\, du= 
\frac{(x-t)^{c+d - 1}\Gamma (c)\Gamma (d)}{\Gamma (c+d)}
\hgf \Big(a , b; c+d ; 1- \frac{t}{x}\Big).\]
\end{lem}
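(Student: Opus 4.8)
The plan is to prove the identity by expanding the hypergeometric function under the integral into its defining power series and integrating term by term. Writing $z = 1 - u/x = (x-u)/x$, note that as $u$ ranges over $(t,x)$ the argument $z$ stays in $(0, 1 - t/x)$, which is bounded away from $1$ since $t > 0$. Hence the series \eqref{def:hgf} converges uniformly on the relevant range, and since the endpoint factors $(x-u)^{c-1}$ and $(u-t)^{d-1}$ are integrable for $\Re c, \Re d > 0$, interchanging summation and integration is legitimate.

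First I would substitute
\[ \hgf\Big(a,b;c;1-\frac{u}{x}\Big) = \sum_{k=0}^\infty \frac{(a)_k (b)_k}{(c)_k\, k!}\Big(\frac{x-u}{x}\Big)^k \]
into the integral, so that the $k$-th term carries the factor $x^{-k}\int_t^x (x-u)^{c-1+k}(u-t)^{d-1}\,du$. The substitution $u = t + (x-t)s$, $s \in (0,1)$, converts this into a Beta integral:
\[ \frac{(x-t)^{c+d-1+k}}{x^k}\int_0^1 (1-s)^{c+k-1} s^{d-1}\,ds = \frac{(x-t)^{c+d-1+k}}{x^k}\cdot\frac{\Gamma(c+k)\,\Gamma(d)}{\Gamma(c+d+k)}. \]

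The decisive step is the cancellation of Pochhammer symbols. Using $\Gamma(c+k) = \Gamma(c)(c)_k$ and $\Gamma(c+d+k) = \Gamma(c+d)(c+d)_k$, the factor $(c)_k$ arising from $\Gamma(c+k)$ cancels the $(c)_k$ in the denominator of the series coefficient, leaving $(a)_k(b)_k/(c+d)_k$. Collecting everything, the whole integral becomes
\[ \frac{\Gamma(c)\Gamma(d)}{\Gamma(c+d)}(x-t)^{c+d-1}\sum_{k=0}^\infty \frac{(a)_k(b)_k}{(c+d)_k\,k!}\Big(\frac{x-t}{x}\Big)^k, \]
and since $(x-t)/x = 1 - t/x$, the remaining series is exactly $\hgf(a,b;c+d;1-t/x)$ by \eqref{def:hgf}. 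This yields the claimed formula; in effect, the operation $c \mapsto c+d$ is produced precisely by this Pochhammer cancellation, which is the real content of the lemma.

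I expect the only genuine subtlety to be the convergence bookkeeping rather than the algebra: one must confirm uniform convergence of the series on $[t,x]$ (to justify the interchange) and convergence of the output series $\hgf(a,b;c+d;1-t/x)$, both of which follow from $1 - t/x < 1$. No hypotheses on $a,b$ are needed in this range, and since both sides are entire in $a$ and $b$ for an argument in the open unit disk, the identity for all admissible parameters follows by analytic continuation once it is established on the convergent range.
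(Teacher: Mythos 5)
Your proof is correct, but note that the paper itself gives no argument for this lemma at all: it is quoted directly from Love \cite[Lem.~1]{Lo67}, so there is no internal proof to compare against. Your derivation --- expanding $\hgf(a,b;c;1-u/x)$ via \eqref{def:hgf}, integrating term by term against $(x-u)^{c-1}(u-t)^{d-1}$ using the substitution $u=t+(x-t)s$ to produce Beta integrals, and then using $\Gamma(c+k)=\Gamma(c)(c)_k$, $\Gamma(c+d+k)=\Gamma(c+d)(c+d)_k$ so that the $(c)_k$ cancels and is replaced by $(c+d)_k$ --- is the standard classical proof of this parameter-raising (fractional-integration) formula, and your analytic bookkeeping is in order: since $t>0$, the argument $(x-u)/x$ stays in the compact set $[0,\,1-t/x]\subset[0,1)$, where the series converges uniformly (for arbitrary $a,b$, by the ratio test, using only $c\notin-\N_0$, which follows from $\Re c>0$), and pairing a uniformly convergent series of continuous functions with the integrable factor $(x-u)^{\Re c-1}(u-t)^{\Re d-1}$ justifies the interchange; the Beta evaluations require exactly the stated hypotheses $\Re c>0$, $\Re d>0$. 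The one superfluous element is your closing appeal to analytic continuation in $a$ and $b$: the term-by-term computation is already valid for all $a,b\in\C$, so there is no remaining parameter range to extend to. What your argument buys relative to the paper's citation is self-containedness --- it exhibits the lemma as an elementary one-page computation rather than an imported black box.
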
 

\begin{lemma} \label{le:418} 
For $n > 1$ and $\rho =(n-1)/2$, we have 
\[\frac{\Gamma \left(\frac{n+1}{2}\right)}{\sqrt{\pi}\Gamma\left(\frac{n}{2}\right)}\gamma_{n,m}\int_{-1}^1 \hgf \left(\rho+\lambda , \rho -\lambda ; \frac{n}{2} ; 
\frac{1}{2}(1-t)\right) (1-t^2)^{\frac{n}{2}-1}dt= 
\int_{\bS^n}\psi_m(x)\ d\mu (x)=\frac{1}{m^2}. \]
\end{lemma}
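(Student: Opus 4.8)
The plan is to treat the two equalities separately, since they are of entirely different character. The first equality is a routine application of the integration formula in Lemma~\ref{le:415}. By \eqref{eq:hypgeo2} the function $\psi_m(x)=\Psi_m(x,e_0)$ is $K$-invariant of the form $\psi_m(x)=\alpha(x_0)$ with
\[ \alpha(t)=\gamma_{n,m}\cdot\hgf\Big(\rho+\lambda,\rho-\lambda;\tfrac n2;\tfrac12(1-t)\Big). \]
Inserting this $\alpha$ into Lemma~\ref{le:415} reproduces verbatim the left hand side of the asserted identity, so for the first equality nothing is needed beyond quoting that lemma.

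The substance lies in the second equality $\int_{\bS^n}\psi_m\,d\mu=m^{-2}$, and here I would bypass the hypergeometric integral entirely and argue spectrally. First I would replace $\psi_m$ by $\varphi_m$: since $\psi_m=\varphi_m\circ\sigma$ (Theorem~\ref{the:ctmSp}) and $\sigma$ is an isometry, hence $\mu$-preserving, we have $\int_{\bS^n}\psi_m\,d\mu=\int_{\bS^n}\varphi_m\,d\mu$. Next I would use that $\varphi_m=\Phi_m(\cdot,e_0)$ is the Green's function of $m^2-\Delta$ at $e_0$: by Lemma~\ref{le:Phim}(ii),(iv) it represents the distribution kernel of $C_m=(m^2-\Delta)^{-1}$ at the base point, so that, as a distribution on $\bS^n$,
\[ (m^2-\Delta)\varphi_m=\delta_{e_0}. \]
Pairing this with the constant test function $\mathbf 1$ (legitimate because $\bS^n$ is compact) and using that $\Delta$ is symmetric with $\Delta\mathbf 1=0$ yields
\[ 1=\langle\delta_{e_0},\mathbf 1\rangle=\langle(m^2-\Delta)\varphi_m,\mathbf 1\rangle=\langle\varphi_m,(m^2-\Delta)\mathbf 1\rangle=m^2\int_{\bS^n}\varphi_m\,d\mu, \]
which is precisely the claim. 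Equivalently one may phrase the computation as $C_m\mathbf 1=m^{-2}\mathbf 1$ and read off the (constant, by $G$-invariance) value of $\int_{\bS^n}\Phi_m(x,e_0)\,d\mu(x)$.

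The points requiring care are the passage from the off-diagonal analytic representative $\varphi_m$ to the global distributional identity $(m^2-\Delta)\varphi_m=\delta_{e_0}$ with the correct unit coefficient — this is exactly where Lemma~\ref{le:Phim}(ii) is indispensable, since it pins down the normalization of the $\delta$-term — and the integrability $\varphi_m\in L^1(\bS^n)$ that makes both $\int_{\bS^n}\varphi_m\,d\mu$ and the pairing $\langle\varphi_m,\mathbf 1\rangle$ meaningful. The latter follows since the resolvent kernel of a second-order elliptic operator has at worst a $\dist(x,e_0)^{-(n-2)}$ singularity for $n\ge 3$, and a logarithmic one for $n=2$, both integrable against the $n$-dimensional volume near $e_0$. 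I would also note that the value $m^{-2}$ is insensitive to the normalization of $\mu$: rescaling $\mu$ rescales the kernel $\Phi_m$, and hence $\psi_m$, by the reciprocal factor, so that $\int_{\bS^n}\psi_m\,d\mu$ is unchanged; this is what reconciles the argument with the probability-measure normalization built into Lemma~\ref{le:415}.
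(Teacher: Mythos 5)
Your proof is correct, and the second equality is handled by a genuinely different route than the paper's. The paper also begins with $C_m\mathbf{1}=m^{-2}\mathbf{1}$, but it then expands $m^{-2}=\la \mathbf{1},C_m\mathbf{1}\ra_{L^2}$ as the double integral $\int_{\bS^n}\int_{\bS^n}\Phi_m(x,y)\,d\mu(x)\,d\mu(y)$ and collapses it to $\int_{\bS^n}\Phi_m(x,-e_0)\,d\mu(x)=\int_{\bS^n}\psi_m\,d\mu$ by rewriting both integrations as integrals over $G$ against normalized Haar measure and invoking the $G$-invariance of $\Phi_m$; the first equality is then obtained exactly as you do, from Theorem~\ref{th:Psi} and Lemma~\ref{le:415}. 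You never form the double integral: instead you trade $\psi_m$ for $\varphi_m$ via the $\sigma$-invariance of $\mu$ (this plays the role of the paper's Haar-measure manipulation) and then pair the slice Green's identity $(m^2-\Delta)\varphi_m=\delta_{e_0}$ against $\mathbf{1}$. The trade-off: the paper only needs Lemma~\ref{le:Phim}(ii) through the global pairing $\tilde C_m(\mathbf{1}\otimes\mathbf{1})$, whereas you need the pointwise slice version at the base point, which is not literally what Lemma~\ref{le:Phim}(ii) states; to make that step airtight you should note that for smooth $\eta$ one has $C_m(m^2-\Delta)\eta=\eta$ in $L^2$, that $x\mapsto\int_{\bS^n}\Phi_m(x,y)[(m^2-\Delta)\eta](y)\,d\mu(y)$ is continuous thanks to the integrable singularity you quote, and that the a.e.\ identity therefore holds at the specific point $e_0$, with the hermitian symmetry $\Phi_m(x,e_0)=\oline{\Phi_m(e_0,x)}$ converting this into the asserted statement about $\varphi_m$. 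In compensation, your argument avoids Fubini for the singular double integral, which the paper uses without comment, and your closing remark on the normalization-independence of the value $m^{-2}$ correctly reconciles the volume-measure definition of $C_m$ with the probability measure of Lemma~\ref{le:415}, a point the paper leaves implicit.
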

\begin{proof} We have $C_m 1=(m^2-\Delta)^{-1}1=\frac{1}{m^2}$
and it follows by the invariance of the measure on the sphere that 
\begin{align*} 
\frac{1}{m^2}&=\ip{1}{C_m 1}_{L^2}
=\int_{\bS^n}\int_{\bS^n}\Phi_m(x,y)\, d\mu(x)\, d\mu(y) \\
&=\int_G\int_G \Phi_m(g_1.e_0, g_2.(-e_0))\, dg_1\, dg_2 
=\int_G\int_G \Phi_m(g_2^{-1} g_1.e_0, -e_0)\, dg_1\, dg_2 \\ 
&=\int_G \Phi_m(g.e_0, -e_0)\, dg 
=\int_{\bS^n} \Psi_m(x,e_0)\, d\mu(x) 
=\int_{\bS^n} \psi_m(x)\, d\mu(x). 
\end{align*}
The claim now follows from Theorem \ref{th:Psi} and Lemma~\ref{le:415}.
\end{proof}

We now have the tools to evaluate the constant $\gamma_{n,m}$.  
 
\begin{thm}\label{thm:gamma} Let $m>0$. Then 
  \begin{equation}
    \label{eq:gamma}
\gamma_{n,m}  
=  \frac{  
 \Gamma \left(\frac{n-1}{2}+ \lambda\right)\Gamma \left(\frac{n-1}{2}-\lambda\right)}{\Gamma (n)} 
  \end{equation}
For $\lambda \in i\R$, we obtain in particular 
\begin{equation}
  \label{eq:gamma2}
\gamma_{n,m} = \frac{  | \Gamma \left(\frac{n-1}{2}+ \lambda\right)|^2 }
{\Gamma (n)  }, \quad \mbox{ and } \quad 
\gamma_{1,m}=\frac{\pi}{m\sinh (\pi m)}\quad \mbox{ for } \quad 
n=1.  
\end{equation}
\end{thm}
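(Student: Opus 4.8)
The plan is to take Lemma~\ref{le:418} as the starting point, since it already expresses $\gamma_{n,m}$ through a single integral, so that the whole problem reduces to computing
\[
I := \int_{-1}^1 \hgf\big(\rho+\lambda,\rho-\lambda;\tfrac{n}{2};\shalf(1-t)\big)\,(1-t^2)^{\frac n2-1}\,dt.
\]
First I would substitute $s = \shalf(1-t)$, so that $1-t^2 = 4s(1-s)$ and $dt = -2\,ds$, turning $I$ into the Beta-type integral
\[
I = 2^{n-1}\int_0^1 s^{\frac n2-1}(1-s)^{\frac n2-1}\,\hgf\big(\rho+\lambda,\rho-\lambda;\tfrac n2;s\big)\,ds.
\]

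The core of the argument is the evaluation of this integral, which I would derive from Lemma~\ref{le:417}. Setting $x=1$ there and substituting $u=1-s$ (so that the hypergeometric argument $1-u$ becomes $s$) rewrites the left-hand side as $\int_0^{1-t} s^{c-1}\big((1-t)-s\big)^{d-1}\hgf(a,b;c;s)\,ds$, and letting the lower-bound parameter $t\to 0^+$ yields the Euler-type identity
\[
\int_0^1 s^{c-1}(1-s)^{d-1}\hgf(a,b;c;s)\,ds = \frac{\Gamma(c)\Gamma(d)}{\Gamma(c+d)}\,\hgf(a,b;c+d;1).
\]
Applying this with $a=\rho+\lambda$, $b=\rho-\lambda$, $c=d=\tfrac n2$ (so $c+d=n$) and then invoking Gauss's summation (Lemma~\ref{le:416}), which applies because $a+b=2\rho=n-1$ gives $\Re(c+d-a-b)=1>0$, produces
\[
\int_0^1 s^{\frac n2-1}(1-s)^{\frac n2-1}\hgf\big(\rho+\lambda,\rho-\lambda;\tfrac n2;s\big)\,ds = \frac{\Gamma(\tfrac n2)^2}{\Gamma\big(\tfrac{n+1}2-\lambda\big)\Gamma\big(\tfrac{n+1}2+\lambda\big)}.
\]

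Feeding $I = 2^{n-1}$ times this quantity back into Lemma~\ref{le:418} leaves
\[
\gamma_{n,m} = \frac{1}{m^2}\cdot\frac{\sqrt\pi}{\Gamma\big(\tfrac{n+1}2\big)\Gamma\big(\tfrac n2\big)}\cdot\frac{\Gamma\big(\tfrac{n+1}2-\lambda\big)\Gamma\big(\tfrac{n+1}2+\lambda\big)}{2^{n-1}},
\]
and the rest is a Gamma-function simplification. The Legendre duplication formula $\Gamma(\tfrac n2)\Gamma(\tfrac{n+1}2)=2^{1-n}\sqrt\pi\,\Gamma(n)$ cancels $\sqrt\pi$, the factor $2^{n-1}$, and the two half-integer Gammas, reducing the scalar prefactor to $1/(m^2\Gamma(n))$; then the recursion $\Gamma(z+1)=z\Gamma(z)$ applied to $\tfrac{n+1}2\pm\lambda=(\rho\pm\lambda)+1$ factors out $(\rho+\lambda)(\rho-\lambda)=\rho^2-\lambda^2=m^2$, which cancels $1/m^2$. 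This delivers \eqref{eq:gamma}, namely $\gamma_{n,m}=\Gamma(\rho+\lambda)\Gamma(\rho-\lambda)/\Gamma(n)$. For \eqref{eq:gamma2}, when $\lambda\in i\R$ we have $\overline{\rho+\lambda}=\rho-\lambda$, so $\Gamma(\rho-\lambda)=\overline{\Gamma(\rho+\lambda)}$ and the product equals $|\Gamma(\rho+\lambda)|^2$; for $n=1$ we have $\rho=0$ and $\lambda=im$, and the reflection formula $\Gamma(z)\Gamma(-z)=-\pi/\big(z\sin(\pi z)\big)$ together with $\sin(\pi i m)=i\sinh(\pi m)$ gives $\gamma_{1,m}=\pi/\big(m\sinh(\pi m)\big)$.

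The step I expect to require the most care is the passage $t\to 0^+$ used to extract the Euler integral from Lemma~\ref{le:417}. One must check that $\int_0^1 s^{\frac n2-1}(1-s)^{\frac n2-1}\hgf(\rho+\lambda,\rho-\lambda;\tfrac n2;s)\,ds$ converges at $s=1$: there $\hgf$ grows like $(1-s)^{1-n/2}$ (since $\tfrac n2-a-b=1-\tfrac n2\le 0$ for $n\ge 2$), which exactly balances the weight $(1-s)^{n/2-1}$ and leaves a bounded, hence integrable, integrand; and one must justify passing the limit inside the integral, which follows by dominated convergence. Everything downstream is routine manipulation of the duplication and reflection formulae.
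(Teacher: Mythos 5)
Your treatment of the case $n>1$ is correct and is essentially the paper's own argument, only cosmetically reorganized: the paper likewise starts from Lemma~\ref{le:418}, applies Lemma~\ref{le:417} (there with $x=2$ on the interval $[t,2]$, rather than your normalized Euler-type identity on $[0,1]$), evaluates the resulting limit with Lemma~\ref{le:416}, and finishes with the duplication formula and $\Gamma(z+1)=z\Gamma(z)$ exactly as you do. Your explicit attention to convergence and to passing the limit $t\to 0^+$ inside the integral is a welcome addition (the paper is silent on this); one small slip there: for $n=2$ we have $\tfrac n2-(\rho+\lambda)-(\rho-\lambda)=0$, so $\hgf$ has a logarithmic singularity at $s=1$ and the integrand is unbounded, though still integrable, so your conclusion survives.

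The genuine gap is the case $n=1$, which the theorem explicitly includes. You obtain $\gamma_{1,m}=\pi/(m\sinh(\pi m))$ by substituting $n=1$, $\rho=0$, $\lambda=im$ into \eqref{eq:gamma}, but your argument never establishes \eqref{eq:gamma} for $n=1$: Lemma~\ref{le:418}, your declared starting point, is stated only for $n>1$; your convergence estimate at $s=1$ explicitly assumes $n\ge 2$; the dominated-convergence majorization needs the exponent $\tfrac n2-1\ge 0$; and, most substantively, the hypothesis $0<\Re(b)$ of Lemma~\ref{le:416} fails for $n=1$, where $b=\rho-\lambda=-im$ is purely imaginary. This is precisely why the paper treats $n=1$ by a separate direct computation: by Remark~\ref{rem:4.4}, $\psi_m$ is the function $t \mapsto \gamma_{1,m}\cosh(mt)$ on the circle, so
\begin{equation*}
\frac{1}{m^2}=\frac{\gamma_{1,m}}{2\pi}\int_{-\pi}^{\pi}\cosh(mt)\,dt
=\frac{\gamma_{1,m}}{\pi m}\sinh(\pi m),
\end{equation*}
which yields the stated value and, via $|\Gamma(im)|^2=\pi/(m\sinh(\pi m))$, confirms \eqref{eq:gamma} and \eqref{eq:gamma2} for $n=1$. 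To repair your proof, either insert such a direct argument for $n=1$, or verify the $n=1$ analogue of Lemma~\ref{le:418} separately and replace the appeal to Lemma~\ref{le:416} by Gauss's summation theorem in the form valid whenever $\Re(c-a-b)>0$ (which does hold here, since $c-a-b=\tfrac12$ for $c=1$).
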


\begin{proof} We assume first that $n>1$ so that $\Re (\rho \pm \lambda )>0$. Put $\gamma :=\gamma_{n,m} 
 \frac{\Gamma \left(\frac{n+1}{2}\right)}{\sqrt{\pi}\Gamma\left(\frac{n}{2}\right)}$
 and $F(s ) :=\gamma \cdot \hgf(\rho+\lambda ,\rho-\lambda ;\frac{n}{2};s)$.  
By Lemma \ref{le:418} and  the change of variables $u=1+t$ we get
\begin{align*}
\frac{1}{m^2}& =  \int_{-1}^1  F((1-t)/2) (1-t)^{\frac{n}{2}-1}(1+t)^{\frac{n}{2}-1}dt\\ 
&=\int_0^2 (2-u)^{\frac{n}{2}-1}u^{\frac{n}{2}-1} F(1-u/2) du\\
&=\lim_{t\to 0^+} \int_t^2 (2-u)^{\frac{n}{2}-1}(u-t)^{\frac{n}{2}-1} F(1-u/2)du\\
&= 2^{n-1}\gamma \frac{\Gamma(n/2)^2}{\Gamma (n)}\lim_{t\to 0^+} \hgf \left(\frac{n-1}{2} + \lambda , \frac{n-1}{2}- \lambda ; 
n ; 1-\frac{t}{2}\right)\\
&= 2^{n-1}\gamma \frac{\Gamma(n/2)^2}{\Gamma (n)}\lim_{t\to 0^+} 
\hgf \Big(\rho+\lambda, \rho-\lambda; n ; 1-\frac{t}{2}\Big).
\end{align*}

With $a=\rho +\lambda$, $b=\rho -\lambda$, $\rho =(n-1)/2$ and  $c=n$,  we have
$c-a-b=1>0$ and $\Re (c-b) = \frac{n+1}{2}+\Re \lambda >0$ 
(see \eqref{eq:lambda2}). Hence 
Lemma \ref{le:416}  implies 
\begin{align*}
\lim_{t\to 0^+} \hgf \Big(\rho+\lambda, \rho-\lambda ; 
n ; 1-\frac{t}{2} \Big) 
&= \frac{\Gamma (n)}{\Gamma \left(\frac{n+1}{2}+\lambda\right)
\Gamma \left(\frac{n+1}{2}-\lambda\right)}
= \frac{\Gamma (n)}{\Gamma \left(1+\rho+\lambda\right)
\Gamma \left(1+\rho-\lambda\right)} \\
&= \frac{\Gamma (n)}{(\rho^2- \lambda^2)\Gamma \left(\rho+\lambda\right)
\Gamma \left(\rho-\lambda\right)} 
= \frac{\Gamma (n)}{m^2\Gamma \left(\rho+\lambda\right)
\Gamma \left(\rho-\lambda\right)},   
\end{align*}
where we have used that $\Re(\rho \pm \lambda) > 0$. 
This further leads to 
\begin{align*}
1&=
m^2 2^{n-1}\gamma\frac{\Gamma \left(\frac{n}{2}\right)^2}{\Gamma (n)}
 \frac{\Gamma (n)}{m^2\Gamma \left(\rho+\lambda\right)
\Gamma \left(\rho-\lambda\right)}\\
&=2^{n-1}\gamma_{n,m} \frac{ \Gamma \left(\frac{n+1}{2}\right)
\Gamma \left(\frac{n}{2}\right)^2}{\sqrt{\pi}
\Gamma\left(\frac{n}{2}\right)}
 \frac{1}{\Gamma \left(\rho+\lambda\right)
\Gamma \left(\rho-\lambda\right)}\\
&=\frac{2^{n-1}\gamma_{n,m}}{\sqrt{\pi}}
 \frac{ \Gamma \left(\frac{n+1}{2}\right)\Gamma \left(\frac{n}{2}\right)}
{\Gamma \left(\rho+\lambda\right)
\Gamma \left(\rho-\lambda\right)}.
 \end{align*} 
This, together with the identity $2^{2z-1}\Gamma (z)\Gamma (z + 1/2) =\sqrt{\pi }\Gamma (2z)$ (\cite[p. 21]{L73}) proves the theorem for $n>1$. 

For $n=1$ we first note that the left hand side of 
\eqref{eq:gamma2} specializes for $n = 1$  to
\[|\Gamma (i m )|^2 = \frac{\pi}{m \sinh (\pi m)} \quad \mbox{ for } \quad 
m > 0.\]
On the other hand 
we have by  the discussion after (\ref{eq:RadPart2}) and our normalization of the measure on the circle
\[
\frac{1}{m^2}
= \frac{\gamma_{1,m}}{2\pi} \int_{-\pi}^\pi \cosh (mt)dt = \frac{\gamma_{1,m}}
{\pi m} \sinh (\pi m) \quad \mbox{ or } \quad 
\gamma_{1,m} = \frac{\pi }{m \sinh (\pi m)}.\qedhere\]
  \end{proof}
 
\begin{remark} ($m \to 0$) As $\Delta $ annihilates the constants, 
its inverse $\Delta^{-1}$ is not densely defined. However, it is
bounded on the hyperplane $1^\perp$, where $1$ denotes the constant function. 
The formula for $\gamma_{n, m}$ shows that 
\[\lim_{m\to 0} m^2 \gamma_{1,m}= 1\quad 
\mbox{for} \quad n=1. \] 
For $n > 1$ and $m < \rho$ we have for $m \to 0$:
\[ \rho - \lambda 
= \rho\Big(1-(1 - m^2/\rho^2)^{1/2}\Big)
\sim \rho \frac{m^2}{2\rho^2} =  \frac{m^2}{2\rho}
\quad \mbox{ and } \quad 
\rho + \lambda \to 2 \rho.\] 
Therefore 
\[\lim_{m \to 0} m^2 \gamma_{n,m} 
=  \lim_{m \to 0} m^2\frac{\Gamma(\rho + \lambda)\Gamma(\rho-\lambda)}{\Gamma(n)} 
= \frac{\Gamma(2\rho)}{\Gamma(2\rho+1)} \lim_{m \to 0} \frac{m^2}{\rho-\lambda}
=\lim_{m \to 0}  m^2\frac{1}{2\rho} \frac{2\rho}{m^2} =1.\]

We also have by the power series expression of $\hgf$ for $\lambda = \rho$:  
\begin{equation}
  \label{eq:m0case}
\lim_{m\to 0}\hgf \Big(\rho+\lambda , \rho -\lambda; 
\frac{n}{2}; z\Big)= \hgf \Big(n-1, 0; \frac{n}{2}; z\Big) = 1.
\end{equation}
Thus, 
\begin{equation}
  \label{eq:m0case2}
\lim_{m\to 0} m^2\Psi_m = \lim_{m\to 0} m^2\gamma_{n,m}
\cdot \hgf \Big(\rho+\lambda,\rho-\lambda; \frac{n}{2}; \cdot \Big)
=1.
\end{equation}

Let $\cY_q$ be the space of homogeneous degree $q$ harmonic polynomials on 
$\R^{n+1}$, restricted to the sphere $\bS^n$. Then
the canonical action $\delta_q(k)f(x)=f(k^{-1}x)$ defines an irreducible representation of $G$ and
$L^2(\bS^n)\simeq \hat\bigoplus_{q=0}^\infty \cY_q$ is a Hilbert space direct sum 
(\cite[Thm.~9.3.2]{Fa08} or \cite[Ch. 7.3]{vD09}).
For $\eta \in\cY_q$ we have, see \cite[Prop. 9.3.5]{Fa08}:
\[\Delta \eta = -q(q+n-1)\eta.\]
For $m>0$ we therefore obtain with the orthogonal projection 
$p_q : L^2 (\bS^n)\to \cY_q\subeq  L^2(\bS^n)$ the representation 
\[(-\Delta +m^2)^{-1}=\frac{1}{m^2} p_0 
+ \sum_{q=1}^\infty \frac{1}{q(q+n-1) + m^2}\cdot p_q. \]
Thus
\[m^2(-\Delta +m^2)^{-1}=p_0 + \sum_{q=1}^\infty \frac{m^2}{q(q+n-1) + m^2}
\cdot p_q
\stackrel{m\to 0}{\ssarr} p_0 .\]
This fits \eqref{eq:m0case2} because $p_0(\varphi) =\int_G \varphi (u)\, d\mu (u)$.
\end{remark}

\section{Reflection positivity on the sphere and representation theory}\label{se:RefPosRep}
\noindent
In this section we explain how our results from the previous section connect to 
representation theory. In particular we show that \textit{all} 
irreducible unitary spherical representations 
of $G^c:= \OO_{1,n}(\R) ^\uparrow$ 
are obtained by 
reflection positivity. We use \cite{vD09}, in particular Section~7.5, as
a standard reference. 

\subsection{The spherical 
  unitary representations of the Lorentz group}
\mlabel{se:IntRep}
 
Recall that the group $G^c$ acts transitively on $\Lnp 
= G^c.\xi^0 \simeq G^c/MN$, where 
$\xi^0 = e_0 +i e_n$ 
(cf.~Subsection~\ref{se:Boundary}). We embed the sphere $\bS^{n-1}$ into $\bL^n_+$ by $\xi_u = (1,iu)$, so that $\xi_{e_n}$ coincides with the element $\xi^0$ from 
above. Then 
\[\Lnp= \{t\xi_u\: t>0, u\in\bS^{n-1}\} \simeq \bS^{n-1}\times \R_+^\times. \]

\begin{lem}\label{le:jgu} For $g=\begin{pmatrix} a & iv\\ iw^\top & A\end{pmatrix} \in G^c$, $a\in \R, v,w\in\R^n, 
A\in M_{n, n}(\R)$, and $u\in \bS^{n-1}$ define
\[ g.u  =\frac{w + u A^\top}{a-v  u} 
\quad \text{and}\quad j(g,u)=a - vu .\]
Then the following holds for $g,g_1,g_2\in G^c$ and $u\in\bS^{n-1}$:
\begin{itemize}
\item[\rm (i)] $g.u\in \bS^{n-1}$ and $(g,u)\mapsto g.u $ defines an action of $G^c$ on $\bS^{n-1}$.
\item[\rm (ii)] $j(g,u)>0$ and   $j(g_1g_2,u)=j(g_1,g_2.u)j(g_2,u)$.
\item[\rm (iii)] $j(g,u)= \lf{ g\xi_u }{e_0}$.
\item[\rm (iv)] For $k\in K, a_t\in A$ and $\tilde n\in N$, we have 
$j(ka_t \tilde n, e_n)=e^t$.
\item[\rm (v)] $g.(t\xi_u)=t\cdot j(g,u) \xi_{g.u}$.
\end{itemize} 
\end{lem}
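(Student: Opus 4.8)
The plan is to deduce all five assertions from a single identity, namely
\[ g\xi_u = j(g,u)\,\xi_{g.u}, \]
which I would obtain by multiplying out the block form of $g$. Writing $\xi_u = (1,iu)$ as a column in $V = \R e_0 \oplus i\R^n$ and applying $g = \begin{pmatrix} a & iv\\ iw^\top & A\end{pmatrix}$, the factors of $i$ (with $i^2 = -1$) turn the top entry into $a - v\cdot u$ and the lower block into $i(w + Au)$, so
\[ g\xi_u = (a - v\cdot u,\ i(w + Au)) = (j(g,u),\ i(w+Au)). \]
Now $\xi_u \in \Lnp$, since $\lf{\xi_u}{\xi_u} = 1 - \|u\|^2 = 0$ and its $e_0$-component is $1 > 0$; as $\Lnp = G^c.\xi^0$ is a single $G^c$-orbit (see \eqref{eq:conic}), the vector $g\xi_u$ again lies in $\Lnp$. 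Hence its $e_0$-component $j(g,u) = a - v\cdot u$ is strictly positive, which is the positivity half of (ii), and the null condition $\lf{g\xi_u}{g\xi_u}=0$ reads $(a - v\cdot u)^2 = \|w + Au\|^2$. Factoring the positive scalar $j(g,u)$ out of $g\xi_u$ then produces $g\xi_u = j(g,u)\,\xi_{g.u}$ with $g.u = (w+Au)/(a-v\cdot u)$, and the null condition says exactly $\|g.u\| = 1$, proving $g.u \in \bS^{n-1}$ in (i); multiplying by $t$ gives (v).

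For (iii) I would read off the $e_0$-component directly: for any $z \in V$ one has $\lf{z}{e_0} = z_0$, so $\lf{g\xi_u}{e_0} = j(g,u)$. The remaining content of (i) and (ii)---the action axioms and the cocycle relation---I would extract simultaneously from associativity of the matrix action. Applying $g_1 g_2$ to $\xi_u$ in two ways gives
\[ j(g_1 g_2, u)\,\xi_{(g_1 g_2).u} = (g_1 g_2)\xi_u = g_1\bigl(j(g_2,u)\xi_{g_2.u}\bigr) = j(g_2,u)\,j(g_1, g_2.u)\,\xi_{g_1.(g_2.u)}. \]
Since $u \mapsto \xi_u$ is injective and each $\xi_u$ is normalized to have $e_0$-component $1$, comparing the two sides forces both $(g_1 g_2).u = g_1.(g_2.u)$ and the cocycle identity $j(g_1 g_2,u) = j(g_1,g_2.u)\,j(g_2,u)$; the neutral element acts trivially because $e\xi_u = \xi_u$.

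For (iv) I would feed into the cocycle relation the explicit descriptions of $K$, $A$, $N$ together with the facts already recorded: $MN$ stabilizes $\xi^0 = \xi_{e_n}$ and $a_t\xi^0 = e^t\xi^0$ by \eqref{eq:atxb}. Concretely, $\tilde n\xi^0 = \xi^0$ gives $\tilde n.e_n = e_n$ and $j(\tilde n,e_n)=1$; the relation $a_t\xi^0 = e^t\xi^0$ gives $a_t.e_n = e_n$ and $j(a_t,e_n)=e^t$; and for $k\in K$, embedded with $a=1$, $v=w=0$, one reads off $j(k,\cdot)\equiv 1$. Inserting these into
\[ j(ka_t\tilde n, e_n) = j(k,(a_t\tilde n).e_n)\,j(a_t,\tilde n.e_n)\,j(\tilde n,e_n) \]
and using $(a_t\tilde n).e_n = e_n$ collapses the product to $1\cdot e^t\cdot 1 = e^t$.

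I expect the only real friction to be bookkeeping: pinning down the row/column conventions in the block matrix so that the signs in $j(g,u) = a - v\cdot u$ and $g.u = (w+Au)/(a-v\cdot u)$ come out correctly, keeping track of the factors of $i$ coming from $V = \R e_0 \oplus i\R^n$ that convert $v\cdot u$ into its negative. Once $g\xi_u = j(g,u)\xi_{g.u}$ is secured with the right normalization, every assertion is a one-line consequence; there is no analytic obstacle, the substance being entirely the orbit fact $\Lnp = G^c.\xi^0$ (supplying positivity of $j$ and the null relation) together with linearity of the $G^c$-action.
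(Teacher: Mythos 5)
Your proof is correct and follows essentially the same route as the paper: both rest on the single identity $g\xi_u = j(g,u)\,\xi_{g.u}$ obtained by block-matrix multiplication, with positivity of $j(g,u)$ coming from the fact that $g\xi_u$ lies in the $G^c$-orbit $\Lnp$, whose elements have positive $e_0$-component, and (i), (ii), (iii), (v) read off from this. Your derivation of (iv) via the cocycle relation and the values $j(k,\cdot)=1$, $j(a_t,e_n)=e^t$, $j(\tilde n,e_n)=1$ is only a cosmetic variant of the paper's use of (iii) together with the stabilizing properties of $K$, $N$ and the relation $a_t\xi^0=e^t\xi^0$.
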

\begin{proof}   A direct calculation show that 
\begin{equation}\label{eq:gxiu}
g.(t\xi_u) = tg (\xi_u^\top)^\top = t\begin{pmatrix} a - vu\\ i (Au^\top +w^\top) \end{pmatrix}^\top=
 t \cdot j(g,u)\xi_{g.u}. 
 \end{equation}
 Here we have used that $x_0>0$ for every $(x_0,\bx)\in \Lnp$ so that $j(g, u) >0$. This proves (v) which then implies
 (i) and (ii). Part (iii) follows directly from (\ref{eq:gxiu}) by taking $t=1$, 
and (iv) follows from (iii) as $K$ stabilizes $e_0$,
 $N$ stabilizes $\xi^0 = \xi_{e_n}$, and $a_t\xi^0=e^t\xi^0$. 
  \end{proof} 
 
With $\rho =\frac{n-1}{2}$ as before, 
we define 
\begin{equation}\label{eq:jl}
j_\lambda : G^c\times \bS^{n-1}\to \C^\times, \qquad 
j_\lambda (g , u):= j(g , u)^{-\lambda -\rho},
\end{equation} 
Then (iii) and (iv) above imply that
\begin{equation}\label{eq:Jacob}
j_\lambda (ka\tilde n,e_n)=e^{-(\lambda +\rho)t}
 \quad\text{and}\quad j_\lambda (k,u)=1 \quad \mbox{ for } \quad k\in K,u\in \bS^{n-1}.  
\end{equation}
Further, (ii) leads to the  cocycle relation
\[ j_\lambda (gh, u) = j_\lambda (g, h.u)j_\lambda (h,u) 
\quad \mbox{ for } \quad g,h \in G^c, u \in \bS^{n-1}.\]

\begin{lem}\label{le:GactMea} The following holds:
\begin{itemize}
\item[\rm(i)] If $\varphi \in L^1(\bS^{n-1})$, then
$\int_{\bS^{n-1}}\varphi (g.u)j_\rho (g, u)\, d\mu(u) 
= \int_{\bS^{n-1}}\varphi (u)d\mu (u)$ holds for the 
$\OO_n(\R)$-invariant probability measure $\mu$ on $\bS^{n-1}$. 
\item[\rm(ii)] We obtain a $G^c$-invariant measure on $\Lnp\cong G^c/MN$  by
\[\int_{\Lnp} \varphi (\xi )\, d\xi 
=\int_0^\infty \int_{\bS^{n-1}} \varphi (r\xi_u)r^{n-1}\, d\mu (u) \frac{dr}{r}
\quad \mbox{ for }  \quad  \varphi \in L^1(\Lnp)\, .\]
\end{itemize}
\end{lem}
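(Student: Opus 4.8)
The two statements are tied together by a single radial Fubini identity, so the plan is to prove one of them cleanly and transfer. Writing a point of $\Lnp$ as $r\xi_u$ with $r>0$ and $u\in\bS^{n-1}$, the candidate measure is $d\xi=r^{n-1}\frac{dr}{r}\,d\mu(u)$, and by Lemma~\ref{le:jgu}(v) the action reads $g.(r\xi_u)=(r\,j(g,u))\,\xi_{g.u}$. Substituting $s=r\,j(g,u)$ in the radial integral pulls out exactly the factor $j(g,u)^{-(n-1)}=j_\rho(g,u)$ (using $2\rho=n-1$). Hence the $G^c$-invariance in (ii) is equivalent to the transformation law (i) applied to the radial averages $w\mapsto\int_0^\infty\varphi(s\xi_w)s^{n-1}\frac{ds}{s}$; and since these averages exhaust $C(\bS^{n-1})$ as $\varphi$ runs through $C_c(\Lnp)$ (take $\varphi(r\xi_u)=h(r)\psi(u)$ with $\int_0^\infty h(s)s^{n-1}\frac{ds}{s}=1$), parts (i) and (ii) are in fact equivalent.

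I would then prove (ii) by identifying $d\xi$ with the classical Lorentz-invariant measure on the forward light cone. The projection $\xi=(v_0,i\bv)\mapsto\bv$ identifies $\Lnp$ with $\R^n\setminus\{0\}$ (since $\lf{v}{v}=0$ and $v_0>0$ force $v_0=|\bv|$), and under it the $G^c$-action is the restriction of the $\OO_{1,n}(\R)^\uparrow$-action on $V$. On $V\cong\R^{n+1}$ the Lebesgue measure $d^{n+1}w=dw_0\,d\bw$ is preserved by $G^c$, because $|\det g|=1$ for $g\in\OO_{1,n}(\R)$, and the forward cone $\{\lf{w}{w}=0,\ w_0>0\}$ is $G^c$-invariant. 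The co-area identity $dw_0\,d\bw=d(\lf{w}{w})\cdot\frac{d\bw}{2|\bw|}$ along $w_0=|\bw|$ then exhibits $\frac{d\bw}{2|\bw|}$ as a $G^c$-invariant surface measure on the cone. Rewriting $\bw=ru$ gives $\frac{d\bw}{2|\bw|}=\tfrac12\,r^{n-1}\frac{dr}{r}\,d\omega_{\bS^{n-1}}(u)$, a positive constant multiple of $d\xi$ via $d\mu=|\bS^{n-1}|^{-1}\,d\omega_{\bS^{n-1}}$; hence $d\xi$ is $G^c$-invariant, proving (ii).

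Finally, (i) follows by reversing the Fubini computation of the first paragraph. The main thing to get right is the co-area disintegration in the second paragraph, i.e. turning the formal ``delta on the cone'' into an honest identity of measures, together with the normalization bookkeeping relating the cone measure to $d\mu$. I would also record an independent route to (i) that stays on the sphere: by the cocycle relation (Lemma~\ref{le:jgu}(ii)) the identity in (i) is multiplicative in $g$, so by the Iwasawa decomposition $G^c=KAN$ it suffices to check $g\in K$, $g\in A$ and $g\in N$. The case $g\in K$ is immediate from $j_\rho(k,u)=1$ (see \eqref{eq:Jacob}) and the $K$-invariance of $\mu$, while for the one-parameter groups $a_t$ and $n_v$ of \eqref{at} and \eqref{nv} one computes $a_t.u$ and $n_v.u$ explicitly and verifies that $u\mapsto g.u$ multiplies the round metric of $\bS^{n-1}$ by the conformal factor $j(g,u)^{-2}$, hence scales $\mu$ by $j(g,u)^{-(n-1)}$. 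Those explicit Jacobians are the only genuinely computational step, and the light-cone argument is designed precisely to avoid them.
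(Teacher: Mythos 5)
Your proposal is correct, but it takes a genuinely different route from the paper, whose entire proof is a citation: (i) is deduced from Lemma~\ref{le:jgu}(v) together with \cite[Prop.~7.5.8]{vD09}, and (ii) from \cite[p.~114]{vD09}. Your argument is self-contained, and its two pillars are sound. First, the equivalence of (i) and (ii): substituting $s=r\,j(g,u)$ in the radial integral, using Lemma~\ref{le:jgu}(v) and $j_\rho(g,u)=j(g,u)^{-2\rho}=j(g,u)^{-(n-1)}$, and exhausting functions on $\bS^{n-1}$ by radial averages of product functions $h\otimes\psi$, is exactly right. Second, your proof of (ii) identifies $d\xi$, up to the irrelevant constant $\shalf|\bS^{n-1}|$, with the classical Lorentz-invariant measure $\frac{d\bw}{2\|\bw\|}$ on the forward light cone; this rests only on $|\det g|=1$ for $g\in G^c$ and the $G^c$-invariance of $\lf{w}{w}$ and of $\{w_0>0\}$. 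What this buys is that no conformal-Jacobian computation on $\bS^{n-1}$ is ever performed, whereas van Dijk's Prop.~7.5.8 (like your alternative Iwasawa route $G^c=KAN$, which is also viable because the cocycle identity makes (i) multiplicative in $g$) is essentially that computation; the cost is that the co-area step must be made honest, as you yourself flag. That step closes in a few lines: on $\{w\in V\: w_0>0,\ \bw\neq 0\}$ the map $w\mapsto(s,\bw)$ with $s=\lf{w}{w}$ is a diffeomorphism onto its image carrying Lebesgue measure to $\frac{ds\,d\bw}{2\sqrt{s+\|\bw\|^2}}$; given $f\in C_c(\Lnp)$ and $g\in G^c$, extend both $f$ and $f\circ g$ to continuous functions of $\bw$ alone, integrate over the $G^c$-invariant shells $S_\eps=\{0<\lf{w}{w}<\eps,\ w_0>0\}$, and let $\eps\to 0$: since $g.S_\eps=S_\eps$ and the extension of $f\circ g$ agrees with the $g$-pullback of the extension of $f$ on the cone (hence the difference is $o(1)$ uniformly on compacta as $\eps\to 0$), the limits of $\frac{1}{\eps}\int_{S_\eps}$ coincide and equal the integrals against $\frac{d\bw}{2\|\bw\|}$. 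With that paragraph inserted, your proof is complete.
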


\begin{proof}
 (i) follows from Lemma \ref{le:jgu}(v) 
and \cite[Prop. 7.5.8]{vD09}, and  (ii) from \cite[p.~114]{vD09}. 
\end{proof}

For $\lambda\in \C$, let $\cH_\lambda$ be the space of 
measurable functions $\phi: \Lnp \to \C$ 
such that $\phi (t\xi) = t^{-\lambda-\rho} \phi(\xi)$ for 
all $t>0$ and $\xi \in \Lnp$, endowed with the Hilbert space structure 
specified by 
\[\|\phi \|^2 := \int_{K} |\phi (k\xi^0 )|^2dk=\int_{\bS^{n-1}}|\phi (\xi_u)|^2d\mu (u) <\infty.\] 
The corresponding scalar product is 
\[\ip{\varphi}{\psi}_{L^2} 
=\int_{\bS^{n-1}}\overline{\varphi (\xi_u)} \psi (\xi_u) d\mu (u)\quad \mbox{ for } \quad \varphi,\psi\in \cH_\lambda.\]
We obtain a representation, not unitary in general, of $G^c$ on $\cH_\lambda$ by 
\[(\pi_\lambda (g)\varphi)(\xi )=\varphi (g^{-1}.\xi )\quad \mbox{ for }  
\quad \varphi \in \cH_\lambda ,g\in G^c\text{ and } \xi \in \Lnp.\]
We then have 
\[(\pi_\lambda (g)\varphi)(\xi_u)
=\varphi (g^{-1}.\xi_u)
= j(g^{-1},u)^{-\lambda-\rho} \varphi (\xi_{g^{-1}.u})
= j_\lambda(g^{-1},u) \varphi (\xi_{g^{-1}.u})\quad \mbox{ for } \quad 
u \in \bS^{n-1}.\]

\begin{lem}\label{le:InvForm} Let $\lambda \in \C$, 
$\varphi\in\cH_\lambda$, $\psi \in\cH_{-\overline{\lambda}}$, and $g\in G^c$.  
Then the following assertions hold:
\begin{itemize}
\item[\rm(i)] $\| \pi_\lambda (g)\varphi\|^2 = \int_{\bS^{n-1}} j(g,u)^{2\Re \lambda}|\varphi (\xi_u)|^2d\mu (u)$.
\item[\rm (ii)] $(\pi_\lambda ,\cH_\lambda)$ is unitary if and only if $\lambda\in i\R$, and then it is irreducible.
\item[\rm (iii)] Let $L$ be a compact subset of $G$. Then there exists a constant $C_L>0$ such that
we have $\|\pi_\lambda (g)\varphi \| \le C_L \|\varphi \|$
for all $g\in L$ and all $\varphi \in \cH_\lambda$.
\item[\rm (iv)]
$\ip{\pi_{-\overline{\lambda}} (g)\psi}
{\pi_{\lambda} (g)\varphi}_{L^2} 
=\ip{\psi}{\varphi}_{L^2}$, so that the $L^2$-inner product 
defines a $\pi_{-\oline\lambda} \times \pi_\lambda$-invariant hermitian 
pairing on $\cH_{-\oline \lambda }\times \cH_\lambda$.
\end{itemize}
\end{lem}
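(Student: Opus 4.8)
The plan is to obtain (i), (iii) and (iv) as direct computations from the defining formula $(\pi_\lambda(g)\varphi)(\xi_u) = j_\lambda(g^{-1},u)\,\varphi(\xi_{g^{-1}.u})$, combined with the change-of-variables identity of Lemma~\ref{le:GactMea}(i) and the cocycle relation of Lemma~\ref{le:jgu}(ii); the one substantial point is the irreducibility claim in (ii), which I single out as the main obstacle. Throughout, the key algebraic fact is that $j(g,u)$ is \emph{positive real}, so that modulus and complex conjugation act on the exponent of $j$ by real part and by conjugation, respectively.

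For (i) I would start from $\|\pi_\lambda(g)\varphi\|^2 = \int_{\bS^{n-1}} |j_\lambda(g^{-1},u)|^2\,|\varphi(\xi_{g^{-1}.u})|^2\,d\mu(u)$. Since $j>0$, formula \eqref{eq:jl} gives $|j_\lambda(g^{-1},u)|^2 = j(g^{-1},u)^{-2\Re\lambda-2\rho} = j(g^{-1},u)^{-2\Re\lambda}\,j_\rho(g^{-1},u)$. Applying the cocycle relation to $gg^{-1}=e$ yields $j(g^{-1},u)^{-1}=j(g,g^{-1}.u)$, so $j(g^{-1},u)^{-2\Re\lambda}=j(g,g^{-1}.u)^{2\Re\lambda}$ is a function of $g^{-1}.u$. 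Setting $\psi(v):=j(g,v)^{2\Re\lambda}\,|\varphi(\xi_v)|^2$, the integrand becomes $\psi(g^{-1}.u)\,j_\rho(g^{-1},u)$, and Lemma~\ref{le:GactMea}(i), with $g$ replaced by $g^{-1}$, converts the integral into $\int_{\bS^{n-1}}\psi(u)\,d\mu(u)$, which is precisely the right-hand side of (i).

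Statement (iii) is then immediate: on the compact set $L\times\bS^{n-1}$ the continuous positive function $(g,u)\mapsto j(g,u)^{2\Re\lambda}$ is bounded by some $C_L^2$, so (i) gives $\|\pi_\lambda(g)\varphi\|^2\le C_L^2\|\varphi\|^2$. For the unitarity half of (ii), (i) shows that $\pi_\lambda$ is unitary iff $\int_{\bS^{n-1}}(j(g,u)^{2\Re\lambda}-1)|f(u)|^2\,d\mu(u)=0$ for every $f\in L^2(\bS^{n-1})$ and every $g$; since $j(a_t,e_n)=e^t$ is not identically $1$ (Lemma~\ref{le:jgu}(iv)), this forces $\Re\lambda=0$, and conversely $\Re\lambda=0$ makes the weight trivial. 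Statement (iv) is a parallel computation: using $j>0$, the product $\overline{j_{-\overline\lambda}(g^{-1},u)}\,j_\lambda(g^{-1},u)$ collapses to $j(g^{-1},u)^{(\lambda-\rho)+(-\lambda-\rho)}=j_\rho(g^{-1},u)$, so the integrand equals $h(g^{-1}.u)\,j_\rho(g^{-1},u)$ with $h(v):=\overline{\psi(\xi_v)}\varphi(\xi_v)$, and Lemma~\ref{le:GactMea}(i) returns $\ip{\psi}{\varphi}_{L^2}$.

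The hard part is irreducibility for $\lambda\in i\R$. Here I would argue as for the unitary spherical principal series of $\OO_{1,n}(\R)^\uparrow$: restricting to $K=\OO_n(\R)$, the homogeneity condition identifies $\cH_\lambda$ with $L^2(\bS^{n-1})$ as a $K$-module, which is the multiplicity-free Hilbert sum of the spaces of degree-$q$ spherical harmonics. Any closed $G^c$-invariant subspace is $K$-invariant, hence a sum of a subcollection of these $K$-types, so it suffices to show that the Lie-algebra action of a non-compact generator in $\fp$ carries each harmonic degree nontrivially into the two adjacent degrees, which rules out any proper nonzero invariant subcollection. I expect this to reduce to verifying that the governing three-term recurrence coefficients (for Gegenbauer-type functions) are nonzero for $\lambda\in i\R$, which is where the genuine work lies; alternatively, one may invoke the standard irreducibility of the unitary principal series along the lines of \cite{vD09}.
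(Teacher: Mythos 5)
Your proposal is correct and takes essentially the same approach as the paper: the paper handles (i), (iii), (iv) and the unitarity half of (ii) by one master computation built from exactly your three ingredients --- positivity of $j$, the cocycle identity $j(g,u)=j(g^{-1},g.u)^{-1}$, and the change-of-variables formula of Lemma~\ref{le:GactMea}(i) --- of which your part-by-part calculations are just specializations. For irreducibility the paper does what your fallback does, namely cite \cite[Cor.~7.5.12]{vD09}, so no further work is required there.
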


\begin{proof} These are standard facts 
about principal series representations 
of semisimple Lie groups, but we provide 
the main ideas of the proof to stay self-contained and avoid the structure 
theory of semisimple Lie groups (see \cite[\S 7.5]{vD09} for details).

Assume first that $\eta,\gamma \in L^2(\bS^{n-1})$ and $\lambda , \mu \in\C$. The cocylce relation and
the fact that $j(e,u)=1$ shows that $j(g , u)= j(g^{-1},g.u)^{-1}$. Hence 
\begin{align*}
\int_{\bS^{n-1}} \overline{j_\lambda(g  ,u)\eta (g. u)}j_\mu (g,u)\gamma (g.u)\, d\mu (u)
&=      \int_{\bS^{n-1}} j (g,u)^{-\overline{\lambda}-\mu} \overline{ \eta (g. u)} \gamma (g.u)j(g,u)^{-2\rho}\, d\mu (u)\\
&=      \int_{\bS^{n-1}} j (g^{-1},u)^{\overline{\lambda}+\mu} \overline{ \eta (u)} \gamma (u)\, d\mu (u), 
\end{align*}
where we used Lemma \ref{le:GactMea}(i) in the last step. 

All parts of Lemma \ref{le:InvForm}, except the irreducibility assertion 
(\cite[Cor. 7.5.12]{vD09}), follow from this calculation.
\end{proof}

\begin{lem} \mlabel{lem:4.14} 
Let $z\in \Xi$ and $\xi \in \Lnp$.
\begin{itemize}
\item[\rm(i)]  $\Re \lf{z}{ \xi }> 0$ and $\lf{z}{\xi }\in \R$ if $z\in \wH$.
\item[\rm(ii)] $\overline{\lf{z}{\xi }}=\lf{\sx(z)}{\xi}$. 
\item[\rm (iii)] $\xi \mapsto [z,\xi ]^{-\lambda - \rho}$ is in $\cH_\lambda$ for all $z\in \Xi$. 
\item[\rm(iv)] 
$\1_\lambda(\xi)=[e_0,\xi ]^{-\lambda-\rho}$ is $K$-invariant and $\cH_\lambda^K
=\C \1_\lambda$. 
\end{itemize}
\end{lem}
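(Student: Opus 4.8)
The plan is to establish (ii) first, since the conjugation identity it encodes is what drives the reality statement in (i) and reappears in (iii). The two ingredients are that $\lf{\cdot}{\cdot}$ is the $\C$-bilinear extension of a real form on $V$, and that $\sx$ is the conjugation of $V_\C=\C^{n+1}$ with respect to $V$, fixing $V$ pointwise. Writing $z=(z_0,\ldots,z_n)$ and $\sx(z)=(\oline{z_0},-\oline{z_1},\ldots,-\oline{z_n})$, a one-line computation gives the general identity $\oline{\lf{z}{w}}=\lf{\sx z}{\sx w}$ for all $z,w\in\C^{n+1}$. Since $\xi\in\Lnp\subeq V$ is fixed by $\sx$, specializing $w=\xi$ yields (ii). The reality claim in (i) then follows at once: if $z\in\wH=\Xi\cap V$, then $\sx z=z$, so $\oline{\lf{z}{\xi}}=\lf{\sx z}{\xi}=\lf{z}{\xi}$, i.e. $\lf{z}{\xi}\in\R$.

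For the positivity in (i) I would use the tube description $\Xi=\Tu\cap\bS^n_\C$ from Proposition~\ref{prop:XiTube}, so that every $z\in\Xi$ decomposes as $z=p+iq$ with $p\in V_+$ and $q\in V$. As $\lf{\cdot}{\cdot}$ is $\C$-bilinear and real-valued on $V\times V$, and $\xi\in V$, this gives $\lf{z}{\xi}=\lf{p}{\xi}+i\lf{q}{\xi}$ with both summands real, whence $\Re\lf{z}{\xi}=\lf{p}{\xi}$. The statement then reduces to the Lorentzian fact that a forward timelike vector pairs strictly positively with a forward lightlike one: writing $p=(p_0,i\bp)\in V_+$ (so $p_0>\|\bp\|$) and $\xi=(\xi_0,i\bv)\in\Lnp$ (so $\xi_0=\|\bv\|>0$), Cauchy--Schwarz gives $\lf{p}{\xi}=p_0\xi_0-\bp\bv\ge\xi_0(p_0-\|\bp\|)>0$. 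I expect this to be the geometric heart of the lemma; the only point to watch is strictness of the inequality, which is exactly where $p$ being timelike rather than merely causal is used. This also guarantees $\lf{z}{\xi}\in\C\setminus(-\infty,0]$, so that $\lf{z}{\xi}^{-\lambda-\rho}$ is unambiguously defined via the principal branch, which is what makes (iii) meaningful.

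With (i) in hand, (iii) is essentially bookkeeping. Homogeneity is clear: for $t>0$ the value $\lf{z}{\xi}$ stays in the right half-plane, so $(t\lf{z}{\xi})^{-\lambda-\rho}=t^{-\lambda-\rho}\lf{z}{\xi}^{-\lambda-\rho}$, placing $\phi_z:=\lf{z}{\cdot}^{-\lambda-\rho}$ in the correct homogeneity class. For finiteness of the norm, I would note that the integrand $|\phi_z(\xi_u)|^2$ equals $|\lf{z}{\xi_u}|^{-2(\rho+\Re\lambda)}$ times the factor $e^{2(\Im\lambda)\arg\lf{z}{\xi_u}}$; since $u\mapsto\lf{z}{\xi_u}$ is continuous, lies in the right half-plane, and has positive minimal real part over the compact sphere $\bS^{n-1}$, both $|\lf{z}{\xi_u}|$ is bounded away from $0$ and $\infty$ and $\arg\lf{z}{\xi_u}\in(-\pi/2,\pi/2)$ is bounded, so the integrand is bounded and its integral against the probability measure is finite.

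Finally, for (iv): applying (iii) with $z=e_0\in\bS^n_+\subeq\Xi$ shows $\1_\lambda\in\cH_\lambda$, and since $\lf{e_0}{\xi_u}=1$ for every $u$, the function $\1_\lambda$ restricts to the constant $1$ on the base $\{\xi_u:u\in\bS^{n-1}\}$. Identifying $\cH_\lambda$ isometrically with $L^2(\bS^{n-1})$ via $\phi\mapsto(u\mapsto\phi(\xi_u))$, the relation $j_\lambda(k,u)=1$ for $k\in K$ from \eqref{eq:Jacob} shows that $K$ acts through this identification by the rotation (quasi-regular) representation $\phi(\xi_u)\mapsto\phi(\xi_{k^{-1}.u})$. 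Invariance of $\1_\lambda$ is then transparent, and since $K\cong\OO_n(\R)$ acts transitively on $\bS^{n-1}$, the only $K$-fixed elements of $L^2(\bS^{n-1})$ are the constants; transporting back gives $\cH_\lambda^K=\C\1_\lambda$, as claimed.
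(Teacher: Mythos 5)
Your proposal is correct, and all four parts hold as you argue them; but for (i) and (ii) you take a genuinely different route than the paper. The paper proves (i) first: reality on $\wH$ by the same direct computation you use, but for positivity it exploits the group action --- since $G^c$ acts transitively on $\Lnp$ and $\Xi$ is $G^c$-invariant, it suffices to check $\Re\lf{z}{\xi^0}=\lf{u}{\xi^0}=u_0-u_n>0$ for $z=u+iv$ with $u\in V_+$, whereas you verify the strict inequality $\lf{p}{\xi}\geq \xi_0(p_0-\|\bp\|)>0$ directly for arbitrary $\xi\in\Lnp$ via Cauchy--Schwarz. The paper then deduces (ii) by analytic continuation: $z\mapsto\lf{z}{\xi}$ and $z\mapsto\oline{\lf{\sx(z)}{\xi}}$ are both holomorphic on $\Xi$ and agree on $\wH$ by (i), hence agree everywhere since $\wH$ is totally real in $\Xi$. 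You instead prove (ii) first, from the pointwise algebraic identity $\oline{\lf{z}{w}}=\lf{\sx z}{\sx w}$ on all of $\C^{n+1}$, and then read off the reality statement in (i) as a corollary; this reverses the paper's logical order and replaces the continuation argument by pure algebra. Both routes are sound: yours is more elementary and self-contained, while the paper's rehearses the two techniques (equivariant reduction and holomorphic extension off totally real submanifolds) that it uses systematically elsewhere, e.g.\ in Lemma~\ref{le:PsiE0} and Theorem~\ref{thm:KernSphere}. For (iii) and (iv) the paper only says ``obvious'' and ``follows from the definition and transitivity''; your bookkeeping (principal branch homogeneity, boundedness of $|\lf{z}{\xi_u}|$ away from $0$ on the compact sphere, the isometry with $L^2(\bS^{n-1})$ and the trivial cocycle $j_\lambda(k,u)=1$) fills in exactly what is being suppressed there.
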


\begin{proof} (i) If $z=(z_0,i\bz)\in \wH$ and $\xi= (w_0,i\mathbf{w})$ then
$\lf{z}{=\xi}=z_0w_0-\bz\bw\in \R$, so $\lf{z}{\xi}$ is real. Now let
$z = u +i v\in \Xi$. Then $u\in V_+$  implies that  $\Re \lf{ z}{ \xi^0}= \lf{u}{\xi^0}= u_0 - u_n>0$. But
then $\Re \lf{z}{g.\xi^0}=\Re \lf{g^{-1}.z}{\xi^0}> 0$ 
for all $g \in G^c$ because $\Xi$ is $G^c$-invariant. Now (i) follows.

For (ii) we note that the maps $\Xi \ni z\mapsto \lf{z}{\xi }, 
\overline{\lf{\sx (z)}{\xi }}\in \C$ are holomorphic and
agree on $\wH$ by (i). Hence they agree on all of $\Xi$ as $\wH$ is totally real in $\Xi$.
(iii) is now obvious and (iv) follows from the definition of $\1_\lambda$ 
and the transitivity of the $K$-action on $\bS^{n-1}$.
\end{proof}

\begin{definition} (a) An irreducible unitary representation $(\pi,\cH)$ of $G^c$ 
is called {\it spherical} if ${\cH^K\not=\{0\}}$. In
that case $\dim \cH^K=1$ and, for every unit vector $u\in \cH^K$, the 
function
$\varphi _\pi (g)= \ip{u}{\pi (g)u}$ is the called the 
corresponding {\it spherical function}. 
It is positive definite and $K$-biinvariant. 

(b) The representations 
$(\pi_\lambda, \cH_\lambda)_{\lambda \in i \R}$ are called 
the \textit{spherical principal series representations}. 
For these parameters, we write $\la \cdot,\cdot\ra_\lambda := \la \cdot,\cdot\ra_{L^2}$ 
for the scalar product on $\cH_\lambda$. 
Note 
that $\pi_{\lambda} \cong \pi_{-\lambda}$ for $\lambda \in i \R$ (\cite[p.~119]{vD09}). 
\end{definition}

We now consider the case $0<\lambda <\rho$. Lemma \ref{le:InvForm}(iv) 
suggests the existence of a $G^c$-intertwining operator 
$A_\lambda : \cH_{\lambda}\to \cH_{-\lambda}$ such that the hermitian form
\begin{equation}
  \label{eq:compsersp}
\ip{\psi}{\varphi }_\lambda :=\int_{\bS^{n-1}} \overline{A_\lambda\psi (\xi_u)}
\varphi (\xi_u)\, d\mu (u) 
= \la A_\lambda\psi, \phi \ra_{L^2}, \qquad 
0 < \lambda < \rho, 
\end{equation}
is non-degenerate and $G^c$-invariant.

\begin{lemma}\label{le:Int} Let $x\in \Lnp$. Then $u\mapsto \lf{x}{\xi_u }^{\lambda-\rho}$ is integrable on $\bS^{n-1}$ if
and only if $\Re \lambda >0$ and in that case
\[\int_{\bS^{n-1}} \lf{x}{\xi_u}^{\lambda - \rho}d\mu (u)= 
\left(\frac{2^{\lambda +\frac{n-3}{2}}}{\sqrt \pi}
\frac{\Gamma \left(\frac{n}{2}\right)\Gamma\left(\lambda\right)}
{\Gamma (\lambda +\rho)}\right)\, \lf{e_0}{x}^{\lambda-\rho}\, .\]
\end{lemma}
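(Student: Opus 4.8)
The plan is to reduce the integral over $\bS^{n-1}$ to a one-dimensional Beta integral, using the structure of $\Lnp$ and the invariance of $\mu$. First I would invoke the parametrization $\Lnp = \{t\xi_w \: t>0,\ w\in\bS^{n-1}\}$ recalled before Lemma~\ref{le:jgu}, writing $x = t\xi_w = (t, itw)$. A direct computation with the Lorentzian form gives $\lf{x}{\xi_u} = t(1 - w\cdot u)$ and $\lf{e_0}{x} = t$, both real and nonnegative, with $\lf{x}{\xi_u}$ vanishing only at the single point $u=w$. Hence $\lf{x}{\xi_u}^{\lambda-\rho} = t^{\lambda-\rho}(1 - w\cdot u)^{\lambda-\rho}$, a positive real raised to a complex power, and the factor $t^{\lambda-\rho} = \lf{e_0}{x}^{\lambda-\rho}$ pulls out of the integral. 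Since $\mu$ is $\OO_n(\R)$-invariant and $\OO_n(\R)$ acts transitively on $\bS^{n-1}$, choosing $k$ with $k.e_n = w$ and substituting $u \mapsto k.u$ shows that the remaining integral $\int_{\bS^{n-1}}(1 - w\cdot u)^{\lambda-\rho}\,d\mu(u)$ equals $\int_{\bS^{n-1}}(1 - u_n)^{\lambda-\rho}\,d\mu(u)$, independent of $w$.

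Next I would apply the radial integration formula of Lemma~\ref{le:415}, with $n$ replaced by $n-1$ (so the ambient sphere is $\bS^{n-1}\subseteq\R^n$), to the $K$-invariant integrand $u \mapsto (1-u_n)^{\lambda-\rho}$. This yields
\[ \int_{\bS^{n-1}}(1 - u_n)^{\lambda-\rho}\,d\mu(u) = \frac{\Gamma(n/2)}{\sqrt{\pi}\,\Gamma\!\left(\frac{n-1}{2}\right)} \int_{-1}^1 (1-s)^{\lambda-\rho}(1-s^2)^{\frac{n-1}{2}-1}\,ds. \]
Using $\rho = (n-1)/2$ to write $(1-s^2)^{\rho-1} = (1-s)^{\rho-1}(1+s)^{\rho-1}$ and combining exponents, the one-dimensional integrand becomes $(1-s)^{\lambda-1}(1+s)^{\rho-1}$.

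The substitution $1 + s = 2v$ then turns this into a Beta integral, namely $\int_{-1}^1 (1-s)^{\lambda-1}(1+s)^{\rho-1}\,ds = 2^{\lambda+\rho-1}\int_0^1 (1-v)^{\lambda-1}v^{\rho-1}\,dv = 2^{\lambda+\rho-1}\,\Gamma(\rho)\Gamma(\lambda)/\Gamma(\rho+\lambda)$. Since $\Gamma(\rho) = \Gamma\!\left(\frac{n-1}{2}\right)$ cancels the corresponding denominator factor and $\rho - 1 = \frac{n-3}{2}$, this reproduces exactly the claimed constant $\frac{2^{\lambda+\frac{n-3}{2}}}{\sqrt{\pi}}\frac{\Gamma(n/2)\Gamma(\lambda)}{\Gamma(\lambda+\rho)}$; multiplying back by the factor $t^{\lambda-\rho} = \lf{e_0}{x}^{\lambda-\rho}$ then gives the full identity.

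Finally, for the integrability statement I would observe that $|\lf{x}{\xi_u}^{\lambda-\rho}| = \big(t(1-w\cdot u)\big)^{\Re\lambda-\rho}$, since the base is a nonnegative real, so integrability is equivalent to convergence of $\int_{-1}^1(1-s)^{\Re\lambda-1}(1+s)^{\rho-1}\,ds$. This converges at $s=-1$ because $\rho = (n-1)/2 > 0$ (here $n>1$, the only case in which $\bS^{n-1}$ is a genuine sphere and $\Lnp\neq\emptyset$), and at $s=1$ precisely when $\Re\lambda > 0$, which yields the asserted ``if and only if''. The only points needing care are the branch of the complex power, which is harmless because the base is positive away from the single point $u=w$, and the precise endpoint analysis needed to pin down the convergence threshold; this last bookkeeping is the main, though modest, obstacle.
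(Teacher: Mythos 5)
Your proof is correct and follows essentially the same route as the paper's: reduce by the $\OO_n(\R)$-invariance of $\mu$ to the case $x=\xi_{e_n}$ (the paper writes $x=ka_t.\xi_{e_n}$ and factors out $e^t=\lf{e_0}{x}$, you write $x=t\xi_w$ and rotate $w$ to $e_n$ --- the same reduction), then apply Lemma~\ref{le:415} with $n$ replaced by $n-1$, split $(1-s^2)^{\frac{n-3}{2}}$, and substitute $1+s=2v$ to reach the Beta integral, with the same endpoint analysis giving integrability iff $\Re\lambda>0$. The only cosmetic difference is that you evaluate the Beta integral directly for complex $\lambda$ with $\Re\lambda>0$ (legitimate, since the base of the power is positive almost everywhere), whereas the paper first computes the integral of the absolute value and then invokes analytic continuation in~$\lambda$.
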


For $\lambda = \rho$ and $m = 0$, we have 
\[ \frac{2^{\lambda +\frac{n-3}{2}}}{\sqrt \pi}
\frac{\Gamma \left(\frac{n}{2}\right)\Gamma\left(\lambda\right)}
{\Gamma (\lambda +\rho)} 
=  \frac{2^{n-2}}{\sqrt \pi}
\frac{\Gamma \left(\frac{n}{2}\right)\Gamma\left(\frac{n-1}{2}\right)}
{\Gamma (n-1)} = 1
\]
by the identity $2^{2z-1}\Gamma (z + 1/2)\Gamma (z) =\sqrt{\pi }\Gamma (2z)$ 
(\cite[p. 21]{L73}).

\begin{proof} Write $x=ka_t.\xi_{e_n}$ and $\lambda = s+ir$.  Then 
  \begin{equation}
    \label{eq:dag3}
\lf{x}{\xi_u} =e^t \lf{\xi_{e_n}}{\xi_{k^{-1}.u}}=\lf{e_0}{x} \lf{\xi_{e_n}}{\xi_{k^{-1}.u}}.
  \end{equation}
We can therefore assume
that $x=\xi_{e_n}$. As $u\mapsto \lf{\xi_{e_n}}{\xi_u}$ is $\OO_{n-1}(\R)$-invariant, 
 we get by Lemma~\ref{le:415}, with
$n$ replaced by $n-1$,
\begin{align*}
\int_{\bS^{n-1}} | \lf{\xi_{e_n}}{\xi_u}^{\lambda - \rho}|d\mu (u)
&= \int_{\bS^{n-1}}(1-u_{n})^{s-\rho}d\mu (u)\\
&= \frac{\Gamma\left(\frac{n}{2}\right)}{\sqrt{\pi}\Gamma\left(\frac{n-1}{2}\right)}
\int_{-1}^1(1-t)^{s-\rho}(1-t^2)^{\frac{n-3}{2}}dt\\
&=\frac{\Gamma\left(\frac{n}{2}\right)}{\sqrt{\pi}\Gamma\left(\frac{n-1}{2}\right)}
\int_{-1}^1(1-t)^{s-1}(1+t)^{\frac{n-3}{2}}dt\\
&=\frac{\Gamma\left(\frac{n}{2}\right)}{\sqrt{\pi}\Gamma\left(\frac{n-1}{2}\right)}\, 2^{s+\frac{n-3}{2}}
\int_{0}^1(1-u)^{s-1}u^{\frac{n-3}{2}}du \qquad \text{(substitute } 2u=1+t).
\end{align*}
As $n > 1$, this integral is finite if and only if $s>0$. 
The same calculation as above, and analytic continuation in $\lambda$ further show that 
\[\int_{\bS^{n-1}} \lf{\xi_{e_n}}{\xi_u}^{\lambda - \rho}\, d\mu (u)
=\frac{\Gamma\left(\frac{n}{2}\right)}{\sqrt{\pi}
\Gamma\left(\frac{n-1}{2}\right)}\, 2^{\lambda+\frac{n-3}{2}}
\int_{0}^1(1-u)^{\lambda-1}u^{\frac{n-3}{2}}du 
= \frac{2^{\lambda +\frac{n-3}{2}}}{\sqrt \pi} 
\frac{\Gamma \left(\frac{n}{2}\right)\Gamma\left(\lambda\right)}
{\Gamma (\lambda +\rho)},\]
where we use \cite[\S 12.41]{WW96} for the last equality.
\end{proof} 

\begin{lemma}\label{lem:IEx}
 Let $\mu,\lambda\in \C$, $\varphi \in \cH_\lambda$ and $\emptyset\not= L\subset \Xi$ compact.
 Then there exists a constant $C$ such that for all $z\in L$ and $u\in \bS^{n-1}$ we have
\[| \lf{z}{\xi_u}^{\mu -\rho}\varphi (\xi_u)|\le C|\varphi (\xi_u)| .\]
In particular, $u\mapsto \lf{z}{\xi_u}^{\mu -\rho}\varphi (\xi_u)$ is integrable and
$z\mapsto \int_{\bS^{n-1}}\lf{z}{\xi_u}^{\mu -\rho}\varphi (\xi_u)d\mu (u)$ 
is holomorphic on $\Xi$.
\end{lemma}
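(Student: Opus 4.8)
The plan is to prove the pointwise bound first and then read off integrability and holomorphy by standard arguments. The decisive input is Lemma~\ref{lem:4.14}(i), which guarantees that $\lf{z}{\xi_u}$ has strictly positive real part for all $z\in\Xi$ and $u\in\bS^{n-1}$; in particular $\lf{z}{\xi_u}$ lies in the open right half-plane, which is contained in $\C\setminus(-\infty,0]$, so the principal branch of the power $w\mapsto w^{\mu-\rho}$ is defined and holomorphic on a neighborhood of all the values occurring.

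First I would fix a compact $L\subseteq\Xi$ and use that the map $(z,u)\mapsto\lf{z}{\xi_u}$ is continuous on the compact set $L\times\bS^{n-1}$ (and complex linear, hence holomorphic, in $z$). Therefore $|\lf{z}{\xi_u}|$ attains a finite maximum $R$ on $L\times\bS^{n-1}$, and the continuous, strictly positive function $\Re\lf{z}{\xi_u}$ attains a positive minimum $\delta>0$. Writing $\mu-\rho=a+ib$ with $a,b\in\R$ and using the principal logarithm,
\[ |\lf{z}{\xi_u}^{\mu-\rho}| = |\lf{z}{\xi_u}|^{a}\, e^{-b\,\arg\lf{z}{\xi_u}}, \]
where $\arg\lf{z}{\xi_u}\in(-\pi/2,\pi/2)$ since $\Re\lf{z}{\xi_u}>0$. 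Hence the exponential factor is bounded by $e^{|b|\pi/2}$, while $|\lf{z}{\xi_u}|^{a}\le\max(R^{a},\delta^{a})$ (use the upper bound $R$ when $a\ge0$ and the lower bound $\delta$ when $a<0$). This produces a constant $C=C(L,\mu,\rho)$ with $|\lf{z}{\xi_u}^{\mu-\rho}|\le C$ for all $z\in L$ and $u\in\bS^{n-1}$, and multiplying by $|\varphi(\xi_u)|$ yields the asserted estimate.

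For integrability I would note that $\mu(\bS^{n-1})=1$, so $\cH_\lambda\subseteq L^2(\bS^{n-1})\subseteq L^1(\bS^{n-1})$; the bound just proved then gives
\[ \int_{\bS^{n-1}}|\lf{z}{\xi_u}^{\mu-\rho}\varphi(\xi_u)|\,d\mu(u) \le C\int_{\bS^{n-1}}|\varphi(\xi_u)|\,d\mu(u)<\infty \]
for every $z\in\Xi$. For the holomorphy of $F(z):=\int_{\bS^{n-1}}\lf{z}{\xi_u}^{\mu-\rho}\varphi(\xi_u)\,d\mu(u)$, the key points are that for each fixed $u$ the integrand $z\mapsto\lf{z}{\xi_u}^{\mu-\rho}\varphi(\xi_u)$ is holomorphic on $\Xi$ (the linear map $z\mapsto\lf{z}{\xi_u}$ lands in the right half-plane, on which the power is holomorphic), that $u\mapsto\lf{z}{\xi_u}^{\mu-\rho}\varphi(\xi_u)$ is measurable, and that every point of $\Xi$ has a compact neighborhood $L$ on which, by the estimate above, the integrand is dominated by the fixed $L^1$-function $C|\varphi(\xi_u)|$. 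The standard theorem on holomorphy of parameter integrals then applies: in a local holomorphic chart on the complex manifold $\Xi$ one combines dominated convergence (for continuity of $F$), Fubini's theorem and Cauchy's theorem (to get $\oint_{\partial T}F=0$ over boundaries of triangles in one-dimensional slices), Morera's theorem, and Osgood's lemma to conclude $F\in\cO(\Xi)$.

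The only step requiring attention is the uniform bound, and that is purely a compactness argument once Lemma~\ref{lem:4.14}(i) supplies the strict positivity of $\Re\lf{z}{\xi_u}$; all remaining steps are routine, so I expect no genuine obstacle.
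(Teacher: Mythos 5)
Your proposal is correct and follows essentially the same route as the paper: the paper's proof is precisely the compactness argument --- continuity of $(z,u)\mapsto \lf{z}{\xi_u}^{\mu-\rho}$ on $\Xi\times\bS^{n-1}$ (which rests on Lemma~\ref{lem:4.14}(i)) gives boundedness on $L\times\bS^{n-1}$, and the remaining assertions follow by standard arguments. You have merely made explicit what the paper leaves implicit, namely the modulus--argument estimate for the power and the dominated-convergence/Morera machinery for holomorphy of the parameter integral, both of which are carried out correctly.
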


\begin{proof} The continuity 
of the function $(z,u) \mapsto \lf{z}{\xi_u}^{\mu -\rho}$ on 
$\Xi \times \bS^{n-1}$ implies that it is bounded on the compact subset 
$L \times \bS^{n-1}$, and this implies the assertion.
\end{proof}

\begin{lem}\label{lem:Alambda} For $\Re \lambda >0$ and $\varphi \in
\cH_\lambda\cap C (\Lnp)$, define 
\[ (A_\lambda \varphi) (x)
:=\frac{\sqrt{\pi}\Gamma (\lambda +\rho)}{2^{\lambda +\frac{n-3}{2}}\Gamma \left(\frac{n}{2}\right)\Gamma\left( \lambda\right)}  
\int_{\bS^{n-1}} [x,\xi_u]^{ \lambda -\rho}\varphi (\xi_u)d\mu (u ) \quad \mbox{ for } \quad 
x \in \bL_n^+.\]
Then 
\begin{itemize}
\item[\rm (i)] $A_\lambda\varphi \in \cH_{- \lambda}$
\item[\rm (ii)] $A_{\lambda } \pi_{\lambda }(g) = \pi_{- \lambda} (g)A_\lambda$ 
on $\cH_\lambda\cap C(\Lnp )$ 
\item[\rm (iii)]   $A_\lambda \1_\lambda=
\1_{- \lambda}$.
\end{itemize}
\end{lem}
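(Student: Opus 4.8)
The plan is to establish the three assertions by direct computation, drawing on the integral evaluation in Lemma~\ref{le:Int}, the equivariance and cocycle identities of Lemma~\ref{le:jgu}, and the change-of-variables formula in Lemma~\ref{le:GactMea}(i). Throughout I abbreviate the normalizing constant as $c_\lambda := \frac{\sqrt\pi\,\Gamma(\lambda+\rho)}{2^{\lambda+(n-3)/2}\,\Gamma(n/2)\,\Gamma(\lambda)}$, which is by design the reciprocal of the constant produced by Lemma~\ref{le:Int}.

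For (i) I would first note that, for $\Re\lambda>0$ and $\varphi$ continuous, the defining integral converges absolutely for each $x\in\Lnp$: the kernel $u\mapsto\lf{x}{\xi_u}^{\lambda-\rho}$ is integrable on $\bS^{n-1}$ by Lemma~\ref{le:Int}, while $\varphi$ is bounded on the compact section $\{\xi_u : u\in\bS^{n-1}\}$. Bilinearity of $\lf{\cdot}{\cdot}$ gives $\lf{tx}{\xi_u}=t\,\lf{x}{\xi_u}$, so $(A_\lambda\varphi)(tx)=t^{\lambda-\rho}(A_\lambda\varphi)(x)$ for $t>0$, which is precisely the homogeneity required for membership in $\cH_{-\lambda}$. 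Finiteness of the $L^2$-norm of the restriction to $\bS^{n-1}$ then follows by applying Lemma~\ref{le:Int} once more with $x=\xi_u$, which bounds $(A_\lambda\varphi)(\xi_u)$ uniformly in $u$.

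Assertion (iii) is the quickest, and I would treat it next. Since $\lf{e_0}{\xi_u}=1$, the function $\1_\lambda(\xi_u)=\lf{e_0}{\xi_u}^{-\lambda-\rho}$ equals $1$ on the whole section, so $(A_\lambda\1_\lambda)(x)=c_\lambda\int_{\bS^{n-1}}\lf{x}{\xi_u}^{\lambda-\rho}\,d\mu(u)$. By Lemma~\ref{le:Int} this integral equals $c_\lambda^{-1}\,\lf{e_0}{x}^{\lambda-\rho}$, the two constants cancel, and we are left with $\lf{e_0}{x}^{\lambda-\rho}=\1_{-\lambda}(x)$.

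The core of the lemma is the intertwining relation (ii), which I would prove by reducing both sides to the same integral. On the right, $(\pi_{-\lambda}(g)A_\lambda\varphi)(x)=(A_\lambda\varphi)(g^{-1}.x)$; since $G^c$ preserves $\lf{\cdot}{\cdot}$ and $g.\xi_u=j(g,u)\xi_{g.u}$ by Lemma~\ref{le:jgu}(v), one has $\lf{g^{-1}.x}{\xi_u}=\lf{x}{g.\xi_u}=j(g,u)\,\lf{x}{\xi_{g.u}}$, turning the right-hand side into $c_\lambda\int_{\bS^{n-1}}j(g,u)^{\lambda-\rho}\lf{x}{\xi_{g.u}}^{\lambda-\rho}\varphi(\xi_u)\,d\mu(u)$. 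On the left, the homogeneity of $\varphi\in\cH_\lambda$ gives $(\pi_\lambda(g)\varphi)(\xi_u)=\varphi(g^{-1}.\xi_u)=j(g^{-1},u)^{-\lambda-\rho}\varphi(\xi_{g^{-1}.u})$, and the substitution $u=g.v$ through Lemma~\ref{le:GactMea}(i) contributes the Jacobian $j(g,v)^{-2\rho}$. I expect the only real obstacle to be the bookkeeping of the cocycle factors: the cocycle relation of Lemma~\ref{le:jgu}(ii) with $j(e,v)=1$ gives $j(g^{-1},g.v)=j(g,v)^{-1}$, hence the homogeneity factor becomes $j(g,v)^{\lambda+\rho}$, and combining it with the Jacobian yields $j(g,v)^{\lambda+\rho-2\rho}=j(g,v)^{\lambda-\rho}$, exactly the factor seen on the right. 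After renaming $v$ back to $u$ the two integrals coincide, proving (ii); the interchange is legitimate because the integrand lies in $L^1(\bS^{n-1})$ for each fixed $g$, again by Lemma~\ref{le:Int}.
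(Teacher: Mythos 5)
Your proposal is correct and takes essentially the same route as the paper: parts (iii) and the convergence/homogeneity claims in (i) rest on Lemma~\ref{le:Int}, and your computation for (ii) uses exactly the paper's ingredients --- the cocycle identity $j(g^{-1},g.u)=j(g,u)^{-1}$, the Jacobian factor $j(g,u)^{-2\rho}$ from Lemma~\ref{le:GactMea}(i), the relation $g.\xi_u=j(g,u)\xi_{g.u}$ from Lemma~\ref{le:jgu}(v), and the $G^c$-invariance of $[\cdot,\cdot]_V$ --- merely organized as a meeting-in-the-middle computation instead of the paper's single chain starting from $(A_\lambda\pi_\lambda(g)\varphi)(x)$. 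The one point you leave untreated in (i) is measurability of $A_\lambda\varphi$, which is required for membership in $\cH_{-\lambda}$ as the paper defines it: the paper settles this by proving that $A_\lambda\varphi$ is actually continuous, using $K$-equivariance of the kernel together with the continuity of $k\mapsto\varphi(k.\cdot)$ from $K$ to $C(\bS^{n-1})$, whereas your uniform bound on $|(A_\lambda\varphi)(\xi_u)|$ yields square-integrability but not measurability --- a routine point (Fubini, or the paper's continuity argument) that should nevertheless be stated.
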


\begin{proof}  
By Lemma \ref{le:Int}, the integral defining $A_\lambda \varphi (x)$ exists for 
$x \in \bL_n^+$ and the homogeneity requirement follows directly from 
the definition. The continuity of the function $A_\lambda \varphi$ follows from 
\begin{align*}
\int_{\bS^{n-1}} [k.\xi_{e_n},\xi_u]^{ \lambda -\rho}\varphi (\xi_u)\, d\mu (u )
&= \int_{\bS^{n-1}} [\xi_{e_n},\xi_{k^{-1}.u}]^{ \lambda -\rho}\varphi (\xi_u)\, d\mu (u )\\
&= \int_{\bS^{n-1}} [\xi_{e_n},\xi_u]^{ \lambda -\rho}\varphi (k.\xi_u)\, d\mu (u ) 
\end{align*}
and the fact that the map $k \mapsto \varphi(k.\cdot), K \to C(\bS^{n-1})$ 
is continuous. This shows that $A_\lambda\varphi \in\cH_{-\lambda}$. The proof
 of the intertwining relation (ii) is the same
as the proof Lemma \ref{le:InvForm}(iv). It uses 
Lemma~\ref{le:GactMea}(i), and  that $j  (g^{-1},u) = j(g,g^{-1}.u)^{-1}$. 
For the constant $c > 0$ in the definition of $A_\lambda$ 
in Lemma~\ref{lem:Alambda}, we have 
\begin{align*}
(A_\lambda \pi_\lambda (g)\varphi)  (x)
&=c \int_{\bS^{n-1}} \lf{x}{\xi_u}^{\lambda - \rho}\varphi (g^{-1}.\xi_u)d\mu (u)\\
&=c\int_{\bS^{n-1}} \lf{x}{\xi_u}^{\lambda - \rho}j_\lambda (g^{-1},u)\varphi (\xi_{g^{-1}.u})d\mu (u)\\
&= c\int_{\bS^{n-1}} \lf{x}{\xi_{g.u}}^{\lambda - \rho}j_{-\lambda}(g,u)\varphi (\xi_u)d\mu (u)\\
&=c\int_{\bS^{n-1}} \lf{x}{j(g,u)\xi_{g.u}}^{\lambda - \rho} \varphi (\xi_u)d\mu (u)\\
&=c\int_{\bS^{n-1}} \lf{x}{g.\xi_u}^{\lambda - \rho}\varphi (\xi_u)d\mu (u)\\
&=c\int_{\bS^{n-1}} \lf{g^{-1}.x}{\xi_u}^{\lambda - \rho}\varphi (\xi_u)d\mu (u)\\
&=\pi_{-\lambda }(g)(A_\lambda  \varphi) (x).
\end{align*}

(iii) follows from
$\lf{x}{\xi_u}^{\lambda -\rho }= 
\1_{ \lambda}(x)\lf{\xi_{e_n}}{\xi_{k^{-1}.u}}^{\lambda -\rho}$ 
(see \eqref{eq:dag3}).
\end{proof}  

\begin{defn}
With $m>0$ and $\lambda = \lambda_m$ as before,  we write 
$(\pi_m,\cH_m)$ and $\ip{\cdot}{\cdot}_m$, 
instead of using $\lambda$ as an index. Here 
$(\pi_m, \cH_m) = (\pi_{\lambda_m}, \cH_{\lambda_m})$ for $m \geq \rho$ 
and $\lambda_m \in i \R$, and for 
$m < \rho$, $\cH_m$ is the Hilbert space obtained from the scalar product 
\eqref{eq:compsersp} for $\lambda = \lambda_m$. \\
We write $(\pi_0,\cH_0)$ for the trivial representation of $G^c$. 
\end{defn}

\begin{theorem}\label{thm:irrSpRep} 
The representations $(\pi_m, \cH_m)_{m \geq 0}$  
are irreducible unitary
spherical representations of $G^c$, and any 
irreducible unitary spherical representations is equivalent 
to exactly one of these. 
The corresponding spherical functions are given for 
$\lambda = \lambda_m$, $m \geq 0$, by
\begin{align} \label{eq:spfunc}
\varphi_m (x) & = \int_{\bS^{n-1}} [x,\xi_u]^{-\lambda - \rho}\, d\mu (u)
= \hgf (\rho +\lambda ,\rho -\lambda; 
\textstyle{\frac{n}{2}}; -\sinh^2(t/2)) \\
&= \frac{(n-1)!}{\Gamma\left(\frac{n-1}{2}+\lambda\right)\Gamma\left(\frac{n-1}{2}-\lambda\right)}\Psi_m(x,e_0)  ,\qquad x=ka_t.e_0\in \wH .\notag
\end{align}
\end{theorem}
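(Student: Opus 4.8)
The plan is to prove the three displayed equalities in \eqref{eq:spfunc} first and then read off the representation–theoretic statements, the decisive analytic input being the positive definiteness of $\Psi_m$ from Theorem~\ref{th:Psi}. By definition the spherical function of $(\pi_m,\cH_m)$ is $\varphi_m(g)=\la\1_\lambda,\pi_m(g)\1_\lambda\ra/\la\1_\lambda,\1_\lambda\ra$, where $\1_\lambda$ is the (up to scalar unique) $K$-fixed vector of Lemma~\ref{lem:4.14}(iv). Since $\1_\lambda(\xi_u)=\lf{e_0}{\xi_u}^{-\lambda-\rho}=1$ for all $u\in\bS^{n-1}$, we get $\la\1_\lambda,\1_\lambda\ra=1$, using the $L^2$-norm for $m\ge\rho$ and, for $0<m<\rho$, the relation $A_\lambda\1_\lambda=\1_{-\lambda}$ of Lemma~\ref{lem:Alambda}(iii) together with \eqref{eq:compsersp}.

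\textbf{The spherical function formula.} First I would use that $G^c$ acts linearly on $V$ and preserves $\lf{\cdot}{\cdot}$ to compute, for $x=g.e_0\in\wH$,
\[ (\pi_m(g)\1_\lambda)(\xi_u)=\1_\lambda(g^{-1}.\xi_u)=\lf{e_0}{g^{-1}.\xi_u}^{-\lambda-\rho}=\lf{g.e_0}{\xi_u}^{-\lambda-\rho}=\lf{x}{\xi_u}^{-\lambda-\rho}, \]
so that (inserting $A_\lambda$ in the complementary range, where $\overline{\1_{-\lambda}(\xi_u)}=1$) we obtain the first equality $\varphi_m(x)=\int_{\bS^{n-1}}\lf{x}{\xi_u}^{-\lambda-\rho}\,d\mu(u)$. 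By $K$-biinvariance I may take $g=a_t$, so that $x=\cosh(t)\,e_0+i\sinh(t)\,e_n$ and $\lf{x}{\xi_u}=\cosh t-\sinh t\,u_n$. Reducing the integral to one over $u_n\in[-1,1]$ via Lemma~\ref{le:415} (in dimension $n-1$) and recognising Euler's integral representation of $\hgf$ gives the second equality with argument $-\sinh^2(t/2)$; alternatively one identifies both sides as the regular, $K$-invariant solution of the radial eigenvalue equation normalised at the base point. The third equality is then immediate: Theorem~\ref{th:Psi} gives $\Psi_m(x,e_0)=\gamma_{n,m}\,\hgf(\rho+\lambda,\rho-\lambda;\tfrac n2;\tfrac12(1-x_0))$ and $\tfrac12(1-\cosh t)=-\sinh^2(t/2)$, whence $\varphi_m=\gamma_{n,m}^{-1}\Psi_m(\cdot,e_0)$; substituting $\gamma_{n,m}=\Gamma(\rho+\lambda)\Gamma(\rho-\lambda)/\Gamma(n)$ from Theorem~\ref{thm:gamma} and $\Gamma(n)=(n-1)!$ yields the stated constant.

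\textbf{Unitarity and irreducibility.} For $m\ge\rho$ we have $\lambda_m\in i\R$ and $(\pi_m,\cH_m)=(\pi_{\lambda_m},\cH_{\lambda_m})$ is unitary and irreducible by Lemma~\ref{le:InvForm}(ii); for $m=0$ the representation is trivial. The case $0<m<\rho$, i.e.\ $0<\lambda<\rho$, is the crux, since here $\cH_m$ is built from the Hermitian form \eqref{eq:compsersp} $\la\psi,\varphi\ra_\lambda=\la A_\lambda\psi,\varphi\ra_{L^2}$, which must be shown to be positive definite. This is exactly where $\Psi_m$ enters: as $\Psi_m$ is positive definite on $\Xi\times\Xi$ (Theorem~\ref{th:Psi}) and $\varphi_m$ is a positive multiple of $\Psi_m(\cdot,e_0)$ on the totally real submanifold $\wH=G^c.e_0$, the $K$-biinvariant function $g\mapsto\varphi_m(g.e_0)$ is positive definite on $G^c$. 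The GNS construction then yields a unitary representation with a cyclic $K$-fixed vector whose matrix coefficient is $\varphi_m$, and since $\varphi_m$ is a spherical function (a joint eigenfunction of the invariant operators with Laplace eigenvalue $\lambda^2-\rho^2=-m^2$, normalised at $e$), this GNS representation is irreducible. Its $(\g^c,K)$-module is the irreducible spherical module with the infinitesimal character determined by $\lambda$, namely the complementary series $\pi_\lambda$; on an irreducible module the invariant Hermitian form is unique up to a real scalar, so the GNS form and \eqref{eq:compsersp}, both invariant and agreeing on the $K$-fixed line after normalisation, coincide. Hence \eqref{eq:compsersp} is positive definite and $(\pi_m,\cH_m)$ is irreducible unitary spherical. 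I expect this positivity step to be the main obstacle, as it is the one place where the analytic results of Section~\ref{sec:4} are indispensable.

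\textbf{Exhaustion.} Finally, $m\mapsto\varphi_m$ is injective because the Laplace eigenvalue $\lambda^2-\rho^2=-m^2$ determines $m\ge 0$. Conversely, every irreducible unitary spherical representation of $G^c$ has a positive definite spherical function that is such a joint eigenfunction, hence equals some $\varphi_\lambda$; positive definiteness (and boundedness) forces $\lambda\in i\R_{\ge 0}\cup[0,\rho]$, i.e.\ the parameters $m\ge 0$ above. Invoking the classification of the spherical unitary dual of $G^c$ (\cite[\S7.5]{vD09}) shows that the $(\pi_m,\cH_m)_{m\ge 0}$ are pairwise inequivalent and exhaust it, completing the proof.
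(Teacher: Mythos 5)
Your proposal is correct in substance, but it takes a genuinely different route from the paper. The paper's proof is essentially a translation exercise: irreducibility, unitarity (including positivity of the complementary-series form \eqref{eq:compsersp}), exhaustion of the spherical dual, and the hypergeometric formula for the spherical functions are all quoted from van Dijk \cite[\S 7.5]{vD09}; the only computations carried out are the matching of normalizations via the quadratic transformation $\hgf(2a,2b;a+b+\shalf;z)=\hgf(a,b;a+b+\shalf;4z(1-z))$ (turning van Dijk's argument $-\sinh^2(t)$ into $-\sinh^2(t/2)$) and the comparison with $\Psi_m$ through Theorem~\ref{th:Psi}. You instead compute $\varphi_m$ directly as the matrix coefficient of $\1_\lambda$, identify it with the $\hgf$-expression by Euler's integral or ODE-uniqueness, and---this is the main divergence---you \emph{derive} positivity of \eqref{eq:compsersp} for $0<m<\rho$ from the positive definiteness of $\Psi_m$ (i.e.\ from reflection positivity) via GNS and uniqueness of invariant Hermitian forms on an irreducible module, rather than citing it. Your route is more self-contained and turns the paper's slogan (Corollary~\ref{cor:IntRep}(ii)) into the actual engine of the proof; what it costs is reliance on standard but nontrivial Gelfand-pair facts (irreducibility of the GNS representation attached to a positive definite spherical function, determination of an irreducible spherical module by its spherical function), and you still need \cite{vD09} for irreducibility of $\pi_\lambda$ and for exhaustion, just as the paper does. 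Two small caveats: the Euler-integral evaluation as you set it up naturally yields $\hgf$ with third parameter $n-1$, so you need the same quadratic transformation the paper uses (your ODE alternative avoids this); and your ordering of steps---establishing $\varphi_m=\gamma_{n,m}^{-1}\Psi_m(\cdot,e_0)$ with the merely Hermitian pairing \emph{before} invoking positivity---is exactly what keeps the argument non-circular, so it should be stated as deliberately as you did.
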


\begin{proof} The irreducibility of the representation $(\pi_m,\cH_m)$ 
is \cite[Cor. 7.5.12]{vD09}. That those
are all the irreducible unitary spherical representations is 
\cite[p. 119]{vD09} and \cite[Thm.~7.5.9]{vD09}. 
The statements about the spherical functions
can be found in \cite[p. 111 and  p. 126]{vD09}.  To translate 
between our setting and \cite{vD09}, we recall from 
\cite[p.~298]{WW96} the relation 
\[ \hgf(2a,2b,a+b+ \shalf, z) = \hgf(a,b,a+b+ \shalf, 4z(1-z)) \] 
for the hypergeometric functions. 
For $a = \frac{\rho + \lambda}{2}$ and $b = \frac{\rho - \lambda}{2}$ 
with $a + b + \frac{1}{2} = \frac{n}{2}$, it leads to 
\[   \hgf \big(\rho +\lambda ,\rho -\lambda; 
{\textstyle \frac{n}{2}}; -\sinh^2(t/2)\big)
=\hgf \Big(\frac{\rho +\lambda}{2} ,\frac{\rho -\lambda}{2}; 
{\frac{n}{2}}; -\sinh^2(t)\Big)  \] 
because $z = -\sinh^2(t/2)$ implies 
\[ 4z(1-z) = - 4 \sinh^2(t/2)\cosh^2(t/2) = - \sinh^2(t).\] 
The last equality follows from Theorem~\ref{th:Psi} and the fact that
\[\frac{1}{2}\left(1-\lf{ka_t.e_0}{e_0}\right)=\frac{1}{2}\left(1-\cosh (t)\right)=-\sinh^2(t/2).\qedhere \]
\end{proof} 
 
From now on we write 
\[ \Phi_m^c(z,w) := \frac{\Psi_m(z,w)}{\Psi_m(e_0,e_0)}, \quad m > 0, \] 
for the normalization of the kernel $\Psi_m$, so that 
$\phi_m(x)=\Phi^c_m(x,e_0)$ is the spherical function on 
$\bH_V^n$ corresponding to the spherical representation~$(\pi_m,\cH_m)$. 
For $m = 0$ we put $\Phi_0^c = 1$. For $m\geq 0$, we write $\cO_m(\Xi) := \cH_{\Phi_m^c} \subeq \cO(\Xi)$ 
for the corresponding reproducing kernel Hilbert space. Then 
left translation
$(\rho_m(g)F)(z)  := F(g^{-1}.z)$ 
defines a unitary representation $(\rho_m, \cH_m)$ of $G^c$. 
By Theorem~\ref{thm:irrSpRep}, 
the representation $(\pi_m,\cO_m(\Xi))$ is irreducible and 
isomorphic to $(\pi_m, \cH_m)$.

\begin{corollary}\label{cor:IntRep} Let the notation be as above. Then the following holds:
\begin{itemize}
\item[\rm (i)] The representations $(\rho_m,\cO_m(\Xi))$, $m\geq 0$
are unitary and irreducible and every irreducible spherical
representation of $G^c$ is unitarily equivalent to $(\pi_m,\cO_m(\Xi))$ for some 
$m\geq 0$. 
\item[\rm (ii)] Every irreducible unitary spherical representation of $G^c$ can be constructed via reflection positivity.
\item[\rm (iii)] $\Gamma_e=\{\phi_m\: m\geq 0\}$.
\item[\rm (iv)] For $\Psi\in \Gamma$, there exists a unique positive Radon 
measure $\mu_\Psi$ on $[0,\infty)$ such that
\[\Psi (z,w)=\int_{[0,\infty)} \Phi_m^c (z,w)\, d\mu_\Psi (m)
\quad \mbox{ for }\quad  z,w\in\Xi.\] 
The integral converges uniformly on compact subsets of $\Xi\times \Xi$, resp., 
as a vector-valued integral in the Fr\'echet space $\Sesh(\Xi)$.
\end{itemize}
\end{corollary}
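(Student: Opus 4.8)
The plan is to obtain (i) and (ii) with essentially no new work. The unitarity and irreducibility of $(\rho_m,\cO_m(\Xi))$ and its equivalence with $(\pi_m,\cH_m)$ were already recorded in the discussion preceding the statement, while Theorem~\ref{thm:irrSpRep} identifies $(\pi_m,\cH_m)_{m\ge 0}$ with the complete list of irreducible unitary spherical representations of $G^c$; together these give (i). Since each $\Phi^c_m$ is the normalization of the kernel $\Psi_m$ produced from the reflection positive Hilbert space of Theorem~\ref{thm:JRA} (via Theorem~\ref{th:Psi}), every such representation is realized by reflection positivity, which is (ii).

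For (iii) I would use the standard dictionary between $G^c$-invariant positive definite kernels and cyclic unitary representations (cf.\ \cite{NO18}). To $\Psi\in\Gamma_1$ one attaches its reproducing kernel space $\cH_\Psi\subeq\cO(\Xi)$ carrying the unitary action $\rho_\Psi(g)F=g_*F$. The vector $\Psi_{e_0}$ is $K$-fixed of norm one and $G^c$-cyclic: from $\rho_\Psi(g)\Psi_{e_0}=\Psi_{g.e_0}$ its orbit spans the $\Psi_w$, $w\in\wH$, and any vector orthogonal to this span vanishes on the totally real submanifold $\wH$, hence identically (Lemma~\ref{le:PsiE0}). Through the bijection between invariant kernels dominated by $\Psi$ and positive $\rho_\Psi$-intertwiners, extremality of $\Psi$ in $\Gamma_1$ translates into triviality of the commutant, i.e.\ irreducibility of $(\rho_\Psi,\cH_\Psi)$. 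By Theorem~\ref{thm:irrSpRep} the irreducible spherical representations are precisely the $(\pi_m,\cH_m)$, whose reproducing kernels are the $\Phi^c_m$; this identifies $\Gamma_e=\{\Phi^c_m:m\ge 0\}$, each $\Phi^c_m$ being the kernel with spherical function $\phi_m$.

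For (iv) the idea is to produce the measure on the totally real orbit and then propagate it holomorphically. Restriction to $\wH=G^c.e_0$ sends $\Psi\in\Gamma$ to the continuous $K$-biinvariant positive definite function $\varphi_\Psi(g)=\Psi(g.e_0,e_0)$ on $G^c$; by Corollary~\ref{co:UniDet} this map is injective and sends $\Phi^c_m$ to the spherical function $\phi_m$. As $(G^c,K)=(\OO_{1,n}(\R)^\uparrow,\OO_n(\R))$ is a Gelfand pair, the Bochner--Godement theorem (see \cite{vD09}) yields a \emph{unique} positive Radon measure on the positive definite spherical functions representing $\varphi_\Psi$, and by Theorem~\ref{thm:irrSpRep} this set is exactly $\{\phi_m:m\ge 0\}\cong[0,\infty)$. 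This produces the unique $\mu_\Psi$ on $[0,\infty)$ with $\varphi_\Psi=\int\phi_m\,d\mu_\Psi(m)$.

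It then remains to lift this scalar identity to $\Xi\times\Xi$ and to establish the convergence, which I would do by a normal families argument. For the finite measure $\mu_\Psi$ the partial integrals $\Psi_N:=\int_{[0,N]}\Phi^c_m\,d\mu_\Psi(m)$ lie in $\Gamma$, and the remainders $R_N:=\Psi-\Psi_N$ are sesquiholomorphic, $G^c$-invariant, and positive definite on the totally real $\wH$ (there $|\phi_m|\le 1$, so the tail integral converges and is positive definite), hence positive definite on $\Xi$ by \cite[Thm.~A.1]{NO14}; thus $R_N\in\Gamma$. Consequently $0\le R_N(z,z)\le\Psi(z,z)$ and $|R_N(z,w)|^2\le\Psi(z,z)\Psi(w,w)$, so $\{R_N\}$ is locally bounded in $\cO(\Xi\times\Xi^{\rm op})$; by Montel's theorem any subsequential limit is an element of $\Gamma$ vanishing on $\wH$, hence zero by Corollary~\ref{co:UniDet}. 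Therefore $\Psi_N\to\Psi$ locally uniformly, i.e.\ in $\Sesh(\Xi)$, giving the integral representation together with its stated convergence. The main obstacle is the uniqueness of $\mu_\Psi$, which is not formal: it rests on the commutativity of the algebra of $K$-biinvariant functions, i.e.\ on the Gelfand pair property, whereas Choquet theory by itself would yield only existence. A secondary point is that $\Phi^c_m(z,z)$ is \emph{not} uniformly bounded in $m$ on compacta of $\Xi$ away from $\wH$, so dominated convergence is unavailable; the monotonicity of the diagonal in the cone order feeding the Montel argument is what replaces it.
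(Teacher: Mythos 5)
Your proposal is correct, and for parts (i)--(iii) it is essentially the paper's argument: the paper compresses everything into ``follows from Theorem~\ref{thm:irrSpRep}'' together with the discussion preceding the corollary, while you make explicit the standard dictionary (sub-kernels of $\Psi$ $\leftrightarrow$ positive operators in the commutant of $(\rho_\Psi,\cH_\Psi)$, so extremality in $\Gamma_1$ $\Leftrightarrow$ irreducibility), which is exactly what the paper leaves implicit. The genuine divergence is in (iv). The paper disposes of it in one line by citing \cite[Thm.~5.1]{KS05} and \cite[Thm.~1]{FT99}, i.e.\ the Faraut--Thomas theory of invariant Hilbert subspaces of $\cO(\Xi)$, where multiplicity-freeness of the decomposition makes the cone $\Gamma$ a lattice and yields existence \emph{and} uniqueness of the disintegration over extreme points. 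You instead work on the real form: restrict $\Psi$ to $\wH$ to get a continuous $K$-biinvariant positive definite function on $G^c$, invoke Bochner--Godement for the Gelfand pair $(G^c,K)$ to get the unique measure $\mu_\Psi$ on $\{\phi_m\colon m\ge 0\}$, and then propagate the identity back to $\Xi\times\Xi$ by the partial-integral/remainder argument: $R_N=\Psi-\Psi_N$ lies in $\Sesh(\Xi)$, is positive definite on $\wH$ (tail integral of kernels bounded by $1$ there), hence positive definite on $\Xi$ by \cite[Thm.~A.1]{NO14}, so the diagonal bound $0\le R_N(z,z)\le\Psi(z,z)$ plus Cauchy--Schwarz, Montel, and vanishing of limits on the totally real $\wH\times\wH$ force $R_N\to 0$ locally uniformly. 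This is a sound replacement for dominated convergence (which, as you note, is unavailable off the real form), and it has the virtue of being self-contained modulo classical spherical harmonic analysis, with the uniqueness of $\mu_\Psi$ made transparent as the Bochner--Godement uniqueness; the price is length, and one small technical point you should add: to move the Radon measure between the set of positive definite spherical functions (compact-open topology) and the parameter space $[0,\infty)$ you need $m\mapsto\phi_m$ to be a homeomorphism (or at least a Borel isomorphism, e.g.\ by Lusin--Souslin) onto its image, which follows from the explicit hypergeometric formula and Theorem~\ref{thm:irrSpRep} but deserves a sentence.
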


\begin{proof} Part (iv) follows from (iii) and 
\cite[Thm.5.1]{KS05}, see also \cite[Thm.1]{FT99}. 
As pointed out above, everything else follows from Theorem~\ref{thm:irrSpRep}. 
 \end{proof}

\subsection{An integral representation of  $\phi_m$}
\mlabel{subsec:5.2}

In this subsection we  obtain an integral representation of 
the kernel $\Phi_m^c$ and to use the Poisson transform to construct a concrete 
unitary intertwining operator $\cP_m : \cH_m\to \cO_m (\Xi)$. 

For $\lambda \in \C$, we consider the {\it Poisson kernel} 
\[ P_\lambda \: \Xi \times \Lnp \to \C, \quad 
P_\lambda (z,\xi)=\lf{z}{\xi}^{-\lambda -\rho}.\] 
This kernel is defined by Lemma \ref{lem:4.14}, the functions 
$P_\lambda(\cdot, \xi)$ are holomorphic on $\Xi$, 
$P_{\lambda,z} := P_\lambda(z,\cdot) \in \cH_{\lambda}$, 
and $P_{\lambda, e_0} =\1_\lambda\in\cH_{\lambda}^K$. We define the 
{\it Poisson transform} by 
\[(\cP_\lambda \varphi) (z) 
= \int_{\bS^{n-1}} P_\lambda (z,\xi_u)\varphi (\xi_u)d\mu (u)
\quad \mbox{ for }  \quad \varphi \in \cH_\lambda .\]
The existence of the integral follows from Lemma \ref{lem:IEx}, 
and $\cP_\lambda \phi$ is holomorphic on $\Xi$. 
For $\lambda =\lambda_m$, $m \geq 0$,
we will also use the notation $P_m:=P_{\lambda_m}$ and $\cP_m=\cP_{\lambda_m}$. 

\begin{lemma}\label{lem:APl}
If $\lambda \in \C$ with $\Re \lambda>0$, then $A_\lambda P_{\lambda,z} 
= P_{-\lambda,z}$ for $z \in \Xi$. 
 \end{lemma}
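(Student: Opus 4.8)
The plan is to read the asserted identity as an equality of elements of $\cH_{-\lambda}$ depending on the parameter $z \in \Xi$, to verify it first at the base point $z = e_0$ by purely representation-theoretic means, to spread it over the orbit $\wH = G^c.e_0$ by $G^c$-equivariance, and finally to propagate it from $\wH$ to all of $\Xi$ by analytic continuation across the totally real submanifold $\wH = \Xi^{\sx}$.

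First I would record how the Poisson kernel transforms. Since every $g \in G^c$ preserves $\lf{\cdot}{\cdot}$, we have $\lf{g.z}{\xi} = \lf{z}{g^{-1}.\xi}$, whence
\[ P_{\lambda,g.z}(\xi) = \lf{g.z}{\xi}^{-\lambda-\rho} = \lf{z}{g^{-1}.\xi}^{-\lambda-\rho} = P_{\lambda,z}(g^{-1}.\xi) = (\pi_\lambda(g)P_{\lambda,z})(\xi), \]
so that $P_{\lambda,g.z} = \pi_\lambda(g)P_{\lambda,z}$, and likewise with $-\lambda$ in place of $\lambda$. Since $P_{\lambda,z} = \lf{z}{\cdot}^{-\lambda-\rho}$ is continuous on $\Lnp$, it lies in $\cH_\lambda \cap C(\Lnp)$, so the intertwining relation of Lemma~\ref{lem:Alambda}(ii) is applicable to it. At the base point, the definition of $\1_\lambda$ in Lemma~\ref{lem:4.14}(iv) gives $P_{\lambda,e_0} = \1_\lambda$ and $P_{-\lambda,e_0} = \1_{-\lambda}$, so Lemma~\ref{lem:Alambda}(iii) yields $A_\lambda P_{\lambda,e_0} = A_\lambda\1_\lambda = \1_{-\lambda} = P_{-\lambda,e_0}$. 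For a general $z = g.e_0 \in \wH$ I would then combine these facts:
\[ A_\lambda P_{\lambda,z} = A_\lambda \pi_\lambda(g)P_{\lambda,e_0} = \pi_{-\lambda}(g)A_\lambda P_{\lambda,e_0} = \pi_{-\lambda}(g)\1_{-\lambda} = P_{-\lambda,g.e_0} = P_{-\lambda,z}, \]
using the transitivity of $G^c$ on $\wH = G^c.e_0$. This settles the identity for all $z \in \wH$.

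Next comes the passage from $\wH$ to $\Xi$, which I will carry out pointwise in the second variable. Fix $x \in \Lnp$; both sides of the claimed identity, evaluated at $x$, are functions of $z \in \Xi$. The right-hand side is $P_{-\lambda,z}(x) = \lf{z}{x}^{\lambda-\rho}$, which is holomorphic on $\Xi$ because $\Re\lf{z}{x} > 0$ by Lemma~\ref{lem:4.14}(i), so $\lf{z}{x}$ avoids the cut of the power function. The left-hand side is, with $c$ the normalizing constant of Lemma~\ref{lem:Alambda},
\[ (A_\lambda P_{\lambda,z})(x) = c\int_{\bS^{n-1}} \lf{x}{\xi_u}^{\lambda-\rho}\,\lf{z}{\xi_u}^{-\lambda-\rho}\,d\mu(u), \]
which I would identify as the integral $\int_{\bS^{n-1}}\lf{z}{\xi_u}^{-\lambda-\rho}\varphi(\xi_u)\,d\mu(u)$ for the fixed function $\varphi = \lf{x}{\cdot}^{\lambda-\rho} \in \cH_{-\lambda}$; Lemma~\ref{lem:IEx}, with its spectral parameter specialized so that the exponent is $-\lambda-\rho$, then shows this is holomorphic in $z$ on all of $\Xi$. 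The two holomorphic functions agree on $\wH$ by the previous paragraph, and since $\wH = \Xi^{\sx}$ is totally real in the connected complex manifold $\Xi$, they agree throughout $\Xi$. As $x \in \Lnp$ was arbitrary, this gives $A_\lambda P_{\lambda,z} = P_{-\lambda,z}$ for every $z \in \Xi$.

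The one genuinely delicate point is this last continuation step. The equivariance argument only reaches the single orbit $\wH$, whereas $G^c$ does not act transitively on $\Xi$, so the group-theoretic reasoning alone is insufficient; everything hinges on having honest holomorphy of $z \mapsto (A_\lambda P_{\lambda,z})(x)$ on the whole crown $\Xi$, and on the totally-real-submanifold uniqueness principle. The care required is in presenting the integrand in the precise form to which Lemma~\ref{lem:IEx} applies verbatim, namely as $\lf{z}{\xi_u}^{-\lambda-\rho}$ times the fixed $\cH_{-\lambda}$-function $\lf{x}{\cdot}^{\lambda-\rho}$, so that the locally uniform bound of that lemma furnishes the holomorphic dependence on $z$.
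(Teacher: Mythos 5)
Your proposal follows essentially the same route as the paper's proof: the base point identity $A_\lambda P_{\lambda,e_0}=A_\lambda\1_\lambda=\1_{-\lambda}=P_{-\lambda,e_0}$ from Lemma~\ref{lem:Alambda}(iii), propagation over $\wH=G^c.e_0$ via the intertwining relation of Lemma~\ref{lem:Alambda}(ii), and then, for each fixed point of $\Lnp$, analytic continuation of the two holomorphic functions of $z$ from the totally real submanifold $\wH=\Xi^{\sx}$ to all of $\Xi$. The paper's proof is exactly this, with the holomorphy of $z\mapsto (A_\lambda P_{\lambda,z})(\xi)$ left implicit, which you instead try to justify in detail.

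One technical claim in that justification is not correct as stated: the function $\varphi=\lf{x}{\cdot}^{\lambda-\rho}$ with $x\in\Lnp$ need \emph{not} lie in $\cH_{-\lambda}$, because membership in $\cH_{-\lambda}$ requires square-integrability of $\varphi(\xi_u)$ over $\bS^{n-1}$, and $\lf{x}{\xi_u}$ vanishes where $\xi_u$ is proportional to $x$. Taking $x=\xi_{e_n}$, one has $|\varphi(\xi_u)|^2=(1-u_n)^{2(\Re\lambda-\rho)}$, whose integral over $\bS^{n-1}$ converges only for $\Re\lambda>\rho/2$; so precisely in part of the range $0<\Re\lambda<\rho$ relevant for the complementary series, Lemma~\ref{lem:IEx} does not apply ``verbatim'' as you assert. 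The gap is easily repaired: what the argument actually needs is only that $u\mapsto\lf{x}{\xi_u}^{\lambda-\rho}$ is integrable on $\bS^{n-1}$, which is exactly Lemma~\ref{le:Int} for $\Re\lambda>0$, and the proof of Lemma~\ref{lem:IEx} (boundedness of $(z,u)\mapsto\lf{z}{\xi_u}^{-\lambda-\rho}$ on $L\times\bS^{n-1}$ for compact $L\subeq\Xi$, followed by dominated convergence) goes through unchanged with an $L^1$ majorant in place of an $\cH_{-\lambda}$-function. With that substitution your argument is complete and coincides with the paper's.
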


 \begin{proof}
 By Lemma \ref{lem:Alambda}(iii), we
have $A_\lambda P_{\lambda, e_0}= A_\lambda \1_\lambda = \1_{-\lambda}  =P_{-\lambda,e_0}$.
For $g\in G^c$, we thus obtain 
\begin{align*}
A_\lambda P_{\lambda,g.e_0}(\xi)&=A_\lambda (\pi_\lambda (g)P_{\lambda, e_0}) (\xi )  
= \pi_{-\lambda}(g)A_\lambda P_{\lambda, e_0}(\xi ) 
=P_{-\lambda,e_0}(g^{-1}.\xi)= P_{-\lambda, g.e_0}(\xi).
\end{align*}
Hence the maps $w\mapsto A_\lambda P_{\lambda, w}(\xi)$ 
and $w\mapsto P_{-\lambda, w}(\xi)$ which are both holomorphic on $\Xi$, 
coincide on~$\wH$. This implies that 
$A_\lambda P_{\lambda, w}=P_{-\lambda, w}$ for all $w\in\Xi$. 
\end{proof}

\begin{theorem}\label{thm:Pm} Let $m>0$ and $\varphi \in\cH_{\lambda_m}$.  Then
$\cP_m (\varphi)\in \cO_m(\Xi)$ and 
$\cP_m : \cH_{\lambda_m}\to \cO_m(\Xi)$ is unitary and $G^c$-equivariant. 
We further have 
\[
\Phi_m^c(z,w ) =\ip{P_{m, w}}{P_{m,z}}_{\lambda_m}=
\int_{\bS^{n-1}} \lf{\sx (w)}{\xi_u}^{\lambda_m -\rho}\lf{z}{\xi_u}^{-\lambda_m - \rho}d\mu (u)
\quad\mbox{ for }\quad z,w \in \Xi.
\]
\end{theorem}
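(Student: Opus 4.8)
My plan is to establish the two displayed kernel identities first and then read off the unitarity and equivariance of $\cP_m$ from the coherent-state picture they provide. Throughout I write $\lambda = \lambda_m$ and set
\[ I(z,w) := \int_{\bS^{n-1}} \lf{\sx(w)}{\xi_u}^{\lambda-\rho}\, \lf{z}{\xi_u}^{-\lambda-\rho}\, d\mu(u). \]
The first step is to show $I(z,w) = \ip{P_{m,w}}{P_{m,z}}_{\lambda_m}$. For $m \geq \rho$, where $\lambda \in i\R$ and $\ip{\cdot}{\cdot}_{\lambda_m} = \ip{\cdot}{\cdot}_{L^2}$, this is immediate from $\ip{P_{m,w}}{P_{m,z}}_{L^2} = \int_{\bS^{n-1}} \overline{\lf{w}{\xi_u}^{-\lambda-\rho}}\,\lf{z}{\xi_u}^{-\lambda-\rho}\, d\mu(u)$ together with $\overline{\lf{w}{\xi_u}} = \lf{\sx(w)}{\xi_u}$ (Lemma~\ref{lem:4.14}(ii)) and $\overline{\lambda} = -\lambda$. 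For $0 < m < \rho$, where $0 < \lambda < \rho$, I would instead use the definition \eqref{eq:compsersp} of $\ip{\cdot}{\cdot}_{\lambda_m}$ together with Lemma~\ref{lem:APl} to write $\ip{P_{m,w}}{P_{m,z}}_{\lambda_m} = \ip{A_\lambda P_{\lambda,w}}{P_{\lambda,z}}_{L^2} = \ip{P_{-\lambda,w}}{P_{\lambda,z}}_{L^2}$, which again reduces to $I(z,w)$ because $\lambda$ is real.

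Next I would identify $I$ with $\Phi_m^c$. The kernel $I$ is sesquiholomorphic on $\Xi \times \Xi$ --- holomorphic in $z$ by Lemma~\ref{lem:IEx} and antiholomorphic in $w$ since $\sx$ is a conjugation --- and, being equal to $\ip{P_{m,w}}{P_{m,z}}_{\lambda_m}$, it is $G^c$-invariant: indeed $\pi_{\lambda_m}(g)P_{m,w} = P_{m,g.w}$ follows from $\lf{w}{g^{-1}.\xi} = \lf{g.w}{\xi}$, and $(\pi_{\lambda_m}, \cH_{\lambda_m})$ is unitary for $\ip{\cdot}{\cdot}_{\lambda_m}$ in both ranges (Lemma~\ref{le:InvForm}(ii) and the invariance of \eqref{eq:compsersp}). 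Evaluating at $w = e_0$, where $\sx(e_0) = e_0$, $\lf{e_0}{\xi_u} = 1$ and $\1_\lambda(\xi_u) = 1$, gives $I(z,e_0) = \int_{\bS^{n-1}} \lf{z}{\xi_u}^{-\lambda-\rho}\, d\mu(u) = \varphi_m(z) = \Phi_m^c(z,e_0)$ by \eqref{eq:spfunc} together with $\Psi_m(e_0,e_0) = \gamma_{n,m}$ (Theorems~\ref{th:Psi} and~\ref{thm:gamma}). Since $\Phi_m^c$ is itself a $G^c$-invariant element of $\Sesh(\Xi)$, Corollary~\ref{co:UniDet} forces $I = \Phi_m^c$, proving both displayed equalities at once.

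For the statements about $\cP_m$, the equivariance $\cP_m \pi_{\lambda_m}(g) = \rho_m(g)\cP_m$ comes from the same change of variables (cocycle relation plus Lemma~\ref{le:GactMea}(i)) as in the proof of Lemma~\ref{lem:Alambda}(ii). Because $\1_\lambda(\xi_u) = 1$, one computes $\cP_m\1_\lambda(z) = \int_{\bS^{n-1}} \lf{z}{\xi_u}^{-\lambda-\rho}\, d\mu(u) = \varphi_m(z) = \Phi^c_{m,e_0}(z)$, the reproducing kernel of $\cO_m(\Xi)$ at $e_0$; equivariance and $P_{m,g.e_0} = \pi_{\lambda_m}(g)\1_\lambda$ then give $\cP_m(\pi_{\lambda_m}(g)\1_\lambda) = \Phi^c_{m,g.e_0}$. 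As $\1_\lambda = P_{m,e_0}$ is a cyclic unit vector (irreducibility of $\pi_{\lambda_m}$, Lemma~\ref{le:InvForm}(ii) and \cite[Cor.~7.5.12]{vD09}), the vectors $\pi_{\lambda_m}(g)\1_\lambda$ are total in $\cH_{\lambda_m}$ and their images $\Phi^c_{m,g.e_0}$ span a dense subspace of $\cO_m(\Xi)$. On these spanning vectors the inner products agree:
\[ \ip{\cP_m \pi_{\lambda_m}(g)\1_\lambda}{\cP_m \pi_{\lambda_m}(h)\1_\lambda}_{\cO_m} = \Phi_m^c(g.e_0, h.e_0) = \ip{\pi_{\lambda_m}(g)\1_\lambda}{\pi_{\lambda_m}(h)\1_\lambda}_{\lambda_m}, \]
the first equality being the reproducing property $\ip{\Phi^c_{m,a}}{\Phi^c_{m,b}}_{\cO_m} = \Phi_m^c(a,b)$ and the last the identity of the first paragraph (which yields $\Phi_m^c(h.e_0,g.e_0)$), combined with the fact that $\Phi_m^c$, equivalently the spherical function $\varphi_m$, is real on $\wH \times \wH$. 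Hence $\cP_m$ is isometric on a dense domain with dense range, so it extends to a $G^c$-equivariant unitary onto $\cO_m(\Xi)$; in particular $\cP_m\varphi \in \cO_m(\Xi)$ for every $\varphi$, the integral formula being valid on the dense domain by Lemma~\ref{lem:IEx}.

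I expect the main obstacle to be the kernel identity $I = \ip{P_{m,w}}{P_{m,z}}_{\lambda_m}$ in the complementary range $0 < m < \rho$: there $\ip{\cdot}{\cdot}_{\lambda_m}$ is not the $L^2$ product, so the computation genuinely depends on the intertwiner relation $A_\lambda P_{\lambda,w} = P_{-\lambda,w}$ (Lemma~\ref{lem:APl}) and on the normalizing constants of Theorems~\ref{th:Psi}--\ref{thm:gamma}; the accompanying technical point is to pass from the dense isometry to all of $\cH_{\lambda_m}$, reconciling the $L^2$ and complementary-series topologies, which the estimate of Lemma~\ref{lem:IEx} controls.
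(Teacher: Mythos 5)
Your proposal is correct and follows essentially the same route as the paper's proof: the same case split for the kernel identity ($\lambda_m \in i\R$ handled directly in $L^2$, $0<\lambda_m<\rho$ via the intertwiner $A_{\lambda}$ and Lemma~\ref{lem:APl}), the same identification of the integral kernel with $\Phi^c_m$ through sesquiholomorphy, $G^c$-invariance, evaluation at $e_0$ against the spherical function \eqref{eq:spfunc}, and Corollary~\ref{co:UniDet}, and the same equivariance computation for $\cP_m$. The only cosmetic difference is the final step, where you check isometry on the dense span of the translates $\pi_{\lambda_m}(g)\1_{\lambda_m}$ rather than invoking, as the paper does, irreducibility plus the normalization $\|\cP_m \1_{\lambda_m}\| = \|\varphi_m\| = 1$; both versions rest on the same facts (kernel identity and cyclicity of the $K$-fixed vector) and both leave the identification of the resulting unitary with the integral operator on all of $\cH_m$ in the complementary range at the same, admittedly terse, level of detail.
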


\begin{proof} It follows from Lemma \ref{lem:IEx} that all the integrals 
in question exist and that $\cP_m(\varphi )\in \cO(\Xi )$. The
same argument shows that the kernels 
\begin{align*}
(z,w) &\mapsto \ip{P_{m, w}}{P_{m,z}}_\lambda, \\ 
(z,w)&\mapsto \int_{\bS^{n-1}} \lf{\sx (w)}{\xi_u}^{\lambda -\rho}\lf{z}{\xi_u}^{-\lambda - \rho}d\mu (u) 
\end{align*}
are sesquiholomorphic. 
We now show that they coincide. For $m\ge \rho$ we have $\lambda\in i\R$ and
\begin{align*}
\ip{P_{m, w}}{P_{m,z}}_\lambda&=\int_{\bS^{n-1}} \overline{\lf{w}{\xi_u}^{-\lambda -\rho}}\lf{z}{\xi_u}^{-\lambda-\rho}d\mu (u)
=\int_{\bS^{n-1}} \lf{\sx w}{\xi_u}^{\lambda -\rho}\lf{z}{\xi_u}^{-\lambda-\rho}\, d\mu (u).
\end{align*} \\
For $0<m<\rho$ we have
\begin{align*}
\ip{P_{m, w}}{P_{m,z}}_\lambda&=\int_{\bS^{n-1}} \overline{(A_\lambda P_m)(w,\xi_u)} \lf{z}{\xi_u}^{-\lambda-\rho}d\mu (u)
=\int_{\bS^{n-1}}\overline{\lf{w}{\xi_u}^{\lambda-\rho} }\lf{z}{\xi_u}^{-\lambda-\rho}d\mu (u)\\
&=\int_{\bS^{n-1}}\lf{\sx w}{\xi_u}^{\lambda-\rho}\lf{z}{\xi_u}^{-\lambda-\rho}d\mu (u)
\end{align*}
by Lemmas \ref{lem:4.14} and \ref{lem:APl} because $\lambda$ is real.
This proves the asserted equality.

For $\lambda\in\C$ and  $\varphi\in\cH_\lambda$, we find with 
Lemma \ref{le:InvForm} 
\begin{align} \label{eq:dagx}
\cP_\lambda(\phi)(z) =& 
\int_{\bS^{n-1}} \lf{z}{\xi_u}^{\lambda -\rho}\varphi (g^{-1}.\xi_u )\, d\mu (u) 
= 
\int_{\bS^{n-1}}\overline{\lf{\sx z}{\xi_u}^{\oline\lambda -\rho}}\varphi (g^{-1}.\xi_u)\, 
d\mu (u) \notag \\
=&\int_{\bS^{n-1}}\overline{\lf{\sx z}{g.\xi_u}^{\oline\lambda -\rho}}\varphi (\xi_u)\, 
d\mu (u)
=\int_{\bS^{n-1}} \lf{g^{-1}.z}{\xi_u}^{\lambda -\rho}\varphi (\xi_u)\, d\mu (u) .
\end{align}
Hence $\cP_\lambda$ is an intertwining operator.

We now consider the sesquiholomorphic kernel 
\[ \Lambda_m (z,w)=\ip{P_{m, w}}{P_{m, z}}_\lambda=
\int_{\bS^{n-1}} 
\lf{\sx w}{\xi_u}^{\lambda -\rho}\lf{z}{\xi_u}^{-\lambda-\rho}d\mu (u).\]
This kernel is hermitian because 
\[\overline{\Lambda_m(w,z)}=\overline{\ip{P_{m,z}}{P_{m,w}}_\lambda}=\ip{P_{m,w}}{P_{m,z}}_\lambda =\Lambda_m (z,w).\]
By taking $\varphi =P_{m,z}$ and replacing $z$ in \eqref{eq:dagx}
by $\sx w$, it follows that $\Lambda_m$ is $G^c$-invariant.

Thus $\Lambda_m\in\Sesh(\Xi )$ and it is positive definite.
By Corollary \ref{co:UniDet} the kernel $\Lambda_m$ is determined
by the function $\Lambda_{m, e_0}|_{\wH}$. But
\[\Lambda_m (z,e_0)=\oline{\Lambda_m (e_0,z)}
=\int_{\bS^{n-1}} [z,e_0]^{-\lambda -\rho}d\mu (u)=\varphi_m(z)\]
by Theorem~\ref{thm:irrSpRep}. Hence $\Lambda_m (z,w)=\Phi_m^c(z,w)$.

As $(\pi_m, \cH_m)$ is irreducible and $\phi_m \in \cO_m(\Xi)$ is 
cyclic, $\cP_m$ is a $G^c$-isomorphism. We also have $\cP_m\1_\lambda = \varphi_m$ and
\[\ip{\cP_m\1_\lambda}{\cP_m\1_\lambda}=
\ip{\varphi_m}{\varphi_m}=\Phi_m^c(e_0,e_0)=1.\]
As $\ip{\1_\lambda}{\1_\lambda}_\lambda=1$, it follows that $\cP_m$ is unitary.
\end{proof}

\subsection{Canonical kernels} 
\mlabel{subsec:5.3}

In this section we discuss the relation between our setting and canonical 
kernels on hyperboloids. To this end, we identify (only in this section) 
$V$ with $\R^{n+1}$ and use the standard notation for $\bH^n$ etc. The conjugation
$\sx$ is then the complex conjugation $z\mapsto \oline z$ and $G^c\subset \mathrm{GL}_n(\R)$ is
the standard realization of $\OO_{1,n}^\uparrow(\R)$.  

The  {\it Berezin kernel} on the 
open unit ball $\bB^n := \{ x \in \R^n \: x^2 <1\}$ is defined by 
\[ B_\lambda(\bx,\by) = \Big(\frac{(1- \bx^2)(1 - \by^2)}{(1-\bx\by)^2} 
\Big)^\lambda \quad \mbox{ for }  \quad 
\lambda > 0, \quad \bx^2, \by^2 < 1\] 
(see \cite[\S 2]{vDH97}). 
We consider the diffeomorphism 
\[ \Gamma \: \bH^n \to \bB^n, \quad 
\Gamma(x_0,\bx) := x_0^{-1} \bx.\] 
Then 
$\Gamma^{-1}(\bx) = (1 - \bx^2)^{-1/2} (1,\bx)$ 
leads to 
\[ C_\lambda(x,y) := B_\lambda(\Gamma(x), \Gamma(y)) 
= \Big(\frac{(x_0^2- \bx^2)(y_0^2 - \by^2)}{(x_0 y_0-\bx\by)^2}\Big)^\lambda 
= [x,y]^{-2\lambda}\] 
(cf.\ \cite[\S 3]{vDH97}). 
This is a $G^c$-invariant kernel on $\bH^n$. 
It extends to a sesquiholomorphic kernel on a neighborhood 
of $\bH^n_V$ in $\Xi$ by the formula
\[  C_\lambda(z,w) :=  [z, \oline w]^{-2\lambda} .\] 
This kernel is defined on the $G^c$-invariant open subset 
$\{(z,w)\in \Xi\times \Xi\: \Re [z, \oline w]>0\}$.  
As 
$\{[z,w]\: z,w\in \Xi\}= \C \setminus (-\infty, -1]$ 
(Lemma \ref{lem:xi-values}), $C_\lambda$ can not be extended to all of $\Xi$.  
To cope with this situation, we shrink $\Xi$ to a suitable 
$G^c$-invariant domain to which $C_\lambda$ extends: 

\begin{prop} Consider the $G^c$-invariant open submanifold 
\[ \Xi' := \{ z \in \Xi \: \beta(z) > 0\} \quad \mbox{ for } \quad 
\beta(z) := [z,\oline z ] = |z_0|^2 - \|\bz\|^2.\] 
Then $C_\lambda$ defines a $G^c$-invariant sesquiholomorphic kernel on $\Xi'$. 
\end{prop}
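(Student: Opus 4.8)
The plan is to reduce the whole statement to the single assertion that $\Re[z,\oline w]>0$ for all $z,w\in\Xi'$. Indeed, $\Xi'$ is open because $\beta$ is continuous and $\Xi$ is open, and it is $G^c$-invariant because every $g\in G^c=\OO_{1,n}(\R)^\uparrow$ is a real matrix preserving $[\cdot,\cdot]$, so $\beta(g.z)=[gz,\oline{gz}]=[gz,g\oline z]=[z,\oline z]=\beta(z)$. The kernel $C_\lambda(z,w)=[z,\oline w]^{-2\lambda}$ is holomorphic in $z$ and antiholomorphic in $w$ wherever the principal branch of $t\mapsto t^{-2\lambda}$ applies to $t=[z,\oline w]$, i.e.\ as long as $[z,\oline w]\in\C\setminus(-\infty,0]$; by the discussion preceding the proposition this is exactly the locus $\{\Re[z,\oline w]>0\}$ on which $C_\lambda$ already lives, and there it is automatically $G^c$-invariant since $[gz,\oline{gw}]=[z,\oline w]$. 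Thus the proposition follows once $\Xi'\times\Xi'\subseteq\{\Re[z,\oline w]>0\}$.

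The first genuine step is a pointwise bound: \emph{for every $z=u+iv\in\Xi'$, writing $u=(u_0,\bu)$ and $v=(v_0,\bv)$ in $\R^{n+1}$, one has $u_0>\|\bv\|$.} From Lemma~\ref{le:HinSn}, $z\in\bS^n_\C$ gives $[u,u]-[v,v]=1$ and $[u,v]=0$, while $\beta(z)=[u,u]+[v,v]>0$; adding and subtracting yields $[u,u]>\tfrac12$ and $[v,v]>-\tfrac12$. The orthogonality $[u,v]=u_0v_0-\bu\cdot\bv=0$ and Cauchy--Schwarz in $\R^n$ give $u_0^2v_0^2=(\bu\cdot\bv)^2\le\|\bu\|^2\|\bv\|^2$, hence $v_0^2\le\|\bu\|^2\|\bv\|^2/u_0^2$. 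Combining this with $\|\bv\|^2=v_0^2-[v,v]<v_0^2+\tfrac12$ and $u_0^2-\|\bu\|^2=[u,u]$ gives $\|\bv\|^2\,[u,u]/u_0^2<\tfrac12$, and since $[u,u]>\tfrac12$ this forces $\|\bv\|^2<u_0^2$, i.e.\ $u_0>\|\bv\|$.

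Next I would move one argument into a normal form. For $w=p+iq\in\Xi$ the vector $p$ is future time-like, and (as already used in the proof of Proposition~\ref{prop:XiTube}) $G^c$ acts transitively on the hyperboloid $\{[x,x]=[p,p],\,x_0>0\}$; pick $g\in G^c$ with $g.p=\sqrt{[p,p]}\,e_0$. Since $[p,q]=0$, the vector $g.q$ is orthogonal to $e_0$, hence purely spatial, and the stabiliser $K=\OO_n(\R)$ of $e_0$ rotates it to a multiple of $e_n$; after a possible sign flip in $K$ we get $g.w=w_s:=\cos(s)\,e_0-i\sin(s)\,e_n$ with $\cos^2 s=[p,p]$, $\sin^2 s=-[q,q]$ and $s\in[0,\pi/2)$. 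As $\beta(w_s)=\cos^2 s-\sin^2 s=\cos(2s)$, membership $w\in\Xi'$ forces $s\in[0,\pi/4)$. Now put $z'=g.z=u'+iv'\in\Xi'$; by $G^c$-invariance of $[\,\cdot\,,\oline{\,\cdot\,}]$ we have $[z,\oline w]=[gz,\oline{w_s}]=[z',\oline{w_s}]$, and expanding with $[x,e_0]=x_0$ and $[x,e_n]=-x_n$ gives $\Re[z',\oline{w_s}]=\cos(s)\,u'_0+\sin(s)\,v'_n$. Applying the pointwise bound to $z'$ yields $u'_0>\|\bv'\|\ge|v'_n|$, while $s\in[0,\pi/4)$ gives $\cos s>|\sin s|\ge0$, so
\[ \Re[z,\oline w]=\cos(s)\,u'_0+\sin(s)\,v'_n\ge u'_0\bigl(\cos s-|\sin s|\bigr)>0, \]
which is exactly what was needed.

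The main obstacle is controlling the \emph{argument} of $[z,\oline w]$, not merely its modulus. The Hermitian form $h(z,w)=[z,\oline w]$ has Lorentzian signature $(1,n)$, and an easy reverse Cauchy--Schwarz inequality shows $|[z,\oline w]|^2\ge\beta(z)\beta(w)>0$ on $\Xi'\times\Xi'$, so $[z,\oline w]$ never vanishes there; but non-vanishing alone does not keep it off the cut $(-\infty,0]$, since $\C^\times$ is connected. What resolves this is precisely the quantitative consequence $u_0>\|\bv\|$ of the defining inequality $\beta>0$ of $\Xi'$: after normalising one argument to $w_s$ it pins $[z,\oline w]$ into the open right half-plane. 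This also explains why $C_\lambda$ does not extend to all of $\Xi\times\Xi$, where $\beta$ may be negative and the bound fails.
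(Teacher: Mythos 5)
Your proof is correct, and it takes a genuinely different route from the paper's. The paper's argument is coordinate-free: from $\beta(z)>0$ it passes to $\tilde \bz := z_0^{-1}\bz$ with $\|\tilde\bz\|<1$, and factorizes $[z,\oline w] = z_0\,\oline{w_0}\,(1-\la \tilde\bz,\tilde\bw\ra)$, noting that all three factors lie in the open right half-plane (the last by Cauchy--Schwarz, the first two because $\Xi'\subeq\Xi\subeq T_{V_+}$). You instead normalize one argument to $w_s=\cos(s)e_0-i\sin(s)e_n$ with $s\in[0,\pi/4)$, using transitivity of $G^c$ on the level sets of the form in $V_+$, and prove the pointwise estimate $u_0>\|\bv\|$ for $z=u+iv\in\Xi'$; together these give $\Re[z,\oline w]>0$ for all $z,w\in\Xi'$. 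Your conclusion is in fact stronger than what the paper's computation literally yields: a product of three complex numbers with positive real part need not have positive real part (it can even be a negative real number), so the factorization by itself does not keep $[z,\oline w]$ off the cut $(-\infty,0]$; to complete the paper's argument one should define $C_\lambda$ on $\Xi'\times\Xi'$ as the product of the principal powers of the three factors (each of which is sesquiholomorphic) and then recover $G^c$-invariance and agreement with the canonical kernel by uniqueness of sesquiholomorphic extension from the totally real submanifold $\bH^n_V$. Your route avoids all branch bookkeeping, since it places $[z,\oline w]$ in the open right half-plane, which is exactly the set on which the paper had already defined $C_\lambda$; the price is the normal-form computation. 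One cosmetic point: the principal branch of $t\mapsto t^{-2\lambda}$ lives on $\C\setminus(-\infty,0]$, which is strictly larger than $\{\Re t>0\}$, so your phrase ``this is exactly the locus'' is imprecise --- but harmless, since you prove the stronger half-plane containment.
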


\begin{proof}   
As the continuous function $\beta$ on $V_\C$ is $G^c$-invariant, 
$\Xi'$ is an open $G^c$-invariant submanifold of $\Xi$. 
Since $\beta(z) =1$ for $z \in \bH^n_V$, it contains $\bH^n_V$. 
With $z_t=\cos (t)e_0 + \sin(t)ie_n\in \bS^n_+$ we have 
\[ \Xi 
= G^c.\{ z_t \: |t| < \pi/2\} \supeq \Xi' = G^c.\{ z_t \: |t| < \pi/4\}.\] 

For $\bz, \bw \in \C^{n}\simeq V_\C$, we write 
$\la \bz,\bw \ra = \bz\oline\bw = \sum_{j=1}^n z_j \oline{w_j}$. 
Then $\beta(z) = |z_0|^2 - \la \bz,\bz\ra  > 0$ implies 
$z_0 \not=0$ and $\tilde \bz := z_0^{-1} \bz$ satisfies $\|\tilde \bz\| < 1$. 
For $\beta(z), \beta(w) > 0$, 
we thus  obtain $|\la \tilde \bz, \tilde \bw \ra| < 1$. 
For $z = (z_0,\bz),w =(w_0,\bw)\in \Xi'$, this leads to 
\[ [z,\oline w]
= z_0 \oline{w_0} - \la \bz, \bw\ra 
= z_0 \oline{w_0}\underbrace{\big(1  - \la \tilde\bz, \tilde \bw \ra\big)}_{\Re > 0}.\] 
As $\Re z_0 > 0$ and $\Re w_0 > 0$ follows from $\Xi' \subeq \Xi \subeq 
T_{V_+}$ (Proposition~\ref{prop:XiTube}), we see that $C_\lambda$ defines a $G^c$-invariant sesquiholomorphic 
kernel on $\Xi'$. 
\end{proof}

On $\bH^n$, the corresponding $K$-invariant function is given by 
\[ \psi_\lambda(x) = C_\lambda(x,e_0) = x_0^{-2\lambda},
\quad \mbox{ resp., } \quad 
 \psi_\lambda(\cosh(t) e_0 +  \sinh(t) e_n)  = \cosh(t)^{-2\lambda}. \] 
By \cite[Thm.~1]{vDH97}, the kernel $C_\lambda \in \Sesh(\Xi')$ 
has an integral representation 
\[ C_\lambda = \int_0^\infty \Psi_m\, d\mu_\lambda(m), \] 
where $\mu_\lambda$ is a measure on $(0,\infty)$, which is 
on the interval $[\rho,\infty)$ ($\rho = \frac{n-1}{2}$) 
equivalent to Lebesgue measure. 
For $\lambda < \frac{\rho}{2}$, the measure $\mu_\lambda$ has an 
additional singular part, given by point measures in the points 
\[ s_j(\lambda) := \rho - 2 \lambda - 2 j \quad \mbox{ for }\quad 
j \in \N_0 \quad \mbox{ with } \quad 
s_j(\lambda) > 0.\] 

For the corresponding unitary representation of $G^c$ 
on the reproducing kernel Hilbert space $\cH_{C_\lambda} \subeq 
\cO(\Xi')$, this means that it decomposes into a direct integral 
of all spherical principal series representations 
(corresponding to $\lambda > \rho$) and a direct sum of finitely 
many spherical complementary series representations, 
corresponding to the values $s_j(\lambda)$. 
We refer to \cite{vDH97} and \cite[Prop.~2.7.3, Cor.~4.2.2]{Hi99} 
for more details, 
where these kernels are considered as real 
analytic kernels on $\bH^n$, resp., $\bB^n$. 
These results extend to restrictions of minimal holomorphic 
representations of $\SU_{n,m}(\C)$ to $\SO_{n,m}(\R)$ (\cite{Se07})  
and, more generally, to matrix balls (\cite{Ner99}) and Makarevich spaces
(\cite{FP05}). 

\section{Perspectives} \label{se:perspectives}

\subsection{Identification of $\Xi$ with a Lie ball}
\mlabel{sec:lieball}

In this subsection we explain how to identify the crown domain 
$\Xi$ with the $n$-dimensional Lie ball, 
i.e., the bounded symmetric domain whose isometry group is 
locally isomorphic to $\SO_{2,n}(\R)$. 

On the $(n+1)$-dimensional tube domain $T_{V_+} = V_+ + i V$, we have a 
natural transitive action of the group $\OO_{2,n+1}(\R)^\uparrow$ 
which is obtained by extending the action of this group on 
the Minkowski space $iV \cong \R^{1,n}$ by rational maps 
to an action by holomorphic automorphisms of the 
tube domain $T_{V_+}$ whose Shilov boundary is $iV$ 
(\cite[\S X.5]{FK94}).
The identity $\Xi = \bS^n_\C \cap T_{V_+}$ 
identifies $\Xi$ with a hypersurface defined 
by the equation  $z^2=1$ 
in the tube domain $T_{V_+}$. 

The function $\Delta(z) := z ^2$ on $V_\C \cong \C^{n}$ 
can be interpreted as the determinant of the complex Jordan algebra 
\[  V_\C \cong \C \oplus \C^{n-1}, 
\qquad (t,\bz)(t',\bz') := (tt' - \bz   \bz', 
t\bz' + t'\bz),\quad t,t' \in \C, 
\bz,\bz'\in \C^{n-1} \] 
whose determinant function is given on $V = \R \oplus i \R^n$ by 
\begin{equation}
  \label{eq:delta-jor}
 \Delta(z_0, i \bz) = z_0^2 - z_1^2 - \cdots - z_{n-1}^2  
= (\iota z)^2  \quad \mbox{ for } \quad z = (z_0, \bz) \in \R^{n+1}, 
\iota(z_0, \bz) = (z_0, i \bz) 
\end{equation}
(\cite[p.~31]{FK94}). 
On $V_\C \cong \C^{n+1}$ we have an involutive rational map 
\[ r(z) := \Delta(z)^{-1} z = \frac{1}{z^2} z
\quad \mbox{ with } \quad (\C^{n+1} \setminus \Delta^{-1}(0))^r = \bS^n_\C, \] 
called {\it ray inversion}, defined in the complement 
of the hypersurface $\Delta = 0$. If $\Delta(z) = 1$, then 
$r$ maps $\C^\times z$ into itself and $\lambda z$ to $\lambda^{-1} z$ 
for $\lambda \in \C^\times$. Next we observe that 
\[ r(z) = \alpha(z^{-1}), \quad \mbox{ where } \quad 
\alpha(z_0, \bz) = (z_0, - \bz) \quad \mbox{ and } \quad (z_0,\bz)^{-1} = 
\Delta(z_0,\bz)^{-1}(z_0,-\bz) \] 
is Jordan inversion. 
Since $T_{V_+}$ is invariant under Jordan inversion (\cite[Thm.~X.1.1]{FK94}) 
and $\alpha$, 
it is also invariant under the holomorphic involution~$r$ and we thus obtain 
\begin{equation}
  \label{eq:xir}
(T_{V_+})^r = T_{V_+} \cap \bS^n_\C = \Xi.
\end{equation}

The Cayley transform $C(z) := (z-e)(z+ e)^{-1}$ maps the 
tube domain $T_{V_+}$ biholomorphically onto the Lie ball 
\[ \cD := \{ u + i v \in V_\C = \C^{n+1} \: 
\|u\|^2 + \|v\|^2 + 2 \sqrt{\|u\|^2\|v\|^2 - (u,v)} < 1 \},\] 
where we identify $V$ with the euclidean space $\R^{n+1}$, 
so that $\|\cdot\|$ and $(u,v) = \sum_j u_jv_j$ refers to the euclidean 
scalar product 
(\cite[\S X.2]{FK94}). From $C(r(z)) = - \alpha(z) = (-z_0,\bz)$ 
it follows that 
\[ C(\Xi) = \cD^n := \cD \cap (\{0\} \times \C^n),\] 
which is an $n$-dimensional Lie ball. 
Since we also have $C \circ \sigma_V = \sigma_V \circ C$, it follows 
that 
\[ C(\bH^n_V) = \cD^n_\R := \cD^n \cap V = \{ (0,i\bx) \: \bx^2 < 1 \} \] 
is an open unit ball in an $n$-dimensional euclidean space. 

Writing $T_{V_+}$ as $G_1/K_1$ for 
$G_1 \cong \SO_{2,n+1}(\R)_0$ and the stabilizer 
$K_1 \cong \SO_2(\R) \times \SO_{2n+1}(\R)$ 
of the base point $e_0$, we obtain from $r(e_0) = e_0$ 
an involution $\tau_r$ on $G_1$ defined by 
$r(g.z) = \tau_r(g)r(z)$ for $z \in T_{V_+}$. 
For the subgroups $G_1^r \subeq G_1$ and $K_1^r \subeq K_1$, we then have 
\[ \Xi = (G_1.e_0)^r = G_1^r.e_0 \cong G_1^r/K_1^r.\] 
In particular, $\Xi$ is a Riemannian homogeneous space of the group 
$G_1^r$. 

We now determine the groups $G_1$ and $G_1^r$ more explicitly. 
On 
\[ \tilde V := i V \oplus \R^2 = (i \R \oplus \R^n) \oplus \R^2, \] 
we consider the symmetric bilinear form given by 
\[ \beta((v,s,t), (v',s',t')) := v   v' - s^2 + t^2 \] 
and the projective quadric 
\[ Q := \{ [\tilde v] \in \bP(\tilde V) \: \tilde v \in \tilde V, 
\beta(\tilde v,\tilde v)= 0 \}.\] 
The map 
\[ \eta \: iV \to Q, \quad 
\eta(v) := \big[(v, \shalf(1 + v   v),\shalf(1 - v   v)\big] \] 
is the {\it conformal completion of $iV$}. It 
 is a diffeomorphism onto an open dense subset of $Q$. 
The natural action of the orthogonal 
group $\OO(\tilde V,\beta)  \cong \OO_{2,n+1}(\R)$ on $Q$ corresponds to 
the action of the conformal group on the Shilov boundary $iV$ of $T_{V_+}$. 

Next we observe that 
\[ \eta(r(v)) 
= \Big[ \big((v  v)^{-1} v, 
\shalf(1 + (v   v)^{-1}),\shalf(1 - (v   v)^{-1} \big)\Big] 
= \Big[( v, \shalf(1 + v   v),\shalf((v   v)-1))\Big],\] 
so that 
\[ \eta(r(v)) = \tilde r \eta(v) 
\quad \mbox{ for } \quad 
\tilde r(i x_0, \bx, s,t) =  (i x_0, \bx, s,-t).\] 
Therefore the involution on $\OO(\tilde V,\beta)$ corresponding to
$\tau_r$ corresponds to $\tau_{\tilde r}(g) := \tilde r g \tilde r$, 
and thus 
\[ \OO(\tilde V, \beta)^{\tilde r} \cong \OO_{2,n}(\R) \times \OO_2(\R) \] 
because the form $\beta$ on the subspace $\tilde V^{\tilde r}$ 
has signature $(n,2)$. This shows that 
\begin{equation}
  \label{eq:lieball}
\Xi \cong \SO_{2,n}(\R)_0/(\SO_2(\R) \times \SO_n(\R)) 
\end{equation}
is the Riemannian symmetric space associated to $\SO_{2,n}(\R)_0$. 
In particular, the action of $G^c 
\cong \OO_{1,n}(\R)^\uparrow$ on $\Xi$ extends to a transitive 
action of the group $\SO_{2,n}(\R)_0$. 

\subsubsection*{Connection to highest weight representations and corresponding kernels} 

On the tube domain $T_{V_+}$, there  exists a 
natural family of sesquiholomorphic kernels, given in terms 
of the Jordan determinant 
$\Delta(z) = \lf{z}{z}$ (see \eqref{eq:delta-jor}) 
by $\Delta\left(\frac{z + \sx w}{2}\right)^{-\nu}$. 
Concretely, we have  
\[ \left[\frac{z + \sx (w)}{2}, \frac{z + \sx (w)}{2}
\right]^{-\nu}_V \] 
(\cite[\S XIII.1]{FK94}). 
These sesequiholomorphic kernels are obviously invariant 
under $G^c$, hence restrict to $G^c$-invariant kernels 
on the complex submanifold $\Xi \subeq T_{V_+}$ 
given by 
\[ Q_\nu(z,w) := \left( \frac{1 + [z, \sx (w)]}{2}\right)^{-\nu}. \] 
On $\H^n_V$, these kernels correspond to the $K$-invariant function 
\[ q_\nu(x) = Q_\nu(x,e_0) = 
\Big( \frac{1 + x_0}{2}\Big)^{-\nu},\]
respectively  
\[q_\nu(\cosh(t) e_0 + \sinh(t) ie_n) = 
\left( \frac{1 + \cosh(t)}{2}\right)^{-\nu}=\cosh(t/2)^{-2\nu}.\] 
The kernel $Q_\nu$ corresponds to the reproducing kernel of a highest weight representation of $\SO_{2,n}(\R)_0$.
Its restriction to the real symmetric bounded domain corresponding to the subgroup 
 $\SO_{1,n}(\R)_0$ has been studied for instance in \cite{Hi99,OO96,O00}. 

Since $\Xi$ is biholomorphic to an $n$-dimensional Lie ball, 
hence also to an $n$-dimensional tube domain, it follows 
from \cite[Thm.~XIII.2.7]{FK94} that the kernel 
$Q_\nu$ is positive definite if and only if either 
$\nu = 0$ or $\nu \geq \frac{d}{2} = \frac{n-2}{2}$ 
(note that $r = 2$ in our case). 
We thus obtain elements $Q_\nu \in \Sesh(\Xi)$ 
for $\nu \geq \frac{n-2}{2}$ and, in view of Corollary \ref{cor:IntRep},  
it is a natural problem to determine the measure $\mu_\nu = \mu_{Q_\nu}$. 

This problem has been solved by  H.~Sepp\"anen in \cite{Se07b},   
see also \cite{OO96,O00} for part (i). This case and a different connection to reflection 
 positivity has been discussed
in \cite{NO14,JO98,JO00}. By \cite[\S 5.3]{Se07b}
we have: 

\begin{theorem}\label{thm:ExMuNu}
There exists a measure $\mu_\nu$ on $[0,\infty)$ such that:
\begin{itemize} 
\item[\rm (i)] For $\nu \geq  \rho = \frac{n-1}{2}$, the 
measure $\mu_\nu$ is absolutely 
continuous with respect to Lebesgue measure on 
the open interval $(\rho, \infty)$.  The representations
$\pi_m$ are the unitary principal series representations.
\item[\rm (ii)]  For $\frac{n-2}{2} < \nu < \frac{n-1}{2}$, we have an additional 
point mass in a point $m_\nu = \sqrt{\rho^2 - \lambda_\nu^2}$, 
where $\lambda_\nu = \rho -\nu$. The corresponding representations corresponds to the complementary
series representations. 
\item[\rm (iii)]  For $\nu = \frac{n-2}{2}$, the measure $\mu_\nu$ is a point mass 
in $m_\mu$.  
\item[\rm (iv)] For the minimal positive value $\nu_{\rm min} = \frac{n-2}{2}$, 
the corresponding representation of 
$\SO_{2,n}(\R)_0$ actually restricts to an irreducible representation 
of $G^c$ belonging to the complementary series. 
\end{itemize}
\end{theorem}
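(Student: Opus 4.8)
The plan is to read off $\mu_\nu := \mu_{Q_\nu}$ from the integral decomposition of Corollary~\ref{cor:IntRep}(iv) by descending to the spherical picture on $\wH$ and computing the associated spherical (Jacobi function) transform, the discrete atoms then appearing as residues picked up in a contour shift. First I would note that $Q_\nu \in \Sesh(\Xi)$ is positive definite exactly for $\nu = 0$ or $\nu \ge \frac{n-2}{2}$ by \cite[Thm.~XIII.2.7]{FK94}, so that in this range Corollary~\ref{cor:IntRep}(iv) produces a unique positive Radon measure $\mu_\nu$ on $[0,\infty)$ with $Q_\nu = \int_{[0,\infty)} \Phi_m^c\, d\mu_\nu(m)$. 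By Corollary~\ref{co:UniDet} this kernel identity is equivalent to the scalar identity obtained by evaluating at $e_0$ and restricting to the totally real submanifold $\wH$. Using $\varphi_m = \Phi_m^c(\cdot,e_0)$ from Theorem~\ref{thm:irrSpRep} and $q_\nu(x) = \cosh(t/2)^{-2\nu}$ for $x = \cosh(t)e_0 + \sinh(t) i e_n \in \wH$, the whole problem reduces to determining $\mu_\nu$ in
\[ \cosh(t/2)^{-2\nu} = \int_{[0,\infty)} \varphi_m(t)\, d\mu_\nu(m), \qquad t \ge 0, \]
where $\varphi_m(t) = \hgf\big(\rho+\lambda_m,\rho-\lambda_m;\tfrac{n}{2};-\sinh^2(t/2)\big)$.

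The core step is then a classical computation in rank-one spherical analysis on $\wH \cong G^c/K$. I would compute the Harish--Chandra spherical transform
\[ \hat q_\nu(\lambda) = \int_{\wH} q_\nu(x)\, \varphi_\lambda(x)\, d\mu_{\wH}(x), \]
which upon passing to the radial variable $t$ becomes a single Mehler--Fock/Jacobi integral of $\cosh(t/2)^{-2\nu}$ against $\varphi_\lambda$; this is a beta-type hypergeometric integral of exactly the shape of Lemma~\ref{le:417} and evaluates, after the usual analytic continuation in $\nu$, to an explicit quotient of $\Gamma$-factors. The spherical Plancherel inversion for $\wH$ then writes $q_\nu = \int_{i\R_+} \hat q_\nu(\lambda)\, \varphi_\lambda\, |c(\lambda)|^{-2}\, d\lambda$, with $c(\lambda)$ the Harish--Chandra $c$-function, and this principal-series integral furnishes the absolutely continuous part of $\mu_\nu$ on $(\rho,\infty)$, establishing~(i) for $\nu \ge \rho$.

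To locate the remaining mass I would shift the contour from $i\R_+$ across the strip $0 < \Re\lambda < \rho$. The only pole of the integrand crossed comes from a $\Gamma$-factor in $\hat q_\nu(\lambda)$ at $\lambda = \lambda_\nu := \rho - \nu$, which lies in the open strip exactly when $0 < \nu < \rho$; its residue is a positive multiple of $\varphi_{\lambda_\nu}$ and so contributes a single atom at $m_\nu = \sqrt{\rho^2 - \lambda_\nu^2}$, a complementary-series parameter since $\lambda_{m_\nu} = \lambda_\nu \in (0,\rho)$. Thus for $\frac{n-2}{2} < \nu < \frac{n-1}{2}$ this atom sits on top of the continuous part and we obtain~(ii), whereas for $\nu \ge \frac{n-1}{2}$ the pole has left the strip and $\mu_\nu$ is purely continuous, confirming~(i).

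The delicate endpoint is $\nu = \nu_{\rm min} = \frac{n-2}{2}$, the first nonzero point of the Wallach set of the rank-two Lie ball. Here $\lambda_\nu = \frac{1}{2}$ and the product $\hat q_\nu(\lambda)\,|c(\lambda)|^{-2}$ degenerates, with its continuous density vanishing identically on $i\R$, so that after the contour shift the entire kernel collapses onto the single residue term and $\mu_\nu$ is a pure point mass, proving~(iii). Since the decomposition of $Q_{\nu_{\rm min}}$ then has a single term, the $\SO_{2,n}(\R)_0$-representation with reproducing kernel $Q_{\nu_{\rm min}}$ — the scalar minimal highest weight representation — restricts irreducibly to $G^c$ and equals the single complementary-series representation $\pi_{m_\nu}$, which is~(iv). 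The main obstacle throughout is the explicit evaluation of the Jacobi transform together with the convergence, analytic-continuation and residue bookkeeping near this endpoint; this is precisely the analysis carried out in \cite[\S 5.3]{Se07b} (cf.\ the parallel canonical-kernel computation of \cite{vDH97} recalled in Subsection~\ref{subsec:5.3}), whose estimates I would import rather than reproduce.
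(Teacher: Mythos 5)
Your proposal is correct in substance, but note that the paper itself offers no proof of Theorem~\ref{thm:ExMuNu}: the result is quoted verbatim from Sepp\"anen, the text preceding the statement reading ``By \cite[\S 5.3]{Se07b} we have''. What you have written is essentially a reconstruction of the method behind that citation: the reduction via Corollary~\ref{cor:IntRep}(iv) and Corollary~\ref{co:UniDet} to the radial identity
\begin{equation*}
\cosh(t/2)^{-2\nu}=\int_{[0,\infty)}\varphi_m(t)\,d\mu_\nu(m),
\end{equation*}
the evaluation of the Jacobi/spherical transform of $\cosh(t/2)^{-2\nu}$ as a quotient of Gamma factors, Plancherel inversion for the absolutely continuous part, and a contour shift across the strip $0<\Re\lambda<\rho$ whose single residue produces the atom at $\lambda_\nu=\rho-\nu$ --- exactly the van Dijk--Hille style computation recalled in Subsection~\ref{subsec:5.3}, which is what \cite[\S 5.3]{Se07b} carries out for $Q_\nu$. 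So in the end both you and the paper rest on the same external source; your version simply makes the mechanism explicit, which is a genuine gain in content. Two caveats, both at the places you yourself flag as imported: (1) the claim that the only pole crossed in the strip comes from a Gamma factor of $\hat q_\nu$, with no contribution from the meromorphically continued Plancherel density $|c(\lambda)|^{-2}$, and (2) the claim that the continuous density vanishes identically on $i\R$ at the endpoint $\nu=\frac{n-2}{2}$, so that the kernel collapses onto the single atom (whence (iii) and, by irreducibility of $\cO_{m_\nu}(\Xi)$, also (iv)). These are precisely the nontrivial bookkeeping points of the analysis; deferring them to \cite{Se07b} is legitimate given that the paper does exactly the same, but they are not established by your argument itself.
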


\subsection{Boundary values on the de Sitter space}\label{se:boundaryVal}

In the last section we discussed the role of the homogeneous space 
$\Lnp$ for the identification of the positive definite kernels $\Phi_m^c$ and the
corresponding unitary representations.   Here we briefly 
relate our work to analysis on the other boundary orbit $\dS^n$ 
(de Sitter space).

Let $\cB (\dS^n)$ be the space of holomorphic functions $F$ on $\Xi$ 
extending to continuous functions on $\Xi \cup \dS^n$. 
We will also write $\cB ( \dS^n\times \Xi )$ for the space of 
sesquiholomorphic kernels $\Phi \: \Xi\times \Xi \to \C$ 
extending to a continuous function on 
$(\dS^n \cup \Xi) \times \Xi$. We
then write  $\beta (\varphi )(y,w) =\beta (\varphi (\cdot ,w))(y)$. Similarly we define $\cB (\Xi \times \bS^n)$ as the space 
of kernels extending continuously to 
$\Xi \times (\Xi \cup \dS^n)$. 
Then $\beta (\varphi )(z,y)$, $(z,y)\in \Xi \times \dS^n$, is well defined.

We use the notation from previous sections:
\[a_w=\begin{pmatrix} \cosh (w) & 0 & -i\sinh (w)\\ 0 & \rI_{n-1} & 0\\
i\sinh (w) & 0 &\cosh (w)\end{pmatrix},\quad z_w=a_w.e_0=\cosh (w)e_0+i\sinh (w)e_n,\quad w\in \C .\]
Let $w= t+ir\in \R +i(-\pi /2 , \pi /2)$. Then   
\[\cosh (t- ir)=\frac{1}{2}\left(e^{t}e^{-ir} + e^{-t}e^{ir}\right) 
\stackrel{r\to \pi/2}{\sarr} -i\sinh (t)\]
and 
\[ \sinh (t-ir)=\frac{1}{2}\left(e^{t}e^{-ir} 
- e^{-t}e^{ir}\right)\stackrel{r\to \pi/2}{\sarr}-i\cosh(t). \]
Thus

\[\lim_{r\nearrow \pi/2} a_{t-ir}.e_0 = -i\sinh (t)e_0 + \cosh (t) e_n\in \dS^n\, .\]
Taking $t=0$ gives:
 
\begin{lem} Let $F\in \cB (\dS^n)$ and $g\in G^c$. Then
$\beta (F) (ge_n)=\lim_{r\nearrow \pi/2} F(ga_{-ir}.e_0)$.  
\end{lem}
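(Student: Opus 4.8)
The plan is to realize the boundary point $ge_n \in \dS^n$ as the endpoint of an explicit curve lying inside $\Xi$ and to conclude by continuity of the boundary extension. Recall that $F \in \cB(\dS^n)$ means precisely that the holomorphic function $F$ on $\Xi$ admits a continuous extension $\tilde F$ to $\Xi \cup \dS^n$; this extension is unique because every point of $\dS^n \subseteq \partial\Xi$ is a limit of points of $\Xi$, so that $\Xi$ is dense in $\Xi \cup \dS^n$. The boundary value map is then $\beta(F) = \tilde F|_{\dS^n}$, and in particular $\beta(F)(ge_n) = \tilde F(ge_n)$, the point $ge_n$ lying in $\dS^n$ since $G^c$ preserves $\dS^n$ and $e_n \in \dS^n$.

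First I would check that the approach curve genuinely stays in the domain of $F$. Since $\cosh(-ir) = \cos r$ and $i\sinh(-ir) = \sin r$, the formula $z_w = a_w.e_0 = \cosh(w) e_0 + i\sinh(w) e_n$ specializes to
\[ a_{-ir}.e_0 = \cos(r)\, e_0 + \sin(r)\, e_n \qquad \text{for } r \in (0,\pi/2). \]
This is a real unit vector with positive $e_0$-component, hence an element of $\bS^n_+$. As $\bS^n_+ \subseteq \Xi = G^c.\bS^n_+$ and $\Xi$ is $G^c$-invariant, we get $g\,a_{-ir}.e_0 \in \Xi$ for all $r \in (0,\pi/2)$, so that $F(g\,a_{-ir}.e_0)$ is indeed defined.

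Then I would invoke the limit computation preceding the lemma, specialized to $t = 0$, which gives $a_{-ir}.e_0 \to e_n$ in $\C^{n+1}$ as $r \nearrow \pi/2$, with $e_n \in \dS^n$. Since $g \in G^c \subseteq \GL(V) \cap \OO_{n+1}(\C)$ acts by a continuous (indeed linear) map on $\C^{n+1}$ that preserves $\dS^n$, it follows that $g\,a_{-ir}.e_0 \to g e_n \in \dS^n$. Continuity of $\tilde F$ on $\Xi \cup \dS^n$ therefore yields
\[ \lim_{r \nearrow \pi/2} F(g\,a_{-ir}.e_0) = \lim_{r \nearrow \pi/2} \tilde F(g\,a_{-ir}.e_0) = \tilde F(g e_n) = \beta(F)(g e_n), \]
which is exactly the claim.

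I expect no genuine obstacle here: the two points needing care are that the curve $r \mapsto g\,a_{-ir}.e_0$ stays in $\Xi$ (guaranteed by $\bS^n_+ \subseteq \Xi$ together with $G^c$-invariance) and that its limit lands in $\dS^n$ where $\beta(F)$ is defined (established by the computation recorded just above the statement). Once these geometric facts are in place, the argument is simply continuity of $\tilde F$ along the chosen path.
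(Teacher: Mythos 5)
Your proof is correct and takes essentially the same route as the paper: there the lemma is stated as an immediate consequence ("Taking $t=0$ gives") of the displayed computation $\lim_{r\nearrow \pi/2} a_{t-ir}.e_0 = -i\sinh(t)e_0+\cosh(t)e_n \in \dS^n$, combined with continuity of the extension and of the linear $G^c$-action, exactly as you argue. Your explicit check that the approach curve $a_{-ir}.e_0=\cos(r)e_0+\sin(r)e_n$ lies in $\bS^n_+\subseteq \Xi$ is a detail the paper leaves tacit, and it is a correct and useful addition.
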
  
 
 The following is well known in general but we give a simple proof suitable for our special situation:
 
\begin{lemma} For the stabilizer $H := G^c_{e_n}$, we have $G^c = HAK=KAH$.
\end{lemma}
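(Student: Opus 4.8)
The plan is to prove the single decomposition $G^c = KAH$ and to obtain $G^c = HAK$ for free: since $K$, $A$ and $H$ are each stable under inversion (with $a_t^{-1} = a_{-t}$, so $A^{-1} = A$), applying inversion to $G^c = KAH$ gives $G^c = (KAH)^{-1} = HAK$, because inversion permutes $G^c$. So from now on I focus on $G^c = KAH$.

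The key idea is to read this decomposition through the transitive action of $G^c$ on de Sitter space $\dS^n \cong G^c/H$ with base point $e_n$ (recall $H = G^c_{e_n}$). Writing $q \: G^c \to \dS^n$, $g \mapsto g.e_n$ for the orbit map, the equality $G^c = KAH$ is equivalent to $q(KA) = \dS^n$, i.e. to the assertion that every point of de Sitter space has the form $ka_t.e_n$. Indeed, once $KA.e_n = \dS^n$ is known, for arbitrary $g \in G^c$ I choose $ka_t$ with $ka_t.e_n = g.e_n$; then $(ka_t)^{-1}g$ fixes $e_n$, hence lies in $H$, so $g \in KAH$.

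First I would compute this orbit explicitly. Reading off the last column of the matrix $a_t$ in \eqref{at} gives $a_t.e_n = -i\sinh(t)\,e_0 + \cosh(t)\,e_n$, which indeed lies in $\dS^n = \bS^n_\C \cap iV$. Since $K \cong \OO_n(\R)$ acts on $\C^{n+1}$ by the block matrices $\diag(1,A)$ with $A \in \OO_n(\R)$, fixing $e_0$ and acting transitively on the unit sphere $\bS^{n-1} \subeq \R e_1 + \cdots + \R e_n$ with $A.e_n$ sweeping out all of $\bS^{n-1}$, I obtain
\[ KA.e_n = \{\, -i\sinh(t)\, e_0 + \cosh(t)\,\omega \: t \in \R,\ \omega \in \bS^{n-1}\,\}. \]
On the other hand, a general point of $\dS^n \subeq iV = i\R e_0 \oplus \R^n$ may be written as $p = i a\, e_0 + \bx$ with $a \in \R$ and $\bx \in \R^n$, and the defining equation $p^2 = 1$ then becomes $|\bx|^2 = 1 + a^2$.

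To finish, given such a $p$ I would solve the matching problem: set $t := -\arsinh(a)$, so $\sinh(t) = -a$ and $\cosh(t) = \sqrt{1+a^2} = |\bx|$, and take $\omega := \bx/|\bx| \in \bS^{n-1}$ together with any $k \in K$ satisfying $k.e_n = \omega$. Then $ka_t.e_n = i a\, e_0 + |\bx|\,\omega = p$, which proves $KA.e_n = \dS^n$ and hence $G^c = KAH$. The computation is entirely elementary; the only point requiring care is bookkeeping in the realization of $G^c$ inside $\GL(V)$ — in particular remembering that $iV = i\R e_0 \oplus \R^n$, so that the de Sitter quadric there reads $|\bx|^2 - a^2 = 1$ and the hyperbolic parameter of $A$ matches the radius $|\bx| = \cosh(t)$ of the spatial part. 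No genuine obstacle arises beyond this transcription.
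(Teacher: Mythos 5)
Your proposal is correct and follows essentially the same route as the paper's own proof: reduce $HAK=KAH$ to $G^c=KAH$ by taking inverses, translate $G^c=KAH$ into the statement $KA.e_n=\dS^n$ via the stabilizer $H=G^c_{e_n}$, and then verify this orbit identity explicitly using $a_t.e_n=-i\sinh(t)e_0+\cosh(t)e_n$ and the transitivity of $K\cong \OO_n(\R)$ on $\bS^{n-1}$. The only differences are cosmetic (your sign convention $t=-\arsinh(a)$ versus the paper's use of $a_{-t}$ with $\sinh t=v_0$), so there is nothing to correct.
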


\begin{proof} That $HAK=KAH$ follows by taking inverses. 
Thus we only have to prove that
$G^c=KAH$. This assertion is equivalent to $KA.e_n=\dS^n$. Let
$v=(iv_0,\bv)\in\dS^n$. Then $-v_0^2+\|\bv\|^2=1$. Let $t\in \R$ be such that
$v_0=\sinh (t)$ and $\|\bv\|=\cosh (t)$ and let $k\in K$ be such that 
$k.e_n=\frac{1}{\cosh (t)} \bv$. Then, as $a_{-t}.e_n=i\sinh (t)e_0+\cosh (t)e_n$, we get
$ka_{-t}.e_n=v$.
\end{proof}

\begin{lemma}\label{le:ImP2} We have $\{\lf{z}{x}\: z\in \Xi, x\in\dS^n\}\cap \R =(-1,1)$. In
particular, if $z\in\Xi$ and $x\in \dS^n$, then $ [z,x]\in \C\setminus ((-\infty ,-1]\cup [1,\infty))$. 
\end{lemma}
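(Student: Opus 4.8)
The plan is to reduce the whole statement to the elementary positivity estimate \eqref{eq:dag} obtained inside the proof of Lemma~\ref{lem:xi-values}, by a one-parameter ``completing the square'' device. Two structural facts make this work. First, on $V_\C = \C^{n+1}$ the form $\lf{\cdot}{\cdot}$ agrees with the standard bilinear form $zw$, so that both $z\in\Xi\subeq\bS^n_\C$ and $x\in\dS^n\subeq\bS^n_\C$ satisfy $\lf{z}{z}=z^2=1$ and $\lf{x}{x}=x^2=1$. Second, $\dS^n\subeq iV$, so adding a \emph{real} multiple of $x$ to $z$ alters only the imaginary part of the decomposition $z=p+iq$ ($p,q\in V$) and hence leaves the point inside the tube $\Tu=V_++iV$. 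No reduction of $x$ to a base point is needed.

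For the inclusion ``$\subeq$'' I would fix $z\in\Xi$ and $x\in\dS^n$, set $\zeta_t:=z+tx$ for $t\in\R$, and note $\zeta_t\in\Tu$ by the second fact together with $\Xi=\Tu\cap\bS^n_\C$ (Proposition~\ref{prop:XiTube}). Expanding,
\[ \lf{\zeta_t}{\zeta_t}=\lf{z}{z}+2t\,\lf{z}{x}+t^2\,\lf{x}{x}=1+2t\,\lf{z}{x}+t^2. \]
If $\lf{z}{x}\in\R$, this is real for every $t$, so the estimate \eqref{eq:dag} applies to $\zeta_t\in\Tu$ and forces $1+2t\,\lf{z}{x}+t^2>0$ for all $t\in\R$. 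A monic real quadratic in $t$ is everywhere positive precisely when its discriminant is negative, i.e.\ $\lf{z}{x}^2<1$, whence $\lf{z}{x}\in(-1,1)$. This shows that the real values among the $\lf{z}{x}$ lie in $(-1,1)$; since any non-real value lies trivially outside $(-\infty,-1]\cup[1,\infty)$, the ``in particular'' clause follows immediately.

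For the reverse inclusion ``$\supeq$'' I would produce explicit pairs. Take $x=e_n$, which lies in $\bS^n_\C\cap iV=\dS^n$ (indeed $e_n^2=1$ and $e_n\in\R^n\subeq iV$), and for $|s|<\pi/2$ the point $z_s:=\cos(s)e_0+\sin(s)e_n\in\bS^n_+\subeq\Xi$. Then $\lf{z_s}{e_n}=(z_s)_n=\sin(s)$, which ranges over all of $(-1,1)$ as $s$ runs through $(-\pi/2,\pi/2)$. Combined with the previous paragraph this yields $\{\lf{z}{x}:z\in\Xi,\ x\in\dS^n\}\cap\R=(-1,1)$.

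The one point demanding care, and the step I would flag as the crux, is the verification that $\zeta_t$ really remains in $\Tu$: this rests on $x\in iV$, so that the perturbation $tx$ does not touch the $V_+$-component $p$ of $z$. Once that is in hand, \eqref{eq:dag} is exactly the positivity input required, and the remainder is the discriminant computation and the explicit examples, both routine; no estimate beyond \eqref{eq:dag} is needed.
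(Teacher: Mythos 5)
Your proof is correct, and it takes a genuinely different route from the paper's. The paper proves the lemma by group-theoretic reduction: it first establishes the decomposition $G^c = KAH$ (the lemma immediately preceding this one exists precisely for that purpose), uses $G^c$-invariance of $\lf{\cdot}{\cdot}$ to put $x = e_n$, writes $z = ha_tk a_{-is}.e_0$, and computes explicitly $\lf{z}{e_n} = u_n\cosh(t)\sin(s) + i\sinh(t)\cos(s)$, from which reality forces $t=0$ and hence $\lf{z}{e_n} = u_n\sin(s) \in (-1,1)$. You instead bypass all structure theory: since $\dS^n \subeq iV \cap \bS^n_\C$, the line $\zeta_t = z + tx$ stays in the tube $\Tu$, and the positivity estimate \eqref{eq:dag} applied to the real quadratic $\lf{\zeta_t}{\zeta_t} = t^2 + 2t\lf{z}{x} + 1$ gives $\lf{z}{x}^2 < 1$ via the discriminant (in fact $t = \pm 1$ already suffice, giving $\lf{z}{x} > -1$ and $\lf{z}{x} < 1$ separately). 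This is exactly the mechanism of the paper's own proof of Lemma~\ref{lem:xi-values}, extended from pairs in $\Xi \times \Xi$ to mixed pairs in $\Xi \times \dS^n$ — the key observation being that perturbation by a real multiple of an element of $iV$ preserves the $V_+$-component. Your argument is shorter, needs neither the $KAH$ lemma nor any matrix computation, and makes the parallel with Lemma~\ref{lem:xi-values} transparent; the paper's computation, on the other hand, yields slightly finer information (it identifies exactly which $z$ produce real values, namely those with $t = 0$ in the $KAH$ coordinates), though the lemma as stated does not need it. Your treatment of the reverse inclusion and of the ``in particular'' clause matches the paper's in substance and is equally routine.
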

\begin{proof}
As $\dS^n=G^c.e_n$ and $\lf{\cdot}{\cdot }$ is
$G^c$ invariant, we can assume that $x=e_n$. 
Write $z=ga_{-is}.e_0=g(\cos (s)e_0+\sin (s)e_n)$ with $|s| < \pi/2$, 
and $g=ha_tk$ with $h\in H$, $a_t\in A$ and $k\in K$. Note that 
\[k.(\cos (s )e_0 +\sin (s)e_n)= \cos (s)e_0+\sin (s)u\quad \text{ for some} \quad  u\in e_0^\perp \cap \bS^n.\]
As $e_n$ is $H$-invariant, we get
\begin{align*}
 \lf{z}{e_n}&= \lf{ha_t(\cos (s)e_0+\sin (s)u)}{e_n}\\
&=  \lf{a_t(\cos (s)e_0+\sin (s)u)}{e_n}
=  \lf{\cos (s)e_0+\sin (s)u}{a_{-t} e_n}\\
&=  \lf{\cos (s)e_0+\sin (s)u}{i \sinh(t)e_0 + \cosh(t) e_n}\\
&=u_n\cosh (t)\sin (s) +i \sinh (t) \cos (s).
\end{align*}

Thus $\lf{z}{e_n}\in \R$ implies $t=0$ because $\cos(s)>0$
for $|s|<\pi/2$. 
Then $\cosh (t)=1$ and $\lf{z}{e_n}=u_n\sin s\in (-1,1)$ 
because $|\sin (s)|<1$ and $|u_n|\le 1$. That $u_n\sin (s)$ can 
take any value in $(-1,1)$ is clear. This proves the lemma.
\end{proof}
 
\begin{Theorem} Let $z,w\in \Xi$ and $x,y\in \dS^n$. Then the following 
assertions hold:
\begin{itemize}
\item[\rm (i)] $\Phi_m^c \in \cB(\dS^n\times \Xi)$ and 
\[ \beta (\Phi_m^c) (x, w)=  \hgf \left(\rho +\lambda ,\rho -\lambda; n/2 ; 
(1-\lf{x}{ \sx w})/2\right) .\] 
\item[\rm (ii)]  $\Phi_m^c \in \cB (\Xi\times \dS^n)$ and  
\[\beta (\Phi_m^c) (z, y) = \hgf \left(\rho +\lambda ,\rho -\lambda; n/2 ; 
(1+\lf{z}{ y})/2\right). \]
\item[\rm (iii)] If $\nu \ge \frac{n-2}{2}$, then $Q_\nu \in \cB (\dS^n\times \Xi)\cap \cB (\Xi\times \dS^n)$ and
\[\beta (Q_\nu)(z,y)= \left(\frac{1-\lf{z}{y}}{2}\right)^{-\nu}
\quad\text{and}\quad \beta(Q_\nu)(x,w)= \left(\frac{1+\lf{x}{\sx w}}{2}\right)^{-\nu}.\]
\end{itemize}
\end{Theorem}

\begin{proof} The kernel 
$\Phi_m^c$ extends to a real analytic function on an open subset 
of $V_\C\times V_\C$ 
containing 
$(\Xi \cup \dS^n) \times \Xi$ (Theorem~\ref{th:Psi} and Lemma \ref{le:ImP2}). 
It follows that the boundary value exists and is
given by the value at the point.  

\nin (ii) follows in the same way noting that
$\dS^n\subset iV$ so that $\sx|_{\dS^n}= -\id$. 

\nin (iii) follows also in the same way as $Q_\nu$ is continuous on $\Xi\times \overline{\Xi}$
and $\overline{\Xi}\times \Xi$.
\end{proof}

Recall the reproducing kernel Hilbert space $\cO_m(\Xi)\subeq \cO(\Xi)$ 
with the kernel $\Phi_m^c$.  
Then $\cO_m (\Xi)=\cP_\lambda\cH_\lambda$ (Theorem~\ref{thm:Pm}). In particular
 $\cO_m(\Xi )$, with left translation as representation, is isomorphic to $(\pi_m,\cH_m)$.
For $y \in \dS^n$, the function $\eta_y:= \beta (\Phi_{m}^c)(\cdot,y)$ is 
holomorphic on $\Xi$. We claim that it does not belong
to $\cO_m(\Xi)$. To this end, we first observe that 
it is invariant under the non-compact stabilizer group $G_y$,  
but for irreducible unitary representations of $G^c$, stabilizer 
subgroups are compact by the Howe--Moore Theorem~\cite[Thm. 5.1]{HM79}.

We claim that $\eta_y$ defines a distribution vector, i.e., 
an  element of $\cO_m(\Xi )^{-\infty }$. 
Let $\cO_m(\Xi)^c \subeq \cO_m(\Xi)$ denote the linear subspace 
of all functions extending continuously to $\Xi \cup \dS^n$. 
It is dense because it contains all elements 
$\Phi^c_m(\cdot, w)$, $w \in \Xi$. 
In particular,  it contains $\varphi_m = \Phi^c_m(\cdot, e_0)$. 
Since each function $\Phi^c_m(\cdot, w)$ extends to a smooth function 
on an open subset containing $\Xi \cup \dS^n$, 
the subspace $\cO_m(\Xi)^c$ contains also 
the subspace $L_{U(\fg )}\varphi_m$ of $K$-finite functions in $\cO_m(\Xi)$. 
The Automatic Continuity Theorem \cite[Thm.~1]{BD92} then implies that 
$\ev_y \: \cO_m(\Xi)^c \to \C, f \mapsto \oline{f(y)}$ 
extends to a $G_y$-invariant distribution vector. The corresponding 
holomorphic function on $\Xi$ is given by 
\[ \Xi \to \C, \quad z \mapsto \oline{\ev_y(\Phi^c_m(\cdot, z))} 
= \oline{\Phi^c_m(y, z)} = \Phi^c_m(z,y) = \beta(\Phi^c_m)(z,y) 
= \eta_y(z).\]

According to \cite{GKO03, GKO04}, 
we can define the Hardy space $\rH^2(\dS^n)$ in the following way. 
The action of $\SO_{2,n}(\R)$ on $\Xi$ can be extended to the open semigroup 
\begin{equation}
  \label{eq:defS}
 S:=\{\gamma\in \SO_{2+n}(\C)\: \gamma^{-1}\overline{\Xi } 
 \subset \Xi\}\not= \emptyset.
\end{equation}
We define
\[\rH^2(\dS^n)=\Big\{\psi \in \cO (\Xi )\: \sup_{\gamma \in S}\int_{\dS^n}|\psi (\gamma^{-1}.x)|^2 dx<\infty\Big\}.\]
Then $\rH^2(\dS^n)$ is a Hilbert space with norm 
\[\left(\sup_{\gamma \in S}\int_{\dS^n}|\psi (\gamma^{-1}.x)|^2 dx\right)^{1/2}\]
(\cite[Cor.~5.3]{GKO03}). As a unitary 
representation of $G^c$ the Hardy space $\rH^2(\dS^n)$ is isomorphic
to 
\[L^2(\wH )\simeq \int_{m>\rho}^\oplus  (L,\cO_m(\Xi))\, d\mu_H(m) 
\simeq \int_{m>\rho}(\pi_m,\cH_m)\, d\mu_H (m)\]
where the measure $\mu_H$ is equivalent to Lebesgue measure on 
$(\rho,\infty)$.

The evaluation maps $\ev_z(\phi) := \phi (z)$, 
$z \in \Xi$, are continuous on $\rH^2(\dS^n)$ and hence given by a positive definite $\SO_{2,n}(\R)$-invariant
kernel on $\Xi \times \Xi$. This kernel is, 
up to multiplication with  a positive function, 
the kernel $Q_{n/2}$ (\cite[Thm. C]{GKO03}).

\begin{thm} Let $\nu > \frac{n-2}{2}$ and let  $\mu_\nu$ be the measure from
{\rm Theorem \ref{thm:ExMuNu}}. Then
\[Q_{\nu}(z,y)=\int_0^\infty \Phi_m^c(z,y)\, d\mu_\nu (m)\quad 
\text{and}\quad Q_{\nu} (y,z)=\int_0^\infty \Phi_m^c(y,z)\, d\mu_\nu (m)
\quad \mbox{ for } \quad (z,y) \in \Xi \times \dS^n.\]
\end{thm}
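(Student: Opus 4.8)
The plan is to reduce both kernels to functions of the single $G^c$-invariant
\[ \zeta := \lf{z}{\sx w}, \]
and then to recognize the boundary identities as the interior integral representation evaluated at boundary values of $\zeta$ that already lie in its region of validity. By Theorem~\ref{th:Psi} and the definition of $\Phi_m^c$ one has $\Phi_m^c(z,w)=A_m(\zeta)$ and $Q_\nu(z,w)=B_\nu(\zeta)$, where
\[ A_m(\zeta)=\hgf\Big(\rho+\lambda,\rho-\lambda;\tfrac n2;\tfrac{1-\zeta}{2}\Big), \qquad B_\nu(\zeta)=\Big(\tfrac{1+\zeta}{2}\Big)^{-\nu}, \]
both holomorphic on $\C\setminus(-\infty,-1]$. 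Corollary~\ref{cor:IntRep} together with Theorem~\ref{thm:ExMuNu} supplies the interior identity $Q_\nu(z,w)=\int_0^\infty\Phi_m^c(z,w)\,d\mu_\nu(m)$ for $z,w\in\Xi$. Since $\sx\Xi=\Xi$ (Proposition~\ref{prop:XiTube}) and $\{\lf{z}{w}:z,w\in\Xi\}=\C\setminus(-\infty,-1]$ (Lemma~\ref{lem:xi-values}), this rewrites as the scalar identity
\[ B_\nu(\zeta)=\int_0^\infty A_m(\zeta)\,d\mu_\nu(m)\qquad\text{for every }\zeta\in\C\setminus(-\infty,-1], \]
the integral converging because each such $\zeta$ is attained by some interior pair.

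The next step is to identify the boundary data and match it to $\zeta$-values already covered by this identity. For $(z,y)\in\Xi\times\dS^n$ one has $\sx y=-y$ (as $\dS^n\subset iV$), so the relevant invariant is $\zeta_0:=\lf{z}{\sx y}=-\lf{z}{y}$; Lemma~\ref{le:ImP2} gives $\lf{z}{y}\in\C\setminus((-\infty,-1]\cup[1,\infty))$, whence $\zeta_0\in\C\setminus(-\infty,-1]$. The boundary-value theorem proved just above identifies the continuous extensions as $\beta(\Phi_m^c)(z,y)=A_m(\zeta_0)$ and $\beta(Q_\nu)(z,y)=B_\nu(\zeta_0)$, so evaluating the scalar identity at $\zeta_0$ yields the first asserted equality. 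The second equality follows identically with $\zeta_1:=\lf{y}{\sx z}$: since $\sx z\in\Xi$ and $\lf{\cdot}{\cdot}$ is symmetric, Lemma~\ref{le:ImP2} again places $\zeta_1$ in $\C\setminus(-\infty,-1]$, while parts~(i) and (iii) of the boundary-value theorem give $\beta(\Phi_m^c)(y,z)=A_m(\zeta_1)$ and $\beta(Q_\nu)(y,z)=B_\nu(\zeta_1)$.

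The step I expect to be the main obstacle, were one to proceed directly, is the passage to the boundary inside the integral over $m$: taking $w\to y$ in the interior identity and interchanging the limit with $\int_0^\infty\cdots\,d\mu_\nu(m)$ would demand a $\mu_\nu$-integrable bound on $|\Phi_m^c(z,w)|$ that is uniform as $w$ tends to $\dS^n$, i.e.\ control of $\hgf(\rho+\lambda_m,\rho-\lambda_m;n/2;s)$ as $|\lambda_m|\to\infty$ for arguments $s$ approaching the segment $(0,1)$. This is delicate precisely because the exponential rate governing the growth in $m$ degenerates as $w$ reaches the boundary. The reduction above circumvents this entirely: the boundary values $\zeta_0,\zeta_1$ already lie in the region $\C\setminus(-\infty,-1]$ where the scalar identity holds, so neither a limiting procedure nor a dominated-convergence argument is required. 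The only genuinely needed verifications are the elementary ones that $\zeta_0,\zeta_1$ avoid $(-\infty,-1]$ (Lemma~\ref{le:ImP2}) and that the continuous boundary extensions coincide with $A_m$ and $B_\nu$ (the boundary-value theorem), both already available.
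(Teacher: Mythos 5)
Your proof is correct, but it takes a genuinely different route from the paper's. The paper argues at the level of point pairs: it uses the compression semigroup $S\subeq \SO_{n+2}(\C)$ from \eqref{eq:defS} and the operation $\gamma^\sharp=\oline{\gamma}^{-1}$, extends the invariance identities $Q_\nu(\gamma.z,w)=Q_\nu(z,\gamma^\sharp.w)$ and $\Phi_m^c(\gamma.z,w)=\Phi_m^c(z,\gamma^\sharp.w)$ to $\gamma\in S^{-1}$ by analytic continuation, and then chooses $\gamma$ with $\gamma^{-1}.z\in\Xi$ and $\gamma^\sharp.y\in\Xi$, so that the boundary pair $(z,y)$ is replaced by an interior pair with the same kernel values, to which Corollary~\ref{cor:IntRep}(iv) applies. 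You instead argue at the level of the invariant $\zeta=\lf{z}{\sx w}$: since $Q_\nu$ and $\Phi_m^c$ factor through $\zeta$ (Theorem~\ref{th:Psi} and the definition of $Q_\nu$), the interior identity becomes a scalar identity on $\C_\Xi=\C\setminus(-\infty,-1]$ (Lemma~\ref{lem:xi-values} plus $\sx\Xi=\Xi$), and Lemma~\ref{le:ImP2} together with the boundary-value theorem shows that pairs in $\Xi\times\dS^n$ and $\dS^n\times\Xi$ yield invariants $\zeta_0=-\lf{z}{y}$ and $\zeta_1=\lf{y}{\sx z}$ lying in that same set, so the identity holds there with no limiting procedure. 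Both proofs are ``move to the interior'' arguments, but yours moves the invariant while the paper moves the points; your version is more economical and self-contained, needing only results already established (and correctly dispensing with any dominated-convergence estimate in $m$, which neither proof requires), whereas the paper's semigroup argument does not need the kernels to factor through a single scalar invariant, so it extends to invariant kernels where no classification like Theorem~\ref{thm:KernSphere} is at hand, and it meshes with the Hardy-space/semigroup framework used throughout Subsection~\ref{se:boundaryVal}.
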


\begin{proof} For $g\in \SO_{n+2}(\C)$, let $g^\sharp=\overline{g}^{-1}$, 
where $\oline g$ denotes complex conjugation with respect to 
$\SO_{2,n}(\R)$. Then the semigroup $S$ from \eqref{eq:defS} 
is $\sharp$-invariant. Furthermore, analytic continuation implies that 
\[ Q_\nu (\gamma .z,w)=Q_\nu (z,\gamma^\sharp.w), \quad 
\Phi_m^c (\gamma.z,w)=\Phi_m^c(z,\gamma^\sharp.w)\quad \mbox{  for } \quad 
\gamma\in S^{-1}, (z,w)\in\Xi\times \oline{\Xi}.\]
For $z\in\Xi$ and $y\in \dS^n$, 
there exists an element $\gamma \in S^{-1}$ such that $\gamma^{-1} . z\in \Xi$ and
$\gamma^*.y\in \Xi$. Hence
\begin{align*}
Q_\nu (z,y) &= Q_\nu (\gamma \gamma^{-1}.z,y)
= Q_\nu (\gamma^{-1}.z,\gamma^\sharp.y)
=\int_{\R_+} \Phi_m^c(\gamma^{-1}.z,\gamma^\sharp.y)\, d\mu_\nu (m)\\
&=\int_{\R_+} \Phi_m^c (z,y)\, d\mu_\nu (m).
\qedhere
\end{align*}
\end{proof}

\subsection{Further examples}
\mlabel{subsec:6.3} 

As the classification shows, there are interesting examples of dissecting involution on non-Riemannian symmetric spaces. 
We discuss here very briefly some examples. 
For a classification we refer to \cite{NO19}. 

\begin{ex} (Euclidean space) (cf.~\cite{NO15a})   
Let $E := \R^n$ denote euclidean $n$-space and consider the 
euclidean motion group 
\[ G :=E \rtimes \OO_n(\R) \quad \mbox{ and } \quad 
K = \OO_n(\R). \] 
Then $E\cong G/K$ is a flat Riemannian symmetric space corresponding to the involution $(x,g)\mapsto (-x,g)$.  

The reflection $\sigma(x_0, \bx) = (-x_0,\bx)$ 
defines a dissecting reflection  such that $\sigma(g.x)=\tau(g).\sigma(x)$ 
holds for $\tau(x,g):=(\sigma(x), \sigma g \sigma)$. 
We then have 
\[ E_+:=\{(x_0,\bx)\:  x_0>0\}\quad \mbox{ and }
\quad E^\sigma =\{(0,\bx)\: \bx\in\R^{n-1}\}\simeq \R^{n-1}.\] 
For $m> 0$, the distribution $(m^2 - \Delta)^{-1} \delta_0$ 
is represented by  a 
rotation invariant analytic function $\phi_m$ on $\R^n \setminus \{0\}$ 
with a singularity in $0$ (for $n > 1$) (\cite{NO15a}). 
The corresponding distribution kernel 
$\Phi_m(x,y) = \phi_m(x-y)$ is singular on the diagonal, and the flipped kernel 
$\Psi_m(x,y) = \phi_m(x - \sigma(y))$ is analytic on 
$E_+ \times E_+$. 

The subspace $E^c := \R i e_0 \oplus \R^{n-1}$ carries the Lorentzian form 
$[(i x_0, \bx), (i y_0, \by)] = x_0 y_0 - \bx \by$ with the open 
forward light cone 
\[ E^c_+ := \{ (x_0,i\bx) \: x_0 > 0, x_0^2 - \bx^2 > 0\}.\] 
The $c$-dual group 
$G^c := E^c \rtimes \SO_{1,n-1}(\R)_0$ is the identity component of the 
corresponding isometry group (the Poincar\'e group). 
The corresponding tube domain is 
\[ \Xi := G^c.E_+ = G^c.(0,\infty)i e_0 = E^c + i E^c_+ =: T_{E^c_+}. \] 

One can show that also in this context, 
the kernel $\Psi_m$ extends to a sesquiholomorphic $G^c$-invariant kernel 
on $\Xi$. The boundary values on $E^c$ of the function 
$\psi_m(z) := \Psi_m(z,0)$ is a  distribution 
$D_m$, satisfying the {\it Klein--Gordon equation} 
\[ (m^2 - \square) D_m= 0.\] 
It is the Fourier transform 
of the $\SO_{1,n-1}(\R)_0$-invariant measure on the hyperboloid 
\[ \cO_m = \big\{ (x_0, \ldots, x_{n-1}) = (x_0, \bx) \: x_0^2 - \bx^2 = m^2, 
x_0 > 0\}.\] 
The corresponding $L^2$-space 
carries an irreducible unitary representation of~$G^c$, cf.\ 
Remark~6.11 in \cite{NO15a}. 
\end{ex}

\begin{ex} Besides $\R^n$, the preceding discussion also applies to 
quotients of $\R^n$ by discrete $\sigma$-invariant subgroups $\Gamma$. 
A particularly interesting case is the torus 
$\T^n$ with $\sigma(z_1, \ldots, z_n) = (\oline{z_1}, z_2, \ldots, z_n)$ 
(cf.\ \cite[\S VIII]{Ja08}). 
\end{ex}

\begin{ex} (Hyperbolic space) 
In the $n+1$-dimensional Minkowski space $V := \R^{1,n}$, we consider 
the hyperbolic space 
\[ \bH^n = \{ (x_0, \bx) \: x_0^2 - \bx^2 = 1, x_0 > 0\} \] 
on which the group $G := \OO_{1,n}(\R)^\uparrow$ acts, 
and the dissecting involutive automorphisms $\sigma$ of $\bH^n$,  
defined by the reflection $r_1 \in G$. Then 
$\bH^n_0 := (\bH^n)^\sigma \cong \bH^{n-1}$, and we put 
\[ \bH^n_\pm := \{ x \in \bH^n \: \pm x_1 > 0\}.\] 

We also write 
\[ [z,w] := z_0 w_0 - \bz \bw \] 
for the complex bilinear extension to $V_\C = \C^{n+1}$, so that 
\[  \bH^n_\C := \{z \in V_\C \: [z,z] = 1\} \cong \bS^n_\C \] 
is the complex sphere. 
On the dual space 
\[ V^c := \R e_0 \oplus \R i e_1 \oplus \R^{n-1} \subeq V_\C 
= \C^{n+1}, \] 
we have 
\[ [(x_0, i x_1, x_2, \ldots,x_n), (y_0, i y_1, y_2, \ldots,y_n)] 
= x_0 y_0 + x_1 y_1 - x_2 y_2- \cdots - x_n y_n,\] 
and this form is invariant under the action of the connected group 
\[ G^c := \SO_{2,n-1}(\R)_0.\] 
The stabilizer of $e_1$ in $G^c$ is the subgroup
$H := G^c_{e_1} \cong \SO_{1,n-1}(\R)_0$, in particular it is connected. 
Since it acts transitively on $\bH^n_0$, we have 
\[ \bH^n_\pm = H.\Exp_{e_0}(\pm (0,\infty)e_1).\] 
Accordingly, we obtain two $G^c$-invariant subsets 
\[ \Xi_\pm := G^c.\bH^n_\pm = G^c.\Exp_{e_0}(\pm (0,\infty)e_1) 
\subeq \bH^n_\C.\] 

To the two non-convex open cones 
$\Omega_\pm := \{ v \in V^c \: \pm [v,v] > 0\}$ 
we associate non-convex tube domains 
\[ T_{\Omega_\pm} := V^c \oplus i \Omega_\pm.\] 

We claim that 
\[ T_{\Omega_+} \cap \bH^n_\C = \Xi_+ \dot\cup \Xi_- = G^c.\Exp_{e_0}(\R^\times e_1)\]  
(cf.\ \cite[Lemma~3.2]{BEM02}).
In fact, for $t \not=0$, we have 
\[ \cosh(t) e_0 + \sinh(t) e_1 
= \cosh(t) e_0 + i \sinh(t) (-i e_1) \in T_{\Omega_+} \] 
because $[-i e_1, -i e_1] = - [e_1, e_1]  =1$, 
so that 
$G^c.\Exp_{e_0}(\R^\times e_1) \subeq T_{\Omega_+}$ follows from the 
$G^c$-invariance of $T_{\Omega_+}$. For the converse, 
let $z = u + i v \in T_{\Omega_+} \cap \bH^n_\C$. 
Then 
\[ 1 = [z,z] = [u,u] - [v,v] + 2 i [u,v] \] 
is equivalent to 
\[ [u,v] = 0 \quad \mbox{ and } \quad 
[u,u] = [v,v] + 1 > 1.\] 
Therfore $(u,v)$ is the basis of a positive $2$-plane in $V^c$. 
Hence the $G^c$-orbit of this pair contains an element of the form 
$(\lambda e_0, \mu i e_1)$ with  $\lambda  = \sqrt{1 + \mu^2}$ 
and $\mu \not=0$. Depending on the orientation of the basis $(u,v)$, 
we have $\mu > 0$ or $\mu < 0$. 
For $\mp\mu > 0$ we get $u + i v \in \Xi_\pm$, and this proves 
our claim. 

We thus obtain a situation very analogous to what we have 
seen in Subsection~\ref{se:SphCr} of the sphere. 
In the physics literature, the domains $\Xi_\pm$ are called 
{\it chiral tubes}. They carry an action of $G^c$ by  holomorphic maps, 
and both contain the {\it anti de Sitter space} 
\[ \AdS^n :=  \{ x \in V^c \: [x,x] = 1 \} =  G^c.e_0  
\cong \SO_{2,n-1}(\R)_0/\SO_{1,n-1}(\R)_0\] 
in their boundary. 

However, Theorem~\ref{the:ctmSp} does not apply here because 
our base point $m_0  = e_0 \in \bH^n$ is fixed by  the involution 
$\sigma$. Hence there is no Riemannian symmetric space 
in $\Xi_\pm$, such as the hyperbolic space for the sphere. 
Here one translates directly between $\bH^n$ and its Lorentzian dual 
$\AdS^n$ by first extending a positive definite analytic kernel 
$\Psi_m$ on $\bH^n_+ \times \bH^n_+$ to a sesquiholomorphic 
kernel on $\Xi_+$ and then taking boundary values on $\AdS^n$. 
Although from a slightly different perspective, this program 
is carried out to some extent in \cite{BEM02}. 
\end{ex} 

\subsection{Extension to anti-unitary representations}
\mlabel{subsec:6.4} 

A {\it graded group} $(\cG,\eps)$ is a pair of a group $\cG$ and a 
homomorphism $\eps \: \cG \to \{\pm 1\}$. A~typical example is the full Lorentz group $\OO_{1,n}(\R)$ 
with $\eps$ defined by $g V_+ = \eps(g) V_+$, for which 
$\ker \eps = \OO_{1,n}(\R)^\uparrow$. Another important example 
is the group $\AU(\cH)$ of all unitary and antiunitary operator 
on a complex Hilbert space $\cH$ with $\ker \eps = \U(\cH)$. 
We call a homomorphism 
$U \: \cG \to \AU(\cH)$ an {\it antiunitary representation} 
if $\eps_{\AU(\cH)}(U(g)) = \eps_\cG(g)$ for $g \in \cG$ 
(see \cite{NO17} for more background on these concepts).

In this section we discuss the extension of the 
representation $(\pi_m, \cH_m)_{m \geq 0}$ of 
$G^c$ to antiunitary representations of $\OO_{1,n}(\R)$.  
Let $\Psi\in \Gamma$ and denote the corresponding reproducing kernel Hilbert space 
by  $\cH_\Psi\subeq \cO(\Xi)$. 

\begin{lemma}\label{le:Conju} Let $\Psi \in \Gamma$ 
and $\sigma  \: \Xi \to \Xi$ be an  antiholomorphic 
involution extending an isometry of $\bH^n_V$ fixing~$e_0$.
 We further assume the existence 
of an involution $\sigma^G$ on $G^c$ with 
\begin{equation}\label{eq:covar}
 \sigma(g.z) = \sigma^G(g).\sigma(z) \quad \mbox{ for } \quad 
g \in G^c, z \in \Xi.
\end{equation}
Then 
\[ \Psi (z,w)=\Psi (\sigma(w),\sigma(z)) \quad \mbox{ for } \quad 
z,w \in \Xi.\] 
\end{lemma}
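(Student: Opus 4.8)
The plan is to introduce the companion kernel $\tilde\Psi(z,w):=\Psi(\sigma(w),\sigma(z))$, to show that $\tilde\Psi\in\Sesh(\Xi)$, and then to conclude $\tilde\Psi=\Psi$ by the uniqueness principle of Corollary~\ref{co:UniDet}; the resulting identity is precisely the assertion. First I would verify that $\tilde\Psi$ is sesquiholomorphic: since $\sigma$ is antiholomorphic, the map $z\mapsto\sigma(z)$ is antiholomorphic, and composing it with the antiholomorphic second slot of $\Psi$ produces a holomorphic dependence on $z$, while composing $\sigma$ with the holomorphic first slot of $\Psi$ produces an antiholomorphic dependence on $w$. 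Next, the covariance \eqref{eq:covar} together with the $G^c$-invariance of $\Psi$ gives
\[
\tilde\Psi(g.z,g.w)=\Psi(\sigma^G(g).\sigma(w),\sigma^G(g).\sigma(z))=\Psi(\sigma(w),\sigma(z))=\tilde\Psi(z,w),
\]
using that $\sigma^G(g)\in G^c$. Hence $\tilde\Psi\in\Sesh(\Xi)$.

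By Corollary~\ref{co:UniDet} it then suffices to prove that $\Psi$ and $\tilde\Psi$ have the same restriction $\Psi_{e_0}|_{\wH}$, i.e.\ that $\tilde\Psi(z,e_0)=\Psi(z,e_0)$ for $z\in\wH$. Using $\sigma(e_0)=e_0$ and that $\Psi$ is hermitian I compute
\[
\tilde\Psi(z,e_0)=\Psi(e_0,\sigma(z))=\overline{\Psi(\sigma(z),e_0)}=\overline{\Psi_{e_0}(\sigma(z))},
\]
so the reduction amounts to identifying $\overline{\Psi_{e_0}(\sigma(z))}$ with $\Psi_{e_0}(z)$ for $z\in\wH$. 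For this I would invoke two facts. First, $\sigma$ restricts on $\wH$ to an isometry of $\wH\cong\bH^n$ fixing $e_0$; since the full isometry group of hyperbolic space is $G^c$ with point stabilizer $K$, this forces $\sigma|_{\wH}\in K$, and as $\Psi_{e_0}|_{\wH}$ is $K$-invariant (Corollary~\ref{co:UniDet}) we get $\Psi_{e_0}(\sigma(z))=\Psi_{e_0}(z)$. Second, $\Psi_{e_0}$ is real-valued on $\wH$: by Theorem~\ref{thm:KernSphere} one has $\Psi_{e_0}(z)=\alpha_\Psi([z,e_0]_V)$ with $[z,e_0]_V=z_0\in[1,\infty)$ for $z\in\wH$, while the hermitian property makes $\alpha_\Psi$ real on $(-1,1]$, hence, by the Schwarz reflection principle applied to the holomorphic function $\alpha_\Psi$ on $\C_\Xi=\C\setminus(-\infty,-1]$, real on all of $(-1,\infty)\supseteq[1,\infty)$. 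Combining these, $\overline{\Psi_{e_0}(\sigma(z))}=\overline{\Psi_{e_0}(z)}=\Psi_{e_0}(z)=\Psi(z,e_0)$, which completes the reduction.

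The steps requiring the most care are the two items in the last paragraph: pinning down that $\sigma|_{\wH}$ lands in $K$, so that $K$-invariance of $\Psi_{e_0}|_{\wH}$ applies, and the reality of $\Psi_{e_0}$ on $\wH$. Neither is deep given the earlier results, but both rely on precise geometric and analytic input — the identification of $\wH$ as the Riemannian symmetric space $G^c/K$ with full isometry group $G^c$ and point stabilizer $K$, and the analytic-continuation description of $\Psi$ through the single-variable function $\alpha_\Psi$ furnished by Theorem~\ref{thm:KernSphere}. Once these are in place, the whole argument reduces, via Corollary~\ref{co:UniDet}, to matching the two kernels on the single $K$-invariant function $\Psi_{e_0}|_{\wH}$, which is exactly what the computation above achieves.
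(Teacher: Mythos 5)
Your proof is correct and follows essentially the same route as the paper: form the companion kernel $\tilde\Psi(z,w)=\Psi(\sigma(w),\sigma(z))$, verify it lies in $\Sesh(\Xi)$, and invoke Corollary~\ref{co:UniDet} to reduce the claim to $\tilde\Psi_{e_0}|_{\wH}=\Psi_{e_0}|_{\wH}$, which you settle via $\sigma|_{\wH}\in K$, the $K$-invariance of $\Psi_{e_0}$, and the reality of $\Psi_{e_0}$ on $\wH$. If anything you are more careful than the paper at the last step: the paper attributes the reality of $\Psi_{e_0}|_{\wH}$ to $K$-invariance alone, whereas your reflection argument for $\alpha_\Psi$ (real on $(-1,1]$ by the hermitian property, hence real on all of $(-1,\infty)$ since $\alpha_\Psi$ is holomorphic on the conjugation-symmetric domain $\C\setminus(-\infty,-1]$) supplies the justification that is genuinely needed, since $[z,e_0]_V\in[1,\infty)$ for $z\in\wH$ lies outside the interval covered by Theorem~\ref{thm:KernSphere}.
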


\begin{proof} We consider the sesquiholomorphic kernel 
\[ \tilde\Psi (z,w):=\Psi (\sigma(w),\sigma(z)) \quad \mbox{ for } \quad 
z,w \in \Xi.\] 
The relation \eqref{eq:covar} implies that 
the kernel $\tilde\Psi$ is $G^c$-invariant. Furthermore $\tilde\Psi$
is holomorphic in the first and antiholomorphic in the second argument. 
Next we observe that 
\[  \tilde\Psi^*(z,w) 
= \oline{\Psi (\sigma(z),\sigma(w))}
= \Psi (\sigma(w),\sigma(z)) = \tilde\Psi(z,w),\] 
so that $\tilde\Psi$ is hermitian, and thus $\tilde\Psi \in \Gamma$. 

To show that $\tilde\Psi = \Psi$, by Corollary~\ref{co:UniDet}, it suffices 
to show that $\tilde\Psi_{e_0}|_{\wH}= \Psi_{e_0}|_{\wH}$, i.e., that 
$\psi := \Psi_{e_0}$ is real-valued on $\bH^n_V$. 
As $\psi(\sigma(x)) = \oline{\psi(x)}$ and $\sigma \in K \cong \OO_n(\R)$, 
this follows from the $K$-invariance of $\psi$. 
This completes the proof.
\end{proof}

It follows that we can define a conjugation on $\cH_\Psi$ by 
\[J\Big(\sum_j c_j \Psi_{w_j}\Big):=\sum_j \oline{c_j} 
\oline{\Psi_{\sigma(w_j)}}, \quad 
c_j \in \C, w_j \in \Xi,\]
resp., 
\[ (Jf)(z) := \oline{f(\sigma(z))}, \qquad z \in \Xi.\]

\begin{lemma}\label{le:UniqJ} The following assertions hold:
\begin{itemize}
\item[\rm(i)] $\cH_\Psi^J=\{\phi\in\cH_\Psi \: J\phi =\phi\}$ is the 
closure of the real linear span of $\Psi_y$, $y\in \bH^n_V$.
\item[\rm(ii)] $J\pi_\Psi (g)=\pi_\Psi (\sigma^G(g))J$ for $g\in G^c$.
\item[\rm(iii)] If $\Psi =\Phi^c_m\in \Gamma_e$ and 
$J_1 :\cH_m\to \cH_m$ is a conjugation satisfying {\rm(ii)}, 
then $J_1=\mu  J$ for some $\mu \in\T$.
\end{itemize} 

\end{lemma}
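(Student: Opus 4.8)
The plan is to take the explicit formula $(Jf)(z)=\overline{f(\sigma(z))}$ as the working definition and to isolate first the single identity on which everything rests. For $w\in\Xi$ I would compute
\[
(J\Psi_w)(z)=\overline{\Psi(\sigma(z),w)}=\Psi(w,\sigma(z))=\Psi(z,\sigma(w))=\Psi_{\sigma(w)}(z),
\]
where the middle steps use that $\Psi$ is hermitian and the symmetry $\Psi(z,w)=\Psi(\sigma(w),\sigma(z))$ of Lemma~\ref{le:Conju}. Hence $J\Psi_w=\Psi_{\sigma(w)}$, so $J$ maps the dense subspace $\mathrm{span}_{\C}\{\Psi_w:w\in\Xi\}$ into $\cH_\Psi$. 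Using the reproducing relation $\langle\Psi_a,\Psi_b\rangle=\Psi(b,a)$ together with Lemma~\ref{le:Conju} once more gives $\langle J\Psi_w,J\Psi_{w'}\rangle=\Psi(w,w')=\langle\Psi_{w'},\Psi_w\rangle$, so $J$ is antiunitary on this subspace and extends to a conjugation of $\cH_\Psi$ (antiunitary with $J^2=\mathrm{id}$, the latter because $\sigma^2=\mathrm{id}$). Establishing that $J$ is well defined and isometric is the one genuinely load-bearing step, and it is precisely here that Lemma~\ref{le:Conju} is indispensable.

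Part (ii) is then a direct manipulation. For $g\in G^c$ I would write
\[
(J\pi_\Psi(g)f)(z)=\overline{f(g^{-1}.\sigma(z))},\qquad
(\pi_\Psi(\sigma^G(g))Jf)(z)=\overline{f(\sigma(\sigma^G(g)^{-1}.z))},
\]
and the covariance \eqref{eq:covar}, combined with the fact that $\sigma^G$ is an involutive automorphism, yields $\sigma(\sigma^G(g)^{-1}.z)=\sigma^G(\sigma^G(g)^{-1}).\sigma(z)=g^{-1}.\sigma(z)$; the two sides then coincide.

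For part (i) I would first argue that $\{\Psi_y:y\in\bH^n_V\}$ is total in $\cH_\Psi$: any $f$ orthogonal to all of them satisfies $f(y)=\langle f,\Psi_y\rangle=0$ on $\bH^n_V$, and since $f\in\cO(\Xi)$, $\Xi$ is connected and $\bH^n_V$ is totally real, $f=0$ by (the argument of) Lemma~\ref{le:PsiE0}. Here one uses that $\sigma$ acts as the identity on $\bH^n_V$, so that $J\Psi_y=\Psi_y$ for $y\in\bH^n_V$; thus the closed real span $W:=\overline{\mathrm{span}_{\R}\{\Psi_y:y\in\bH^n_V\}}$ is contained in $\cH_\Psi^J$. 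For the reverse inclusion I would exploit that a conjugation gives the real-orthogonal splitting $\cH_\Psi=\cH_\Psi^J\oplus i\,\cH_\Psi^J$ with continuous projection $v\mapsto\tfrac12(v+Jv)$: approximating $v\in\cH_\Psi^J$ by complex combinations $f_n=a_n+ib_n$ with $a_n,b_n\in\mathrm{span}_{\R}\{\Psi_y\}$, and noting $Jf_n=a_n-ib_n$, one gets $\tfrac12(f_n+Jf_n)=a_n\to v$, so $v\in W$ and hence $\cH_\Psi^J=W$.

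Part (iii) is a Schur-type argument. If $J_1$ is another conjugation satisfying (ii), then $J_1J$ is \emph{unitary} (a product of two antiunitaries), and since both $J$ and $J_1$ intertwine $\pi_\Psi$ with $\pi_\Psi\circ\sigma^G$ while $\sigma^G$ is an involution, one checks
\[
J_1J\,\pi_\Psi(g)=J_1\,\pi_\Psi(\sigma^G(g))\,J=\pi_\Psi(g)\,J_1J\qquad(g\in G^c).
\]
Because $\Psi=\Phi_m^c\in\Gamma_e$ makes $(\pi_\Psi,\cH_\Psi)\cong(\pi_m,\cH_m)$ irreducible (Theorem~\ref{thm:irrSpRep}, Corollary~\ref{cor:IntRep}), Schur's lemma forces $J_1J=\mu\,\mathrm{id}$ with $\mu\in\T$, whence $J_1=\mu J^{-1}=\mu J$ since $J^{-1}=J$. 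The main obstacle is not any single step but the very first one—verifying that $J$ is genuinely isometric, which hinges entirely on Lemma~\ref{le:Conju}—together with the interplay in (i) between the totally-real density of $\bH^n_V$ and the real-form structure of the conjugation.
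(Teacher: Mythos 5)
Your proof is correct and follows essentially the same route as the paper: the paper disposes of (i) and (ii) with the terse remark that they ``follow directly from the definition and Lemma~\ref{le:Conju}'', which your computations $J\Psi_w=\Psi_{\sigma(w)}$, the covariance manipulation for (ii), and the totality-plus-projection argument for (i) simply flesh out, while your Schur-lemma argument for (iii) (with $J_1J$ unitary, commuting with $\pi_\Psi$ by involutivity of $\sigma^G$, hence a unimodular scalar) is identical to the paper's. The one point you make explicit that the paper leaves implicit is that (i) as stated requires $\sigma$ to fix $\bH^n_V$ pointwise (as $\sigma_V$ does), since Lemma~\ref{le:Conju} only assumes $\sigma|_{\bH^n_V}\in K$, and otherwise $J\Psi_y=\Psi_{\sigma(y)}\neq \Psi_y$; this is a correct and necessary reading of the hypotheses.
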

\begin{proof} (i) and (ii) follow directly from the definition and Lemma \ref{le:Conju}. 

For (iii) we recall first that the representation of $G^c$ on $\cH_m$ is irreducible.
If $J_1$ satisfies~(ii), 
then $J_1J:\cH_m\to \cH_m$ is a unitary intertwining operator. By 
Schur's Lemma there exists $\lambda\in \C$ such that $J_1J=\lambda \id$ and now 
$|\lambda| = 1$ by unitarity. 
\end{proof}

\begin{ex} The assumptions of Lemma~\ref{le:Conju} are in particular 
satisfied with $\sigma^G = \id_G$ and $\sigma = \sigma_V\res_{\Xi}$. 
By Lemma \ref{le:UniqJ} and the fact that $-\1$ commutes with $G^c$,
we can extend $\pi_\Psi$ to an antiunitary representation of 
$\OO_{1,n}(\R) \cong G^c \rtimes \{\1,\sigma^G\}$ by
\[\pi_\Psi (-\1):= J, \qquad (Jf)(z) := \oline{f(\sigma_V(z)}.\]
For every $X\in \fg^c$ we then have $J\dd \pi_\Psi (X)=\dd \pi_\Psi (X)J$. In particular
\[J\pi_\Psi (\exp tX)=\pi_\Psi(\exp tX)J \quad \mbox{ for } \quad t \in \R.\]
\end{ex}

We now assume that $\Psi = \Psi_m$. Let $X\in \fg^c$ be so that $\theta(X)=-X$ 
(so $X$ is hyperbolic (a boost)) and $\ad X$ has eigenvalues $\pm \lambda $ 
and $0$ with $0<\lambda < 1/4$. 
Consider the homomorphism 
\[ \gamma_X \: \R^\times \to 
\OO_{1,n}(\R) \cong G^c \rtimes \{\1,\sigma^G\}, \qquad 
\gamma_X(e^t) := \exp(t X), \quad \gamma_X(-1) :=-\1 \]  
of graded groups. Then $\pi_\Psi \circ \gamma_X \: \R^\times \to \AU(\cH)$ 
is a strongly continuous antiunitary one-parameter group taking on $-1$ the 
value~$J$. The corresponding modular operator is given by 
\[ \Delta_X := e^{2\pi i\partial\pi_\Psi(X)},\] 
so that the corresponding standard subspace is 
\[ V_X :=  \{ v \in \cD(\Delta_X^{1/2}) = \cD(e^{i \pi\partial\pi_\Psi(X)}) \: 
Jv = e^{i \pi\partial\pi_\Psi(X)}\}.\] 
If $\|\ad X\| < \frac{1}{4}$, then the elements 
$\Psi_z$, $z \in \bH^n_V$, are contained in 
the domain of $\Delta_X$ by \cite{KS05} with 
\[ \Delta_X^{1/4} \Psi_z = \Psi_{\exp(-i\pi X/2).z} 
= J \Psi_{\exp(i\pi X/2).z} 
= \Delta_X^{1/2} \Psi_{\exp(i\pi X/2).z},\] 
so that the correspond standard subspace $V_X$ 
is generated by the elements $\Psi_{\exp(-i\pi X/2).z}$, $z \in \bH^n_V$.
In particular, $V_0 = \cH_\Psi^J$ is generated by $\Psi_z$, $z \in \bH^n_V$. 
For the most general statement about analytic extension of
coefficient functions we refer to \cite{LP18,KS05} and the references therein.


\begin{thebibliography}{aaaaaaaa}

\bibitem[AG90]{AG90}  Akhiezer, D.~N., and S.~G. Gindikin, 
{\it On Stein extensions of real symmetric spaces},
Math. Ann. {\bf 286} (1990), 1--12 

\bibitem[AKLM06]{AKLM06} 
Alekseevsky, D., A. Kriegl, M.  Losik,  and P. W. Michor, 
{\it Reflection groups on Riemannian manifolds}, 
Annali di Matematica Pura ed Applicata {\bf 186} (2006),  25--58 

\bibitem[An13]{An13} Anderson, C.~C.~A., ``Defining Physics at Imaginary Time: Reflection 
Positivity for Certain Riemannian Manifolds,'' 
Thesis, Harvard Univ., 2013

\bibitem[AFG86]{AFG86} de Angelis, G., D. de Falco, and 
G. Di Genova, {\it Random fields on Riemannian manifolds: a constructive 
approach}, Comm. Math. Phys. {\bf 103} (1986), 297--303

\bibitem[BJM16]{BJM16} 
Barata, J. ~C. A., C.~D. J\"akel,  and  J. Mund, 
{\it Interacting quantum fields on de Sitter space}, 
 arXiv:1607.02265 
 
\bibitem[BEM02]{BEM02}  Bros, J., H. Epstein, and U. Moschella, 
{\it Towards 
a general theory of quantized fields on the anti-de Sitter space-time}, 
Comm. Math. Phys. {\bf 231:3} (2002), 481--528

\bibitem[BEM02b]{BEM02b} ---,  {\it Asymptotic symmetry of de Sitter spacetime}, 
Phys. Rev. D (3) {\bf 65:8} (2002), 084012, 8 pp

\bibitem[BM96]{BM96} Bros, J., and U.~Moschella, {\it 
Two-point functions and quantum fields in de Sitter universe}, Rev. 
Math. Phys. {\bf 8:3} (1996), 327--391

\bibitem[BM04]{BM04} ---, {\it Fourier analysis and holomorphic 
decomposition on the one-sheeted hyperboloid} in ``G\'om\'etrie complexe. II. 
Aspects contemporains dans les math\'ematiques et la physique,'' 27--58, 
Hermann \'Ed. Sci. Arts, Paris, 2004

\bibitem[BV96]{BV96} Bros, J., and G.A.~Viano, {\it 
Connection between the  harmonic analysis on the sphere and the 
harmonic analysis on the one-sheeted hyperboloid: 
an analytic continuation viewpoint--I, II}, 
Forum Math. {\bf 8} (1996), 621--658, 659--722  

\bibitem[BV97]{BV97} ---,  
{\it 
Connection between the  harmonic analysis on the sphere and the 
harmonic analysis on the one-sheeted hyperboloid: 
an analytic continuation viewpoint--III}, 
Forum Math. {\bf 9} (1997), 165--191 

\bibitem[BD92]{BD92} Brylinski, J.-L. and P. Delorme, {\it Vectors distributions $H$-invariants
pour les s\'eries principales g\'en\'eralis\'les d'espaces sym\'etriques r\'eductifs
et prolongement m\'eromorphe d'int\'egrales d'Eisenstein}, 
Invent. Math. \textbf{109} (1992), 619--664

\bibitem[C03]{C03} Cook, J. D., {\it Notes on Hypergeometric
Functions}, https://www.johndcook.com/\\ HypergeometricFunctions.pdf, 2003

\bibitem[Di04]{Di04} Dimock, J., {\it Markov quantum fields on a manifold}, 
Rev. Math. Phys. {\bf 16:2} (2004), 243--255

\bibitem[vD09]{vD09} van Dijk, G., ``Introduction to Harmonic
Analysis and Generalized Gelfand Pairs,''
Studies in Math. {\bf 36}, de Gruyter, Berlin, 2009

\bibitem[vDH97]{vDH97} van Dijk, G., and S.C.~Hille, 
{\it Canonical representations related to hyperbolic spaces}, 
J. Funct. Anal. {\bf 147} (1997), 109--139 

 
 
  \bibitem[Fa08]{Fa08} Faraut, J.,  ``Analysis on Lie Groups. An Introduction,'' 
Cambridge Studies in Advanced Mathematics \textbf{110}, 
 Cambridge University Press, 2008.


\bibitem[FK94]{FK94} Faraut, J., and A. Koranyi, 
``Analysis on Symmetric Cones,'' 
Oxford Mathematical Monographs, Oxford University Press, 1994

\bibitem[FP05]{FP05} Faraut, J., and M. Pevzner, {\it Berezin kernels and analysis on Makerevich spaces}, Indag. Mathes. N. S. \textbf{16} (2005), 461--486

\bibitem[FT99]{FT99} Faraut, J., and E.G.F.\ Thomas, {\it Invariant Hilbert spaces of
holomorphic functions}, J. Lie Theory \textbf{9} (1999), 383--402
 
\bibitem[FJK79]{FJK79} M. Flensted-Jensen and T. H. Koornwinder, {\it Positive definite spherical
functions on a noncompact, rank one symmetric space}. In ``Analyse Harmonique sur les Groupes de Lie (S\'em., Nancy-Strasbourg 1976--1978), II,''  249--282, LNM
\textbf{ 739}, Springer, Berlin, 1979
 
\bibitem[GK\'O03]{GKO03} Gindikin, G., B. Kr\"otz and G. \'Olafsson, {\it 
Hardy spaces for non-compactly causal symmetric spaces and the most 
continuous spectrum}, Math. Ann. {\bf 327:1}  (2003), 25--66

\bibitem[GK{\'O}04]{GKO04} ---  ,  {\it Holomorphic $H$-spherical
distribution vectors in principal series representations}.
\textit{Inventiones Mathematicae} \textbf{158} (2004), 643--682.
 
\bibitem[Hel78]{Hel78} Helgason, S., ``Differential Geometry, Lie Groups, 
and Symmetric Spa\-ces,'' Acad. Press, London, 1978

\bibitem[Hel84]{Hel84} ---,  ``Groups and Geometric Analysis,'' 
Academic Press, 1984

\bibitem[HN12]{HN12}
Hilgert, J.,  and K.-H.\ Neeb,
``Structure and Geometry of Lie Groups,'' {Springer Monographs in Mathematics},
{Springer}, {New York}, 2012 

\bibitem[H{\'O}96]{HO96} Hilgert, J., and G. {\'O}lafsson, 
``Causal Symmetric Spaces, Geometry and Harmonic
Analysis,'' Perspectives in Mathematics {\bf 18}, Academic Press, 1996

\bibitem[Hi99]{Hi99} Hille, S.~C., 
{\it Canonical representations}, 
Thesis, Leiden Univ., June 1999 

\bibitem[HM79]{HM79}  Howe, R.  and C. Moore, {\it Asymptotic properties of unitary representations},
J. Funct. Anal. {\bf 32} (1979), 72--96
 

\bibitem[Ja08]{Ja08} Jaffe, A., {\it Quantum theory and relativity}, 
in ``Group Representations, Ergodic Theory, and Mathematical Physics. 
A tribute to George W. Mackey,'' Doran, Robert S. (ed.) et al., 
Providence, RI, American Mathematical Society (AMS),  
Contemporary Mathematics {\bf 449} (2008), 209--245 

\bibitem[JR07a]{JR07a} Jaffe, A., and G. Ritter, {\it Quantum field theory 
on curved backgrounds. I. The euclidean functional integral},
Comm. Math. Phys. {\bf 270} (2007), 545--572

\bibitem[JR07b]{JR07b} ---, {\it Quantum field theory
on curved backgrounds. II. Spacetime symmetries},
arXiv:hep-th/0704.0052v1

\bibitem[JR08]{JR08}
--- , {\it Reflection positivity and monotonicity}, 
J. Math. Phys. {\bf 49} (2008), 052301, 10 pp

\bibitem[J{\'O}98]{JO98}
Jorgensen, P. E.~T.  and G. {\'O}lafsson, 
{\it Unitary representations of {L}ie groups with reflection symmetry}, 
\newblock {J.~Funct. Anal.} {\bf 158:1} (1998), 26--88

\bibitem[J{\'O}00]{JO00}
---, {\it Unitary representations and {O}sterwalder--{S}chrader duality},  
\newblock In ``The Mathematical Legacy of {H}arish-{C}handra ({B}altimore,
  {MD}, 1998),'' Proc. Sympos. Pure Math. {\bf 68},  333--401, 
  Amer. Math. Soc., Providence, RI, 2000 

\bibitem[KL81]{KL81}  Klein,  A. and L. Landau,  {\it Periodic Gaussian Osterwalder--Schrader positive processes and the two-sided
Markov property on the circle},  \textit{Pac. J. Math.} \textbf{94} (1981),  341--367

\bibitem[KO08]{KO08} Kr\"otz, B., and E. Opdam, {\it Analysis on the crown domain}, 
Geom. Funct. Anal. {\bf 18:4} (2008), 1326001421

\bibitem[KS04]{KS04} Kr\"otz, B and R. J. Stanton, {\it Holomorphic extensions of representations. I. Automorphic functions},
Annals of Mathematics, {\bf 159} (2004),  641--724

\bibitem[KS05]{KS05} --- , {\it Holomorphic extensions of representations. II. 
Geometry and harmonic analysis}, Geom. Funct. Anal. {\bf 15:1} (2005), 190--245 

\bibitem[L73]{L73} Lebedew, N. N., ``Spezielle Funktionen und ihre Anwendung'', 
 B. I. AG, Z\"urich, 1973

\bibitem[Lo69]{Lo69} Loos, O., ``Symmetric spaces I: General theory,'' 
W. A. Benjamin, Inc., New York, Amsterdam, 1969

\bibitem[Lo67]{Lo67} Love, E. R., {\it Some integral equations involving hypergeometric functions},
Proc. Edinburgh Math. Soc.  {\bf 15}  (1967), 169--198
 
\bibitem[LP18]{LP18} Liu, G., A. Parthasarathy, {\it Domains of holomorphy for irreducible admissible uniformly bounded
Banach representations of simple Lie groups}, Transform. Groups \textbf{23} (2018), 755--764

\bibitem[MN\'O14]{MNO14}
Merigon, S., K.-H.~Neeb, and G. {\'O}lafsson, 
\newblock {\it Integrability of unitary representations on reproducing kernel
  spaces}, \newblock  Representation Theory {\bf 19} (2015), 24--55 

\bibitem[N\'O14]{NO14} Neeb, K.-H., G. \'Olafsson,  {\it Reflection 
positivity and conformal symmetry}, J. Funct. Anal.  {\bf 266} (2014), 2174--2224

\bibitem[N\'O15a]{NO15a} --- ,  {\it Reflection 
positive one-parameter groups and dilations}, Complex 
Analysis and Operator Theory {\bf 9:3} (2015), 653--721 

\bibitem[N\'O15b]{NO15b} --- ,  {\it 
Reflection positivity for the circle group}, 
in ``Proceedings of the 30th International Colloquium on Group 
Theoretical Methods,'' Journal of Physics: Conference Series 
{\bf 597} (2015), 012004; arXiv:math.RT.1411.2439 

\bibitem[N\'O17]{NO17} --- ,  {\it Antiunitary representations and modular theory}, 
in ``50th Sophus Lie Seminar'', Eds. K. Grabowska et al; 
Banach Center Publications {\bf 113} (2017), 291--362; 
arXiv:math-RT:1704.01336 

\bibitem[N\'O18]{NO18} ---, ``Reflection Positivity---
A Representation Theoretic Perspective,''  
Springer Briefs in Mathematical Physics {\bf 32}, 2018 

\bibitem[N\'O19]{NO19} ---, {\it 
Symmetric spaces with dissecting involutions}, to appear in Transformation Groups, arXiv:1907.07740

\bibitem[Ner99]{Ner99} Neretin, Y., {\it Plancherel formula for Berezin deformation 
of $L^2$ on Riemannian symmetric spaces}, arXiv:math-RT:9911020v1 3 Nov 1999

\bibitem[\'O00]{O00} \'Olafsson, G., \textit{Analytic continuation 
in representation theory and harmonic analysis}. In: ``Global
Analysis and Harmonic Analysis,'' eds. J. P. Bourguignon, T. Branson,
and O. Hijazi,  S\'em. et Congr., Soc. Math. France {\bf 4} (2000), 201--233; 


\bibitem[{\'O}P04]{OP04} {\'O}lafsson, G. and  A. Pasquale, {\it Paley-Wiener
Thereom for the $\Theta$-spherical Transform: The Even Multiplicity case}, 
Journal Math. Pures Appl. \textbf{83:7} (2004), 869--927 
 
\bibitem[{\'O}{\O}96]{OO96} \'Olafsson, G., and   B. {\O}rsted, \textit{Generalization of the Bargmann Transform}, 
in ``Workshop on Lie Theory and its Applications in Physics, 
Clausthal, August 1995'', eds. Dobrev et al, World Scientific, 1996


\bibitem[OS73]{OS73} Osterwalder, K.,  and R. Schrader, {\it Axioms for 
Euclidean Green's functions.~I}, Comm. Math. Phys. {\bf 31} (1973), 
83--112 

\bibitem[OS75]{OS75} ---, {\it Axioms for Euclidean Green's functions. II}, 
Comm. Math. Phys. \textbf{42} (1975), 281--305

\bibitem[Ru73]{Ru73} Rudin, W., ``Functional Analysis,'' McGraw Hill, 1973

\bibitem[Se07]{Se07} Sepp\"anen, H., {\it Branching laws for minimal holomorphic 
representations}, J. Funct. Anal. {\bf 251:1} (2007), 174--209

\bibitem[Se07b]{Se07b} ---, {\it Branching of some holomorphic 
representations of $SO(2,n)$}, J. Lie Theory {\bf 17:1} (2007), 191--227
 
\bibitem[Str83]{Str83} Strichartz, R. S., {\it Analysis of the Laplacian on the 
complete Riemannian manifold}, J. Funct. Anal {\bf 52:1} (1983), 48--79 

\bibitem[T63]{T63} Takahashi, R., {\it Sur les repr\'esentations unitaires des groupes de
Lorentz g\'en\'eralis\'es}, Bull. Soc. Math.
France {\bf  91} (1963) 289--433

\bibitem[Tr67]{Tr67} Treves, F., ``Topological Vector Spaces, Distributions, and
Kernels,'' Academic Press, New York, 1967 

\bibitem[WW96]{WW96} Whittaker, E. T., and G. Watson, ``A Course
of Modern Analysis,'' 4th ed, reprint, Cambridge University Press, 1996

\end{thebibliography}
\end{document}